\setlist[enumerate,1]{label = \normalfont(\arabic*), ref = (\arabic*)}
\newcommand{\keepcomment}{0}%
\newtheorem{theorem}{Theorem}[section]
\newtheorem{lemma}[theorem]{Lemma}
\newtheorem{proposition}[theorem]{Proposition}
\newtheorem{corollary}[theorem]{Corollary}
\theoremstyle{definition}
\newtheorem{definition}[theorem]{Definition}
\newtheorem{remark}[theorem]{Remark}
\newtheorem{example}[theorem]{Example}
\newtheorem*{theorem-nonum}{Theorem}
\newcommand{\R}{\mathbb{R}}
\newcommand{\IZ}{\mathbb{Z}}
\renewcommand{\L}{\mathrm{L}}
\newcommand{\W}{\mathrm{W}}
\newcommand{\X}{\mathrm{X}}
\newcommand{\Y}{\mathrm{Y}}
\newcommand{\Z}{\mathrm{Z}}
\newcommand{\Zv}{{}_\mathrm{v}\mathrm{Z}}
\newcommand{\Lv}{{}_\mathrm{v}\mathrm{L}}
\newcommand{\cL}{\mathcal{L}}
\newcommand{\cD}{\mathcal{D}}
\newcommand{\cF}{\mathcal{F}}
\newcommand{\cM}{\mathcal{M}}
\renewcommand{\S}{\mathcal{S}}
\newcommand{\Soo}{\mathcal{S}_\infty}
\newcommand{\Ext}{\mathcal{E}}
\newcommand{\Sol}{\mathcal{R}}
\newcommand{\Soll}{\mathcal{L}}
\newcommand{\Cont}{\mathrm{C}}
\newcommand{\ann}{\mathrm{C}}
\newcommand{\WB}{\mathrm{W}}
\newcommand{\e}{\mathrm{e}}
\renewcommand{\d}{\,\mathrm{d}}
\newcommand{\ddt}{\,\frac{\mathrm{d}t}{t}}
\newcommand{\ddtau}{\,\frac{\mathrm{d}\tau}{\tau}}
\newcommand{\dds}{\,\frac{\mathrm{d}s}{s}}
\newcommand{\cc}{\mathrm{c}}
\renewcommand{\r}{\mathrm{r}}
\newcommand{\eps}{\varepsilon}
\newcommand{\B}{\mathrm{B}}
\newcommand{\vv}{{}_\mathrm{v}}
\renewcommand\Re{\operatorname{Re}}
\newcommand{\ind}{\mathbf{1}}
\DeclareMathOperator{\supp}{supp}
\DeclareMathOperator{\dist}{d}
\DeclareMathOperator{\diam}{diam}
\DeclareMathOperator{\dom}{D}
\DeclareMathOperator{\Div}{div}
\newcommand{\wt}{\widetilde}
\newcommand{\loc}{\mathrm{loc}}
\newcommand{\Id}{\mathrm{Id}}
\newcommand{\RD}{\mathrm{RD}}
\newcommand{\SobSet}{\mathcal{I}}
\def\Yint#1{\mathchoice
	{\YYint\displaystyle\textstyle{#1}}%
	{\YYint\textstyle\scriptstyle{#1}}%
	{\YYint\scriptstyle\scriptscriptstyle{#1}}%
	{\YYint\scriptscriptstyle\scriptscriptstyle{#1}}%
	\!\iint}
\def\YYint#1#2#3{{\setbox0=\hbox{$#1{#2#3}{\iint}$}
		\vcenter{\hbox{$#2#3$}}\kern-.51\wd0}}
\def\longdash{{-}\mkern-3.5mu{-}}
\def\tiltlongdash{\rotatebox[origin=c]{15}{$\longdash$}}
\def\tiltfiint{\Yint\longdash}
\def\tiltfiint{\Yint\tiltlongdash}
\title[Non-linear parabolic PDEs with rough coefficients and critical data]{Non-linear parabolic PDEs with rough coefficients and critical data: existence, uniqueness and regularity of weak solutions}
\author{Pascal Auscher}
\address{Université Paris-Saclay
\\ France--Australia Mathematical Sciences and Interactions ANU -- CNRS International Research Laboratory, Canberra, ACT 2601, Australia.}
\email{pascal.auscher@universite-paris-saclay.fr}
\author{Sebastian Bechtel}
\address{Université Paris-Saclay, CNRS \\ Laboratoire de Mathématiques d’Orsay \\ 91405 Orsay \\ France}
\email{sebastian.bechtel@universite-paris-saclay.fr}
\subjclass[2020]{Primary: 35A01, 35A02, 42B37, 35D30. Secondary: 42B35, 30H25.}
\date{\today}
\dedicatory{}
\thanks{This project has received funding from the European Union’s Horizon 2020 research and innovation programme under the Marie Skłodowska-Curie grant agreement No 101034255 \euflag{}.
A CC-BY 4.0 \url{https://creativecommons.org/licenses/by/4.0/} public copyright license has been applied by the authors to the present document and will be applied to all subsequent versions up to the Author Accepted Manuscript arising from this submission.}
\keywords{critical spaces, rough coefficients, weak solutions, reaction--diffusion equations, singular integral operators, self-improving properties, function spaces}
\begin{document}
	\begin{abstract}
		This article investigates the well-posedness of weak solutions to non-linear parabolic PDEs driven by rough coefficients with rough initial data in critical homogeneous Besov spaces. Well-posedness is understood in the sense of existence and uniqueness of maximal weak solutions in suitable weighted $\Z$-spaces in the absence of smallness conditions. We showcase our theory with an application to rough reaction--diffusion equations. Subsequent articles will treat further classes of equations, including equations of Burgers-type and quasi-linear problems, using the same approach. Our toolkit includes a novel theory of hypercontractive singular integral operators (SIOs) on weighted $\mathrm{Z}$-spaces and a self-improving property for super-linear reverse Hölder inequalities.
	\end{abstract}
	\maketitle

	\allowdisplaybreaks

	\setcounter{tocdepth}{1}
	\tableofcontents

	\section{Introduction}
	\label{sec:intro}

	\subsection{Motivation}
	\label{subsec:intro_motivation}

	This article aims to contribute to the broad study of nonlinear evolution equations such as the nonlinear heat equation and other reaction--diffusion systems, the Navier--Stokes equations and other fluid models, moving interface problems and geometric flows~\cite{Amann,Fujita,AW,KatoNS,KT,Ericksen_Leslie,Matioc,Fitzhugh,Nagumo,Koch-Lamm,DaPG}.

	We concentrate on second-order equations on $(0,\infty) \times \R^n$ of the abstract type
	\begin{align}
		\label{intro:nl}
		\partial_t u - \Div(a(t,x,u) \nabla u) = F(u,\nabla u), \quad u(0) = u_0.
	\end{align}
	Systems and higher-order equations will be outlined in Section~\ref{subsec:intro_outlook}.
	Our goal is to establish existence, regularity and uniqueness of local and global weak solutions when the diffusion matrix is rough, typically only measurable, and initial data are taken from scaling-critical smoothness spaces of negative regularity.
	Such a result is out of reach of the current state of the art. Hence, we develop a novel framework including new harmonic-analysis and PDE tools, which employs weighted $\Z$-spaces as solution spaces and homogeneous Besov spaces as initial data spaces. Then, we showcase our framework by obtaining existence, uniqueness and regularity of a maximal weak solution to the rough reaction--diffusion equation
	\begin{align}
		\tag{RD}
		\label{eq:rd}
		\partial_t u -\Div(A\nabla u) = \phi(u), \quad u(0) = u_0.
	\end{align}
	Here, the non-linearity $\phi(u)$ has a local Lipschitz property and polynomial growth of order $1+\rho$, and $A \colon \R^n \to \R^{n\times n}$ is a merely bounded, measurable, elliptic coefficient matrix.
	We state a precise well-posedness result for~\eqref{eq:rd} in Theorem~\ref{thm:rd_new} and restrict ourselves here to an informal version illustrating the flavor of our theory.
	\begin{theorem-nonum}[Well-posedness of~\eqref{eq:rd}]
		Fix the growth parameter $\rho > \nicefrac{2}{n}$ of the non-linearity $\phi$. Let $u_0 \in \dot\B^\alpha_{p,p}$, a scaling-critical Besov space with respect to the non-linearity. Then, there exists a weak solution $u$ of~\eqref{eq:rd} which is  unique and maximal among the solutions satisfying $$u \in \Z^{r,q}_{\frac{n}{2r} - \frac{1}{\rho}} \quad \& \quad \nabla u \in \Z^{p,2}_{\frac{n}{2p} - \frac{1}{\rho} - \frac{1}{2}}$$ for admissible choices of $r$ and $q$.
	\end{theorem-nonum}
	The exponent $q$ of the space $\Z^{r,q}_\beta$ expresses local integrability, and some $\L^q$-averages parameterized by the upper half-space enjoy a global $\L^r$-integrability at a scale determined by the weight parameter $\beta$. In other words, the scale of weighted $\Z$-spaces captures a local-global control on functions with possible different exponents $q$ and $r$.
	When $r = q$, the space $\Z^{r,q}_\beta$ coincides with the weighted Lebesgue space $\L^r_\beta$.
	For weak solutions it is crucial that $\nabla u$ belongs to a weighted $\Z$-space of locally square-integrable functions, whereas the local integrability parameter $q$ for $u$ enjoys greater flexibility below or above $2$.

	Note that~\eqref{eq:rd} recovers the classical non-linear heat equation describing reaction--diffusion processes in a homogeneous, isotropic medium when $A \equiv \Id$. To study reaction--diffusion processes in media such as biological tissue, compound materials or porous materials, the diffusivity of the respective medium has to be encoded by the matrix $A$. As an illustrative example, consider the spreading of a tumor in the brain~\cite{Tumor}. Then, the diffusivity jumps between areas of gray and white matter, so that $A$ is indeed a real bounded elliptic matrix which is only measurable, lacking any form of continuity.

	Other models for our theory are, for instance, problems of Burgers-type, quasi-linear problems, or the generalized harmonic map heat flow. These and others are outlined in Section~\ref{subsec:intro_outlook}. To keep the length of the present article reasonable, we develop our tools and approach in full generality, but postpone applications to other models than~\eqref{eq:rd} to subsequent work.

	Next, we discuss some central challenges appearing in our well-posedness theory and indicate how they are resolved within the framework of weighted $\Z$-spaces and homogeneous Besov spaces.

	\subsection{Hypercontractivity as a main tool}
	\label{subsec:intro_hyper}

	We ignore the initial condition $u(0) = u_0$ for the moment and take a look at the non-linear term $\phi(u)$ instead. Assume that $u$ belongs to the solution space $X \coloneqq \Z^{r,q}_\beta$. Then $\phi(u)$ does \emph{not} belong again to $X$ because $\phi$ is of super-linear growth. Consequently, the parabolic solution operator $(\partial_t - \Div(A\nabla))^{-1}$ has to compensate the loss in the integrability and weight parameters caused by the non-linearity in order to be able to map back into $X$. We call this regularizing effect \emph{hypercontractivity} throughout this article.
	To obtain it, one can rely on one of the two items:
	\begin{itemize}
		\item[(a)] parabolic Sobolev embeddings,
		\item[(b)] hypercontractivity of the heat semigroup on Lebesgue spaces.
	\end{itemize}
	In the presence of rough coefficients, (a) is not exploitable to obtain sufficient hypercontractivity, see Remark~\ref{rem:hyper_sobolev} below. Instead, we exploit (b) in our framework. To do so, we introduce a class of hypercontractive singular integral operators (SIOs) in Definition~\ref{def:SIO} which generalizes classical notions of SIOs.
	Our SIOs are bounded operators $\L^q$ to weighted $\L^r$ on the upper half-space, and they enjoy some decay properties. However, they do not, in general, possess a representation by a pointwise kernel (the \enquote{beyond Calderón--Zygmund}-regime), including solution operators with rough coefficients.
	By virtue of the hypercontractivity of heat semigroups (generated by $L \coloneqq - \Div(A\nabla)$), solution operators to parabolic problems with rough coefficients are hypercontractive SIOs, see Section~\ref{subsec:duhamel_cauchy}.
	Hypercontractivity of the solution operators follows from the following extrapolation result, see Theorem~\ref{thm:sio}.

	\begin{theorem-nonum}[Hypercontractive SIOs on weighted $\Z$-spaces]
		Let $S$ be an SIO of type $(p_0,q,r,\kappa,\infty)$, where $p_0,q,r\in (1,\infty]$ and $\kappa \in [0,1]$ satisfying $q \geq p_0$ and $r \geq q$. Then, for $p \in [p_0, \infty]$ and $\beta > -1$, the operator $S$ extends to a bounded operator $\Z^{p,q}_\beta \to \Z^{p,r}_{\beta + \kappa}$.
	\end{theorem-nonum}

	Usual SIOs correspond to the case $q = r$, and this is a special instance of our result.
	In applications to parabolic PDEs, the integrability parameter $r$ is restricted in terms of parabolic Sobolev conjugates of $q$.
	Owing to the theorem, the losses in the local integrability parameter $q$ and in the weight parameter $\beta$ caused by the non-linearity $\phi$ can indeed be compensated. To gain in the global integrability parameter $p$, we use the \enquote{Hardy--Sobolev}-type embedding in Proposition~\ref{prop:Z_space_embedding}.
	Hence, non-linear terms can be effectively controlled in weighted $\Z$-spaces.

	\begin{remark}[Limitations of parabolic Sobolev embeddings]
		\label{rem:hyper_sobolev}
		The parabolic Sobolev embeddings in~(a) are the foundation of the maximal regularity approach~\cite{HNVW3,DHP,MovingInterface,CL94,PSW,PW_Addendum}, in which one proves mixed weighted $\L^p_t(\L^q_x)$ estimates. This approach allows one to obtain good results for the Laplace operator (that is, when $A \equiv \Id$) or, by perturbation techniques, for $-\Div(A\nabla)$ when $A$ possesses some regularity~\cite{Dong-Kim-ARMA,Krylov}. However, when $A$ is only measurable, the parameter $q$ cannot be taken large for the operator $\nabla (\partial_t - \Div(A\nabla))^{-1}$ used in the maximal regularity approach.
		Therefore, since Sobolev embeddings become powerful only when the integrability parameter $q$ can be taken large,~(a) does not provide sufficient hypercontractivity in the case of rough coefficients. For instance, important cases such as the Allen--Cahn non-linearity $\phi(u) = -u^3 + u$ cannot be considered by the maximal regularity approach in the physical dimension $n = 3$ when the coefficients are rough, see for instance~\cite{AV_var}.
	\end{remark}

	\subsection{Rough initial data}
	\label{subsec:intro_rough_data}

	By previous work of the authors together with Haardt~\cite{Luca}, homogeneous Besov spaces of negative regularity admit a caloric characterization in weighted $\Z$-spaces. More precisely, one has $u_0 \in \dot \B^\alpha_{p,p}$ if and only if $\e^{t\Delta} u_0 \in \Z^{p,q}_{\nicefrac{\alpha}{2}}$ for all $q \in (1,\infty]$. When $-\Delta$ is replaced by $L \coloneqq -\Div(A\nabla)$, mild assumptions on $\alpha \in  (-1,0)$ and $p \in (1,\infty]$ guarantee that the free evolution $\Ext_L(u_0)$ of $u_0$ still belongs to $\Z^{p,q}_{\nicefrac{\alpha}{2}}$, see~\cite{AH3} for the case $q = 2$ and Section~\ref{sec:caloric} for $q\in (1,\infty]$.

	Moreover, the free evolution $\Ext_L(u_0)$ solves the initial value problem in the weak sense and $u_0$ is attained in the sense of Definition~\ref{def:initial_condition}.
	Consequently, homogeneous Besov spaces are the natural initial data spaces within our framework.

	\subsection{Solutions and their regularity}
	\label{subsec:intro_regularity}

	The principal notion of solutions for us is the one of weak solutions in the sense of PDEs. It is a flexible and operator-free notion, physically meaningful, and the initial data are attained in a natural sense. By definition, weak solutions have a locally square-integrable gradient. As said, this is captured by the condition $\nabla u \in \Z^{p,2}_{\nicefrac{n}{2p}-\nicefrac{1}{\rho} - \nicefrac{1}{2}}$ in our well-posedness result for~\eqref{eq:rd}.

	Besides weak solutions, we also work with \enquote{mild solutions} in weighted $\Z$-spaces as a first step towards existence, see Section~\ref{sec:mild}. They are well-suited to capitalize on the hypercontractivity results presented above.
	In particular, hypercontractivity of the solution operators allows us to show existence and uniqueness of mild solutions to non-linear PDEs, in particular to~\eqref{eq:rd}.
	An important difference between mild and weak solutions is that mild solutions do not demand any knowledge on their gradients.

	To bridge between mild and weak solutions, regularity theory for both types of solution plays a major role in our approach. Our results can be summarized as follows:
	\begin{itemize}
		\item If $u \in \Z^{r_0,q_0}_{\beta_0}$ is a mild solution to a non-linear PDE, then $u \in \Z^{r,q}_\beta$ for a wide range of parameters $r$, $q$ and $\beta$ dictated by the equation.
		\item If $u \in \Z^{r,q_0}_\beta$ is a weak solution to a non-linear PDE, then $u \in \Z^{r,q}_\beta$ for an improved value of $q$.
		\item Building on these regularity results, a mild--weak transference principle, identifying mild solutions as weak solutions and vice versa, holds.
	\end{itemize}

	We postpone the discussion of weak solutions to the next subsection. Mild solutions to~\eqref{eq:rd} satisfy by definition the identity $u = \Soll(\phi(u)) + \Ext(u_0)$, where $\Soll$ is a hypercontractive solution operator applied to the non-linearity $\phi(u)$, and $\Ext(u_0)$ is the free evolution of the initial datum $u_0$. Since the identity is implicit, an iteration argument using hypercontractivity of $\Soll$ yields self-improvement of $u$ to various weighted $\Z$-spaces (Proposition~\ref{prop:RD_bootstrapping}).

	\subsection{Weak solutions to non-linear PDEs}
	\label{subsec:intro_weak}

	Instead of developing a well-posedness theory for weak solutions from scratch, we transfer existence, regularity and uniqueness of mild solutions to weak solutions. This is the mild--weak transference principle just mentioned. To obtain existence of weak solutions, it suffices to identify mild solutions as weak solutions. As for uniqueness, we have to reverse the order of things and identify a weak solution as a mild solution. Then, uniqueness of mild solutions is transferred to weak solutions.

	For both directions, establishing $\nabla u \in \Z^{p,2}_\beta$ is the main obstacle, where $\beta \coloneqq \nicefrac{n}{2p} - \nicefrac{1}{\rho} - \nicefrac{1}{2}$. We need $\phi(u) \in \Z^{p,2_*}_\beta$ in order to leverage our SIO-theory. Here, $2_*$ is the lower parabolic Sobolev conjugate of $2$. The regularity theory for mild solutions ensures this property, and allows the passage from mild solutions to weak solutions. If $u$ is a weak solution, the local integrability of $u$ is in general not sufficient to guarantee this property for $\phi(u)$.

	To resolve this problem, we first establish reverse Hölder inequalities of \enquote{super-linear type} for weak solutions to~\eqref{eq:rd} in Lemma~\ref{lem:RH_rd}. They read
	\begin{align}
		\label{eq:intro_RH}
		\tag{RH}
		\left( \tiltfiint_{B} |s^{\nicefrac{1}{\rho}} u|^{2^*} \right)^\frac{1}{2^*} \lesssim \tiltfiint_{2B} |s^{\nicefrac{1}{\rho}} u| + \left( \tiltfiint_{2B} |s^{\nicefrac{1}{\rho}} u|^{2_*(1+\rho)} \right)^\frac{1}{2_*},
	\end{align}
	holding for suitable parabolic balls $B$ and $2^*$ is the upper parabolic Sobolev conjugate of $2$.
	But, the local exponent $2_*(1+\rho)$ might be too large to a priori control the last term.
	Hence, we need to lower it, which is the purpose of the following result, see Theorem~\ref{thm:rh_improvement}.

	\begin{theorem-nonum}[Self-improvement of super-linear RH-inequalities]
		The RH-inequality of super-linear type~\eqref{eq:intro_RH} self-improves: there exists $\theta > 1$ such that, for $\lambda > 1$ small enough, there holds
		\begin{align}
			\left( \tiltfiint_{B} |s^{\nicefrac{1}{\rho}} u|^{2^*} \right)^\frac{1}{2^*} \lesssim \left( \tiltfiint_{\lambda B} |s^{\nicefrac{1}{\rho}} u|^{q} \right)^\frac{\theta}{q} + \left( \tiltfiint_{\lambda B} |s^{\nicefrac{1}{\rho}} u|^{q} \right)^\frac{1}{q},
		\end{align}
		where $q \in (\RD_{-}(n,\rho), 2_* (1+\rho))$.
	\end{theorem-nonum}

	Given the non-linearity, the number $\RD_{-}(n,\rho) > 1$ is the minimal number for which we can implement hypercontractivity for~\eqref{eq:rd}. Note that in contrast with the linear case, $q$ cannot go down to zero here.
	At this stage, we can exploit the above inequality to improve the local integrability exponent for $u$ in weighted $\Z$-spaces. This puts us in a good position to prove uniqueness of $u$ as a weak solution.

	\subsection{Outlook: further non-linear models}
	\label{subsec:intro_outlook}

	Besides the reaction term $\phi(u)$, our framework applies to Burgers-type non-linearities $\Div(\Phi(u))$, where $\Phi(u)$ is a vector field of polynomial growth enjoying a local Lipschitz condition.
	Such non-linear terms in the case of quadratic growth were investigated in~\cite{Auscher_Frey}.
	Compared to the reaction--diffusion case, the ranges of parameters have to be adjusted to the different types of non-linearities. This has the effect that the end-point Besov space with $p = \infty$ can be reached, which is not the case for the reaction--diffusion equation presented here.
	For this, we  need to introduce separable subspaces $\vv\dot\B^\alpha_{\infty,\infty}$ and $\Zv^{\infty,q}_\beta$ of the respective Besov and weighted $\Z$-spaces, see Section~\ref{subsec:prelim_function_spaces}, and all linear results in Sections~\ref{subsec:SIO} to~\ref{sec:linear_wp} are formulated for them likewise.

	Moreover, using the linear theory for $\Zv^{\infty,q}_\beta$, quasi-linear equations of the type
	\begin{align}
		\partial_t u -\Div(a(u)\nabla u) = 0, \quad u(0) = u_0
	\end{align}
	can be treated, where $u_0$ is taken from a suitable subspace of $\L^\infty$. Note that $\L^\infty$ is a scaling-critical space for the non-linearity. Likewise, models such as the generalized harmonic map heat flow
	\begin{align}
		\partial_t u -\Div(A\nabla u) = u|\nabla u|^2, \quad u(0) = u_0,
	\end{align}
	fall into the scope of our framework, provided $A$ has minimal smoothness in the sense of Definition~\ref{def:regular_A}.
	The case $A = \Id$ was investigated in~\cite{Koch-Lamm} by using weighted tent-space norms.

	Beyond~\eqref{eq:rd}, it would be interesting to study concrete systems of reaction--diffusion equations, for instance of Fitzhugh--Nagumo~\cite{Fitzhugh,Nagumo} type. Our Theorem~\ref{thm:rd_new} does not directly apply to them as it only treats equations of reaction--diffusion type. However, due to the specific structure of such systems, we expect that our approach generalizes to them.

	Finally, it would be interesting to study fourth-order problems in future work. Our SIO theory is general enough to apply to them, and well-posedness of weak solutions to linear higher-order equations in divergence form was investigated for instance in~\cite{Zaton}.

	\subsection*{Acknowledgments}

	The authors are grateful to Luca Haardt and Hedong Hou for enriching discussions on the subject, and to Moritz Egert and Pierre Portal for valuable feedback.

	\subsection*{Notation}

	Put $\xi \cdot \eta \coloneqq \sum_{i=1}^n \xi_i \eta_i$ for two vectors $\xi, \eta \in \R^n$. The inner product of $\R^n$ can hence be written as $\langle \xi, \eta \rangle = \xi \cdot \eta$.
	Write $\R^{1+n}_+ \coloneqq (0,\infty) \times \R^n$ for the upper half-space.
	For $x\in \R^n$ and $t > 0$ define the \emph{parabolic annuli} by $\ann_1(x,t) \coloneqq \B(x,\sqrt{4t})$ and $\ann_j(x,t) \coloneqq \B(x, \sqrt{2^{j+1} t}) \setminus \B(x, \sqrt{2^j t})$ for $j \geq 2$.
	Write $\WB(t,x) \coloneqq (\nicefrac{t}{2}, t) \times \B(x, \sqrt{t})$ for the \emph{parabolic Whitney box} centered at $x$ of height $t$.
	For a function $f$ defined on $\R^{1+n}_+$, write $\pi(f) \coloneqq \pi_1(\supp(f)) \times \pi_2(\supp(f))$, where $\pi_1$ and $\pi_2$ are the projections of $\R^{1+n} = \R \times \R^n$ to $\R$ and $\R^n$, respectively.

	For $q \in [1,\infty]$ define the sets $\SobSet^*(q)$ and $\SobSet^{**}(q)$ through
	\begin{align}
		\SobSet^*(q) \coloneqq \begin{cases}
			[q, q^*], \quad &\text{if } q < n+2, \\
			[q, \infty), \quad &\text{if } q = n+2, \\
			[q,\infty], \quad &\text{if } q > n+2,
		\end{cases}
		\quad \text{and} \quad
		\SobSet^{**}(q) \coloneqq \begin{cases}
			[q, q^{**}], \quad &\text{if } q < 1 + \nicefrac{n}{2}, \\
			[q, \infty), \quad &\text{if } q = 1 + \nicefrac{n}{2}, \\
			[q,\infty], \quad &\text{if } q > 1 + \nicefrac{n}{2}.
		\end{cases}
	\end{align}
	Here $q^*$ is the upper parabolic Sobolev index of $q$ defined by $\nicefrac{1}{q^*} \coloneqq \nicefrac{1}{q} - \nicefrac{1}{n+2}$ for $q < n+2$ and we use also the lower parabolic Sobolev index of $q$ defined by $\nicefrac{1}{q_{*}} \coloneqq \nicefrac{1}{q} + \nicefrac{1}{n+2}$.

	The space $\Soo$ is the subspace of the Schwartz functions $f\in \S$ satisfying $\langle f, P \rangle = 0$ for any polynomial $P$. Equip $\Soo$ with the topology inherited from $\S$. Then, write $\Soo'$ for its continuous dual space. $\Soo'$ can be identified with $\S'/\mathcal{P}$, where $\S'$ denotes the tempered distributions and $\mathcal{P}$ is the space of all polynomials.

	For standard function spaces on $\R^n$ like $\L^p(\R^n)$ or $\W^{1,p}(\R^n)$ we omit the underlying set $\R^n$ from the notation but write an index $x$ instead, for instance $\L^p_x$ or $\W^{1,p}_x$. A shorthand-notation for spaces on (subsets of) the half-space $\R^{1+n}_+$ will be introduced in Section~\ref{subsec:prelim_function_spaces}.

	\section{Preliminaries}
	\label{sec:prelim}

	\subsection{Function spaces}
	\label{subsec:prelim_function_spaces}

	We introduce the relevant function spaces for this article. First, we introduce solution spaces on $\R^{1+n}_+$ or on strips of the form $(a,b) \times \R^n$. Second, we introduce spaces of initial data.

	\subsubsection*{Solution spaces}

	Our primary solution spaces are weighted $\Z$-spaces. They go back to~\cite{Barton_Mayboroda} and were discussed in more depth in~\cite{Amenta_Interpolation,Luca}.

	\begin{definition}[$\Z$-spaces]
		\label{def:Z_space}
		Let $p,q \in (1,\infty]$ and $\beta \in \R$. The space $\Z^{p,q}_\beta$ consists of all measurable functions $u \colon \R^{1+n}_+ \to \R$ such that
		\begin{align}
			\| u \|_{\Z^{p,q}_\beta} \coloneqq \Bigl( \iint_{\R^{1+n}_+} \Bigl( \fint_{t/2}^t \fint_{\B(x, \sqrt{t})} |s^{-\beta} u(s,y)|^q \d y \d s \Bigr)^\frac{p}{q} \d x \ddt \Bigr)^\frac{1}{p} < \infty,
		\end{align}
		with the obvious modifications if $p=\infty$ and/or $q=\infty$.
		For $T \in (0,\infty]$ write $\Z^{p,q}_\beta(T)$ for the space of functions consisting of $u \colon (0,T) \times \R^n \to \R$ whose extension by zero $E_0 u$ to a function on $\R^{1+n}_+$ belongs to $\Z^{p,q}_\beta$. Set $\| u \|_{\Z^{p,q}_\beta(T)} \coloneqq \| E_0 u \|_{\Z^{p,q}_\beta}$. If $\beta = 0$, we remove it from the notation, that is, we put $\Z^{p,q} \coloneqq \Z^{p,q}_0$.
	\end{definition}

	\begin{remark}
		\label{rem:Z_spaces}
		The range for the parameters $p$ and $q$ can be enlarged to $(0,\infty]$. However, this extension is not relevant for the present article.
		In~\cite{AH3}, weighted $\Z$-spaces are defined for $q = 2$.
		As we use the average integral $\fint_{t/2}^t$ in our definition, our space $\Z^{p,2}_\beta$ here corresponds to the space $\Z^{p,2}_{\beta + 1/2}$ there, as it was defined with the integral $\int_{t/2}^t$ instead.
	\end{remark}

	If $p = q$, Fubini's theorem reveals that weighted $\Z$-spaces coincide with weighted $\L^p$-spaces, where the latter are properly defined in the following definition. More precisely, there holds the identity $\Z^{p,p}_\beta = \L^p_\beta$.

	\begin{definition}[Weighted $\L^p$-spaces]
		Let $p \in (1,\infty]$, $\beta \in \R$ and $T \in (0,\infty]$. The space $\L^p_\beta$ consists of measurable functions $u \in \R^{1+n}_+ \to \R$ such that $(t,x) \mapsto t^{-\beta} u(t,x)$ belongs to $\L^p(\R^{1+n}_+, \d x \ddt)$, with norm $$\| u \|_{\L^p_\beta} \coloneqq \| (t,x) \mapsto t^{-\beta} u(t,x) \|_{\L^p(\R^{1+n}_+, \d x \ddt)}.$$
		Using either extension by zero or restriction of the range of integration,
		the space $\L^p_\beta(T)$ is defined analogously, likewise for $\L^p_\beta(\tau,T)$. If $\beta = 0$, we remove the index from the notation.
	\end{definition}

	By Jensen's inequality, there holds the contractive inclusion $$\Z^{p,q_0}_\beta \subseteq \Z^{p,q_1}_\beta, \quad q_0 \geq q_1.$$
	{This \emph{nesting property} will be useful in the treatment of non-linearities, as they force us to work with different values of $q$.}

	Let $T \in (0,\infty)$. %
	Observe the norm equivalences%
	\begin{align}
		\| u \|_{\Z^{p,q}_\beta(T)} \approx \Bigl\| (t,x) \mapsto \Bigl( \fint_{t/2}^t \fint_{\B(x, \sqrt{t})} |s^{-\beta} u(s,y)|^q \d y \d s \Bigr)^\frac{1}{q} \Bigr\|_{\L^p_\beta(T)}.
	\end{align}
	Indeed, if $t \geq 2T$, the (open) Whitney box at height $t$ is disjoint to the support of the zero extension $E_0 u$ of $u$ used in the definition of $\Z^{\infty,q}_\beta(T)$, whereas the terms for $T<t <2T$ can be absorbed into the ones for $t\le T$ by a covering argument, compare with~\cite[Prop.~3.7]{Luca}.

	From~\cite{Luca} we collect some further important properties of weighted $\Z$-spaces.

	The spaces $\Z^{p,q}_\beta$ for $p,q \in (1,\infty]$ and $\beta \in \R$ are Banach spaces. There hold the continuous inclusions
	\begin{align}
		\label{eq:Z_loc_embedding}
		\L^q_\cc(\R^{1+n}_+) \subseteq \Z^{p,q}_\beta \subseteq \L^q_\loc(\R^{1+n}_+).
	\end{align}
	Here, the index $\cc$ indicates functions with compact support in $\R^{1+n}_+$. If $p$ and $q$ are both finite, then the inclusion $\L^q_\cc(\R^{1+n}_+) \subseteq \Z^{p,q}_\beta$ is dense. If $p = \infty$ but $q$ is finite, one can show weak-*-density, see~\cite[Lem.~3.1]{AH2}, in the duality with $\Z^{1,q'}_{-\beta}$ for the duality form $\iint_{\R^{1+n}_+} f g \, \d x \ddt$.

	Furthermore, there holds a \enquote{Hardy--Sobolev}-type embedding for weighted $\Z$-spaces, which we recall in the following proposition. For a proof, see~\cite[Thm.~3.11]{Luca}.

	\begin{proposition}[\enquote{Hardy--Sobolev}-type embedding]
		\label{prop:Z_space_embedding}
		Let $p_0, p_1, q \in (1,\infty]$ and $\beta_0, \beta_1 \in \R$ satisfying $p_0 < p_1$ and $\beta_0 > \beta_1$. Suppose that $2\beta_0 - \nicefrac{n}{p_0} = 2\beta_1 - \nicefrac{n}{p_1}$. Then, there holds the continuous inclusion
		\begin{align}
			\Z^{p_0,q}_{\beta_0} \subseteq \Z^{p_1,q}_{\beta_1}.
		\end{align}
		The statement can be localized to a finite time interval, with inclusion constant independent of the interval length.
	\end{proposition}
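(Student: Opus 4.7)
My plan is to reduce the general embedding to the endpoint $p_1 = \infty$ by a Hölder-interpolation factorization, and then prove the endpoint via a pointwise bound on Whitney averages exploiting the doubling structure of $\d x \ddt$ on $\R^{n+1}_+$.

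First I would rewrite the norm. Setting
\begin{align*}
	G_\beta(t,x) := \Bigl( \fint_{t/2}^t \fint_{\B(x,\sqrt{t})} |s^{-\beta} u(s,y)|^q \d y \d s \Bigr)^{1/q},
\end{align*}
there holds $\|u\|_{\Z^{p,q}_\beta} = \|G_\beta\|_{\L^p(\R^{n+1}_+, \d x \ddt)}$. Because $s \sim t$ on the Whitney box $\WB(x,t)$, one has the pointwise equivalence $G_{\beta_1}(t,x) \simeq t^{\beta_0 - \beta_1} G_{\beta_0}(t,x)$ with constants depending only on $\beta_0 - \beta_1$. The scaling hypothesis reads $\beta_0 - \beta_1 = \tfrac{n}{2p_0}(1 - \theta)$ with $\theta := p_0/p_1 \in [0,1)$. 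Assuming $p_1 < \infty$, the factorization
\begin{align*}
	t^{\beta_0-\beta_1} G_{\beta_0}(t,x) = \bigl(t^{n/(2p_0)} G_{\beta_0}(t,x)\bigr)^{1-\theta} \cdot G_{\beta_0}(t,x)^{\theta}
\end{align*}
raised to the $p_1$-th power and integrated in $\d x \ddt$, together with $\theta p_1 = p_0$, delivers
\begin{align*}
	\|u\|_{\Z^{p_1,q}_{\beta_1}}^{p_1} \lesssim \|t^{n/(2p_0)} G_{\beta_0}\|_{\L^\infty(\R^{n+1}_+)}^{(1-\theta)p_1} \cdot \|u\|_{\Z^{p_0,q}_{\beta_0}}^{p_0},
\end{align*}
reducing everything to the endpoint embedding $\Z^{p_0,q}_{\beta_0} \hookrightarrow \Z^{\infty,q}_{\beta_0 - n/(2p_0)}$.

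For the endpoint, I would fix $(t_0, x_0) \in \R^{n+1}_+$ and take the neighborhood $V := (t_0, 2t_0) \times \B(x_0, \sqrt{t_0})$. With a suitable enlargement (for instance, factors $\lambda = 4$ in time and $\mu = 2$ in space), each $(t', x') \in V$ satisfies $\WB^*(x', t') \supseteq \WB(x_0, t_0)$, where $\WB^*$ is the enlarged Whitney box, and the Lebesgue measures of $\WB^*(x',t')$ and $\WB(x_0, t_0)$ are comparable. This forces the pointwise lower bound $G^*_{\beta_0}(t', x') \gtrsim G_{\beta_0}(t_0, x_0)$ on $V$, where $G^*_\beta$ is defined as $G_\beta$ but with $\WB^*$ in place of $\WB$. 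Since the $\d x \ddt$-measure of $V$ is comparable to $t_0^{n/2}$, integrating the $p_0$-th power over $V$ yields
\begin{align*}
	t_0^{n/(2p_0)} G_{\beta_0}(t_0, x_0) \lesssim \|G^*_{\beta_0}\|_{\L^{p_0}(\R^{n+1}_+, \d x \ddt)}.
\end{align*}
A change-of-aperture estimate, obtained by covering $\WB^*(x,t)$ with a controlled number of ordinary Whitney boxes at nearby points and invoking a parabolic Hardy--Littlewood maximal inequality on the doubling space $(\R^{n+1}_+, \d x \ddt)$, supplies $\|G^*_{\beta_0}\|_{\L^{p_0}} \lesssim \|G_{\beta_0}\|_{\L^{p_0}}$, completing the endpoint case.

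The main obstacle is the change-of-aperture step, where the anisotropic Whitney geometry (ball radius $\sqrt{t}$ at height $t$) has to be reconciled with the doubling measure $\d x \ddt$ to yield a clean maximal-function bound. The localization to $(0,T)$ is automatic: the argument is pointwise in $(t_0, x_0)$, the neighborhoods $V(t_0, x_0)$ remain inside $(0, 2T) \times \R^n$, and the parabolic scaling $(t,x) \mapsto (r^2 t, rx)$ leaves the entire setup invariant, so the inclusion constant depends only on $n, p_0, p_1, q, \beta_0, \beta_1$.
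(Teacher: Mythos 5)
The paper does not prove this proposition; it defers to \cite[Thm.~3.11]{Luca}, so there is no in-paper argument to compare against. Your proposal is, up to one small imprecision, a valid self-contained proof, and it follows what I regard as the canonical route for embeddings of this Hardy--Sobolev type: reduce to the $p_1=\infty$ endpoint by the factorization $t^{\beta_0-\beta_1}G_{\beta_0}=\bigl(t^{n/(2p_0)}G_{\beta_0}\bigr)^{1-\theta}G_{\beta_0}^{\theta}$ with $\theta=p_0/p_1$ and a sup-out, then prove the endpoint by integrating a pointwise lower bound on enlarged Whitney averages over a slab $V$ of measure $\simeq t_0^{n/2}$. The exponent bookkeeping ($(1-\theta)p_1+\theta p_1=p_1$, $\theta p_1 = p_0$, and $(\beta_0-\beta_1)p_1=\frac{n}{2p_0}(p_1-p_0)$) is consistent with the scaling hypothesis, and the geometric enlargement you describe does ensure $\WB^*(x',t')\supseteq\WB(x_0,t_0)$ with comparable measures for $(t',x')\in V$, which is all that is needed.

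The one imprecision is in the change-of-aperture step. The bound $\|G^*_{\beta_0}\|_{\L^{p_0}}\lesssim\|G_{\beta_0}\|_{\L^{p_0}}$ does not follow from a Hardy--Littlewood maximal inequality applied to $G_{\beta_0}$: the quantity $G^*_{\beta_0}(t,x)^q$ is an average of $|s^{-\beta_0}u|^q$ over an enlarged Whitney box, not an average of $G_{\beta_0}^q$ over a parabolic ball, so there is no pointwise domination $G^*_{\beta_0}\lesssim M(G_{\beta_0})$ to feed into a maximal theorem. The covering you mention, carried out to conclusion, is the correct argument: cover $\WB^*(x,t)$ by $O(1)$ ordinary Whitney boxes $\WB(x_i,t_j)$ with $t_j=c_jt$ and $x_i=x+c_i\sqrt{t}$, deduce $G^*_{\beta_0}(t,x)^{p_0}\lesssim\sum_{i,j}G_{\beta_0}(t_j,x_i)^{p_0}$ (using subadditivity of $s\mapsto s^{p_0/q}$ if $p_0<q$, and the finiteness of the index set otherwise), and then observe that each map $(t,x)\mapsto(c_jt,\,x+c_i\sqrt{t})$ preserves the measure $\d x\ddt$ up to a bounded Jacobian, so the $\L^{p_0}$-norm of each shifted term is comparable to $\|G_{\beta_0}\|_{\L^{p_0}}$. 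This is exactly the spirit of the spatial change-of-angle in Lemma~\ref{lem:change_of_angle}, except you also enlarge the time interval, which adds only finitely many extra height layers. With that replacement, the proof closes, and the localization assertion follows as you say from the $T$-independence and scale-invariance of the constants applied to $E_0u$.
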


	An important tool for the SIO-theory in Section~\ref{subsec:SIO} will be the following change of angle formula, whose proof is given in~\cite[Lem.~3.6]{Luca}.

	\begin{lemma}[Change of angle]
		\label{lem:change_of_angle}
		Let $p,q \in (1,\infty]$ and $\beta \in \R$. Put $\tau \coloneqq \min(p,q)$. Then, for $\lambda \geq 1$ and $u \in \Z^{p,q}_\beta$, there holds the estimate
		\begin{align}
			\Bigl( \iint_{\R^{1+n}_+} \Bigl( \fint_{t/2}^t t^{-\frac{n}{2}} \int_{\B(x, \lambda \sqrt{t})} |s^{-\beta} u(s,y)|^q \d y \d s \Bigr)^\frac{p}{q} \d x \ddt \Bigr)^\frac{1}{p} \lesssim \lambda^\frac{n}{\tau} \| u \|_{\Z^{p,q}_\beta},
		\end{align}
		with the obvious modifications if $p=\infty$ and/or $q=\infty$.
	\end{lemma}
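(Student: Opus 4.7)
The strategy is to compare the enlarged Whitney average against a sum of standard Whitney averages, and then to apply a Jensen/subadditivity argument that produces the exponent $\min(p,q)$ in the $\lambda$-dependence.

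\textbf{Step 1: Normalization.} Since $|\B(x,\lambda\sqrt t)| \approx \lambda^n t^{n/2}$, I first rewrite
\begin{align}
\fint_{t/2}^t t^{-\frac{n}{2}} \int_{\B(x,\lambda\sqrt t)} |s^{-\beta}u|^q \d y \d s \approx \lambda^n \fint_{t/2}^t \fint_{\B(x,\lambda\sqrt t)} |s^{-\beta}u|^q \d y \d s.
\end{align}
Thus, setting
\begin{align}
G_\lambda(t,x) \coloneqq \Bigl(\fint_{t/2}^t \fint_{\B(x,\lambda\sqrt t)} |s^{-\beta}u|^q \d y \d s\Bigr)^{1/q}, \quad g(t,x) \coloneqq G_1(t,x),
\end{align}
the target estimate is reduced to $\| G_\lambda \|_{\L^p(\d x \ddt)} \lesssim \lambda^{n(1/\tau-1/q)} \| g \|_{\L^p(\d x \ddt)}$, with $\| g\|_{\L^p(\d x \ddt)} = \|u\|_{\Z^{p,q}_\beta}$.

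\textbf{Step 2: Covering and pointwise control.} Fix $t>0$. The enlarged ball $\B(x,\lambda\sqrt t)$ can be covered by $N \lesssim \lambda^n$ balls of the form $\B(x+\sqrt t z_j,\sqrt t)$, where the $z_j$'s are chosen from a $\sqrt t$-spaced lattice intersected with $\{|z| \lesssim \lambda\}$ (and, after rescaling by $\sqrt t$, the number $N$ and the choice of $z_j\in\IZ^n$ are independent of $t$ and $x$). Summing the $\L^q$-integrals over these balls gives the pointwise bound
\begin{align}
G_\lambda(t,x)^q \lesssim \lambda^{-n} \sum_{j=1}^N g\bigl(t,x+\sqrt t z_j\bigr)^q.
\end{align}

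\textbf{Step 3: Raising to the power $p/q$.} Now I split cases depending on whether $p/q\ge 1$ or $p/q<1$, which is precisely the dichotomy producing $\tau = \min(p,q)$.

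If $q\le p$, Jensen's inequality yields $(\sum_{j=1}^N a_j)^{p/q} \le N^{p/q-1}\sum_j a_j^{p/q}$, so combined with $N\lesssim \lambda^n$ the pointwise bound becomes
\begin{align}
G_\lambda(t,x)^p \lesssim \lambda^{-n}\sum_{j=1}^N g(t,x+\sqrt t z_j)^p.
\end{align}

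If $p<q$, the subadditivity $(\sum a_j)^{p/q}\le \sum a_j^{p/q}$ (valid since $p/q<1$) gives instead
\begin{align}
G_\lambda(t,x)^p \lesssim \lambda^{-np/q}\sum_{j=1}^N g(t,x+\sqrt t z_j)^p.
\end{align}

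\textbf{Step 4: Conclusion by translation invariance.} Integrating in $x\in\R^n$ kills the translations $\sqrt t z_j$, so $\int g(t,x+\sqrt t z_j)^p\d x = \int g(t,x)^p\d x$ for every $j$. Using $N\lesssim\lambda^n$, both cases yield
\begin{align}
\int_{\R^n} G_\lambda(t,x)^p \d x \lesssim \lambda^{np(1/\tau-1/q)} \int_{\R^n} g(t,x)^p\d x,
\end{align}
where $\tau=q$ in the first case and $\tau=p$ in the second. A final integration in $t$ against $\ddt$ produces $\|G_\lambda\|_{\L^p(\d x\ddt)}\lesssim \lambda^{n(1/\tau-1/q)}\|u\|_{\Z^{p,q}_\beta}$, which combined with the $\lambda^{n/q}$ factor from Step 1 gives the claim. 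The endpoint cases $p=\infty$ or $q=\infty$ follow by the same argument replacing the relevant integrals with essential suprema. The only delicate point in the plan is ensuring that the case-split $p\gtrless q$ produces exactly the exponent $n/\tau$; but this is automatic from tracking $N\lesssim\lambda^n$ through the Jensen versus subadditivity inequalities.
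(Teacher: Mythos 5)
Your proof is correct, and the covering argument you use---decomposing the dilated Whitney box into $\lesssim\lambda^n$ unit-scale Whitney boxes, exploiting translation invariance of the outer $\L^p_x$-norm, and splitting on $p\gtrless q$ via Jensen's inequality versus subadditivity of $t\mapsto t^{p/q}$---is the standard route to change-of-angle estimates in tent-type and $\Z$-type spaces, and matches the approach of the cited reference. One minor point: a $\sqrt t$-spaced lattice does not yield a cover of $\B(x,\lambda\sqrt t)$ by balls of radius $\sqrt t$ in all dimensions (the covering radius of $\IZ^n$ is $\sqrt{n}/2$), so the lattice spacing should be taken $\sim\sqrt{t}/\sqrt{n}$, which does not affect the count $N\lesssim_n\lambda^n$.
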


	A first consequence is local $\L^q_{\beta}$ integrability for $\Z^{\infty,q}_\beta$ up to the boundary.

	\begin{lemma}
		\label{lem:Z_local_Lq}
		Fix $q\in (1, \infty]$, $\beta\in \R$ and $F \in \Z^{\infty,q}_{\beta}$. Let $x\in \R^n$ and $t > 0$. Put $Q \coloneqq (0, t) \times \B(x, \sqrt{t})$. Then, $F \in \L^q_{\beta}(Q)$.
	\end{lemma}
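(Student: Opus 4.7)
The plan is to decompose the parabolic box $Q$ dyadically in time and, at each time scale, cover the spatial ball $\B(x,\sqrt{t})$ by a bounded-overlap family of parabolic Whitney boxes. On each Whitney box the $\Z^{\infty,q}_\beta$-norm controls the local integral directly, and summing over the two scales closes the bound.

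Concretely, set $s_k \coloneqq t/2^k$ and $I_k \coloneqq (s_{k+1}, s_k)$ for $k \geq 0$, so that $(0,t)$ is the disjoint union of the $I_k$. At each scale, pick centers $y_{k,1}, \ldots, y_{k,N_k} \in \B(x,\sqrt{t})$ with $N_k \lesssim 2^{kn/2}$ such that the balls $\B(y_{k,j},\sqrt{s_k})$ cover $\B(x,\sqrt{t})$ with bounded overlap. Each piece $I_k \times \B(y_{k,j}, \sqrt{s_k})$ is exactly the parabolic Whitney box $\WB(y_{k,j}, s_k)$, and the very definition of $\Z^{\infty,q}_\beta$ gives
\[
\int_{I_k}\!\int_{\B(y_{k,j},\sqrt{s_k})} |\sigma^{-\beta} F|^q \, dy\, d\sigma \lesssim s_k^{1+n/2}\, \|F\|_{\Z^{\infty,q}_\beta}^q.
\]
Summing over $j$ and then over $k$, the covering multiplicity $2^{kn/2}$ is beaten by the Whitney-box volume $s_k^{1+n/2} \approx t^{1+n/2} 2^{-k(1+n/2)}$, leaving a convergent geometric series $\sum_k 2^{-k}$. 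This yields the local integrability bound with total constant $\lesssim t^{1+n/2} \|F\|_{\Z^{\infty,q}_\beta}^q$, giving $F \in \L^q_\beta(Q)$.

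The case $q = \infty$ is immediate, as on each Whitney box $\sigma^{-\beta} F$ is essentially bounded by $\|F\|_{\Z^{\infty,\infty}_\beta}$ and $Q$ has finite measure. The main (mild) obstacle is purely combinatorial, namely tracking the balance between the covering exponent $kn/2$ and the volume exponent $k(1+n/2)$ to secure summability in $k$; the weight $\sigma^{-\beta}$ varies on each Whitney box by only a bounded multiplicative factor and is absorbed into implicit constants. No deeper ingredient is required, in particular neither the change-of-angle Lemma~\ref{lem:change_of_angle} nor the embedding Proposition~\ref{prop:Z_space_embedding} enters the argument.
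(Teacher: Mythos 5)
Your proposal is correct and reaches the same bound as the paper, namely $\int_Q |\sigma^{-\beta}F(\sigma,y)|^q\,dy\,d\sigma \lesssim t^{(n+2)/2}\|F\|^q_{\Z^{\infty,q}_\beta}$, with all exponents tracked correctly. The route is essentially the same as the paper's: both decompose $(0,t)$ into dyadic time slabs $I_k=(2^{-k-1}t,2^{-k}t)$ and, on each slab, compare the ball $\B(x,\sqrt t)$ to Whitney scale. The only genuine difference is in how each slab is handled. The paper applies the change-of-angle formula (Lemma~\ref{lem:change_of_angle}), which at height $s_k = 2^{-k}t$ views $\B(x,\sqrt t)=\B(x, 2^{k/2}\sqrt{s_k})$ as a dilated Whitney ball and pays the price $\lambda^{n/\tau}$ with $\lambda = 2^{k/2}$ and $\tau=\min(\infty,q)=q$. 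You instead cover $\B(x,\sqrt t)$ by $N_k\lesssim 2^{kn/2}$ balls of radius $\sqrt{s_k}$ and sum. For $p=\infty$ this covering is precisely the mechanism that proves the change-of-angle estimate, so the two routes are equivalent in content; your closing remark that the change-of-angle lemma \enquote{does not enter the argument} is therefore a bit misleading — you have inlined its proof for this special case rather than bypassed the idea. That said, the inlined version is self-contained and arguably more transparent here, since it avoids appealing to the full $\Z^{p,q}_\beta$-norm formulation when only a pointwise statement at a single $(x,t)$ is needed. One small stylistic difference: the paper first reduces to $\beta=0$, whereas you carry the weight $\sigma^{-\beta}$ throughout and absorb its oscillation on each Whitney box into constants; both are fine.
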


	\begin{proof}
		It is enough to deal with $\beta=0$.
		The case $q = \infty$ is trivial, thus we suppose $q < \infty$.
		Decompose the interval $(0,t)$ into dyadic chunks and use the change of angle formula to give
		\begin{align}
			\Bigl( \fint_0^t \fint_{\B(x,\sqrt{t})} |F|^q \Bigr)^\frac{1}{q} &\lesssim \sum_{\ell \geq 0} 2^{-\frac{\ell}{q}} 2^{-\frac{\ell n}{2q}} \Bigl( \fint_{2^{-\ell-1}t}^{2^{-\ell}t} (2^{-\ell} t)^{-\frac{n}{2}} \int_{\B(x,\sqrt{2^\ell} \sqrt{2^{-\ell} t})} |F|^q \Bigr)^\frac{1}{q} \\
			&\lesssim \sum_{\ell \geq 0} 2^{-\frac{\ell}{q}} \| F \|_{\Z^{\infty,q}}.
		\end{align}
		The sum in $\ell$ converges due to $q < \infty$. Multiplying the resulting bound by $t^\frac{n+2}{2q}$ gives the claim.
	\end{proof}

	\subsubsection*{Solution spaces vanishing near the initial time}

	In the case $p = \infty$, we introduce a subclass of functions that \enquote{vanishes near the initial time}. They will be important for non-linear applications beyond~\eqref{eq:rd} with initial data that are not small.

	\begin{definition}[$\Zv$-spaces]
		\label{def:Zv-spaces}
		Let $q\in (1,\infty]$ and $\beta \in \R$. The subspace $\Zv^{\infty,q}_\beta$ of $\Z^{\infty,q}_\beta$ consists of all $u \in \Z^{\infty,q}_\beta$ with the property:
		\begin{align}
			\label{eq:Zv_defining_property}
			\forall \eps > 0 \;\; \exists \tau = \tau(\eps,u) \colon \quad \| u \|_{\Z^{\infty,q}_\beta(\tau)} \leq \eps.
		\end{align}
		For $T \in (0,\infty)$, define the space $\Zv^{\infty,q}_\beta(T)$ analogously. Similarly, define the spaces $\Lv^\infty_\beta$ and $\Lv^\infty_\beta(T)$.
	\end{definition}

	\begin{remark}
		\label{rem:Zv}
		Observe that the defining property for weighted $\Zv$-spaces is always true for $\Z^{p,q}_\beta$ when $p < \infty$ and $q$ is arbitrary.
	\end{remark}

	From {the remark and} the localization properties stated in Proposition~\ref{prop:Z_space_embedding}, we derive the following embedding into $\Zv$-spaces.

	\begin{corollary}[{Weighted $\Zv$-space embedding}]
		\label{cor:embedding_into_Zv}
		In the situation of Proposition~\ref{prop:Z_space_embedding}, when $p_1 = \infty$ and hence $\beta_1 = \beta_0 - \nicefrac{n}{2p_0}$, then there holds the continuous inclusion $\Z^{p_0,q}_{\beta_0} \subseteq \Zv^{\infty,q}_{\beta_1}$.
	\end{corollary}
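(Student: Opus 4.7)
The plan is to combine the localized form of the Hardy--Sobolev embedding from Proposition~\ref{prop:Z_space_embedding} with the vanishing property for $p<\infty$ recorded in Remark~\ref{rem:Zv}. Proposition~\ref{prop:Z_space_embedding} already yields the global inclusion $\Z^{p_0,q}_{\beta_0}\subseteq \Z^{\infty,q}_{\beta_1}$ under the scaling relation $\beta_1 = \beta_0 - \nicefrac{n}{2p_0}$, so what remains is to verify the defining property~\eqref{eq:Zv_defining_property} for $\Zv^{\infty,q}_{\beta_1}$.

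I would fix $u\in \Z^{p_0,q}_{\beta_0}$ and, for $\tau>0$, apply the \emph{localized} version of Proposition~\ref{prop:Z_space_embedding} to the zero extension of $u|_{(0,\tau)\times\R^n}$. Since the inclusion constant there is independent of the interval length, this produces
\begin{align}
\| u \|_{\Z^{\infty,q}_{\beta_1}(\tau)} \lesssim \| u \|_{\Z^{p_0,q}_{\beta_0}(\tau)}
\end{align}
with an implicit constant uniform in $\tau>0$. By Remark~\ref{rem:Zv}, and since $p_0<\infty$, the right-hand side tends to $0$ as $\tau\to 0^+$: this is dominated convergence applied to the Whitney-box integrand defining the $\Z^{p_0,q}_{\beta_0}$-norm, using that Whitney boxes of height $t>2\tau$ no longer meet the support of $E_0(u|_{(0,\tau)})$ while the full integrand for $u$ on $\R^{n+1}_+$ provides an $\L^{p_0}(\d x\,\d t/t)$-majorant.

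Putting the two estimates together, given $\eps>0$ it suffices to choose $\tau$ so small that the right-hand side is bounded by $\eps$, which is exactly~\eqref{eq:Zv_defining_property}; hence $u\in \Zv^{\infty,q}_{\beta_1}$. There is no genuine obstacle here: the only point worth flagging is that the truncation to $(0,\tau)$ commutes with the embedding with a constant that does not blow up as $\tau\to 0$, which is precisely the \enquote{localized} part of Proposition~\ref{prop:Z_space_embedding}.
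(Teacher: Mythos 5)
Your proof is correct and takes essentially the same route the paper intends: combine the localization property of Proposition~\ref{prop:Z_space_embedding} (with its uniform-in-$T$ constant) with Remark~\ref{rem:Zv}, and use that for $p_0<\infty$ the truncated norm $\|u\|_{\Z^{p_0,q}_{\beta_0}(\tau)}$ vanishes as $\tau\to 0^+$ by dominated convergence, which verifies~\eqref{eq:Zv_defining_property}. The only addition you make is spelling out the majorant argument behind Remark~\ref{rem:Zv}, which the paper leaves implicit; there is no discrepancy.
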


	\subsubsection*{Initial data spaces}

	We use homogeneous Besov spaces $\dot\B^\alpha_{p,p}$ as spaces of initial data. Here, we simply recall their definition. Their free evolution will be discussed in Section~\ref{sec:caloric}.

	To introduce homogeneous Besov spaces, we follow the approach presented in~\cite{Sawano}. Recall $\Soo$ and $\Soo'$ from the notation section. Denote by $C_0$ the annulus $\{ \xi \in \R^n \colon \nicefrac{1}{2} \leq |\xi| \leq 4 \}$ and write $2^j C_0$ for the annulus $\{ \xi \in \R^n \colon 2^{j-1} \leq |\xi| \leq 2^{j+2} \}$ for any integer $j$. Let $\chi \in \Cont^\infty_\cc(\R^n)$ such that $\supp(\chi) \subseteq C_0$ and for any $\xi \neq 0$ there holds the identity $$\sum_{j = -\infty}^\infty \chi(2^{-j} \xi) = 1.$$
	Let $\Delta_j$ be the $j$th \emph{Littlewood--Paley operator} associated with $\chi$, given by
	\begin{align}
		\Delta_j f \coloneqq \cF^{-1}(\chi(2^{-j} \cdot) \cF(f)) \qquad (f \in \Soo').
	\end{align}
	Suppose\footnote{One can also treat the case $\alpha \in \R$ and $p \in (1,\infty]$, see~\cite{Sawano}. We restrict ourselves to the cases relevant for this article to avoid unnecessary technical difficulties.} that $\alpha < 0$ and $p \in (1,\infty]$. Define the homogeneous Besov space $\dot\B^\alpha_{p,p}$ as the collection of all $f \in \Soo'$ such that the norm
	\begin{align}
		\| f \|_{\dot\B^\alpha_{p,p}} \coloneqq \| (\Delta_j f)_j \|_{\ell^p(\IZ; \L^p(\R^n))} < \infty.
	\end{align}
	The space $\dot\B^\alpha_{p,p}$ is a Banach space~\cite[Cor.~2.3, p.~282]{Sawano}.
	For $f \in \dot\B^\alpha_{p,p}$, the series $\sum_j \Delta_j f$ converges to $f$ in $\Soo'$.
	By an observation of Peetre~\cite{Peetre}, owing to $\alpha < 0$, the series $\sum_j \Delta_j f$ even converges to some $g \in \S'$. The \emph{filter} operation\footnote{Here, we follow the terminology of~\cite[p.~286]{Sawano}.}
	\begin{align}
		\Phi \colon \dot\B^\alpha_{p,p} \to \S', \quad f \mapsto \Phi(f) \coloneqq g
	\end{align}
	is well-defined and injective. By construction, $f$ is the equivalence class of $\Phi(f)$ in $\Soo'$. From now on, we will identify $f$ and $\Phi(f)$. In other words, we consider an element $f \in \dot\B^\alpha_{p,p}$, which is a priori a tempered distribution modulo all polynomials, as the tempered distribution $\Phi(f)$.

	{
	The following definition is in the spirit of Definition~\ref{def:Zv-spaces}.

	\begin{definition}[$\vv\dot\B$-spaces]
		\label{def:vvB}
		Let $\alpha < 0$. Define the closed subspace $\vv\dot\B^\alpha_{\infty,\infty}$ of $\dot\B^\alpha_{\infty,\infty}$ as the closure of $\Soo$ in $\dot\B^\alpha_{\infty,\infty}$ equipped with the subspace topology.
	\end{definition}

	\begin{remark}
		\label{rem:def_vvB}
		By definition, $\Soo$ is dense in $\vv\dot\B^\alpha_{\infty,\infty}$.
		If $p < \infty$, then $\Soo$ is dense in $\dot\B^\alpha_{p,p}$, see~\cite[Thm.~2.30, p.~282]{Sawano}.
	\end{remark}
	}

	\subsection{Weak solutions I}
	\label{subsec:prelim_weak_solutionI}

	We introduce the notion of weak solutions to linear parabolic equations and collect some properties of them. %
	Later, after a short interlude on elliptic operators in Section~\ref{subsec:prelim_elliptic}, we will come back to weak solutions in Section~\ref{subsec:prelim_weak_solutionII} and recite some well-posedness results from the literature for them.

	Consider a measurable, real coefficient function $A \colon \R^n \to \R^{n\times n}$ which satisfies for some $\Lambda{(A)}, \lambda{(A)} > 0$ the ellipticity and boundedness properties
	\begin{align}
		\label{eq:A_elliptic}
		\tag{A}
		\Re A(x)\xi \cdot \overline{\xi} \geq \lambda{(A)} |\xi|^2, \quad |A(x)\xi| \leq \Lambda{(A)} |\xi|, \qquad x \in \R^n, \xi \in \R^n.
	\end{align}
	We define weak solutions to a linear parabolic equation in divergence form associated with the coefficient function $A$ as follows.

	\begin{definition}[Weak solution]
		\label{def:weak_solution}
		Let $0 \leq a < b \leq \infty$ and put $Q \coloneqq (a,b) \times \R^n$. Let $f$ and $F$ be in $\cD'(Q)$. A function $u \in \L^2_\loc(a,b; \W^{1,2}_\loc(\R^n))$ is called a \emph{weak solution} to the equation
		\begin{align}
			\label{eq:def_weak}
			\tag{PDE}
			\partial_t u -\Div(A\nabla u) = f + \Div(F)
		\end{align}
		on $Q$,
		if for any $\phi \in \Cont^\infty_\cc(Q)$ there holds the integral identity
		\begin{align}
			\label{eq:def_weak_integral}
			\iint_Q -u \partial_t \phi + A\nabla u \cdot \nabla \phi \d x \d t = \langle f, \phi \rangle + \langle F, \nabla \phi \rangle.
		\end{align}
		The pairings on the right-hand side are the usual duality pairings between a space and its (topological) dual space.
	\end{definition}

	Since $\phi \in \Cont^\infty_\cc(Q)$, it suffices to have $u, \nabla u \in \L^1_\loc(Q)$ instead of $u, \nabla u \in \L^2_\loc(Q)$ to make the integral identity~\eqref{eq:def_weak_integral} meaningful.
	We will treat this case in the subsequent definition.
	To employ energy techniques or to investigate questions such as uniqueness of weak solutions, it is more convenient to directly include local square-integrability of $u$ and $\nabla u$ into the definition of a weak solution.

	\begin{definition}[Very weak solution]
		\label{def:very_weak_solution}
		In the situation of Definition~\ref{def:weak_solution}, if a function $u\in \L^1_\loc(a,b; \W^{1,1}_\loc(\R^n))$ satisfies~\eqref{eq:def_weak_integral}, then call $u$ a \emph{very weak solution} of~\eqref{eq:def_weak}.
	\end{definition}

	We introduce a notion of distributional trace. This enables us to formulate initial value problems in a first step, and Cauchy problems in a second step.

	\begin{definition}[Initial condition]
		\label{def:initial_condition}
		Let $0 < b \leq \infty$, $u \in \Cont({(0,b)}; \cD'(\R^n))$, and $u_0 \in \cD'(\R^n)$. Say that $u$ satisfies the \emph{initial condition} $u(0) = u_0$ if $u(t)$ converges in $\cD'(\R^n)$ to $u_0$ as $t\to 0$.
	\end{definition}

	Using local energy estimates and Lions type embeddings, a weak solution $u$ of~\eqref{eq:def_weak} in the sense of Definition~\ref{def:weak_solution} with appropriate source terms is continuous in $\L^2_\loc$. This will be the case under our assumptions. In particular, $u(t)$ is a well-defined distribution for any $0 < t < b$ when using its continuous representative.

	\subsection{Elliptic operators}
	\label{subsec:prelim_elliptic}

	We recall some background on elliptic operators in divergence form with real, measurable, bounded, elliptic coefficients. Such operators already appeared in the weak formulation of linear parabolic equations in Section~\ref{subsec:prelim_weak_solutionI}.
	A reader who is not yet familiar with notions like maximal accretivity, sectoriality and so on can consult, for instance, the monograph~\cite{Haase} in addition.

	Consider a measurable, real coefficient function $A \colon \R^n \to \R^{n\times n}$ which satisfies the ellipticity and boundedness condition~\eqref{eq:A_elliptic} for some $\Lambda{(A)}, \lambda{(A)} > 0$ as before.
	The elliptic operator $L = -\Div(A\nabla \cdot)$ can be properly defined as a maximal accretive operator on $\L^2_x$ using Kato's form method~\cite{Kato}. In a nutshell, this approach goes as follows: define the elliptic and accretive sesquilinear form
	\begin{align}
		a \colon \W^{1,2}_x \times \W^{1,2}_x \to \R, \quad (u,v) \mapsto \int_{\R^n} A\nabla u \cdot \overline{\nabla v} \d x.
	\end{align}
	Then, there exists an unbounded, maximal accretive operator $L$ on $\L^2_x$ with $\dom(L) \subseteq \W^{1,2}_x$ that satisfies the relation
	\begin{align}
		(Lu, v)_2 = a(u,v), \qquad u\in \dom(L), v\in \W^{1,2}_x.
	\end{align}
	In particular, $-L$ is the generator of a contraction semigroup on $\L^2_x$. Moreover, $L$ is a sectorial operator of angle $\phi \in [0,\nicefrac{\pi}{2})$ on $\L^2_x$. The properties mentioned so far do not yet rely on the fact that $A$ is real. This is going to change now. Indeed, using that the coefficients of $L$ are real, the semigroup $\e^{-tL}$ for $t>0$ can be represented by an integral kernel satisfying Gaussian bounds, see for instance~\cite[Thm.~6.11]{Ouhabaz} (note that this result applies with $\eps = 0 = \alpha$ in our situation). More precisely, there is a measurable kernel $K \colon (0,\infty) \times \R^n \times \R^n \to \R$ satisfying for some $C,c > 0$ the \emph{Gaussian bounds}
	\begin{align}
		|K(t,x,y)| \leq C t^{-\frac{n}{2}} \e^{-c\frac{|x-y|^2}{t}}
	\end{align}
	for all $t > 0$ and almost every $x,y \in \R^n$, such that for all $t>0$ and almost every $x\in \R^n$ one has the representation
	\begin{align}
		(\e^{-tL} f)(x) = \int_{\R^n} K(t,x,y) f(y) \d y, \qquad f \in \L^1_x \cap \L^2_x.
	\end{align}
	Next, we relate kernel bounds to the notion of \enquote{off-diagonal estimates}. For a thorough background on this notion, the reader can consult~\cite{Memoirs}.
	\begin{definition}[Off-diagonal estimates]
		Let $1\leq p \leq q \leq \infty$. Given a family of operators $\mathcal{T} = \{ T_t \}_{t \in I}$ on $\L^2_x$, where $I \subseteq (0,\infty)$ is an interval, say that $\mathcal{T}$ satisfies $\L^p_x \to \L^q_x$ \emph{off-diagonal estimates} of exponential type if there exist $C,c > 0$ such that,
		for all measurable sets $E,F \subseteq \R^n$ and all $t\in I$, there holds the estimate
		\begin{align}
			\label{eq:def_ODE}
			\| \ind_F T_t (f\ind_E) \|_{\L^q_x} \leq C t^{-\frac{n}{2}\bigl( \frac{1}{p} - \frac{1}{q} \bigr)} \e^{-c\frac{\dist(E,F)^2}{t}} \| f \ind_E \|_{\L^p_x}, \qquad f \in \L^p_x \cap \L^2_x.
		\end{align}
		If $p=q$, simply say that $\mathcal{T}$ satisfies $\L^p_x$ off-diagonal estimates. If~\eqref{eq:def_ODE} is only valid for $E = F = \R^n$, say that $\mathcal{T}$ is $\L^p_x \to \L^q_x$ bounded. In the case $p=q$, simply say that $\mathcal{T}$ is $\L^p_x$-bounded.
	\end{definition}
	Using Gaussian bounds, it readily follows that $\{ \e^{-tL} \}_{t>0}$ satisfies $\L^1_x \to \L^\infty_x$ off-diagonal estimates. Then, it is also well-known that $\{ \e^{-tL} \}_{t>0}$ satisfies $\L^p_x \to \L^q_x$ off-diagonal estimates for any $1 \leq p \leq q \leq \infty$, see~\cite{Memoirs}.

	We will also need the gradient family $\{ \sqrt{t} \nabla \e^{-tL} \}_{t>0}$. {There exists some $q(A) \in (2,\infty]$ such that the gradient family $\{ \sqrt{t} \nabla \e^{-tL} \}_{t>0}$ satisfies $\L^q_x$ off-diagonal estimates whenever $q \in [2, q(A))$, see~\cite[Prop.~2.1 \& Cor.~3.10]{Memoirs}\footnote{In this reference, the number $q(A)$ is called $q_{+}(L)$, where $L = -\Div(A\nabla \cdot)$}.}
	Using standard arguments presented in~\cite{Memoirs} and that the coefficients are real, this fact can be extended to $\L^p_x \to \L^q_x$ off-diagonal estimates for $p,q \in [1,q(A))$ satisfying $p \leq q$.
	{Eventually, using duality, we also obtain $\L^q_x \to \L^r_x$ off-diagonal estimates for $\{ \sqrt{t} \e^{-tL} \Div \}_{t>0}$ if $q,r \in (q(A^*)', \infty]$ satisfying $q \leq r$, still using that $L$ has real coefficients.}

	For our non-linear application to a reaction--diffusion equation without any regularity of the coefficients, the just described setting is enough. In future work, for instance in applications to quasi-linear equations, we will assume some regularity of the coefficients. Thus we include this regular case in order to develop our theory in the necessary generality.
	More precisely, we are interested in the availability of $\L^p_x \to \L^q_x$ off-diagonal estimates for the gradient family with $q \in (2,\infty)$. In fact, we directly turn them into the definition of \enquote{regular coefficients}. Afterwards, we illustrate in Example~\ref{ex:regular_A} how a smoothness condition on $A$ can be used to ensure them.

	\begin{definition}[Regular coefficients]
		\label{def:regular_A}
		Given an elliptic coefficient function $A$, say that the coefficients are \emph{regular} if the associated gradient families $\{ \sqrt{t} \nabla \e^{-t(L+1)} \}_{t>0}$ and $\{ \sqrt{t} \nabla \e^{-t(L^*+1)} \}_{t>0}$ are $\L^q_x$-bounded for all $q\in (1,\infty)$.
	\end{definition}

	The definition is formulated using the shifted operator $L+1$. For the \enquote{homogeneous} part $L$, we conclude the following.

	\begin{lemma}[Gradient family for $L$]
		\label{lem:regular_coefficients}
		If $L$ has regular coefficients, then for any $T \in (0,\infty)$, the family $\{ \sqrt{t} \nabla \e^{-tL} \}_{0<t \leq T}$ is $\L^q_x$-bounded for all $q\in (1,\infty)$, where the bound potentially depends on $T$.
	\end{lemma}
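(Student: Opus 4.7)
The plan is simply to relate the semigroup for $L$ to the semigroup for the shifted operator $L+1$, which by hypothesis gives $\L^q_x$-boundedness of its gradient family. Since $L$ and the identity commute, the bounded holomorphic functional calculus on $\L^2_x$ (or just the very definition of the semigroup via Hille--Yosida, since the identity commutes with everything) yields the factorization
\begin{align}
\e^{-t(L+1)} = \e^{-t}\, \e^{-tL} \qquad (t > 0).
\end{align}
Applying $\sqrt{t}\,\nabla$ to both sides and rearranging yields
\begin{align}
\sqrt{t}\,\nabla \e^{-tL} = \e^{t}\, \sqrt{t}\,\nabla \e^{-t(L+1)}.
\end{align}

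Now fix $q \in (1,\infty)$ and $T \in (0,\infty)$. By the assumption of regular coefficients, there is a constant $C_q > 0$ such that $\|\sqrt{t}\,\nabla \e^{-t(L+1)} f\|_{\L^q_x} \leq C_q \|f\|_{\L^q_x}$ for all $t > 0$ and all $f$ in a suitable dense subclass (say $\L^2_x \cap \L^q_x$). Combining with the factorization, we get
\begin{align}
\|\sqrt{t}\,\nabla \e^{-tL} f\|_{\L^q_x} \leq \e^{t}\, C_q\, \|f\|_{\L^q_x} \leq \e^{T}\, C_q\, \|f\|_{\L^q_x}
\end{align}
for every $0 < t \leq T$. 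By density, this extends to all $f \in \L^q_x$, giving the claimed $\L^q_x$-boundedness with constant $\e^{T} C_q$ depending on $T$ and $q$.

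There is essentially no obstacle here; the content is the simple observation that the shift $L \leadsto L+1$ only costs an exponential factor $\e^{t}$, which is uniformly bounded on any finite time interval $(0,T]$. The same argument applied to $L^*$ in place of $L$ gives the analogous statement for the adjoint gradient family, which is what will be used together with this lemma in the sequel.
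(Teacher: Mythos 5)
Your proof is correct, and since the paper offers no explicit proof of this lemma (it is stated as an immediate consequence of Definition~\ref{def:regular_A}), your argument is precisely the one the authors leave implicit: the identity $\e^{-t(L+1)} = \e^{-t}\,\e^{-tL}$ and the bound $\e^t \leq \e^T$ on $(0,T]$.
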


	When we use off-diagonal estimates later on in Section~\ref{subsec:duhamel_cauchy} to study solution operators to linear PDEs, these solution operators will (potentially) have degenerate constants when $T \to \infty$ as a consequence of Lemma~\ref{lem:regular_coefficients}.

	\begin{example}
		\label{ex:regular_A}
		If $A$ is a uniformly continuous, elliptic coefficient function, then $A$ is regular. Indeed, this follows by freezing the coefficients. More precisely, we can combine~\cite{Dong-Kim-TAMS} with~\cite[Thm.~12.13]{Block} in the same way as was done in~\cite[Sec.~3]{B_Lions}.
	\end{example}

	Using again duality (in conjunction with composition), the families $\{ \sqrt{t} \e^{-t(L+1)} \Div \}_{t>0}$ and $\{ t \nabla \e^{-t(L+1)} \Div \}_{t>0}$ , as well as $\{ \sqrt{t} \e^{-tL} \Div \}_{0<t \leq T}$ and $\{ t \nabla\e^{-tL} \Div \}_{0<t \leq T}$ enjoy $\L^q_x$ off-diagonal estimates for any $q\in (1, \infty)$ and $T \in (0,\infty)$.

	\subsection{Weak solutions II}
	\label{subsec:prelim_weak_solutionII}

	We return to the investigation of weak solutions to linear parabolic equations initiated in Section~\ref{subsec:prelim_weak_solutionI}. Using some terminology from Section~\ref{subsec:prelim_elliptic}, we present well-posedness results from the literature for them.

	The following results
	are simplified versions of~\cite[Thm.~9.2]{AH3}. We extract them for the reader's convenience, taking also the shift of notation {discussed} in Remark~\ref{rem:Z_spaces} into account.
	To start with, recall from Section~\ref{subsec:prelim_elliptic} the number $q(A^*) \in (2,\infty]$ associated with the adjoint coefficient function $A^*$, describing regularity properties of the equation.
	Note that in~\cite{AH3}, this number is called $q_{+}(L^*)$.
	Put $Q \coloneqq q(A^*)' \in [1, 2)$. Using $Q$, define for a given $\alpha > -1$ the critical number $p(\alpha,A) \in (0,2)$ by
	\begin{align}
		\label{eq:def_pA}
		\frac{1}{p(\alpha,A)} = 1 + \alpha - \frac{\alpha}{Q}.
	\end{align}
	Note that $p(\cdot,A)$ is decreasing.
	The applicability of the linear theory in~\cite{AH3} stated below requires the condition $p > \tilde p_L(\beta)$, where $\beta>-1$ and the number $\tilde p_L(\beta)$ is defined in that article. For $p\in (1,\infty]$ and $\beta>-1$, this condition is implied by the simpler condition $p\ge p(2\beta+1,A)$. We systematically impose the latter.

	The first result on uniqueness is stated for global solutions, but the proof localizes to $[0,T]$ for any $0 < T < \infty$. %

	\begin{proposition}[Uniqueness of weak solutions]
		\label{prop:linear_uniqueness}
		Fix {$T \in (0,\infty]$}. {Let $\beta > -1$ and let $p \in (1,\infty]$ with $p \geq p(2\beta+1,A)$}. Then, there exists at most one weak solution $u$ of the initial value problem
		\begin{align}
			\partial_t u - \Div(A\nabla u) = 0, \quad u(0) = 0
		\end{align}
		on $[0,T]$ satisfying $\nabla u \in \Z^{p,2}_\beta(T)$.
	\end{proposition}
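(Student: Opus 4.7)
The plan is a duality argument against the backward adjoint parabolic equation. Given an arbitrary test source $\phi \in \Cont^\infty_\cc((0,T) \times \R^n)$, I would solve the backward Cauchy problem
\begin{align}
-\partial_s v - \Div(A^*\nabla v) = \phi, \qquad v(T) = 0,
\end{align}
using the linear well-posedness theory of~\cite{AH3} applied to the adjoint coefficient $A^*$. The hypothesis $p \geq p(2\beta+1,A)$ is exactly what guarantees that the conjugate exponent $p'$ and the conjugate weight parameter lie in the admissible range of the adjoint theory (through the definition~\eqref{eq:def_pA} of $p(\cdot,A)$ and the role of $q(A^*)$), so that the resulting backward solution $v$ has gradient $\nabla v$ in the natural predual space of $\Z^{p,2}_\beta(T)$. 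In particular, the pairing $\iint A\nabla u \cdot \nabla v \d x \d s$ is absolutely convergent.

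Next I would test the weak formulation for $u$ against $v$ after regularization, since $v$ is not compactly supported in $(0,T) \times \R^n$. Concretely, I would mollify $u$ in time and truncate spatially with a smooth cut-off $\chi_R(x) \eta_\eps(s)$ supported strictly inside $(0,T) \times \R^n$, plug the product into the integral identity from Definition~\ref{def:weak_solution}, and then send $R \to \infty$ and $\eps \to 0$. Upon integration by parts in time, the $A\nabla u \cdot \nabla v$ contributions cancel against $-\Div(A^*\nabla v) = \phi + \partial_s v$ coming from the adjoint equation, leaving only the temporal boundary terms at $s=0$ and $s=T$. The term at $s=T$ vanishes because $v(T) = 0$; the one at $s=0$ vanishes by the initial condition $u(0)=0$ combined with the fact that weak solutions to the homogeneous equation admit a continuous local $\L^2$-representative near $t=0$ via Lions-type embeddings (as noted at the end of Section~\ref{subsec:prelim_weak_solutionI}). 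Letting $\phi$ run through $\Cont^\infty_\cc((0,T) \times \R^n)$ then forces $u \equiv 0$.

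The main obstacle will be the rigorous justification of the time integration by parts, as $u$ only has weighted $\Z$-space regularity rather than global $\L^2$-control, so the classical Lions energy framework does not apply directly. One has to carefully pair $\nabla u \in \Z^{p,2}_\beta(T)$ against $\nabla v$ in the corresponding predual and track the weight $\ddt$ throughout the computation, as well as control the spatial cut-off limit $R \to \infty$ using decay of $v$ inherited from off-diagonal estimates for the adjoint semigroup through the Duhamel representation. Assembling these pieces is essentially the content of~\cite[Thm.~9.2]{AH3} in the global-in-time setting; the localization to $[0,T]$ only amounts to replacing the time interval and observing that the adjoint backward problem on $[0,T]$ with $v(T) = 0$ enjoys the same mapping properties.
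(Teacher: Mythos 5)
The paper does not prove Proposition~\ref{prop:linear_uniqueness} independently: as stated immediately beforehand, it is extracted as a simplified version of~\cite[Thm.~9.2]{AH3}, with the only added remark being that the argument localizes to $[0,T]$. Your proposal correctly makes the same attribution in its final paragraph and gives a plausible duality sketch (testing the weak formulation against a backward adjoint solution with zero terminal data, handling boundary terms via $u(0)=0$ and $v(T)=0$) consistent with how such uniqueness theorems are established, so you are aligned with the paper's treatment. I would only caution that the key technical step — that $\nabla v$ produced by the adjoint Duhamel theory lands precisely in a predual of $\Z^{p,2}_\beta(T)$ under the hypothesis $p\geq p(2\beta+1,A)$, and that the spatial truncation limit $R\to\infty$ is controlled — is not trivial in the weighted $\Z$-space setting and is the actual content of~\cite[Thm.~9.2]{AH3}; your sketch identifies the obstacle without resolving it, so the proof as written still rests on that citation.
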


	The second one provides us with existence of a weak solution for the initial value problem with Besov data. %

	\begin{proposition}[Existence of the initial value problem]
		\label{prop:initial_value_problem_hedong}
		Let $\alpha \in (-1,0)$ and let $p\in (1,\infty]$ {with $p \geq p(\alpha, A)$}. Fix $u_0 \in \dot\B^\alpha_{p,p}$. Then, there exists a weak solution $u$ to the initial value problem
		\begin{align}
			\partial_t u - \Div(A\nabla u) = 0, \quad u(0) = u_0,
		\end{align}
		satisfying $u \in \Z^{p,2}_{\nicefrac{\alpha}{2}}$ and $\nabla u \in \Z^{p,2}_{\nicefrac{\alpha}{2} - \nicefrac{1}{2}}$, with
		\begin{align}
			\| u \|_{\Z^{p,2}_{\frac{\alpha}{2}}} + \| \nabla u \|_{\Z^{p,2}_{\frac{\alpha}{2} - \frac{1}{2}}} \lesssim \| u_0 \|_{\dot\B^\alpha_{p,p}}.
		\end{align}
		The initial value is attained in the sense of Definition~\ref{def:initial_condition}. If in addition $u_0 \in \L^2_x$, then $u(t) = \e^{-tL} u_0$.
	\end{proposition}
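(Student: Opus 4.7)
The plan is to construct the solution as a free evolution $\Ext_L(u_0)$ and then verify the claimed regularity, PDE, and initial-data properties. Since the statement is essentially a distillation of results from \cite{AH3} (linear well-posedness theory), the strategy is to combine the caloric characterization of homogeneous Besov spaces referenced in Section~\ref{sec:caloric} with the off-diagonal machinery for $\{\e^{-tL}\}_{t>0}$ and $\{\sqrt{t}\nabla\e^{-tL}\}_{t>0}$ from Section~\ref{subsec:prelim_elliptic}.

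First, I would treat the smooth case $u_0 \in \L^2_x \cap \dot\B^\alpha_{p,p}$ (or, to be safe, $u_0 \in \Soo$). Here $u(t) \coloneqq \e^{-tL} u_0$ is unambiguously defined and is a weak solution of $\partial_t u - \Div(A\nabla u)=0$ in the sense of Definition~\ref{def:weak_solution} by the usual semigroup calculus, with $u \in \Cont([0,\infty);\L^2_x)\cap \Cont((0,\infty);\dom(L))$; in particular the initial condition $u(0)=u_0$ holds in $\L^2_x$, hence in $\cD'(\R^n)$ as in Definition~\ref{def:initial_condition}. The two quantitative estimates
\[
\| u \|_{\Z^{p,2}_{\nicefrac{\alpha}{2}}} \lesssim \| u_0 \|_{\dot\B^\alpha_{p,p}}, \qquad \| \nabla u \|_{\Z^{p,2}_{\nicefrac{\alpha}{2} - \nicefrac{1}{2}}} \lesssim \| u_0 \|_{\dot\B^\alpha_{p,p}}
\]
are exactly the caloric characterization of $\dot\B^\alpha_{p,p}$ with respect to $L$ (the former is the $\Ext_L$-characterization invoked in Section~\ref{subsec:intro_rough_data} and proved in Section~\ref{sec:caloric}; the latter is its companion for the gradient family, using that $\sqrt{t}\nabla\e^{-tL}$ enjoys $\L^q_x$ off-diagonal estimates for $q < q(A)$). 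The threshold $p \geq p(\alpha,A)$ from \eqref{eq:def_pA} is precisely what guarantees these caloric characterizations in the rough-coefficient regime, via the identification with the critical exponent $\tilde p_L(\alpha/2)$ discussed before Proposition~\ref{prop:linear_uniqueness}.

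Next I would extend to arbitrary $u_0 \in \dot\B^\alpha_{p,p}$ by approximation. For $p<\infty$, $\Soo$ is dense in $\dot\B^\alpha_{p,p}$ (Remark~\ref{rem:def_vvB}), so picking a sequence $u_0^{(k)} \in \Soo$ with $u_0^{(k)} \to u_0$ in $\dot\B^\alpha_{p,p}$, the corresponding solutions $u^{(k)}=\e^{-tL}u_0^{(k)}$ form a Cauchy sequence in $\Z^{p,2}_{\alpha/2}$ with $\nabla u^{(k)}$ Cauchy in $\Z^{p,2}_{\alpha/2-1/2}$ by the estimates above; define $u \coloneqq \Ext_L(u_0)$ as the limit. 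The weak formulation \eqref{eq:def_weak_integral} passes to the limit by the $\L^2_\loc$ inclusion \eqref{eq:Z_loc_embedding}, giving a weak solution satisfying the bound. For the initial condition: one tests against $\varphi \in \Cont^\infty_\cc(\R^n)$ and decomposes $u(t)-u_0 = (u(t)-u^{(k)}(t)) + (\e^{-tL}u_0^{(k)} - u_0^{(k)}) + (u_0^{(k)} - u_0)$; the first and last terms are controlled via Besov/Z-estimates and duality of $\varphi$ against the implicit distribution, the middle one by the semigroup convergence as $t\to 0$. For $p=\infty$, I would instead use the weak-$*$ density of $\Soo$ (or a mollification scheme) compatible with the $\Zv$-structure hinted at in Corollary~\ref{cor:embedding_into_Zv} and Remark~\ref{rem:def_vvB}, and rely on the boundedness $\e^{-tL}\colon \dot\B^\alpha_{\infty,\infty} \to \dot\B^\alpha_{\infty,\infty}$ plus caloric characterization of Section~\ref{sec:caloric}.

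The last identification in the statement --- if $u_0 \in \L^2_x$ then $u(t) = \e^{-tL}u_0$ --- follows because both sides are weak solutions arising as $\Z^{p,2}_{\alpha/2}$-limits of the same smooth approximation: indeed for $u_0 \in \L^2_x \cap \dot\B^\alpha_{p,p}$ one may take $u_0^{(k)} \to u_0$ simultaneously in both norms (use truncations and Littlewood--Paley cut-offs from the definition of $\dot\B^\alpha_{p,p}$), so the limits in $\L^2_x$ and in $\Z^{p,2}_{\alpha/2}$ agree almost everywhere. The main obstacle I anticipate is the $p=\infty$ (non-separable) endpoint of the construction and the verification of the initial trace there, since the naive norm-density argument breaks down; this is the step where the $\vv$-subspace framework of Definition~\ref{def:vvB} and the weak-$*$ density arguments alluded to after \eqref{eq:Z_loc_embedding} become essential.
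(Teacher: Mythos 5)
The paper does not actually prove this proposition: it is stated as a ``simplified version of~\cite[Thm.~9.2]{AH3}'' and extracted verbatim from that reference (see the paragraph preceding Proposition~\ref{prop:linear_uniqueness}). Your proposal instead tries to reconstruct a proof, but its central step is circular within the paper. You reduce the two quantitative estimates to a ``caloric characterization'' that you say is ``proved in Section~\ref{sec:caloric}''; however, the proof of Proposition~\ref{prop:caloric_extension_Besov} in that section explicitly invokes Proposition~\ref{prop:initial_value_problem_hedong} as an ingredient. So you cannot use Section~\ref{sec:caloric} to prove the proposition you are proving. If instead you intend to cite~\cite{AH3} directly for these estimates, then the argument collapses to essentially the same citation the paper makes, only with extra machinery around it.

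Beyond the circularity there are two genuine gaps that~\cite{AH3} has to do real work to close and that your sketch leaves open. First, the gradient estimate $\|\nabla u\|_{\Z^{p,2}_{\nicefrac{\alpha}{2}-\nicefrac{1}{2}}}\lesssim\|u_0\|_{\dot\B^\alpha_{p,p}}$ for $p<2$ (which is allowed, since $p(\alpha,A)\in(0,2)$) is not a direct consequence of $\L^q_x$ off-diagonal estimates for $\{\sqrt{t}\nabla\e^{-tL}\}$ with $q<q(A)$; that regime requires the delicate tent-space / duality analysis around the threshold $\tilde p_L(\nicefrac{\alpha}{2})$ that the paper deliberately imports rather than reproves. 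Second, your density-and-limit construction for $p=\infty$ cannot be carried through for the stated space $\dot\B^\alpha_{\infty,\infty}$: $\Soo$ is norm-dense only in the proper closed subspace $\vv\dot\B^\alpha_{\infty,\infty}$ (Remark~\ref{rem:def_vvB}), yet the proposition is asserted for all of $\dot\B^\alpha_{\infty,\infty}$. You flag this obstacle yourself, but the weak-$*$ density you propose as a remedy does not by itself let you pass to the limit in the weak formulation or in the distributional trace, which is precisely why the paper resorts to citing~\cite{AH3} rather than running the argument you sketch.
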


	\begin{remark}
		In Proposition~\ref{prop:caloric_extension_Besov}, we will provide additional regularity properties for the solution $u$ of the initial value problem. Moreover, we will discuss the case $u_0 \in \vv\dot\B^\alpha_{\infty,\infty}$, where the $\Z$-spaces can be replaced by the corresponding $\Zv$-spaces.
	\end{remark}

	Existence results were also obtained in~\cite{AH3}. We need a generalized version of them, which we will present in Theorem~\ref{thm:linear_wp} later on.

	\subsection{Scaling behavior}
	\label{subsec:scaling}

	We shortly comment on the scaling behavior of the non-linear equation~\eqref{eq:rd}.

	Let $u$ be a solution to~\eqref{eq:rd}. Put $v_\lambda(t,x) = \lambda^\sigma u(\lambda^2 t, \lambda x)$ for any $\lambda > 0$. Then $v_\lambda$ is again a solution\footnote{We assume additionally a natural homogeneity condition for the non-linearity to perform the calculation.} to~\eqref{eq:rd} if $\sigma = \nicefrac{2}{\rho}$.

	Indeed, simply writing $v$ instead of $v_\lambda$, the left-hand side of~\eqref{eq:rd} formally gives
	\begin{align}
		(\partial_t v - \Div(A \nabla v))(t,x) = \lambda^{2 + \sigma} (\partial_t u - \Div(A \nabla u))(\lambda^2 t,\lambda x),
	\end{align}
	whereas the right-hand side leads to
	\begin{align}
		\phi(v)(t,x) = \lambda^{(1+\rho) \sigma} \phi(u)(\lambda^2 t,\lambda x).
	\end{align}
	To use the fact that $u$ solves~\eqref{eq:rd}, we need to balance the powers of $\lambda$ on both sides of the equation. One has indeed $2 + \sigma = (1+\rho) \sigma$ if and only if $\sigma = \nicefrac{2}{\rho}$. Then, the right-hand sides of the last two displayed equations coincide, and we find that $v = v_\lambda$ is again a solution of~\eqref{eq:rd}. The calculation can be made rigorous by using the notion of weak solutions.

	Taking the limit $t \to 0$, the initial value of $v_\lambda$ is $\lambda^{\nicefrac{2}{\rho}} u_0(\lambda \,\cdot)$. This suggests that $u_0$ should be taken from a space whose norm is invariant under the transformation $u_0 \mapsto \lambda^{\nicefrac{2}{\rho}} u_0(\lambda \,\cdot)$. We will come back to this in Section~\ref{sec:caloric}.

	\section{Singular integral operators on $\Z$-spaces: hypercontractivity}
	\label{subsec:SIO}

	The goal of this section is to prepare the ground for the study of Duhamel operators in Section~\ref{subsec:duhamel_cauchy} with a focus on \emph{hypercontractive properties}, that is to say, obtaining higher integrability of the output compared to the input.
	Such a property is necessary in the treatment of non-linear equations.

	To this end, we study singular integral operators (SIOs) on weighted $\Z$-spaces. If the local integrability parameter of the weighted $\Z$-spaces are fixed to $q = 2$, such results were obtained in~\cite[Sec.~9]{AH3} using interpolation of corresponding results on weighted tent spaces. Our focus here is to allow the parameter $q$ to be different to $2$ on the one hand, and to study \enquote{hypercontractive} mapping properties on the other hand, which is a new aspect.

	The following definition is a hypercontractive extension of the definition introduced in~\cite{AH2} when $q=r=2$. For its formulation, recall $\pi(f)$ and $\ann_j(x,t)$ from the notation section.

	\begin{definition}[SIOs of type $(p_0,q,r,\kappa,M)$]
		\label{def:SIO}
		Let $p_0,q,r \in (1,\infty]$, $\kappa \in [0,1]$ and $M \in [0,\infty]$ satisfying $q \geq p_0$, $r \geq q$ and $\kappa \geq \nicefrac{n}{2}(\nicefrac{1}{q} - \nicefrac{1}{r})$.
		An operator $S$ is called \emph{singular integral operator (SIO)} of \emph{type $(p_0,q,r,\kappa,M)$} with \emph{integral kernel} $K(t,s)$, if
		\begin{enumerate}
			\item[(a)] $S$ is $\L^q \to \L^r_{\kappa - \nicefrac{n}{2}(\nicefrac{1}{q} - \nicefrac{1}{r})}$ bounded.
			\item[(b)] For $0 < s < t < \infty$, the kernel $K(t,s) \colon \L^{p_0}_x \to \L^r_x$ satisfies for $f\in \L^{p_0}_x$ and $j \geq 1$ the (weak) off-diagonal bound
			\begin{align}
				\| \ind_{\B(x,\sqrt{t})} K(t,s) (f \ind_{\ann_j(x,t)}) \|_{\L^r_x} \lesssim (t-s)^{-1 + \kappa - \frac{n}{2}\bigl( \frac{1}{p_0} - \frac{1}{r} \bigr)} \Bigl( 1 + \frac{d^2}{t-s} \Bigr)^{-M} \| \ind_{\ann_j(x,t)} f \|_{\L^{p_0}_x},
			\end{align}
			where $d \coloneqq \dist(\B(x,\sqrt{t}), \ann_j(x,t))$.
			\item[(c)] For $f \in \L^q$ with compact support and almost every $(t,x) \not\in \pi(f)$ there holds the representation formula
			\begin{align}
				S(f)(t,x) = \int_0^t \bigl(K(t,s) f(s) \bigr)(x) \d s.
			\end{align}
		\end{enumerate}
	\end{definition}

	\begin{remark}
		Strictly speaking, the terminology \enquote{singular} is only justified when $\kappa = \nicefrac{n}{2}(\nicefrac{1}{q} - \nicefrac{1}{r})$. Usually, this situation happens when $\kappa = 0$ and $q = r$. Our theory gives optimal estimates in the singular and non-singular case and we refrain from making the terminology unnecessarily complicated.
	\end{remark}

	\begin{remark}[Causality]
		\label{rem:SIO_causal}
		Owing to property (c), if $S$ is an SIO in the sense of Definition~\ref{def:SIO}, then $S$ is \emph{causal}, meaning that the value of $S$ at time $\tau$ only depends on the data in the time interval $[0,\tau]$. Indeed, fix $\tau > 0$ and split $f(s,y) = f(s,y)\ind_{(0,\tau]}(s) + f(s,y)\ind_{(\tau,\infty)}(s)=f_{1}(s,y)+f_{2}(s,y)$. Then, when $t < \tau$, property (c) applies to $S(f_2)$ and yields $S(f_2)(t,x) = 0$. Therefore, causality follows.
	\end{remark}

	The following is the abstract main result on SIOs of hypercontractive type.

	\begin{theorem}[Hypercontractivity of SIOs on $\Z$-spaces]
		\label{thm:sio}
		Let $p_0,q,r \in (1,\infty]$, $\kappa \in [0,1]$ and $M \in [0,\infty]$ satisfying $q \geq p_0$, $r \geq q$, $M > \nicefrac{n}{2p_0}$ and $M > \nicefrac{n}{2}(\nicefrac{1}{p_0} - \nicefrac{1}{r}) + \nicefrac{1}{p_0} - \kappa$. Fix a singular integral operator $S$ of type $(p_0,q,r,\kappa,M)$. Also, let $\beta > -1$ and $T \in (0,\infty]$.
		\begin{enumerate}
			\item[(i)] Let $p \in [p_0, \infty)$. Then, $S$ extends by density to a bounded operator $\Z^{p,q}_\beta(T) \to \Z^{p,r}_{\beta + \kappa}(T)$.
			\item[(ii)] There exists a bounded operator $\widetilde{S} \colon \Z^{\infty,q}_\beta(T) \to \Z^{\infty,r}_{\beta + \kappa}(T)$ satisfying the following properties:
			\begin{itemize}
				\item $\widetilde{S}(f) = S(f)$ almost everywhere for all $f \in \Z^{\infty,q}_\beta(T) \cap \L^q_\cc(T)$,
				\item there is a sequence of functions $f_m \in \L^q_\cc$ such that, for every compact subset $E \subseteq (0,T] \times \R^n$, $f_m$ converges in $\L^q(E)$ to $f$ as $m\to \infty$, there holds $u_m \coloneqq S(f_m) \in \L^r(E)$, $u_m$ converges in $\L^r(E)$ as $m\to \infty$, and its limit coincides with $\widetilde{S}(f)$ almost everywhere on $E$.
			\end{itemize}
		\end{enumerate}
		All operator norms are independent of $T$.
		In particular, the mapping property in (ii) remains true for $\Zv$-spaces.
	\end{theorem}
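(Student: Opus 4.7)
The plan is to reduce part (i) by density of $\L^q_\cc$ in $\Z^{p,q}_\beta$ (for $p<\infty$) to $f \in \L^q_\cc$, and in part (ii) to construct $\widetilde{S}$ as a weak-$*$ limit of truncated inputs. The core of both parts is a pointwise Whitney-box estimate obtained from a dyadic decomposition of $f$. Fix $(t,x) \in \R^{n+1}_+$ and write $f = \sum_{j \geq 1,\, k \geq 0} f_{j,k}$ with $f_{j,k}(\sigma, y) := f(\sigma,y)\, \ind_{T_k(t)}(\sigma)\, \ind_{C_j(x,t)}(y)$, where $T_k(t) := (2^{-k-1}t, 2^{-k}t)$ and $C_j(x,t)$ are the parabolic annuli. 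By causality (Remark~\ref{rem:SIO_causal}) only $\sigma < t$ contributes, which is exactly covered by $k \geq 0$.

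The main claim is that for each $(j,k)$,
$$\left(\fint_{t/2}^t \fint_{B(x,\sqrt{t})} |s^{-\beta-\kappa} S(f_{j,k})|^r\right)^{1/r} \lesssim c_{j,k} \cdot \mathcal{A}_{j,k}(t,x),$$
where $\mathcal{A}_{j,k}(t,x) := \bigl( \fint_{\tau/2}^\tau \fint_{B(x, \lambda\sqrt{\tau})} |\sigma^{-\beta} f|^q \bigr)^{1/q}$ with $\tau = 2^{-k}t$, $\lambda = 2^{(j+k+1)/2}$, and $c_{j,k} \lesssim 2^{-j(M - \nicefrac{n}{2q})}\, 2^{-k(1+\beta)}$ for large $j,k$. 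The derivation branches across regimes. For $j \geq 2$, $k \geq 2$ (far time), $s - \sigma \sim t$ on $T_k$, so (b) yields the decay $(1 + d_j^2/(s-\sigma))^{-M} \lesssim 2^{-jM}$ directly and a trivial $L^1$-in-$\sigma$ estimate is enough. For $j \geq 2$ and $k \in \{0,1\}$ (near time), this factor is controlled by $(s-\sigma)^M (2^j t)^{-M}$, and a Hölder step in $\sigma$ with exponent $q'$ absorbs the kernel singularity at $\sigma = s$; the admissibility of this step is exactly the content of the hypothesis $M > \tfrac{n}{2}(\tfrac{1}{q} - \tfrac{1}{r}) + \tfrac{1}{q} - \kappa$. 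For $j = 1$ (no spatial decay), (b) alone is insufficient; instead I apply (a) to the localized input $f_{1,k}$ and restrict the resulting $\L^r_{\kappa - \nicefrac{n}{2}(\nicefrac{1}{q}-\nicefrac{1}{r})}$ bound to the strip $s \in (t/2, t)$, after which the scalings in $t$ match to produce $\mathcal{A}_{1,k}$ up to a constant.

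With this pointwise estimate, take $\L^p(\d x \tfrac{\d t}{t})$, change variable $\tau = 2^{-k}t$, and apply Lemma~\ref{lem:change_of_angle} with $\min(p,q) = q$ (since $p \geq q$): the $\lambda^{n/q}$ cost of enlarging the ball is compensated by the $\lambda^{-n/q}$ built into the $\fint$-averaging, so $\|\mathcal{A}_{j,k}\|_{\L^p(\d x \d t/t)} \lesssim \|f\|_{\Z^{p,q}_\beta}$ uniformly in $(j,k)$. Geometric summation in $j$ (requiring $M > \nicefrac{n}{2q}$) and in $k$ (requiring $\beta > -1$), combined with Minkowski, proves (i). For (ii), since $\L^q_\cc$ is only weak-$*$-dense in $\Z^{\infty,q}_\beta$ (cf.~\cite[Lem.~3.1]{AH2}), I choose a sequence $f_m \in \L^q_\cc$ converging to $f$ weak-$*$ against $\Z^{1,q'}_{-\beta}$ and in $\L^q(E)$ on every compact $E \subset (0,T]\times\R^n$. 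The uniform bound from the core estimate (which is oblivious to $p<\infty$) allows the weak-$*$ limit $\widetilde{S}(f)$ of $S(f_m)$ to be defined in $\Z^{\infty,r}_{\beta+\kappa}$, while the local $\L^r$-approximation claim follows by applying representation (c) to $f_m - f$ on $E$. The $\Zv$-preservation reduces to causality: if $\|f\|_{\Z^{\infty,q}_\beta(\tau)} \leq \eps$, then $\widetilde{S}(f)|_{(0,\tau)}$ depends only on $f|_{(0,\tau)}$ and thus $\|\widetilde{S}(f)\|_{\Z^{\infty,r}_{\beta+\kappa}(\tau)} \lesssim \eps$.

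The main technical obstacle will be the near-time analysis, in particular the split between $j=1$ (where the pointwise kernel bound fails to provide the required integrability and (a) must be invoked on localized data) and $j \geq 2$ (where the $M$-decay from (b) must be balanced against the temporal singularity via Hölder's inequality); the three hypotheses on $M$, $\kappa$, $\beta$ must align precisely so that the doubly geometric sum over $(j,k)$ converges.
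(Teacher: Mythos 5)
Your decomposition, parameter bookkeeping, and the role you assign to each hypothesis on $M$ all match the paper's Case~1 closely, but there is a genuine gap in your treatment of the $j=1$ terms. You propose to handle $j=1$ for \emph{all} $k$ by applying property~(a) to $f_{1,k}$ and restricting to $(t/2,t)$. Property~(a) is, however, a global $\L^q \to \L^r_{\kappa - \nicefrac{n}{2}(\nicefrac{1}{q}-\nicefrac{1}{r})}$ bound: when $f_{1,k}$ lives at time scale $2^{-k}t$, restricting the output to $s \in (t/2,t)$ gives no additional decay in $k$. Working the scalings out, the resulting coefficient in front of $\mathcal{A}_{1,k}$ is $\approx 2^{-k\beta}$, not your claimed $2^{-k(1+\beta)}$; the missing factor $2^{-k}$ is precisely the gain from integrating the non-singular kernel $(s-\sigma)^{-1+\kappa - \nicefrac{n}{2}(\nicefrac{1}{q}-\nicefrac{1}{r})}$ over the short interval $T_k$. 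Since $\beta > -1$ allows $\beta \leq 0$, the bound $2^{-k\beta}$ does not sum in $k$, and the argument fails. The fix is the paper's: for $j=1$ and $k\geq 2$ the Whitney box $\WB(x,t)$ is \emph{temporally} disjoint from $\pi(f_{1,k})$, so property~(c) applies, and one uses (b) (whose off-diagonal factor is trivial since $d=0$, but whose kernel bound is non-singular since $s-\sigma\approx t$) together with the restricted fractional integral estimate (Lemma~\ref{lem:fractional_integral_decay}) to extract the extra $2^{-k}$. Property~(a) should be reserved for the genuinely local part $j=1$, $k\in\{0,1\}$ (where the kernel is singular and (c) may not apply).

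Your Part~(ii) also diverges from the paper and is underspecified. The paper builds $\widetilde{S}(f)$ concretely as the $\L^r(E)$-limit of the absolutely convergent series $\sum_{j,\ell}S(f_{j,\ell})$ and proves that this limit is independent of the starting Whitney box; the partial sums then furnish the approximating sequence $f_m$. Your weak-$*$ limit construction leaves two things unaddressed: why the limit is independent of the chosen sequence $f_m$, and how "applying representation~(c) to $f_m - f$" is justified when $f_m - f$ is in general not compactly supported. These can likely be repaired, but they are not free, and the paper's explicit construction sidesteps them. Note also that the gap in Case~1 propagates here, since the Case~2 argument reuses the same per-$(j,\ell)$ estimates summed over all indices including $j=1$, $\ell\geq 2$.
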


	\begin{remark}[Global versus local estimates in time]
		\label{rem:SIO_global_estimate}
		If an operator $S$ does not satisfy the hypotheses (a) and/or (b) of an SIO for $T = \infty$ but only for $T \in (0,\infty)$ with bounds depending on $T$, then the conclusion of Theorem~\ref{thm:sio} holds true for $T$ finite with operator norm depending on $T$ (through the norm of $S$ in condition (a) of Definition \ref{def:SIO} and/or the implicit constant in (b) for $0 < s < t \leq T$). Indeed, this follows by an inspection of the proof, or by applying Theorem~\ref{thm:sio} to a time-truncated version of $S$.
	\end{remark}

	For its proof we will need the following lemma, which gives additional decay for a restricted (non-singular) fractional integral. We postpone its proof to the end of this subsection. The unrestricted fractional integral will be treated in Lemma~\ref{lem:fractional_integral}.

	\begin{lemma}
		\label{lem:fractional_integral_decay}
		Let $\lambda \in \R$, $q,r \in (1,\infty]$ and $\gamma_0, \gamma_1 \in \R$ satisfying $r \geq q$ and
		\begin{align}
			\label{eq:fractional_integral_relation}
			\gamma_1 = \gamma_0 + \lambda.
		\end{align}
		For $k\geq 1$ consider the integral operator
		\begin{align}
			T_\lambda^k(f)(t) \coloneqq \int_{2^{-k-1} t}^{2^{-k} t} (t-s)^{-1+\lambda} f(s) \d s.
		\end{align}
		Then $T_\lambda^k \colon \L^q_{\gamma_0} \to \L^r_{\gamma_1}$ with $\| T_\lambda^k \|_{\cL(\L^q_{\gamma_0}, \L^r_{\gamma_1})} \lesssim 2^{-k(\gamma_0 + 1)}$.
	\end{lemma}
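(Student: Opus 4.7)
The plan is to reduce, by pointwise estimates, to a uniform boundedness property of the averaging operator $Kg(t) := \int_{2^{-k-1}t}^{2^{-k}t} g(s) \, \tfrac{ds}{s}$ on weighted Lebesgue spaces, and to prove the latter via a convolution identity in logarithmic coordinates combined with Young's inequality. The hypothesis $k \geq 1$ is crucial as it separates the kernel singularity from the integration range.

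First I would exploit that for $k \geq 1$ and $s \in [2^{-k-1}t, 2^{-k}t]$, one has $t - s \in [(1-2^{-k})t, (1-2^{-k-1})t] \subseteq [t/2, t]$, so $(t-s)^{-1+\lambda} \approx t^{-1+\lambda}$ uniformly in $k$, with implicit constant depending only on $\lambda$. Therefore
\begin{align}
|T_\lambda^k(f)(t)| \lesssim t^{-1+\lambda} \int_{2^{-k-1}t}^{2^{-k}t} |f(s)| \, ds.
\end{align}
Multiplying by $t^{-\gamma_1}$ and using the balance relation $\gamma_1 = \gamma_0 + \lambda$, this becomes $t^{-\gamma_0-1} \int_{2^{-k-1}t}^{2^{-k}t} s^{\gamma_0} \cdot |s^{-\gamma_0} f(s)| \, ds$. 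On the range of integration, $s \approx 2^{-k}t$ up to a factor depending only on $\gamma_0$, so $s^{\gamma_0} \approx 2^{-k\gamma_0} t^{\gamma_0}$, and converting $\frac{ds}{t}$ to $\frac{ds}{s}$ produces a further factor $s/t \approx 2^{-k}$. This yields the pointwise bound
\begin{align}
|t^{-\gamma_1} T_\lambda^k(f)(t)| \lesssim 2^{-k(\gamma_0+1)} K|\tilde f|(t),
\end{align}
where $\tilde f(s) := s^{-\gamma_0} f(s)$ satisfies $\|\tilde f\|_{L^q(dt/t)} = \|f\|_{\L^q_{\gamma_0}}$.

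It remains to prove that $K \colon L^q(dt/t) \to L^r(dt/t)$ is bounded with constants independent of $k$ whenever $r \geq q$. For this, I would change variables via $y = \log t$, setting $\bar g(y) = g(e^y)$, so that $\|g\|_{L^p(dt/t)} = \|\bar g\|_{L^p(dy)}$ for every $p \in [1,\infty]$. In these coordinates, $K$ acts as convolution with the indicator of the fixed-length interval $[k \log 2, (k+1) \log 2]$, that is, $\widebar{Kg}(x) = (\bar g * \chi)(x)$ where $\chi = \ind_{[k\log 2,(k+1)\log 2]}$. By Young's convolution inequality, $\|\bar g * \chi\|_{L^r} \leq \|\chi\|_{L^s} \|\bar g\|_{L^q}$ where $\tfrac{1}{s} = 1 + \tfrac{1}{r} - \tfrac{1}{q} \in [0,1]$ (this is precisely where $r \geq q$ enters), and $\|\chi\|_{L^s} = (\log 2)^{1/s}$ depends neither on $k$ nor on $g$. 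Combining this with the pointwise bound above gives $\|T_\lambda^k(f)\|_{\L^r_{\gamma_1}} \lesssim 2^{-k(\gamma_0+1)} \|f\|_{\L^q_{\gamma_0}}$.

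The main thing to get right is the case $r > q$: a direct Hölder argument inside the integral would only recover the diagonal case $r = q$ (via Fubini), and the gain needed for $r > q$ comes exactly from the translation invariance in log coordinates that makes Young's inequality available. The powers of $2^{-k}$ are book-kept cleanly by the scaling $s \approx 2^{-k}t$, so once this translation-invariance is identified, the rest is routine.
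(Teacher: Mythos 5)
Your proof is correct, and it takes a genuinely different route from the paper's. Both proofs start by exploiting $t-s \approx t$ for $k \geq 1$ to factor out $2^{-k(\gamma_0+1)}$, reducing to a uniform bound for the averaging operator $g \mapsto \fint_{2^{-k-1}t}^{2^{-k}t} g(s)\,ds$ (equivalently your $K$ with the $ds/s$ normalization). Where you differ is in how you prove that uniform bound. The paper treats the endpoint $r=\infty$ by the sup bound, the diagonal $r=q$ by Jensen plus Fubini, and then fills in $r \in (q,\infty)$ by interpolation. You instead pass to logarithmic coordinates, identify $K$ as convolution against the indicator of an interval of length $\log 2$, and invoke Young's inequality in one stroke; this handles all cases $r\geq q$ simultaneously and makes it transparent why the bound is uniform in $k$ (translation invariance). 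Your route avoids any appeal to interpolation theory at the cost of requiring one to see the convolution structure, while the paper's is perhaps more in keeping with its surrounding toolbox (interpolation between endpoints appears elsewhere). One small stylistic point: since the scalar weighted Lebesgue spaces here are just weighted $L^p(\mathbb{R}_+, dt/t)$, their identification with ordinary $L^p(\mathbb{R})$ under $t = e^y$ is exactly what makes Young available, and it would be worth stating that isometry explicitly rather than as $\|g\|_{L^p(dt/t)} = \|\bar g\|_{L^p(dy)}$ in passing, as it is the crux of the argument.
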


	Now, we can proceed with the proof of Theorem~\ref{thm:sio}.

	\begin{proof}[Proof of Theorem~\ref{thm:sio}]
		We are going to distinguish the cases $p < \infty$ and $p = \infty$. Due to the hypercontractive character of the result, we cannot obtain the case $p < \infty$ by interpolation with the case $p = \infty$ in a meaningful way. Therefore, we have to treat both cases by hand.

		To ease notation, summation over $j$ and $\ell$ refers to summation over $j \geq 1$ and $\ell \geq 1$ throughout the proof.

		\textbf{Case 1}: $p < \infty$. Fix $T \in (0,\infty]$. By the dense inclusion $\L^q_\cc(T) \subseteq \Z^{p,q}_\beta(T)$ (see Section~\ref{subsec:prelim_function_spaces}), it suffices to derive an estimate for $f\in \L^q_\cc(T)$. Let $x\in \R^n$ and $0 < t \leq T$. We start with a bound for $S(f)$ on the parabolic Whitney box $\WB(t,x) = (\nicefrac{t}{2},t) \times \B(x,\sqrt{t})$. To this end, decompose $f = \sum_{j \geq 1} \sum_{\ell \geq 1} f_{j,\ell}$ on $(0,t)\times \R^n$, where
		\begin{align}
			\label{eq:def_f_jl}
			f_{j,\ell}(s,z) \coloneqq
			\begin{cases}
				f(s,z) \ind_{(\nicefrac{t}{4}, t)}(s) \ind_{\ann_j(x,t)}(z), \qquad &j\geq 1, \ell = 1, \\
				f(s,z) \ind_{(2^{-\ell-1}t, 2^{-\ell}t)}(s) \ind_{\ann_j(x,t)}(z), \qquad &j \geq1, \ell \geq 2.
			\end{cases}
		\end{align}
		Note that the decomposition is adapted to the parabolic Whitney box $\WB(t,x)$.
		Also, observe that the sum in the decomposition of $f$ is finite owing to the compact support of $f$. Therefore, $S(f)(\tau) = \sum_{j,\ell} S(f_{j,\ell})(\tau)$ for every $0 \leq \tau \leq t$, where we also use causality of $S$ (see Remark~\ref{rem:SIO_causal}).
		To show the claim, it suffices to prove
		\begin{align}
			\label{eq:SIO_thm_key_estimate}
			\| S(f_{j,\ell}) \|_{\Z^{p,r}_{\beta + \kappa}(T)} \lesssim 2^{-\ell a} 2^{-j b} \| f \|_{\Z^{p,q}_{\beta}(T)}
		\end{align}
		for some numbers $a,b > 0$. For convenience, we focus on the case $r < \infty$. The case $r=\infty$ follows with the usual modifications.

		We start out with the \emph{local part}, that is, the term where $j = 1 = \ell$. Using $t \approx \tau$, property (a) of an SIO, and $\supp(f_{1,1}) \subseteq [\nicefrac{t}{4}, t] \times \overline{\B}(x,\sqrt{4t})$, we find
		\begin{align}
			&\Bigl( \fint_{\nicefrac{t}{2}}^t \fint_{\B(x,\sqrt{t})} |\tau^{-(\beta + \kappa)} S(f_{1,1})(\tau,y)|^r \d y \d \tau \Bigr)^\frac{1}{r} \\
			\approx{} &t^{-(\beta + \nicefrac{n}{2q})} \Bigl( \int_{\nicefrac{t}{2}}^t \int_{\B(x,\sqrt{t})} |\tau^{-(\kappa - \nicefrac{n}{2}(\nicefrac{1}{q} - \nicefrac{1}{r}))} S(f_{1,1})(\tau,y)|^r \d y \ddtau \Bigr)^\frac{1}{r} \\
			\lesssim{} &t^{-(\beta + \nicefrac{n}{2q})} \Bigl( \int_{\nicefrac{t}{4}}^t \int_{\B(x,\sqrt{4t})} |f(s,z)|^q \d z \dds \Bigr)^\frac{1}{q}
			\approx \Bigl( \fint_{\nicefrac{t}{4}}^t \fint_{\B(x,\sqrt{4t})} |s^{-\beta} f(s,z)|^q \d z \d s \Bigr)^\frac{1}{q}.
		\end{align}
		Applying the $\L^p_{x}$-norm to this bound gives
		$$\| S(f_{1,1}) \|_{\Z^{p,r}_{\beta + \kappa}(T)} \lesssim \| f \|_{\Z^{p,q}_\beta(T)},$$
		and hence concludes the treatment of the local part.

		We proceed with the \emph{global part}. First, we consider the cases $j = 1$ and $\ell \geq 2$. Since the Whitney box $\WB(t,x)$ is disjoint to $\pi(f_{1,\ell})$, we can use property (c), $t \approx \tau$, Minkowski's inequality and (b) to give
		\begin{align}
			&\Bigl( \fint_{\nicefrac{t}{2}}^t \fint_{\B(x,\sqrt{t})} |\tau^{-(\beta + \kappa)} S(f_{1,\ell})(\tau,y)|^r \d y \d \tau \Bigr)^\frac{1}{r} \\
			\lesssim{} &\Bigl( \int_{\nicefrac{t}{2}}^t \Bigl( \tau^{-(\beta + \kappa + \nicefrac{n}{2r})} \int_{2^{-\ell-1} t}^{2^{-\ell} t} \| K(\tau,s)f_{1,\ell}(s) \|_{\L^r_x} \d s \Bigr)^r \ddtau \Bigr)^\frac{1}{r} \\
			\lesssim &\Bigl( \int_{0}^\infty \Bigl( \tau^{-(\beta + \kappa + \nicefrac{n}{2r})} \int_{2^{-\ell-1} \tau}^{2^{-\ell+1} \tau} (\tau-s)^{-1 + \kappa -\nicefrac{n}{2}(\nicefrac{1}{p_0} - \nicefrac{1}{r})} \| f_{1,\ell}(s) \|_{\L^{p_0}_x} \d s \Bigr)^r \ddtau \Bigr)^\frac{1}{r}.
		\end{align}
		Apply Lemma~\ref{lem:fractional_integral_decay} with $k \coloneqq \ell - 1 \geq 1$, $\lambda \coloneqq \kappa - \nicefrac{n}{2}(\nicefrac{1}{p_0} - \nicefrac{1}{r})$, $\gamma_0 \coloneqq \beta + \nicefrac{n}{2p_0}$, $\gamma_1 \coloneqq \beta + \kappa + \nicefrac{n}{2r}$ to the last expression and recall $s \approx 2^{-\ell} t$ on the support of $f_{j,\ell}$, to find
		\begin{align}
			&\Bigl( \fint_{\nicefrac{t}{2}}^t \fint_{\B(x,\sqrt{t})} |\tau^{-(\beta + \kappa)} S(f_{1,\ell})(\tau,y)|^r \d y \d \tau \Bigr)^\frac{1}{r} \\
			\lesssim{} &2^{-\ell(\beta + \nicefrac{n}{2p_0} + 1)} \Bigl( \int_0^\infty \int_{\R^n} |s^{-(\beta + \nicefrac{n}{2p_0})} f_{1,\ell}(s,z)|^{p_0} \d z \dds \Bigr)^\frac{1}{p_0} \\
			\approx{} &2^{-\ell(\beta + \nicefrac{n}{2p_0} + 1)} \Bigl( \fint_{2^{-\ell-1} t}^{2^{-\ell} t} (2^{-\ell} t)^{-\frac{n}{2}} \int_{\B(x, \sqrt{4t})} |s^{-\beta} f(s,z)|^{p_0} \d z \d s \Bigr)^\frac{1}{p_0}.
		\end{align}
		Now, apply the $\L^p_{x}$-norm to the last bound, use the change of angle formula (Lemma~\ref{lem:change_of_angle}), which yields the growth factor $2^{\nicefrac{\ell n}{2p_0}}$ as $p \geq p_0$, and use the nesting property of weighted $\Z$-spaces (due to $q \geq p_0$) to give
		\begin{align}
			\| S(f_{1,\ell}) \|_{\Z^{p,r}_{\beta + \kappa}(T)} \lesssim 2^{-\ell(\beta + 1)} \| f \|_{\Z^{p,p_0}_\beta(T)} \leq 2^{-\ell(\beta + 1)} \| f \|_{\Z^{p,q}_\beta(T)}.
		\end{align}
		This establishes~\eqref{eq:SIO_thm_key_estimate} with $a \coloneqq \beta + 1$ in the current case as $a>0$ by the assumption $\beta > -1$.

		Next, consider the terms $j \geq 2$ and $\ell = 1$. Now, the spatial support of $f_{j,1}$ is disjoint to the Whitney box $\WB(t,x)$, so that we have (c) at our disposal. Thus, we can argue similarly as before to obtain
		\begin{align}
			&\Bigl( \fint_{\nicefrac{t}{2}}^t \fint_{\B(x,\sqrt{t})} |\tau^{-(\beta + \kappa)} S(f_{j,1})(\tau,y)|^r \d y \d \tau \Bigr)^\frac{1}{r} \\
			\lesssim{} &\Bigl( \int_{\nicefrac{t}{2}}^t \Bigl( \tau^{-(\beta + \kappa + \nicefrac{n}{2r})} \int_{\nicefrac{\tau}{4}}^\tau (\tau-s)^{-1 + \kappa - \nicefrac{n}{2}(\nicefrac{1}{p_0} - \nicefrac{1}{r})} \Bigl(\frac{2^j t}{\tau-s} \Bigr)^{-M} \| f_{j,1}(s) \|_{\L^{p_0}(\ann_j(x,t))} \d s \Bigr)^r \ddtau \Bigr)^\frac{1}{r}.
		\end{align}
		Owing to the term $(\tau-s)^M$ from the off-diagonal decay, a simple calculation using Hölder's inequality\footnote{The second constraint on $M$ in the theorem is used here to ensure integrability of the appearing integrals.} yields
		\begin{align}
			&\Bigl( \fint_{\nicefrac{t}{2}}^t \fint_{\B(x,\sqrt{t})} |\tau^{-(\beta + \kappa)} S(f_{j,1})(\tau,y)|^r \d y \d \tau \Bigr)^\frac{1}{r} \\
			\lesssim{} &2^{-jM} \Bigl( \int_{\nicefrac{t}{4}}^t \int_{\B(x, \sqrt{2^{j+1} t})} |s^{-(\beta + \nicefrac{n}{2p_0})} f(s,z)|^{p_0} \d z \dds \Bigr)^\frac{1}{p_0} \\
			\approx{} &2^{-jM} \Bigl( \fint_{\nicefrac{t}{4}}^t t^{-\frac{n}{2}} \int_{\B(x, \sqrt{2^{j+1} t})} |s^{-\beta} f(s,z)|^{p_0} \d z \d s \Bigr)^\frac{1}{p_0}.
		\end{align}
		Apply again the $\L^p_{x}$-norm and use a change of angle followed by the nesting property of weighted $\Z$-spaces, to give
		$$\| S(f_{j,1}) \|_{\Z^{p,r}_{\beta + \kappa}(T)} \lesssim 2^{-jM} 2^{\frac{jn}{2p_0}} \| f \|_{\Z^{p,p_0}_\beta(T)} \leq 2^{-jM} 2^{\frac{jn}{2p_0}} \| f \|_{\Z^{p,q}_\beta(T)}.$$
		Put $b \coloneqq M - \nicefrac{n}{2p_0}$ for~\eqref{eq:SIO_thm_key_estimate} and note $b > 0$ by assumption on $M$.

		Eventually, we consider the cases $j \geq 2$ and $\ell \geq 2$. Analogous arguments as before (using again Lemma~\ref{lem:fractional_integral_decay}) provide
		\begin{align}
			&\Bigl( \fint_{\nicefrac{t}{2}}^t \fint_{\B(x,\sqrt{t})} |\tau^{-(\beta + \kappa)} S(f_{j,\ell})(\tau,y)|^r \d y \d \tau \Bigr)^\frac{1}{r} \\
			\lesssim{} &2^{-jM} 2^{-\ell(\beta + \nicefrac{n}{2p_0} + 1)} \Bigl( \fint_{2^{-\ell-1} t}^{2^{-\ell}t} (2^{-\ell} t)^{-\frac{n}{2}} \int_{\B(x, \sqrt{2^{j+1} t})} |s^{-\beta} f|^{p_0} \d z \d s \Bigr)^\frac{1}{p_0}
		\end{align}
		Hence, applying the $\L^p$-norm and using a change of angle joint with the nesting property of weighted $\Z$-spaces give
		$$\| S(f_{j,\ell}) \|_{\Z^{p,r}_{\beta + \kappa}(T)} \lesssim 2^{-\ell a} 2^{-jb} \| f \|_{\Z^{p,p_0}_\beta(T)} \leq 2^{-\ell a} 2^{-jb} \| f \|_{\Z^{p,q}_\beta(T)}$$
		with the same choices of $a$ and $b$ as before. This concludes Case~1.

		\textbf{Case 2}: $p = \infty$. We cannot argue by density anymore. Instead, we give an explicit construction of the extension $\tilde S$ of $S$ to $\Z^{\infty,q}_\beta(T)$. The argument is an adaptation of~\cite[Thm.~3.1]{Auscher_Portal}.

		Fix $T \in (0,\infty]$ and let $f \in \Z^{\infty,q}_\beta(T)$. Let $x \in \R^n$ and $0 < t \leq T.$ As in Case~1, decompose $f = \sum_{j,\ell} f_{j,\ell}$ on $(0,t) \times \R^n$, where the functions $f_{j,\ell}$ are defined as in~\eqref{eq:def_f_jl}. Note that $f_{j,\ell} \in \L^q_\cc(t)$. Repeating the arguments from Case~1, we obtain the bound
		\begin{align}
			\sum_{j, \ell} \Bigl( \fint_{\nicefrac{t}{2}}^t \fint_{\B(x,\sqrt{t})} |\tau^{-(\beta + \kappa)} S(f_{j,\ell})(\tau,y)|^r \d y \d \tau \Bigr)^\frac{1}{r} \lesssim \| f \|_{\Z^{\infty,q}_\beta(T)}.
		\end{align}
		In fact, for a dilation parameter $a \geq 1$, we can even deduce the bound
		\begin{align}
			\label{eq:convergens_tilde_S}
			\sum_{j, \ell} \Bigl( \fint_{t/(2a)}^t \fint_{\B(x,\sqrt{a t})} |\tau^{-(\beta + \kappa)} S(f_{j,\ell})(\tau,y)|^r \d y \d \tau \Bigr)^\frac{1}{r} \lesssim_a \| f \|_{\Z^{\infty,q}_\beta(T)}.
		\end{align}
		Indeed, without loss of generality, it suffices to treat the case $a = 2^k$. Then, the terms with $\ell \leq 1 + k$ and $j \leq 1 + k$ all contribute to the local part. Besides that, the calculation works in the same way as before.

		Now, let $E \subseteq (0,t] \times \R^n$ be compact. For $a \geq 1$ big enough, there holds $E\subseteq [t/(2a),t] \times \overline{\B}(x,\sqrt{at})$. Therefore,~\eqref{eq:convergens_tilde_S} yields in particular the absolute convergence of $\sum_{j,\ell} S(f_{j,\ell})$ in $\L^r(E)$. After passing to a subsequence, the convergence also holds pointwise almost everywhere on $E$. Recall that the functions $f_{j,\ell}$ are adapted to the Whitney box $\WB(t,x)$. However, we claim that the (pointwise almost everywhere) limit of $\sum_{j,\ell} S(f_{j,\ell})$ on $E$ is independent of the starting Whitney box $\WB(t,x)$. Taking this fact for granted, we can define $\widetilde{S}(f)(s,y)$ almost everywhere on $(0,t] \times \R^n$ by $\sum_{j,\ell} S(f_{j,\ell})$ for $f_{j,\ell}$ defined with respect to \emph{any} starting Whitney box of height $t' \in [t,T]$. In particular, we then obtain the $\Z^{\infty,q}_\beta \to \Z^{\infty,r}_{\beta + \kappa}$ boundedness of $\widetilde{S}$ from~\eqref{eq:convergens_tilde_S} with $a = 1$. That $\tilde S$ coincides with $S$ on $\L^q_\cc(T)$ is clear by definition. Finally, starting with $t = T$ and defining $f_m$ as a partial sum of $\sum_{j,\ell} f_{j,\ell}$ (using the usual diagonal argument), we can represent $\tilde S(f)$ as the limit of $S(f_m)$ in $\L^r(E)$ as $m\to \infty$, where $f_m \in \L^q_\cc(T)$.

		Let us now show independence of the limit. Put $W \coloneqq \WB(t,x)$ and let $W' \coloneqq \WB(\tau,y)$ be another Whitney box with $y\in \R^n$ and $\tau \in [t,T]$. Set $u_{j,W} \coloneqq \sum_\ell S(f_{j,\ell})$, where the functions $f_{j,\ell}$ are the ones constructed in~\eqref{eq:def_f_jl} with respect to the Whitney box $W$. Moreover, let $u_{j,W'}$ be the corresponding sequence where the $f_{j,\ell}$ are constructed with respect to the starting Whitney box $W'$. Let $z \in \R^n$ and $0 < s \leq t$. We show that $\sum_j u_{j,W}$ and $\sum_j u_{j,W'}$ have the same limit on $E \coloneqq \WB(s,z)$. By arbitrariness of $(z,s)$, this shows independence of the limit on $(0,t] \times \R^n$. To do so, write $u_W = U_{j,W} + R_{j,W}$ for $j \geq 1$, where $R_{j,W}$ is the tail of $\sum_j u_{j,W}$ starting at the index $j+1$. Similarly, decompose $u_{W'} = U_{j,W'} + R_{j,W'}$. Pick $k \geq 1$ big enough such that $\B(z,\sqrt{s}) \subseteq \B(x, \sqrt{2^k t}) \cap \B(y, \sqrt{2^k \tau})$. Write $$u_W - u_{W'} = (U_{j+k,W} - U_{j+k,W'}) + R_{j+k,W} - R_{j+k,W'}.$$ Both tails converge to zero in $\L^r(\WB(s,z))$ as $j\to \infty$ owing to~\eqref{eq:convergens_tilde_S} with $a = 2^k$. To treat $U_{j+k,W} - U_{j+k,W'}$, we claim that $U_{j+k,W} - U_{j+k,W'} = S(g_j)$ on $(0,s] \times \R^n$, where $$g_j = f(\ind_{\B(x,\sqrt{2^{j+k+1}t})} - \ind_{\B(y,\sqrt{2^{j+k+1}\tau})})$$ on $(0,s] \times \R^n$. Indeed, this is a consequence of linearity of $S$ on $\L^q_\beta$, which follows from Case~1 applied with $p = q$. Clearly, $g_j$ is uniformly bounded in $\Z^{\infty,q}_\beta(s)$, and an elementary geometric inspection reveals that $g_j$ vanishes on $(0,s) \times \B(z, \sqrt{2^{j+1} s})$. Hence, expanding $S(g_j)$ around the Whitney box $\WB(s,z)$ and employing the tail estimate another time yields convergence of $U_{j+k,W} - U_{j+k,W'}$ in $\L^r(\WB(s,z))$ to zero as $j \to \infty$. We tacitly use $g_j \in \L^q_\beta$ (Lemma~\ref{lem:Z_local_Lq}) and continuity of $S$ in this argument.

		Finally, we comment on the boundedness on $\Zv$-spaces. Write $C$ for the operator norm of $\tilde S$, which is independent of the time interval. Let $f \in \Zv^{\infty,q}_\beta(T)$ and $\eps > 0$. By definition, there is $t \in (0,T]$ such that $\| f \|_{\Z^{\infty,q}_\beta(t)} \leq \nicefrac{\eps}{C}$. Thus, $\| S(f) \|_{\Z^{\infty,r}_{\beta + \kappa}(t)} \leq C \| f \|_{\Z^{\infty,q}_\beta(t)} \leq \eps$. Hence, $S(f) \in \Zv^{\infty,r}_{\beta + \kappa}(T)$ as desired.
	\end{proof}

	\begin{remark}
		\label{rem:SIO_thm}
		If an operator $S$ is an SIO of type $(p_0,q,r,\kappa,M)$ and of type $(\tilde p_0,\tilde q,\tilde r,\kappa,\tilde M)$ at the same time, the extensions into both settings are consistent. In particular, if $\tilde q = \tilde r = 2$, then our extensions coincide with those obtained in~\cite{AH3}.
	\end{remark}

	Combining Theorem~\ref{thm:sio} with the weighted $\Z$-space embedding (Proposition~\ref{prop:Z_space_embedding} or Corollary~\ref{cor:embedding_into_Zv}), we readily find the following strengthening of property (a) of an SIO.

	\begin{corollary}[Hypercontractivity of SIOs on weighted Lebesgue spaces]
		\label{cor:SIO_weighted_Lq}
		Let $S$ be an SIO of type $(q,q,r,\kappa,M)$ with $q,r \in (1,\infty]$, $\kappa \in [0,1]$ and $M \in [0,\infty]$ satisfying $r \geq q$, $M > \nicefrac{n}{2q}$ and $M > \nicefrac{n}{2}(\nicefrac{1}{q} - \nicefrac{1}{r}) + \nicefrac{1}{q} - \kappa$.
		Also, let $\beta > -1$ and $T \in (0,\infty]$. Then, $S$ extends to a bounded operator $S \colon \L^q_\beta(T) \to \L^r_{\beta + \kappa - \nicefrac{n}{2}(\nicefrac{1}{q} - \nicefrac{1}{r})}(T)$.
		If one of $q$, $r$ is infinite, the corresponding $\L^\infty$-space can be replaced by an $\Lv^\infty$-space
	\end{corollary}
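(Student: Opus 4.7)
The plan is to pass from weighted Lebesgue spaces to weighted $\Z$-spaces, apply the hypercontractive SIO theorem just proved, and then trade local for global integrability via the Hardy--Sobolev embedding. The starting observation is the identity $\L^p_\gamma = \Z^{p,p}_\gamma$ (Fubini in Definition~\ref{def:Z_space}), so the desired statement amounts to
\begin{align}
S \colon \Z^{q,q}_\beta(T) \to \Z^{r,r}_{\beta + \kappa - \nicefrac{n}{2}(\nicefrac{1}{q} - \nicefrac{1}{r})}(T).
\end{align}

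First, I would invoke Theorem~\ref{thm:sio} with the global parameter $p = q$; this is admissible since $p \geq q$ holds with equality, and the remaining hypotheses on $\kappa$ and $M$ are already part of the assumptions. Part~(i) of the theorem handles $q \in (1,\infty)$ and part~(ii) handles $q = \infty$, together producing a bounded extension $S \colon \Z^{q,q}_\beta(T) \to \Z^{q,r}_{\beta + \kappa}(T)$. Next, I would upgrade the global index from $q$ to $r$ by applying Proposition~\ref{prop:Z_space_embedding} with $p_0 \coloneqq q$, $p_1 \coloneqq r$, local parameter $r$, $\beta_0 \coloneqq \beta + \kappa$, and $\beta_1 \coloneqq \beta + \kappa - \nicefrac{n}{2}(\nicefrac{1}{q} - \nicefrac{1}{r})$. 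The scaling identity $2\beta_0 - \nicefrac{n}{p_0} = 2\beta_1 - \nicefrac{n}{p_1}$ holds by design, while $p_0 < p_1$ and $\beta_0 > \beta_1$ follow from $q < r$; the resulting inclusion $\Z^{q,r}_{\beta_0}(T) \hookrightarrow \Z^{r,r}_{\beta_1}(T) = \L^r_{\beta_1}(T)$ closes the argument. The borderline case $q = r$ is trivial since then $\kappa - \nicefrac{n}{2}(\nicefrac{1}{q} - \nicefrac{1}{r}) = \kappa$ and the conclusion is already contained in Step~1.

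For the $\Lv^\infty$ refinement, two sub-cases arise. If $q < \infty$ and $r = \infty$, Step~1 lands in $\Z^{q,\infty}_{\beta+\kappa}(T)$, and Corollary~\ref{cor:embedding_into_Zv} (in place of Proposition~\ref{prop:Z_space_embedding}) provides the embedding into $\Zv^{\infty,\infty}_{\beta + \kappa - \nicefrac{n}{2q}}(T) = \Lv^\infty_{\beta + \kappa - \nicefrac{n}{2q}}(T)$. If $q = \infty$, the constraint $r \geq q$ forces $r = \infty$, and the closing sentence of Theorem~\ref{thm:sio}(ii) directly transports $\Zv$-membership from input to output, yielding $\Lv^\infty_\beta(T) \to \Lv^\infty_{\beta + \kappa}(T)$.

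No step is a genuine obstacle: the whole proof is a bookkeeping exercise combining the hypercontractive SIO theorem with the Hardy--Sobolev-type embedding and its $\Zv$-counterpart. The only point requiring a moment of attention is checking the scaling relation for the embedding, but that is a one-line calculation.
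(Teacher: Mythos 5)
Your proof is correct and follows exactly the route the paper intends: the paper derives this corollary by combining Theorem~\ref{thm:sio} (applied through the identification $\L^p_\gamma = \Z^{p,p}_\gamma$, taking $p=q$) with the weighted $\Z$-space embedding of Proposition~\ref{prop:Z_space_embedding}, resp.\ Corollary~\ref{cor:embedding_into_Zv} for the $\Lv^\infty$ refinement. Your checking of the scaling relation $2\beta_0 - \nicefrac{n}{p_0} = 2\beta_1 - \nicefrac{n}{p_1}$ and your treatment of the borderline case $q=r$ are both accurate.
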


	We conclude this subsection by the postponed proof of Lemma~\ref{lem:fractional_integral_decay}.

	\begin{proof}[Proof of Lemma~\ref{lem:fractional_integral_decay}]
		Without loss of generality, assume that $f$ is positive. Using that $t \approx t-s$, calculate
		\begin{align}
			T_\lambda^k(f)(t) \approx 2^{-k(\gamma_0 + 1)} t^{\gamma_0 + \lambda} \fint_{2^{-k-1}t}^{2^{-k}t} s^{-\gamma_0} f(s) \d s.
		\end{align}
		Now we distinguish cases.

		\textbf{Case 1}: $r = \infty$. Using~\eqref{eq:fractional_integral_relation} and Jensen's inequality, we simply obtain
		\begin{align}
			\sup_{t>0} t^{-\gamma_1} T_\lambda^k(f)(t) \lesssim 2^{-k(\gamma_0 + 1)} \sup_{t > 0} \fint_{2^{-k-1}t}^{2^{-k}t} s^{-\gamma_0} f(s) \d s \lesssim 2^{-k(\gamma_0 + 1)} \| f \|_{\L^q_{\gamma_0}}.
		\end{align}

		\textbf{Case 2}: $r = q$. Similarly to the previous case, we first deduce
		\begin{align}
			\Bigl( \int_0^\infty \bigl( t^{-\gamma_1} T_\lambda^k(f)(t) \bigr)^q \ddt \Bigr)^\frac{1}{q} &\lesssim 2^{-k(\gamma_0 + 1)} \Bigl( \int_0^\infty \Bigl( \fint_{2^{-k-1}t}^{2^{-k}t} s^{-\gamma_0} f(s) \d s \Bigr)^q \ddt \Bigr)^\frac{1}{q} \\
			&\lesssim 2^{-k(\gamma_0 + 1)} \Bigl( \int_0^\infty \fint_{2^{-k-1}t}^{2^{-k}t} \bigl( s^{-\gamma_0} f(s) \bigr)^q \d s \ddt \Bigr)^\frac{1}{q}.
		\end{align}
		Hence, Fubini's theorem reveals
		\begin{align}
			\| T_\lambda^k(f) \|_{\L^q_{\gamma_1}} \lesssim 2^{-k(\gamma_0 + 1)} \| f \|_{\L^q_{\gamma_1}}.
		\end{align}

		\textbf{Case 3}: $r \in (q,\infty)$. This case follows by interpolation from the previous two cases.
	\end{proof}

	\section{Consequences for inhomogeneous linear Cauchy problems}
	\label{subsec:duhamel_cauchy}

	Using the results from the preceding section, we construct weak solutions to the linear problem
	\begin{align}
		\label{eq:LP}
		\tag{LP}
		\partial_t u - \Div(A\nabla u) = f + \Div(F), \quad u(0) = 0,
	\end{align}
	and show that they possess higher integrability than the source terms.
	For brevity, put $L = -\Div(A\nabla)$. We have reviewed the relevant properties of such elliptic operators in Section~\ref{subsec:prelim_elliptic} and we are going to use them freely.

	Given a scalar function $f$ and a vector field $F$, define solution operators (formally) by
	\begin{align}
		\Soll^L_1(f)(t) &= \int_0^t \e^{-(t-s)L} f(s) \d s, \\
		\Sol^L_\frac{1}{2}(F)(t) &= \int_0^t \e^{-(t-s)L} \Div(F(s)) \d s.
	\end{align}
	For $f,F \in \L^2(\R^{1+n}_+)$, the integrals converge absolutely in $\L^2(\R^n)$ and yield linear maps $\L^2(\R^{1+n}_+) \to \L^2_\loc(\R^{1+n}_+)$. If in addition $f,F$ are compactly supported, it is straight-forward to show that $u \coloneqq \Soll^L_1(f) + \Sol^L_{\nicefrac{1}{2}}(F)$ provides a weak solution to~\eqref{eq:LP}. The roadmap for this section is as follows:
	\begin{itemize}
		\item Show that $\Soll^L_1$ and $\Sol^L_{\nicefrac{1}{2}}$, as well as their gradients, are SIOs in the sense of Definition~\ref{def:SIO}. In particular, this includes establishing boundedness properties in the scale of weighted Lebesgue spaces for them (Corollary~\ref{cor:Duhamel_Lq}).
		\item Extend $\Soll^L_1$ and $\Sol^L_{\nicefrac{1}{2}}$ using the SIO-theory from Section~\ref{subsec:SIO} to hypercontractive operators in the scale of weighted $\Z$-spaces (Proposition~\ref{prop:Duhamel_mapping_Z}). We impose the conditions from Table~\ref{tab:Duhamel_Sobolev} for this.
		\item Show (by virtue of an approximation argument) that these extensions furnish (very) weak solutions to~\eqref{eq:LP} with forcing terms in weighted $\Z$-spaces (Proposition~\ref{prop:Duhamel_Z_weak}).
		\item Establish the vanishing trace condition in case that the solutions are locally square-integrable (Proposition~\ref{prop:Duhamel_Z_trace}).
	\end{itemize}

	We start out with the first bullet point from the roadmap. To check the mapping properties in Lebesgue spaces, we often use corresponding mapping properties for the scalar fractional integral. A proof for its mapping behavior is given in~\cite[Thm.~3.7, p.~72]{Samko}. We recall that result in our notation for the reader's convenience.

	\begin{lemma}[Fractional integral]
		\label{lem:fractional_integral}
		Let $\lambda \in (0,1)$, $q,r \in (1,\infty]$ satisfying $r \geq q$ and $\gamma_0, \gamma_1 \in \R$. Consider the fractional integral of order $\lambda$ given by
		\begin{align}
			T_\lambda(f)(t) \coloneqq \int_0^t (t-s)^{-1+\lambda} f(s) \d s, \qquad t>0.
		\end{align}
		If the parameters satisfy
		\begin{align}
			\gamma_1 = \gamma_0 + \lambda, \quad \lambda \geq \frac{1}{q} - \frac{1}{r}, \quad \gamma_0 > -1,
		\end{align}
		then $T_\lambda \colon \L^q_{\gamma_0} \to \L^r_{\gamma_1}$.
	\end{lemma}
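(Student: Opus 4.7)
The plan is to identify $T_\lambda$, after absorbing the temporal weights, as a Mellin convolution on the multiplicative group $((0,\infty), \tfrac{dt}{t})$, and then invoke Young's inequality for this group, with a separate argument at the critical exponent.

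First, set $g(s) := s^{-\gamma_0} f(s)$ and substitute $s = tu$ in the defining integral. Using $\gamma_1 = \gamma_0 + \lambda$, one computes
\begin{align*}
    t^{-\gamma_1} T_\lambda(f)(t) = \int_0^1 (1-u)^{\lambda-1} u^{\gamma_0} g(tu) \, du = \int_0^1 (1-u)^{\lambda-1} u^{\gamma_0+1} g(tu) \, \frac{du}{u}.
\end{align*}
Since $\tfrac{du}{u}$ is the Haar measure on $(0,\infty)$ viewed multiplicatively, the right-hand side is a Mellin convolution $(K \ast_M g)(t)$ with kernel $K(u) := (1-u)^{\lambda-1} u^{\gamma_0+1} \ind_{(0,1)}(u)$. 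Moreover, $\|f\|_{\L^q_{\gamma_0}} = \|g\|_{L^q((0,\infty), dt/t)}$ by construction, so the desired boundedness reduces to
\begin{align*}
    \|K \ast_M g\|_{L^r(dt/t)} \lesssim \|g\|_{L^q(dt/t)}.
\end{align*}

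Second, I would split into cases. When $q = r$, Minkowski's integral inequality reduces the task to $K \in L^1(\tfrac{du}{u})$, which amounts to the finiteness of the Beta integral $\int_0^1 (1-u)^{\lambda-1} u^{\gamma_0} \, du = B(\gamma_0+1, \lambda)$, ensured by the assumptions $\gamma_0 > -1$ and $\lambda > 0$. When $q < r \leq \infty$ with the strict inequality $\lambda > \nicefrac{1}{q} - \nicefrac{1}{r}$, Young's inequality on $((0,\infty), \tfrac{du}{u})$ applies with the exponent $p_0$ determined by $\nicefrac{1}{p_0} = 1 + \nicefrac{1}{r} - \nicefrac{1}{q}$, and the required membership $K \in L^{p_0}(\tfrac{du}{u})$ again follows from an explicit Beta-function computation using $\gamma_0 > -1$ and $\lambda > \nicefrac{1}{q} - \nicefrac{1}{r}$.

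The main obstacle is the endpoint $\lambda = \nicefrac{1}{q} - \nicefrac{1}{r}$ with $r < \infty$: here $K$ just fails to lie in $L^{p_0}(\tfrac{du}{u})$, so Young is unavailable. This is the classical Hardy--Littlewood--Sobolev situation, and it is handled by proving a weak-type $(q,r)$ estimate via a layer-cake / distribution-function argument adapted to the one-sided fractional kernel, and then upgrading to the strong-type bound by Marcinkiewicz interpolation against nearby non-endpoint cases already established. This is precisely the content of~\cite[Thm.~3.7, p.~72]{Samko}, to which we defer for the detailed execution.
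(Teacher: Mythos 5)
The paper offers no proof of Lemma~\ref{lem:fractional_integral}: it recalls the result and defers to \cite[Thm.~3.7, p.~72]{Samko}. Your argument actually supplies more than the paper does, so a comparison with the ``paper's proof'' is really a comparison with the bare citation. Your reduction to a Mellin convolution is correct: after absorbing the weights via $g(s)=s^{-\gamma_0}f(s)$ and substituting $s=tu$, one does obtain $t^{-\gamma_1}T_\lambda f(t)=\int_0^1(1-u)^{\lambda-1}u^{\gamma_0}g(tu)\,\mathrm{d}u$, and the kernel $K(u)=(1-u)^{\lambda-1}u^{\gamma_0+1}\ind_{(0,1)}(u)$ lies in $\L^{p_0}(\mathrm{d}u/u)$ with $1/p_0 = 1+1/r-1/q$ precisely when $\gamma_0>-1$ and $\lambda>1/q-1/r$, by the Beta-integral computation you indicate. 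Young's inequality on the multiplicative group then gives the non-endpoint cases cleanly, including $q=r$ with $p_0=1$.

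There is one small but genuine gap. You treat the endpoint $\lambda=1/q-1/r$ only for $r<\infty$, and that is the right thing to do — but the hypotheses of the lemma as stated also allow $r=\infty$, $\lambda=1/q$, and that case is covered neither by Young (kernel not in $\L^{q'}(\mathrm{d}u/u)$) nor by the HLS-type weak-type argument you invoke. In fact that case appears to fail: take $\gamma_0=0$ and $g(s)=(1-s)^{-\delta}\ind_{(1/2,1)}(s)$ with $\delta\uparrow 1/q$. Then $h(1)=\int_{1/2}^1(1-u)^{1/q-1-\delta}\,\mathrm{d}u \approx q/(1-\delta q)$ whereas $\|g\|_{\L^q(\mathrm{d}s/s)}\approx (1-\delta q)^{-1/q}$, so the ratio blows up like $(1-\delta q)^{-1/q'}$. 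The paper never invokes the lemma in that configuration (in the proof of Lemma~\ref{lem:Duhamel_is_SIO}, $r=\infty$ is handled by a separate direct Hölder computation when $q>n+2$), so this is harmless for the paper's applications, but you should either exclude $r=\infty$ in the endpoint case explicitly or flag the issue, as your argument — like the statement as written — would otherwise claim more than is true.
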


	Now, we can continue with the SIO property of the Duhamel operators.

	\begin{table}
		\centering
		\begin{tabular}{ c | c |c c }
			& & $A$ rough & $A$ regular \\
			\hline
			$\Soll^L_1$ & $\kappa = 1$ & $r \in \SobSet^{**}(q)$ & $r \in \SobSet^{**}(q)$ \\
			$\nabla \Soll^L_1$ & $\kappa = \nicefrac{1}{2}$ & $r \in \SobSet^{*}(q)$, $r < q(A)$ & $r \in \SobSet^{*}(q), {r<\infty}$ \\
			$\Sol^L_{\nicefrac{1}{2}}$ & $\kappa = \nicefrac{1}{2}$ & $r \in \SobSet^*(q)$, $\min(p,q) > q(A^*)'$ & $r \in \SobSet^*(q), {q<\infty}$ \\
			$\nabla \Sol^L_{\nicefrac{1}{2}}$ & $\kappa = 0$ & $q = r \in (q(A^*)', q(A))$, $p > q(A^*)'$ & $q = r \in (1,\infty)$ \\
		\end{tabular}
		\vspace{1em}
		\caption{Parameter restriction on $(p, q, r, \kappa)$ with $p,q,r \in (1,\infty]$ and $\kappa \in [0,1]$ satisfying $r \geq q$ for Proposition~\ref{prop:Duhamel_mapping_Z} and Corollary~\ref{cor:Duhamel_Lq}. {The critical numbers $q(A)$ and $q(A^*)'$ have been introduced in Section~\ref{subsec:prelim_elliptic}.}}
		\label{tab:Duhamel_Sobolev}
	\end{table}

	\begin{lemma}[Duhamel operators are SIOs]
		\label{lem:Duhamel_is_SIO}
		Let $S \in \bigl\{ \Soll^L_1, \nabla \Soll^L_1, \Sol^L_{\nicefrac{1}{2}}, \nabla \Sol^L_{\nicefrac{1}{2}} \bigr\}$ and $p$, $q$, $r$, $\kappa$ be subject to the restrictions imposed in Table~\ref{tab:Duhamel_Sobolev} for $A$ rough. Then, $S$ is an SIO of type $(\min(p,q),q,r,\kappa,\infty)$. If $A$ is regular, the same conclusion holds true for the time-truncated version\footnote{Compare with Remark~\ref{rem:SIO_global_estimate}.} of $S$.
	\end{lemma}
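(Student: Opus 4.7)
The plan is to verify the three conditions (a), (b), (c) of Definition~\ref{def:SIO} for each of the four operators. Property (c) is immediate: for $f$ or $F$ in $\L^2_\cc(\R^{n+1}_+)$ the defining Bochner integrals converge absolutely in $\L^2_x$, so the advertised kernels $K(t,s) \in \bigl\{ \e^{-(t-s)L}, \nabla \e^{-(t-s)L}, \e^{-(t-s)L}\Div, \nabla \e^{-(t-s)L}\Div \bigr\}$ do give the stated pointwise-in-$x$ representation away from $\pi(f)$. Property (b) is a direct consequence of the off-diagonal estimates collected in Section~\ref{subsec:prelim_elliptic}: the Gaussian factor $\e^{-c d^2/(t-s)}$ dominates any polynomial weight $(1 + d^2/(t-s))^{-M}$, which is why $M = \infty$ is admissible. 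The entries of Table~\ref{tab:Duhamel_Sobolev} reflect precisely the ranges of exponents for which off-diagonal estimates are known: for $\e^{-tL}$ one has $\L^q_x \to \L^r_x$ off-diagonal estimates for all $1 \leq q \leq r \leq \infty$; for $\sqrt{t}\nabla \e^{-tL}$ one needs $q,r \in [1,q(A))$ in the rough case and $q,r \in [1,\infty)$ in the regular case; dually, $\sqrt{t}\e^{-tL}\Div$ requires $q,r \in (q(A^*)',\infty]$ in the rough case and $q,r \in (1,\infty]$ in the regular case; finally, $t \nabla \e^{-tL}\Div$ is handled by composing the previous two (splitting $\e^{-tL} = \e^{-(t/2)L}\e^{-(t/2)L}$), which in the rough case forces $q = r = 2$.

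For property (a), the spatial off-diagonal estimates give the diagonal bound
\begin{align}
\| K(t,s) g \|_{\L^r_x} \lesssim (t-s)^{-1 + \kappa - \frac{n}{2}(\frac{1}{q} - \frac{1}{r})} \| g \|_{\L^q_x},
\end{align}
with $\kappa$ as in the table. When $\kappa > 0$, the required $\L^q(\R^{n+1}_+) \to \L^r_{\kappa - \frac{n}{2}(\frac{1}{q}-\frac{1}{r})}(\R^{n+1}_+)$ boundedness follows by applying Lemma~\ref{lem:fractional_integral} to the scalar function $s \mapsto \| g(s)\|_{\L^q_x}$ with parameters $\lambda = \kappa - \frac{n}{2}(\frac{1}{q}-\frac{1}{r}) \in (0,1)$, $\gamma_0 = 0$ and $\gamma_1 = \lambda$. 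The hypothesis $\lambda \geq \frac{1}{q} - \frac{1}{r}$ from Lemma~\ref{lem:fractional_integral} translates exactly into $\frac{1}{q} - \frac{1}{r} \leq \frac{2\kappa}{n+2}$, which is the parabolic Sobolev condition $r \in \SobSet^{**}(q)$ when $\kappa = 1$ and $r \in \SobSet^{*}(q)$ when $\kappa = \nicefrac{1}{2}$. The condition $\lambda \in (0,1)$ produces the upper endpoints $q^{**}$ and $q^*$ and forces the bracket notation in Table~\ref{tab:Duhamel_Sobolev} (with the case distinctions according to whether $q$ is below, equal to, or above the critical threshold).

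The genuinely singular case is $\kappa = 0$, corresponding to $\nabla \Sol^L_{\nicefrac{1}{2}}$. Here the kernel decay $(t-s)^{-1}$ is not integrable and Lemma~\ref{lem:fractional_integral} is inapplicable; property (a) reduces to the classical $\L^2(\R^{n+1}_+)$-boundedness of the parabolic singular integral $F \mapsto \nabla \Sol^L_{\nicefrac{1}{2}}(F)$, which is obtained from energy estimates on weak solutions of $\partial_t u - \Div(A\nabla u) = \Div(F)$ with vanishing initial data (equivalently, $\L^2$-maximal regularity for $L$). In the regular case, the same conclusion for $q = r \in (1,\infty)$ follows from $\L^p$-maximal regularity for $L + 1$ on finite time intervals, which uses exactly the $\L^q_x$-boundedness of the gradient semigroup built into Definition~\ref{def:regular_A}; this is what produces the $T$-dependent constants mentioned in the statement.

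The only subtlety is bookkeeping the various parameter constraints: one must keep track of (i) the spatial off-diagonal range dictated by $q(A)$ and $q(A^*)'$, (ii) the parabolic Sobolev range $r \in \SobSet^{*}(q)$ or $\SobSet^{**}(q)$ coming from Lemma~\ref{lem:fractional_integral}, and (iii) for the regular case, the distinction between $L$ and $L+1$ that is hidden in Lemma~\ref{lem:regular_coefficients}. Once these ranges are aligned as in Table~\ref{tab:Duhamel_Sobolev}, the three SIO properties follow from the ingredients assembled in Sections~\ref{subsec:prelim_elliptic} and~\ref{subsec:SIO} without further harmonic-analytic input.
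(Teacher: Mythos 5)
Your three-step plan (verify (a), (b), (c); off-diagonal estimates for (b); fractional integral for (a) when $\kappa>0$; maximal regularity for $\kappa=0$) is exactly the paper's strategy, but two of your claims have genuine gaps. For the non-singular cases you assert that $\lambda = \kappa - \frac{n}{2}(\nicefrac{1}{q} - \nicefrac{1}{r}) \in (0,1)$ always, and then invoke Lemma~\ref{lem:fractional_integral}. This fails for $\Soll^L_1$ when $q = r$ (allowed by Table~\ref{tab:Duhamel_Sobolev}), where $\lambda = \kappa = 1$: Lemma~\ref{lem:fractional_integral} is stated only for $\lambda \in (0,1)$, and the boundary case is the Hardy inequality $T_1 \colon \L^q_{\gamma_0} \to \L^q_{\gamma_0+1}$ ($\gamma_0 > -1$), which needs a separate word. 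Your attribution of the case splits in $\SobSet^*(q)$ and $\SobSet^{**}(q)$ to the constraint $\lambda < 1$ is also not right — those come from the interplay of $\lambda \geq \nicefrac{1}{q} - \nicefrac{1}{r}$ with the endpoint $r = \infty$. Relatedly, the paper does not apply Lemma~\ref{lem:fractional_integral} uniformly: for $q < n+2$ it argues exactly as you do, but for $q > n+2$ it first proves the $r = \infty$ bound directly by Hölder's inequality (where the Beta-type integral converges precisely because $q > n+2$ gives $\lambda > \nicefrac{1}{q}$) and then interpolates against $r=q$; the case $q = n+2$ is handled by a further interpolation. This avoids pushing the fractional integral lemma to the delicate $r = \infty$ endpoint.

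Your sketch of the singular case $\kappa = 0$ is also thinner than it needs to be. The paper's regular-coefficient argument hinges on the factorization
\begin{align}
\nabla \Sol^L_{\nicefrac{1}{2}}(F) = \nabla (L+1)^{-\nicefrac{1}{2}} (L+1) \Soll^L_1\bigl((L+1)^{-\nicefrac{1}{2}} \Div F\bigr),
\end{align}
whose three pieces are handled by $\L^q$-boundedness of the Riesz transforms for $L+1$ and $L^*+1$ (where regularity of $A$ enters via~\cite[Thm.~4.1]{Memoirs}, not via gradient-semigroup boundedness on its own) and $\L^q$-maximal regularity of $L$ from Coulhon--Duong, the latter relying on the Gaussian bounds from real coefficients rather than on regularity. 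Saying only "$\L^p$-maximal regularity for $L+1$" skips the decomposition that makes the singular operator tractable on $\L^q$ for $q \neq 2$.
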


	\begin{proof}
		Properties (b) and (c) of an SIO are readily verified for a given operator $S \in \bigl\{ \Soll^L_1, \nabla \Soll^L_1, \Sol^L_{\nicefrac{1}{2}}, \nabla \Sol^L_{\nicefrac{1}{2}} \bigr\}$. Indeed, the parameter restrictions in Table~\ref{tab:Duhamel_Sobolev} are designed to ensure the required off-diagonal estimates (with $M = \infty$) for the respective integral kernels, which yields (b). Property (c) was established in~\cite[Prop.~4.1]{AH2} for $\Soll^L_1$ and $\nabla \Soll^L_1$, and in~\cite[Prop.~5.8]{AH3} for $\Sol^L_{\nicefrac{1}{2}}$ and $\nabla \Sol^L_{\nicefrac{1}{2}}$.

		It remains to argue for (a). The strategy for all six non-singular cases when $\kappa >0$ is the same. We focus on the treatment for $\Sol^L_{\nicefrac{1}{2}}$ when $A$ is rough and only comment on the necessary changes for the remaining cases. This is Step~1 of the subsequent proof. The genuinely singular cases are discussed in Step~2.

		\textbf{Step 1}: non-singular cases $\kappa > 0$. As said, we just consider $S = \Sol^L_{\nicefrac{1}{2}}$ for $A$ rough in detail. Fix $q$ and $r$ as in Table~\ref{tab:Duhamel_Sobolev}.
		We distinguish the three cases appearing in the definition of the set $\SobSet^*(q)$ separately.
		First, in the case $q < n+2$, the claim follows from $\L^q_x \to \L^r_x$ boundedness of the family $\{ \sqrt{t} \e^{-tL} \Div \}_{t>0}$ in conjunction with the fractional integral. Indeed, note first that Table~\ref{tab:Duhamel_Sobolev} is designed to ensure the necessary $\L^q_x \to \L^r_x$ bounds. Then, for $t>0$, calculate
		\begin{align}
			\| \Sol^L_\frac{1}{2}(F)(t) \|_{\L^r_x} \leq \int_0^t \| \e^{-(t-s)L} \Div(F(s))\|_{\L^r_x} \d s \lesssim \int_0^t (t-s)^{-\nicefrac{n}{2}(\nicefrac{1}{q} - \nicefrac{1}{r}) - \nicefrac{1}{2}} \| F(s) \|_{\L^q_x} \d s.
		\end{align}
		Put $\lambda = \nicefrac{1}{2} - \nicefrac{n}{2}(\nicefrac{1}{q} - \nicefrac{1}{r})$, then the preceding bound can be rewritten as
		\begin{align}
			\| \Sol^L_\frac{1}{2}(F)(t) \|_{\L^r_x} \lesssim T_\lambda(\| F(\cdot) \|_{\L^q_x})(t).
		\end{align}
		An application of Lemma~\ref{lem:fractional_integral} with $\gamma_0 \coloneqq 0$ and $\gamma_1 \coloneqq \nicefrac{1}{2} -\nicefrac{n}{2}(\nicefrac{1}{q} - \nicefrac{1}{r})$ now gives the claim. Its application is justified since $r \in \SobSet^*(q)$ implies $\lambda \geq \nicefrac{1}{q} - \nicefrac{1}{r}$. In particular, $\lambda > 0$.

		Second, when $q > n+2$, we use $\L^q_x \to \L^\infty_x$ boundedness of $\{ \sqrt{t} \e^{-tL} \Div \}_{t>0}$ and Hölder's inequality to give, for $0 < t \leq \infty$,
		\begin{align}
			\| \Sol^L_\frac{1}{2}(F)(t) \|_{\L^{\vphantom{q}\infty}_x} &\lesssim \int_0^t (t-s)^{- \frac{1}{2} - \frac{n}{2q}} \| F(s) \|_{\L^{q}_x} \d s \\
			&\leq \Bigl( \int_0^t (t-s)^{(- \frac{1}{2} - \frac{n}{2q})q'} s^{q'} \dds \Bigr)^\frac{1}{q'} \| F \|_{\L^q_{\vphantom{x}}}.
		\end{align}
		The integral converges owing to $q > n+2$, and it evaluates to $t^{\nicefrac{1}{2} - \nicefrac{n}{2q}}$. Therefore, $\Sol^L_{\nicefrac{1}{2}}$ maps $\L^q$ to $\L^\infty_{\nicefrac{1}{2} - \nicefrac{n}{2q}}$. Moreover, by interpolation with $\L^q \to \L^q_{\nicefrac{1}{2}}$ boundedness from the previous case, we obtain any $r \in [q, \infty]$.

		Third, when $q = n+2$, let $r\in [q,\infty)$. Note $r > 2$. Define $\theta \coloneqq \nicefrac{2}{r} \in (0,1)$. Next, define $\tilde q$ through $\frac{1}{n+2} = \frac{\theta}{2} + \frac{1-\theta}{\tilde q}$. One has $\tilde q > n+2$, and $\tilde q \leq \infty$ is equivalent to the assumption $r \geq n+2$. Now, $\L^q = \L^{n+2} \to \L^r_{\nicefrac{1}{2} + \nicefrac{n}{2}(\nicefrac{1}{q} - \nicefrac{1}{r})}$ boundedness follows by interpolation between the bounds $\L^2 \to \L^2_{\nicefrac{1}{2}}$ and $\L^{\tilde q} \to \L^\infty_{\nicefrac{1}{2} - \nicefrac{n}{2\tilde q}}$, keeping in mind that the defining relation for $\tilde q$ can be rewritten to $\frac{1-\theta}{\tilde q} = \frac{1}{q} - \frac{1}{r}$.

		For the other cases listed in Table~\ref{tab:Duhamel_Sobolev}, observe three facts: first, $r$ can be improved by one Sobolev conjugate when $\kappa = \nicefrac{1}{2}$ and by two Sobolev conjugates if $\kappa = 1$. Second, the upper and/or lower bounds for $q$ and $r$ are dictated by the availability of $\L^q_x \to \L^r_x$ boundedness described in Section~\ref{subsec:prelim_elliptic}. Third, when the coefficients are regular, off-diagonal estimates are only available until a fixed but arbitrary time $T < \infty$, so we have to work with the time-truncated version of the operator, compare with Remark~\ref{rem:SIO_global_estimate}. Keeping them in mind, the above-presented proof is easily adapted to the remaining non-singular cases.

		\textbf{Step 2}: singular case $\kappa = 0$. We focus on the regular coefficients, as for rough coefficients it is similar and even simpler.
		Let $F\in \L^2_\cc(\R^{1+n}_+)$.
		Note that $(L+1)^{-\frac{1}{2}} \Sol^L_{\nicefrac{1}{2}}(F) = \Soll^L_1 \bigl((L+1)^{-\frac{1}{2}} \Div(F) \bigr)$, where $(L+1)^{-\frac{1}{2}} \Div$ is defined as the adjoint of the Riesz transformation $\nabla (L^*+1)^{-\frac{1}{2}}$ associated with $L^*+1$.
		Hence, we can write
		\begin{align}
			\nabla \Sol^L_\frac{1}{2}(F) = \nabla (L+1)^{-\frac{1}{2}} (L+1) \Soll^L_1((L+1)^{-\frac{1}{2}} \Div(F)).
		\end{align}
		The Riesz transform $\nabla (L+1)^{-\frac{1}{2}}$ is $\L^q$-bounded owing to the regular coefficients of $L$, see~\cite[Thm.~4.1]{Memoirs}. By duality, the same is true for $(L+1)^{-\frac{1}{2}} \Div$, where we use that the notion of regular coefficients is invariant under taking the adjoint. Hence, $\L^q$-boundedness of $\nabla \Sol^L_{\nicefrac{1}{2}}$ follows from $\L^q$-boundedness of $(L+1) \Soll^L_1$. To see that $(L+1) \Soll^L_1$ is $\L^q$-bounded, we consider the terms $L \Soll^L_1$ and $\Soll^L_1$ separately. For the second term, this follows from arguments similar (but easier) to Step~1. Note, however, that the operator norm for this term will now depend on $T$. Boundedness of the first term is equivalent to the question of \emph{$\L^q$-maximal regularity} of $L$. The latter was established %
		in~\cite{Coulhon_Duong} under the assumption that $L$ satisfies Gaussian bounds, see Section~\ref{subsec:prelim_elliptic}. Since $L$ has real coefficients, Gaussian bounds for $L$ indeed hold. This concludes Step~2.
	\end{proof}

	Combining the lemma with Theorem~\ref{thm:sio}, we obtain boundedness for the Duhamel operators on $\Z$-spaces.

	\begin{proposition}[Hypercontractivity of Duhamel operators on $\Z$-spaces]
		\label{prop:Duhamel_mapping_Z}
		Fix $T \in (0,\infty)$. Let $S \in \bigl\{ \Soll^L_1, \nabla \Soll^L_1, \Sol^L_{\nicefrac{1}{2}}, \nabla \Sol^L_{\nicefrac{1}{2}} \bigr\}$ and $p$, $q$, $r$, $\kappa$ be subject to the restrictions from Table~\ref{tab:Duhamel_Sobolev}.
		Let $\beta > -1$.
		Then, the operator $S$ admits a bounded extension\footnote{If $p = \infty$, an explicit representation of this extension is stated in Theorem~\ref{thm:sio}.} to $\Z^{p,q}_\beta(T) \to \Z^{p,r}_{\beta + \kappa}(T)$. The mapping property remains true for $\Zv$-spaces when $p=\infty$.
		In each case, the operator norm is non-decreasing in $T$, and if the coefficients are rough, then the operator norm is in fact independent of $T$, so that we can allow $T = \infty$.
	\end{proposition}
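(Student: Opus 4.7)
My plan is to read the proposition as a direct packaging of Lemma~\ref{lem:Duhamel_is_SIO} and Theorem~\ref{thm:sio}. The lemma tells me that each of the four operators $S \in \{\Soll^L_1, \nabla \Soll^L_1, \Sol^L_{\nicefrac{1}{2}}, \nabla \Sol^L_{\nicefrac{1}{2}}\}$ is an SIO of type $(q,r,\kappa,\infty)$ precisely under the parameter restrictions of Table~\ref{tab:Duhamel_Sobolev}, globally in time if $A$ is rough, and up to any finite horizon $T$ via the time-truncated version of $S$ if $A$ is regular. Because $M = \infty$, both lower bounds on $M$ required by Theorem~\ref{thm:sio} hold automatically for any admissible $(q,r,\kappa)$.

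Given this, for any $\beta > -1$ and any $p \in [q,\infty]$, Theorem~\ref{thm:sio} promotes $S$ to a bounded operator $\Z^{p,q}_\beta(T) \to \Z^{p,r}_{\beta+\kappa}(T)$; the statement about $\Zv$-spaces when $p = \infty$ is then the final assertion of that theorem. In the rough case, the SIO hypotheses hold globally, so Theorem~\ref{thm:sio} yields operator norms independent of $T$, and $T = \infty$ is admissible. In the regular case, I would invoke Remark~\ref{rem:SIO_global_estimate} to derive the same mapping property on $(0,T)$ with operator norm controlled in terms of the constant in property (a) and the implicit constant in property (b) for $0 < s < t \leq T$; since both are monotone in $T$, this gives the non-decreasing dependence stated.

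There is no genuine analytic obstacle here: the proof is essentially a bookkeeping exercise after the two preceding results have been established. The only point to be careful about is matching the case-by-case parameter constraints in Table~\ref{tab:Duhamel_Sobolev}---in particular the involvement of $q(A)$ and $q(A^*)'$ in the rough rows for $\nabla \Soll^L_1$ and $\Sol^L_{\nicefrac{1}{2}}$---with the SIO framework from Lemma~\ref{lem:Duhamel_is_SIO}, and consistently tracking the rough/regular dichotomy when concluding whether the operator norm is $T$-independent or merely non-decreasing in $T$.
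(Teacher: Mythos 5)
Your proposal is correct and matches the paper's approach exactly: the paper states Proposition~\ref{prop:Duhamel_mapping_Z} as an immediate corollary of Lemma~\ref{lem:Duhamel_is_SIO} and Theorem~\ref{thm:sio}, with the rough/regular dichotomy handled precisely as you describe via $T$-independence of the SIO constants in the rough case and Remark~\ref{rem:SIO_global_estimate} (with monotonically increasing constants) in the regular case. Your observation that the $M$-conditions of Theorem~\ref{thm:sio} are vacuous for $M=\infty$ is also the reason Lemma~\ref{lem:Duhamel_is_SIO} suffices without further hypothesis checks.
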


	If we employ Corollary~\ref{cor:SIO_weighted_Lq} instead of Theorem~\ref{thm:sio}, we obtain the subsequent result.

	\begin{corollary}[Hypercontractivity of Duhamel operators on Lebesgue spaces]
		\label{cor:Duhamel_Lq}
		Fix $T \in (0,\infty)$. Let $S \in \bigl\{ \Soll^L_1, \nabla \Soll^L_1, \Sol^L_{\nicefrac{1}{2}}, \nabla \Sol^L_{\nicefrac{1}{2}} \bigr\}$ and $\kappa$, $q$, $r$ be subject to the restrictions imposed in Table~\ref{tab:Duhamel_Sobolev} with $p \coloneqq q$. Let $\beta > -1$. Then, the operator $S$ admits a bounded extension $\L^q_\beta(T) \to \L^r_{\beta + \kappa - \nicefrac{n}{2}(\nicefrac{1}{q} - \nicefrac{1}{r})}(T)$, with operator norm non-decreasing in $T$.
		The mapping property remains true for $\Lv^\infty$-spaces when $q=\infty$ and/or $r = \infty$.
		In each case, the operator norm is non-decreasing in $T$, and if the coefficients are rough, then the operator norm is in fact independent of $T$, so that we can allow $T = \infty$.
	\end{corollary}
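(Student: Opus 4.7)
The plan is to invoke directly the framework built up in Section~\ref{subsec:SIO}. By Lemma~\ref{lem:Duhamel_is_SIO}, each operator $S \in \{ \Soll^L_1, \nabla \Soll^L_1, \Sol^L_{\nicefrac{1}{2}}, \nabla \Sol^L_{\nicefrac{1}{2}} \}$ is an SIO of type $(q,r,\kappa,\infty)$ whenever the triple $(\kappa,q,r)$ lies in the admissible range of Table~\ref{tab:Duhamel_Sobolev}, with the caveat that in the regular-coefficients case this must be understood for the time-truncated version of $S$ with constants allowed to depend on $T$. Since the decay parameter is $M = \infty$, the two side conditions $M > \nicefrac{n}{2q}$ and $M > \nicefrac{n}{2}(\nicefrac{1}{q} - \nicefrac{1}{r}) + \nicefrac{1}{q} - \kappa$ required by Corollary~\ref{cor:SIO_weighted_Lq} are automatically satisfied, regardless of whether $\kappa$ is zero or positive.

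Given this, I would simply apply Corollary~\ref{cor:SIO_weighted_Lq} with the parameters $(q,r,\kappa,\beta)$ inherited from the statement. The conclusion of that corollary is precisely the bounded extension
\begin{align}
S \colon \L^q_\beta(T) \to \L^r_{\beta + \kappa - \nicefrac{n}{2}(\nicefrac{1}{q} - \nicefrac{1}{r})}(T),
\end{align}
which is what we want. When $q = \infty$ or $r = \infty$, the $\Lv^\infty$-variant is immediate from the corresponding statement in Corollary~\ref{cor:SIO_weighted_Lq}, which was itself obtained by combining Theorem~\ref{thm:sio} with the embedding of Corollary~\ref{cor:embedding_into_Zv}.

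For the monotonicity and $T$-independence claims, I would argue as follows. In the rough case, Lemma~\ref{lem:Duhamel_is_SIO} produces an SIO with constants that do not depend on $T$, since the off-diagonal bounds used there come from genuine global off-diagonal estimates for the semigroup and its gradient, hence Corollary~\ref{cor:SIO_weighted_Lq} delivers an operator norm independent of $T$ and we may even take $T = \infty$. In the regular case, the available off-diagonal bounds hold only up to a fixed horizon, so one works with the time-truncated operator on $(0,T)$, as in Remark~\ref{rem:SIO_global_estimate}; since enlarging $T$ only relaxes the range of integration while the defining bounds are preserved, the resulting operator norm is non-decreasing in $T$.

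The only genuinely subtle point---and thus the main thing to be careful with---is that Corollary~\ref{cor:SIO_weighted_Lq} and the underlying Theorem~\ref{thm:sio} were stated for $p \in [q,\infty]$ through the $\Z$-space route, so one must verify that the diagonal case $p = q$ (whose weighted $\Z$-space is $\L^q_\beta$ by Fubini, cf.\ the discussion after Definition~\ref{def:Z_space}) combined with the Hardy--Sobolev embedding of Proposition~\ref{prop:Z_space_embedding} indeed yields the stated shift in the weight from $\beta + \kappa$ down to $\beta + \kappa - \nicefrac{n}{2}(\nicefrac{1}{q} - \nicefrac{1}{r})$. This is exactly the computation implicit in the proof of Corollary~\ref{cor:SIO_weighted_Lq}, so no new work is needed: the corollary at hand is purely a dictionary between Table~\ref{tab:Duhamel_Sobolev} and the SIO framework, and the proof consists of collecting Lemma~\ref{lem:Duhamel_is_SIO} and Corollary~\ref{cor:SIO_weighted_Lq}.
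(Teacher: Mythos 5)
Your proposal is correct and takes exactly the paper's route: the paper obtains Corollary~\ref{cor:Duhamel_Lq} by combining Lemma~\ref{lem:Duhamel_is_SIO} (so that the Duhamel operators are SIOs of type $(q,r,\kappa,\infty)$, with the $M$-conditions trivially met since $M=\infty$) with Corollary~\ref{cor:SIO_weighted_Lq}, exactly as you do, and your remarks on the $\Lv^\infty$-variant and on the $T$-dependence (global off-diagonal estimates in the rough case, time-truncation à la Remark~\ref{rem:SIO_global_estimate} in the regular case) match the paper's discussion.
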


	From these preliminary results, we derive existence of (very) weak solutions to~\eqref{eq:LP}.

	\begin{proposition}[Duhamel solutions to~\eqref{eq:LP}]
		\label{prop:Duhamel_Z_weak}
		Fix $T \in (0,\infty)$. Let $\beta > -1$, $p,q,\tilde q \in (1,\infty]$, $f \in \Z^{p,q}_\beta(T)$ and $F \in \Z^{p,\tilde q}_\beta(T)$. %
		If $A$ is rough, assume in addition $\tilde q \geq 2$.
		Then, the function $u \coloneqq \Soll^L_1(f) + \Sol^L_{\nicefrac{1}{2}}(F)$ is a very weak solution to~\eqref{eq:LP}.
		Moreover, if $q \geq 2_*$ and $\tilde q \geq 2$, then $u$ is even a weak solution to~\eqref{eq:LP}
	\end{proposition}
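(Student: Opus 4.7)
The plan is to reduce to compactly supported data by approximation, use classical semigroup calculus to check the weak formulation for such data, and then pass to the limit using the hypercontractivity properties established in Proposition~\ref{prop:Duhamel_mapping_Z}. First, I would invoke the construction of the extensions in Theorem~\ref{thm:sio}: by density when $p < \infty$ and by the explicit approximation in part (ii) when $p = \infty$, one produces sequences $f_m \in \L^q_\cc((0,T) \times \R^n)$ and $F_m \in \L^{\tilde q}_\cc((0,T) \times \R^n)$ that converge to $f$ and $F$ locally in $\L^q$ and $\L^{\tilde q}$ respectively, with $\Soll^L_1(f_m) \to \Soll^L_1(f)$ and $\Sol^L_{\nicefrac 12}(F_m) \to \Sol^L_{\nicefrac 12}(F)$ locally in a higher $\L^r$-space dictated by the Sobolev exponent. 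The same reasoning applied to the gradients yields $\nabla \Soll^L_1(f_m) \to \nabla \Soll^L_1(f)$ and $\nabla \Sol^L_{\nicefrac 12}(F_m) \to \nabla \Sol^L_{\nicefrac 12}(F)$ locally in a suitable $\L^r$-space, again by Proposition~\ref{prop:Duhamel_mapping_Z} combined with Lemma~\ref{lem:Z_local_Lq}.

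For the approximating data, I would check directly that $u_m \coloneqq \Soll^L_1(f_m) + \Sol^L_{\nicefrac 12}(F_m)$ is a weak solution of $\partial_t u_m - \Div(A\nabla u_m) = f_m + \Div(F_m)$ on $(0,T) \times \R^n$ in the sense of Definition~\ref{def:weak_solution}. Since $f_m, F_m$ are compactly supported in the open half-space, the Duhamel formulas converge absolutely in $\L^2_x$ via Gaussian bounds, and $u_m \in \L^2_\loc(0,T; \W^{1,2}_\loc(\R^n))$ by standard semigroup arguments for $L = -\Div(A\nabla)$. Testing with $\phi \in \Cont^\infty_\cc((0,T) \times \R^n)$, an integration by parts in $s$ and $x$ together with the definition of $L$ via its form yields the required integral identity; this is routine but it is where the classical PDE content enters.

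The passage to the limit is then straightforward once the convergence statements are in hand. For the very weak case, both sides of~\eqref{eq:def_weak_integral} involve only $\L^1_\loc$-information of $u$, $\nabla u$, $f$, $F$, which is guaranteed by Proposition~\ref{prop:Duhamel_mapping_Z} (hypercontractivity gives $u \in \Z^{p,r}_{\beta + 1}$ for some $r \in \SobSet^{**}(q)$ and $\nabla u \in \Z^{p,r'}_\beta$ for some admissible $r'$, and these embed locally into $\L^1$). For the weak case, one needs $\L^2_\loc$-convergence of $u_m$ and $\nabla u_m$: since $q \geq 2_*$, one has $2 \in \SobSet^{**}(q)$, so $\Soll^L_1(f) \in \Z^{p,2}_{\beta + 1} \subseteq \L^2_\loc$, and since $2 \in \SobSet^{*}(q)$, $\nabla \Soll^L_1(f) \in \Z^{p,2}_{\beta + \nicefrac 12} \subseteq \L^2_\loc$; the assumption $\tilde q \geq 2$ is similarly used so that Corollary~\ref{cor:Duhamel_mapping_Z} yields $\Sol^L_{\nicefrac 12}(F) \in \Z^{p,r}_{\beta + \nicefrac 12}$ for some $r \geq 2$ and $\nabla \Sol^L_{\nicefrac 12}(F) \in \Z^{p,2}_{\beta}$ (using the nesting property when $\tilde q > 2$ to drop down to the exponent $2$ required by the singular line in Table~\ref{tab:Duhamel_Sobolev}).

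The main obstacle is the bookkeeping needed to combine the various Sobolev exponents in Table~\ref{tab:Duhamel_Sobolev} to secure the needed local $\L^r$-convergence for both $u_m$ and $\nabla u_m$ simultaneously, especially in the regular-coefficient case where the operator norms depend on $T$ and care is needed to avoid invoking the scalar fractional integral outside its range. Once this is settled, the right-hand side of~\eqref{eq:def_weak_integral} converges by the local $\L^q$- and $\L^{\tilde q}$-convergence of $f_m$ and $F_m$, and the left-hand side converges by the local convergence of $u_m$ and $\nabla u_m$ established above, yielding the integral identity for $u$ and hence the desired (very) weak solution property.
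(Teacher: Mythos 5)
Your proposal matches the paper's proof in its essential structure: approximate $f,F$ by compactly supported data (by density for $p<\infty$, by the explicit sequence from Theorem~\ref{thm:sio}~(ii) for $p=\infty$), verify the integral identity for the approximations via the classical $\L^2$-theory for $L=-\Div(A\nabla)$, pass to the limit using the mapping properties of Proposition~\ref{prop:Duhamel_mapping_Z} (only $\L^1_\loc$-convergence is actually needed, via~\eqref{eq:Z_loc_embedding}), and upgrade to a weak solution when $q\geq 2_*$, $\tilde q\geq 2$ by observing that then the target spaces embed into $\L^2_\loc$. One small point worth making explicit, which the paper does: one must use the \emph{same} approximating sequence for the Duhamel operator and its gradient, so that the $\L^1_\loc$-limit of $\nabla u_m$ is identified with the weak gradient of $u$ via closedness of the distributional derivative.
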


	\begin{proof}
		We focus on the case where $A$ is rough and we assume that $F = 0$ for brevity of the presentation.
		The other cases are analogous.
		Let us also mention that, when $\tilde q \geq 2$, the existence of weak solutions for non-trivial $F$ was treated in more generality in~\cite{AH3}.

		Throughout the proof, we use the second inclusion in~\eqref{eq:Z_loc_embedding} freely.

		\textbf{Step 1}: the case $p = q \leq 2$.
		Let $T \in (0,\infty)$ and put $Q \coloneqq (0,T) \times \R^n$.
		By Corollary~\ref{cor:Duhamel_Lq}, $\Soll^L_1$ and $\nabla \Soll^L_1$ extend to bounded operators
		\begin{align}
			\Soll^L_1 &\colon \L^q_\beta(T) \to \L^q_{\beta + 1}(T) \subseteq \L^1_\loc(Q), \\
			\nabla \Soll^L_1 &\colon \L^q_\beta(T) \to \L^q_{\beta + \frac{1}{2}}(T) \subseteq \L^1_\loc(Q).
		\end{align}
		By density~\cite[Ex.~9.4.1]{GrafakosModern}, pick a sequence $f_n \in \L^q_\beta(T) \cap \L^2_\cc(Q)$ such that $f_n \to f$ in $\L^q_\beta(T) \subseteq \L^1_\loc(Q)$. By continuity of $\Soll^L_1$ we find $u_n \coloneqq \Soll^L_1(f_n) \to \Soll^L_1(f) \eqqcolon u$ in $\L^1_\loc(Q)$. Using this and the continuity of $\nabla \Soll^L_1$ in conjunction with the definition of the weak gradient, deduce $\nabla u_n \to \nabla u$ in $\L^1_\loc(Q)$. By the $\L^2$-theory presented at the beginning of this subsection, $u_n$ is a weak solution to~\eqref{eq:LP} with forcing term $f_n$, that is to say, for $\phi \in \Cont^\infty_\cc(Q)$ there holds the integral identity
		\begin{align}
			\label{eq:weak_approximate_identity}
			\iint_Q -u_n \partial_t \phi + A \nabla u_n \cdot \nabla \phi \d x \d t = \iint_Q f_n \phi \d x \d t.
		\end{align}
		Since $u_n \to u$, $\nabla u_n \to \nabla u$ and $f_n \to f$ in $\L^1_\loc(Q)$, we can take the limit $n\to \infty$ in~\eqref{eq:weak_approximate_identity} to find
		\begin{align}
			\iint_Q -u \partial_t \phi + A \nabla u \cdot \nabla \phi \d x \d t = \iint_Q f \phi \d x \d t, \qquad \phi \in \Cont^\infty_\cc(Q).
		\end{align}
		Since $u, \nabla u \in \L^1_\loc(Q)$, this shows that $u$ is a very weak solution to~\eqref{eq:LP}.

		\textbf{Step 2}: case $p \geq q$. By the nesting property of weighted $\Z$-spaces, we can suppose $q \leq 2$. The strategy is similar as in Step~1. Let again $T \in (0,\infty)$ and put $Q \coloneqq (0,T) \times \R^n$.
		By Proposition~\ref{prop:Duhamel_mapping_Z}, $\Soll^L_1$ and $\nabla \Soll^L_1$ extend to bounded operators
		\begin{align}
			\Soll^L_1 &\colon \Z^{p,q}_\beta(T) \to \Z^{p,q}_{\beta + 1}(T) \subseteq \L^1_\loc(Q), \\
			\nabla \Soll^L_1 &\colon \Z^{p,q}_\beta(T) \to \Z^{p,q}_{\beta + \frac{1}{2}}(T) \subseteq \L^1_\loc(Q).
		\end{align}
		We claim that there exists a sequence $f_n \in \L^q_\cc(Q)$ such that $f_n \to f$ in $\L^1_\loc(Q)$, $u_n \coloneqq \Soll^L_1(f_n) \to \Soll^L_1(f) \eqqcolon u$ in $\L^1_\loc(Q)$ and $\nabla u_n \to \nabla u$ in $\L^1_\loc(Q)$. Indeed, if $p < \infty$, then by density (see Section~\ref{subsec:prelim_function_spaces}) there exists a sequence $f_n \in \L^q_\cc(Q)$ that converges in $\Z^{p,q}_\beta(T)$ to $f$. Then, the claims for $u_n$ and $\nabla u_n$ follow by continuity of $\Soll^L_1$ and $\nabla \Soll^L_1$ as in Step~1. If $p = \infty$, we use the sequence $f_n$ provided by Theorem~\ref{thm:sio}. Note that we can take the same sequence for $\Soll^L_1$ and $\nabla \Soll^L_1$. The convergence of $\nabla u_n \to \nabla u$ uses continuity in conjunction with the definition of the weak gradient in an analogous way as before.

		To conclude Step~2, we use the same approximation argument as in Step~1, this time exploiting that $u_n$ is a weak solution to~\eqref{eq:LP} with forcing term $f_n$ owing to Step~1.

		\textbf{Step 3}: weak solutions. Now, suppose in addition that $q \geq 2_*$. Then, Proposition~\ref{prop:Duhamel_mapping_Z} provides bounded extensions %
		\begin{align}
			\Soll^L_1 &\colon \Z^{p,q}_\beta(T) \to \Z^{p,2}_{\beta + 1}(T) \subseteq \L^2_\loc(Q), \\
			\nabla \Soll^L_1 &\colon \Z^{p,q}_\beta(T) \to \Z^{p,2}_{\beta + \frac{1}{2}}(T) \subseteq \L^2_\loc(Q).
		\end{align}
		Therefore, $u, \nabla u \in \L^2_\loc(Q)$. Since $u$ is a very weak solution to~\eqref{eq:LP} by Step~2, this shows that $u$ is in fact a weak solution to~\eqref{eq:LP}.
	\end{proof}

	\begin{remark}
		We emphasize that it is possible in the previous proposition that the forcing term $f$ is \emph{not} itself locally square-integrable. Such forcing terms were not considered in~\cite{AH2}.
		{To the contrary, in the variational setting for parabolic equations in divergence form, forcing terms in $f\in \L^{2_*}$ can be considered, see for instance \cite{ABES19}.
		So, our condition $q\ge 2_{*}$ is natural.}
	\end{remark}

	To conclude this section, we discuss the vanishing trace of the Duhamel operators.

	\begin{proposition}[Vanishing trace for Duhamel solutions]
		\label{prop:Duhamel_Z_trace}
		Fix $T \in (0,\infty)$.
		Let $\beta > -1$, $q \in [2_*, \infty]$, $\tilde q \in [2,\infty]$ and $p \in (1,\infty]$. %
		Moreover, let
		\begin{itemize}
			\item $f \in \Z^{p,q}_{\beta - \nicefrac{1}{2}}$ if in addition $\beta > - \nicefrac{1}{2}$, otherwise put $f = 0$,
			\item $F \in \Z^{p,\tilde q}_{\beta}$.
		\end{itemize}
		Then, the function $u \coloneqq \Soll^L_1(f) + \Sol^L_{\nicefrac{1}{2}}(F)$ satisfies $u(0) = 0$ in the sense of Definition~\ref{def:initial_condition}.
	\end{proposition}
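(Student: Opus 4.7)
The plan is to show that, for each $\phi \in \Cont^\infty_\cc(\R^n)$, the scalar function $g(t) \coloneqq \langle u(t), \phi\rangle$ extends continuously to $[0,T)$ with $g(0^+) = 0$; by Definition~\ref{def:initial_condition}, this yields the distributional convergence $u(t) \to 0$ in $\cD'(\R^n)$ as $t \to 0$.

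First I would establish continuity of $g$ at $t = 0$ via the PDE. By Proposition~\ref{prop:Duhamel_Z_weak}, $u$ is a very weak solution of~\eqref{eq:LP}, so testing the distributional equation against $\phi \chi(t)$ for $\chi \in \Cont^\infty_\cc((0,T))$ produces
\begin{align}
g'(t) = \langle f(t), \phi\rangle + \langle F(t), \nabla\phi\rangle - \int_{\R^n} A(x)\nabla u(t,x) \cdot \nabla\phi(x) \d x
\end{align}
in $\cD'((0,T))$. Each factor ($f$, $F$, $\nabla u$) belongs to a weighted $\Z$-space of weight strictly above $-1$: weight $\beta - \nicefrac{1}{2} > -1$ for $f$ when present, and weight $\beta > -1$ for both $F$ and $\nabla u$. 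A Whitney covering of $(0,\eta) \times \supp\phi$ combined with Jensen's inequality and the embedding~\eqref{eq:Z_loc_embedding} gives
\begin{align}
\int_0^\eta \int_{\supp\phi} |h(t,x)| \d x \d t \lesssim \eta^{1+\gamma_h}
\end{align}
for each such factor $h$ of weight $\gamma_h > -1$. Hence $g' \in \L^1(0,T)$ with $\int_0^\eta |g'(s)| \d s \to 0$ as $\eta \to 0$, so $g$ extends continuously to $[0,T)$ with some value $g(0^+)$.

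Next, to pin down $g(0^+) = 0$, I would approximate the data by compactly supported substitutes. For $p < \infty$, density of $\L^q_\cc$ in $\Z^{p,q}_{\beta - \nicefrac{1}{2}}$ (and of $\L^{\tilde q}_\cc$ in $\Z^{p,\tilde q}_\beta$) supplies sequences $f_n \to f$, $F_n \to F$ in the respective $\Z$-norms. For $p = \infty$, I would use the explicit approximation from Theorem~\ref{thm:sio}(ii) applied simultaneously to the four Duhamel operators (as in Step~2 of the proof of Proposition~\ref{prop:Duhamel_Z_weak}), yielding compactly supported sequences with the required local convergence. Continuity of the Duhamel operators on $\Z$-spaces (Proposition~\ref{prop:Duhamel_mapping_Z}), together with~\eqref{eq:Z_loc_embedding}, propagates this to $u_n \to u$ and $\nabla u_n \to \nabla u$ in $\L^1_\loc((0,T) \times \R^n)$, where $u_n \coloneqq \Soll^L_1(f_n) + \Sol^L_{\nicefrac{1}{2}}(F_n)$. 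Causality (Remark~\ref{rem:SIO_causal}) forces each $u_n$ to vanish on some initial strip $[0,\tau_n)$, so the classical trace $u_n(0) = 0$ holds.

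Setting $g_n(t) \coloneqq \langle u_n(t), \phi\rangle$, one has $g_n(t) = \int_0^t g_n'(s) \d s$ for every $t \in [0,T)$. The local convergences transfer to $g_n \to g$ and $g_n' \to g'$ in $\L^1_\loc(0,T)$, and passing to the limit gives $g(t) = \int_0^t g'(s) \d s$ for almost every $t$; continuity of both sides on $[0,T)$ upgrades this to every $t$, whence $g(0^+) = 0$ on sending $t \to 0$. The main obstacle will be the $p = \infty$ case, where $\L^q_\cc$ is not norm-dense in $\Z^{\infty, q}_\gamma$: one must verify that the approximating sequences furnished by Theorem~\ref{thm:sio}(ii) can be chosen compatibly across all four Duhamel operators to guarantee joint $\L^1_\loc$-convergence of $u_n$ and $\nabla u_n$ to $u$ and $\nabla u$, respectively.
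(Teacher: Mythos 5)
Your argument is self-contained and takes a genuinely different route from the paper's. The paper proves the vanishing trace by reduction to existing results: the term $\Sol^L_{\nicefrac{1}{2}}(F)$ is handled by citing the corresponding trace result in~\cite{AH3} (after using $\Z^{p,\tilde q}_\beta\subseteq\Z^{p,2}_\beta$), the term $\Soll^L_1(f)$ by citing~\cite{AH2}, and the relaxation from $q\ge 2$ there to $q\ge 2_*$ here is argued by replacing the localized energy estimate of~\cite{AMP19} used in those proofs by the sharper version from~\cite{ABES19}. You instead study the scalar function $g(t)=\langle u(t),\phi\rangle$ directly: the weak formulation yields an explicit formula for $g'$, a Whitney-covering bound on weighted $\Z$-spaces of weight $>-1$ gives $g'\in\L^1(0,\eta)$ with $\int_0^\eta|g'|\lesssim\eta^{1+\gamma}$, and the value $g(0^+)=0$ is pinned down by approximation and causality. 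This is an appealing, transparent route that avoids leaning on the (non-trivial) localized energy estimates behind the cited results; what the cited route establishes in addition is convergence of the trace in $\L^2_\loc$, which is stronger than the distributional convergence of Definition~\ref{def:initial_condition} required here.

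There is, however, a gap in the final limit passage as written. From $g_n'\to g'$ in $\L^1_\loc(0,T)$ alone you cannot conclude $\int_0^t g_n'(s)\d s\to\int_0^t g'(s)\d s$, because the interval of integration touches $t=0$ where local convergence gives no control. What rescues the argument is precisely your Whitney-covering estimate applied \emph{uniformly in $n$}: the $\Z$-norms of $f_n$, $F_n$, $\nabla u_n$ stay bounded along the approximating sequences (for $p<\infty$ by norm convergence; for $p=\infty$ because the approximants produced by Theorem~\ref{thm:sio}(ii) are restrictions of the original data, hence norm-nonincreasing, while $\nabla u_n$ is correspondingly bounded by Proposition~\ref{prop:Duhamel_mapping_Z}), so that $\sup_n\int_0^\delta|g_n'(s)|\d s\lesssim\delta^{1+\gamma}$ with $\gamma>-1$. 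Combining this uniform smallness at $t=0$ with the limit of $g_n(t)-g_n(\delta)=\int_\delta^t g_n'(s)\d s$ for fixed $\delta>0$ yields $g(0^+)=\lim_{\delta\to 0}g(\delta)=0$. You should make this uniformity explicit; the local convergences by themselves do not suffice.
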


	\begin{proof}
		Keeping $\Z^{p,\tilde q}_\beta \subseteq \Z^{p,2}_\beta$ in mind, the term $\Sol^L_{\nicefrac{1}{2}}(F)$ has already been treated in~\cite[Thm.~5.1~(d)]{AH3}. Likewise, the term $\Soll^L_1(f)$ was treated in~\cite{AH2} under the assumption that $q \geq 2$. Their proof relies on the standard {localized energy estimate presented in~\cite[Prop.~3.6]{AMP19}}. If the more refined version in~\cite[Proof of Prop.~4.3]{ABES19} is used instead, then the proof immediately extends to our setting, keeping the inclusion {$\Z^{p,q}_{\beta-\nicefrac{1}{2}} \subseteq \Z^{p,2_*}_{\beta-\nicefrac{1}{2}}$} in mind.
	\end{proof}

	\section{The free evolution of Besov data}
	\label{sec:caloric}

	We investigate solvability of the initial value problem
	\begin{align}
		\label{eq:caloric}
		\tag{HC}
		\partial_t u -\Div(A\nabla u) = 0, \quad u(0) = u_0,
	\end{align}
	where $u_0$ is taken from a homogeneous Besov space $\dot \B^\alpha_{p,p}$.

	Throughout this section, put $L = -\Div(A\nabla)$. Let $p \in (1,\infty]$ and $\alpha \in (-1,0)$ satisfying $p \geq p(\alpha,A)$. Owing to Proposition~\ref{prop:initial_value_problem_hedong}, the mapping
	\begin{align}
		\dot \B^\alpha_{p,p} \ni u_0 \mapsto \Ext_L(u_0) \coloneqq u \in \Z^{p,2}_{\nicefrac{\alpha}{2}}
	\end{align}
	is well-defined, where $u$ is the unique weak solution to the initial value problem~\eqref{eq:caloric}. Call $\Ext_L(u_0)$ the \emph{free evolution} of $u_0$.
	If $u_0$ belongs to $\L^2_x$ in addition, then the free evolution can be expressed by virtue of the heat semigroup generated by $-L$.

	The following result is the main tool for the free evolution of Besov data.

	\begin{proposition}[Free evolution of Besov data]
		\label{prop:caloric_extension_Besov}
		Let $p \in (1,\infty]$ and $\alpha \in (-1,0)$ satisfying $p \geq p(\alpha,A)$. Set $\sigma \coloneqq \nicefrac{n}{p} - \alpha$. Let $u_0 \in \dot\B^\alpha_{p,p}$ if $p < \infty$ and $u_0 \in \vv\dot\B^\alpha_{\infty,\infty}$ if $p = \infty$. Then, for all $r \in [p,\infty]$ and $q\in (1,\infty]$, one has $\Ext_L(u_0) \in \Z^{r,q}_{\nicefrac{n}{2r} - \nicefrac{\sigma}{2}}$ and $\nabla\Ext_L(u_0) \in \Z^{r,2}_{\nicefrac{n}{2r} - \nicefrac{\sigma}{2}-\nicefrac{1}{2}}$ along with the estimates
		\begin{align}
			\label{eq:caloric_extension_Besov}
			\|\Ext_L(u_0)\|_{ {\Z^{r,q}_{\frac{n}{2r} - \frac{\sigma}{2}}}} {+ \|\nabla \Ext_L(u_0)\|_{ {\Z^{r,2}_{\frac{n}{2r} - \frac{\sigma}{2}-\frac{1}{2}}}}} \lesssim \|u_{0}\|_{\dot\B^\alpha_{p,p}}.
		\end{align}
		If $r = \infty$, we even have $\Ext_L(u_0) \in {\Zv^{\infty,q}_{-\nicefrac{\sigma}{2}}}$ {and $\nabla\Ext_L(u_0) \in {\Zv^{\infty,2}_{-\nicefrac{\sigma}{2}-\nicefrac{1}{2}}}$}.
		Put $u\coloneqq \Ext_L(u_0)$. Then $u$ is the unique weak solution to the initial value problem~\eqref{eq:caloric}.
	\end{proposition}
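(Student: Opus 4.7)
Put $u \coloneqq \Ext_L(u_0)$. Proposition~\ref{prop:initial_value_problem_hedong} already delivers $u \in \Z^{p,2}_{\nicefrac{\alpha}{2}}$ and $\nabla u \in \Z^{p,2}_{\nicefrac{\alpha}{2} - \nicefrac{1}{2}}$, with the stated Besov control. Since $\nicefrac{\alpha}{2} = \nicefrac{n}{2p} - \nicefrac{\sigma}{2}$, this coincides with the $r = p$, $q = 2$ slice of~\eqref{eq:caloric_extension_Besov}. For $r \in (p, \infty]$, my plan is to apply the Hardy--Sobolev embedding (Proposition~\ref{prop:Z_space_embedding}) with $p_0 = p$, $p_1 = r$, $\beta_0 = \nicefrac{\alpha}{2}$, $\beta_1 = \nicefrac{n}{2r} - \nicefrac{\sigma}{2}$: the scaling identity $2\beta_0 - \nicefrac{n}{p_0} = -\sigma = 2\beta_1 - \nicefrac{n}{p_1}$ is automatic, and $\beta_0 > \beta_1$ since $r > p$. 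Shifting $\beta$ by $-\nicefrac{1}{2}$ upgrades the gradient bound in the same way. This secures the $q = 2$ slice of~\eqref{eq:caloric_extension_Besov} at every admissible $r$.

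To reach general $q \in (1,\infty]$, the case $q \leq 2$ follows from the nesting $\Z^{r,2} \subseteq \Z^{r,q}$. The hard case is $q > 2$, which I would handle via Moser's local boundedness: since $A$ is real, bounded, measurable, elliptic, and $u$ solves $\partial_t u - Lu = 0$ weakly on $\R^{n+1}_+$, on every parabolic Whitney box
\begin{align}
\sup_{\WB(x,t)} |u|^2 \lesssim \fint_{t/4}^{t} \fint_{\B(x, 2\sqrt{t})} |u(s,y)|^2 \d y \d s.
\end{align}
Since $s \approx t$ on the enlarged cylinder, the weight $s^{-\beta}$ with $\beta = \nicefrac{n}{2r} - \nicefrac{\sigma}{2}$ can be pulled inside; taking then the $\L^r_x$-norm and integrating against $\nicefrac{\d t}{t}$ yields $\| u \|_{\Z^{r,\infty}_{\nicefrac{n}{2r}-\nicefrac{\sigma}{2}}} \lesssim \| u \|_{\Z^{r,2}_{\nicefrac{n}{2r}-\nicefrac{\sigma}{2}}}$, where the spatial enlargement from $\sqrt{t}$ to $2\sqrt{t}$ is absorbed by Lemma~\ref{lem:change_of_angle} and the time enlargement from $(\nicefrac{t}{2}, t)$ to $(\nicefrac{t}{4}, t)$ by a dyadic splitting. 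Nesting then covers every $q \in (1,\infty]$.

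For the $\Zv$-conclusion when $r = \infty$, if $p < \infty$ I would apply Corollary~\ref{cor:embedding_into_Zv} to the already-established bounds, which directly gives $u \in \Zv^{\infty,q}_{-\nicefrac{\sigma}{2}}$ and $\nabla u \in \Zv^{\infty,2}_{-\nicefrac{\sigma}{2} - \nicefrac{1}{2}}$ via the scaling $\beta_1 = \beta_0 - \nicefrac{n}{2p}$. If $p = \infty$ and $u_0 \in \vv\dot\B^\alpha_{\infty,\infty}$, pick $u_{0,k} \in \Soo$ with $u_{0,k} \to u_0$ in $\dot\B^\alpha_{\infty,\infty}$ (Definition~\ref{def:vvB}); each $u_{0,k}$ sits in every $\dot\B^\alpha_{p',p'}$ with $p' < \infty$ (Remark~\ref{rem:def_vvB}), and the finite-$p'$ case already proved yields $\Ext_L(u_{0,k}) \in \Zv^{\infty,q}_{-\nicefrac{\sigma}{2}}$. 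The $\Z^{\infty,q}_{-\nicefrac{\sigma}{2}}$-estimate from~\eqref{eq:caloric_extension_Besov} (now valid at $p = \infty$) together with linearity of $\Ext_L$ forces $\Ext_L(u_{0,k}) \to u$ in $\Z^{\infty,q}_{-\nicefrac{\sigma}{2}}$; since $\Zv^{\infty,q}_{-\nicefrac{\sigma}{2}}$ is closed in $\Z^{\infty,q}_{-\nicefrac{\sigma}{2}}$, this transfers the $\Zv$-property to the limit, and the same reasoning covers $\nabla u$. Uniqueness is obtained by applying Proposition~\ref{prop:linear_uniqueness} to the difference $u - \tilde u$ of two weak solutions sharing the datum $u_0$, noting that the required hypothesis $p \geq p(\alpha, A)$ for the gradient class $\Z^{p,2}_{\nicefrac{\alpha}{2} - \nicefrac{1}{2}}$ is precisely what we assume.

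The main obstacle is the passage from $q = 2$ to $q > 2$: although Moser's local boundedness is classical in the real-coefficient setting, executing it on parabolic Whitney boxes without incurring a loss in the global integrability parameter $r$ is where Lemma~\ref{lem:change_of_angle} must be invoked carefully to reconcile the enlarged parabolic averages with the original Whitney geometry.
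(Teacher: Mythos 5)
Your plan matches the paper through the Hardy--Sobolev step, but for the $q$-improvement you take a genuinely different route: the paper's Proposition~\ref{prop:free_evolution_improve_Loo} is a semigroup-averaging argument (write $G(t) = \fint_{t/4}^{t/2}\e^{-(t-s)L}G(s)\d s$ and exploit $\L^q\to\L^r$ off-diagonal bounds of $\{\e^{-tL}\}$), applied first for $u_0\in\Soo$ and then pushed to general data by density; you instead invoke parabolic local boundedness (De Giorgi--Nash--Moser) directly for the weak solution $\Ext_L(u_0)$. Your route is valid here (real coefficients, interior Whitney geometry, fixed parabolic aspect ratio, then Lemma~\ref{lem:change_of_angle} plus dyadic splitting to reconcile the enlarged cylinders), and it has the advantage of not needing $u_0\in\Soo$ at this step; the paper's route has the advantage of being purely semigroup-theoretic and of reusing the same off-diagonal machinery that drives the SIO theory. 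Your $p<\infty$ handling of the $\Zv$-claim via Corollary~\ref{cor:embedding_into_Zv} is also a clean alternative to the paper's combination of its Lemma~\ref{lem:heat_extension_bounded} with density.

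The one genuine gap is in the $p=\infty$ branch of the $\Zv$-claim. You keep $\alpha$ fixed and apply the finite-$p'$ case to $u_{0,k}\in\dot\B^\alpha_{p',p'}$, asserting that this yields $\Ext_L(u_{0,k})\in\Zv^{\infty,q}_{-\nicefrac{\sigma}{2}}$. But with $(p',\alpha)$ the relevant scaling parameter is $\sigma'=\nicefrac{n}{p'}-\alpha>\sigma=-\alpha$, so what you actually obtain is $\Zv^{\infty,q}_{-\nicefrac{\sigma'}{2}}$ with $-\nicefrac{\sigma'}{2}<-\nicefrac{\sigma}{2}$ — and the smallness condition at a more negative weight is strictly \emph{weaker}: on a Whitney box $s\approx t$, so membership in $\Zv^{\infty,q}_{\beta'}$ for $\beta'<\beta$ does not control the $\Z^{\infty,q}_\beta$-norm near $t=0$. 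To repair this, either shift the regularity to $\alpha'\coloneqq\alpha+\nicefrac{n}{p'}$ (taking $p'$ large enough that $\alpha'\in(-1,0)$), which gives $\sigma'=\sigma$ and hence the right weight after Corollary~\ref{cor:embedding_into_Zv}, or establish directly that $\Ext_L(u_{0,k})\in\Zv^{\infty,q}_{\beta}$ for every $\beta<0$ when $u_{0,k}\in\Soo$, which is precisely the content of the paper's Lemma~\ref{lem:heat_extension_bounded}: since $s^{-\beta}\to 0$ as $s\to 0$ and $\|\e^{-sL}u_{0,k}\|_\infty\lesssim\|u_{0,k}\|_\infty$, the Whitney averages are $\lesssim t^{-\beta}\|u_{0,k}\|_\infty\to 0$.
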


	\begin{remark}
		For non-linear problems, the parameter $\sigma$ is dictated by the scaling behavior of the equation, compare with Section~\ref{subsec:scaling}.
	\end{remark}

	For its proof, we need some preparatory results. One of them is the following self-improving property of the free evolution in weighted $\Z$-spaces.

	\begin{proposition}[Self-improving property]
		\label{prop:free_evolution_improve_Loo}
		Fix $T \in (0, \infty]$.
		Let $\beta \in \R$ and {$p,q,r \in (1,\infty]$ with $r > q$}. Then, $\Ext_L$ satisfies the self-improving estimate
		\begin{align}
			\| \Ext_L(u_0) \|_{\Z^{p,r}_\beta(T)} \lesssim \| \Ext_L(u_0) \|_{\Z^{p,q}_\beta(T)}, \qquad u_0 \in \Soo.
		\end{align}
		The implicit constant is independent of $T$.
	\end{proposition}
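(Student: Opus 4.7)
The plan is to exploit that $u \coloneqq \Ext_L(u_0)$ is a bona fide weak solution of the homogeneous equation on all of $(0,\infty) \times \R^n$, and to use local boundedness (Moser/Nash/De Giorgi) to upgrade the inner $\L^q$-Whitney-average to an inner $\L^r$-Whitney-average at the price of a mild enlargement of the cylinder. Since $u_0 \in \Soo \subseteq \L^2_x$, Proposition~\ref{prop:initial_value_problem_hedong} yields $u(t) = \e^{-tL} u_0$, and $u$ is a weak solution of $\partial_t u - \Div(A\nabla u) = 0$ on the whole half-space. Because $A$ is real, bounded and elliptic, the classical Moser local boundedness theorem for divergence-form parabolic equations applies: for any $q_0 > 0$ and any parabolic cylinders $W \Subset W'$ one has
\begin{align}
\sup_W |u| \leq C \Bigl(\fint\!\!\fint_{W'} |u|^{q_0}\Bigr)^{1/q_0},
\end{align}
where $C$ depends only on the ellipticity constants, $q_0$ and the geometric ratio of $W$ and $W'$ (by parabolic scale-invariance).

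First I would apply this with $W = (t/2, t) \times \B(x, \sqrt{t})$, $W' = (t/4, t) \times \B(x, 2\sqrt{t})$, and the exponent $q_0 \coloneqq q$. Observing that $s^{-\beta}$ is comparable to $t^{-\beta}$ uniformly on $W'$, and that $\|\cdot\|_{\L^r(W)} \leq |W|^{1/r} \|\cdot\|_{\L^\infty(W)}$, one obtains for every $r \in (q,\infty]$ the pointwise (in $(t,x)$) estimate
\begin{align}
\Bigl(\fint_{t/2}^t \fint_{\B(x,\sqrt{t})} |s^{-\beta} u|^r\Bigr)^{1/r} \lesssim \Bigl(\fint_{t/4}^t \fint_{\B(x, 2\sqrt{t})} |s^{-\beta} u|^q\Bigr)^{1/q}.
\end{align}
Crucially, whenever $t \leq T$, the enlarged cylinder $W'$ is still contained in $(0, t] \times \R^n \subseteq (0, T] \times \R^n$, so applying Moser to the genuine solution $u$ (rather than its zero extension) is consistent with computing the truncated $\Z^{\bullet,\bullet}_\beta(T)$ norms via the equivalence recorded after Definition~\ref{def:Z_space}.

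Next I would take the $\L^p$-norm in $\d x \ddt$ over $t \in (0,T)$. Writing
\begin{align}
\fint_{t/4}^t = \tfrac{1}{2} \fint_{t/2}^t + \tfrac{1}{2} \fint_{t/4}^{t/2},
\end{align}
I would split the right-hand side into two pieces. The first, corresponding to $s \in (t/2, t)$, is a Whitney average at height $t$ with spatial radius doubled; an application of Lemma~\ref{lem:change_of_angle} with $\lambda = 2$ controls it by $\| u \|_{\Z^{p,q}_\beta(T)}$. The second, corresponding to $s \in (t/4, t/2)$, becomes a Whitney average at height $\tau \coloneqq t/2$ with spatial radius $\sqrt{8\tau}$; performing the change of variables $\tau = t/2$ (for which $\ddt$ is invariant) and invoking Lemma~\ref{lem:change_of_angle} with $\lambda = \sqrt{8}$ produces the same bound. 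Summing the two contributions yields the desired estimate, with constants independent of $T$; in particular the case $T = \infty$ follows by monotone convergence.

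The main obstacle is simply quoting Moser's bound in the exact form needed; this is classical for weak solutions to divergence-form parabolic equations with real bounded measurable coefficients (e.g.\ Aronson--Serrin, Ladyzhenskaya--Solonnikov--Ural'tseva), and everything else is change-of-angle bookkeeping together with the fact that on $W'$ the weight $s^{-\beta}$ is essentially constant.
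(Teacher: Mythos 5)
Your proof is correct but takes a genuinely different route from the paper's. The paper's argument is operator-theoretic: it starts from the semigroup averaging identity $G(t)=\fint_{t/4}^{t/2}\e^{-(t-s)L}G(s)\,\mathrm{d}s$, feeds $\L^q_x\to\L^r_x$ off-diagonal estimates for $\{\e^{-tL}\}$ through Minkowski and Jensen, decomposes $G(s)$ over parabolic annuli $\ann_j(x,\tau)$, applies the change-of-angle lemma at each $j$, and sums the resulting geometric series against the exponential off-diagonal decay. You instead observe that $\Ext_L(u_0)$ is a genuine weak solution of $\partial_t u-\Div(A\nabla u)=0$ on the full half-space with real bounded measurable elliptic $A$ and invoke the parabolic Moser local sup estimate to trade the inner $\L^q$-average for an $\L^\infty$ (hence $\L^r$) bound on a slightly enlarged Whitney cylinder; the rest is the same change-of-angle bookkeeping, and uniformity in $T$ is automatic because the enlarged cylinder still lies inside $(0,T]\times\R^n$. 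Your route is shorter (no annulus decomposition, no geometric series) but is welded to the De~Giorgi--Nash--Moser framework, hence to scalar second-order divergence-form equations with real coefficients; the paper's argument reduces the matter to semigroup off-diagonal bounds, the same currency as the SIO machinery of Section~\ref{subsec:SIO}, which is what one would need anyway for the extensions to systems or higher-order operators mentioned in Section~\ref{subsec:intro_outlook}. Two cosmetic points: the split of the average should read $\fint_{t/4}^t=\tfrac{2}{3}\fint_{t/2}^t+\tfrac{1}{3}\fint_{t/4}^{t/2}$ rather than $\tfrac12,\tfrac12$, and since the proposition allows $q$ arbitrarily close to $1$ you should explicitly invoke the sub-$2$-exponent version of Moser's sup bound (classical, via the usual self-improvement step or Aronson--Serrin), though this does not affect correctness.
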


	Note that the converse inequality follows from the nesting property of weighted $\Z$-spaces. The argument is inspired by~\cite[Lem.~5.13]{Auscher_Frey}.

	\begin{proof}
		Let $u_0 \in \Soo \subseteq \L^2_x$.
		For convenience, put $G(t) \coloneqq \Ext_L(u_0)(t)= e^{-tL}u_{0}$. By the semigroup property, there holds for every $t>0$ the averaging identity
		\begin{align}
			G(t) = \fint_{\nicefrac{t}{4}}^{\nicefrac{t}{2}} \e^{-(t-s)L} G(s) \d s.
		\end{align}
		Fix $x \in \R^n$ and $\tau \in (0,T]$. Let $t\in (\nicefrac{\tau}{2}, \tau)$. Using Minkowski's inequality and $\L^q_x \to \L^r_x$ off-diagonal estimates of the heat semigroup, calculate
		\begin{align}
			\Bigl(\fint_{\B(x,\sqrt{\tau})} |G(t,y)|^r \d y \Bigr)^\frac{1}{r} &\leq \fint_{\nicefrac{t}{4}}^{\nicefrac{t}{2}} \tau^{-\frac{n}{2r}} \| \e^{-(t-s)L} G(s) \|_{\L^r(\B(x,\sqrt{\tau}))} \d s \\
			&\leq \sum_{j \geq 1} \fint_{\nicefrac{t}{4}}^{\nicefrac{t}{2}} \tau^{-\frac{n}{2r}} \| \e^{-(t-s)L} (G(s) \ind_{\ann_j(x,\tau)}) \|_{\L^r(\B(x,\sqrt{\tau}))} \d s \\
			&\lesssim \sum_{j \geq 1} \fint_{\nicefrac{t}{4}}^{\nicefrac{t}{2}} \tau^{-\frac{n}{2r}} (t-s)^{\frac{n}{2r} - \frac{n}{2q}} \e^{-c\frac{2^j \tau}{t-s}} \| G(s) \|_{\L^q(\B(x,\sqrt{2^{j+1}\tau}))} \d s.
		\end{align}
		Now, use $t-s\approx t \approx \tau$, $(\nicefrac{t}{4}, \nicefrac{t}{2}) \subseteq (\nicefrac{\tau}{8}, \nicefrac{\tau}{2})$ and Jensen's inequality, to find
		\begin{align}
			\Bigl(\fint_{\B(x,\sqrt{\tau})} |G(t,y)|^r \d y \Bigr)^\frac{1}{r} &\lesssim \sum_{j \geq 1} \e^{-c2^j} \fint_{\nicefrac{\tau}{8}}^{\nicefrac{\tau}{2}} \tau^{- \frac{n}{2q}} \| G(s) \|_{\L^q(\B(x,\sqrt{2^{j+1}\tau}))} \d s \\
			&\lesssim \sum_{j \geq 1} \e^{-c2^j} \Bigl( \fint_{\nicefrac{\tau}{8}}^{\nicefrac{\tau}{2}} \tau^{- \frac{n}{2}} \| G(s) \|_{\L^q(\B(x,\sqrt{2^{j+1}\tau}))}^q \d s \Bigr)^\frac{1}{q}.
		\end{align}
		The right-hand side of the last bound is independent of $t$, so applying an $\L^r$-average over $t$ yields
		\begin{align}
				\Bigl(\fint_{\nicefrac{\tau}{2}}^\tau \fint_{\B(x,\sqrt{\tau})} |G(t,y)|^r \d y \d t \Bigr)^\frac{1}{r} \lesssim \sum_{j \geq 1} \e^{-c2^j} \Bigl( \fint_{\nicefrac{\tau}{8}}^{\nicefrac{\tau}{2}} \tau^{- \frac{n}{2}} \int_{\B(x,\sqrt{2^{j+1}\tau})} |G(s,y)|^q \d y \d s \Bigr)^\frac{1}{q}.
		\end{align}
		Now, multiply the last bound by $\tau^{-\beta}$, use $t \approx \tau \approx s$, and apply the $\L^p$-norm in conjunction with a change of angle to deduce
		\begin{align}
			\| G \|_{\Z^{p,r}_\beta(T)} \lesssim \sum_{j \geq 1} \e^{-c2^j} 2^\frac{jn}{2\min(p,q)} \| G \|_{\Z^{p,q}_\beta(T)}.
		\end{align}
		The sum in $j$ converges. This completes the proof by definition of $G$.
	\end{proof}

	\begin{lemma}
		\label{lem:heat_extension_bounded}
		Let $u_0 \in \Soo$. Then, $\Ext_{L}(u_0) \in \Zv^{\infty,{q}}_\beta$ for all $\beta < 0$ {and $q\in (1,\infty]$}.
	\end{lemma}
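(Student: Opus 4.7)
The plan is to reduce the lemma to the case $q = \infty$ via the nesting property $\Z^{\infty,\infty}_\beta \subseteq \Z^{\infty,q}_\beta$, after which the task splits into verifying the two defining conditions of $\Zv^{\infty,\infty}_\beta$: the global finiteness of $\|\Ext_L(u_0)\|_{\Z^{\infty,\infty}_\beta}$ and the vanishing $\|\Ext_L(u_0)\|_{\Z^{\infty,\infty}_\beta(\tau)} \to 0$ as $\tau \to 0$. Since $u_0 \in \Soo \subseteq \L^2_x$, we have the concrete representation $\Ext_L(u_0) = \e^{-sL}u_0$ at height $s$.

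First I would dispose of the vanishing condition, which is the easy half. The Gaussian kernel bounds for $\e^{-sL}$ recalled in Section~\ref{subsec:prelim_elliptic} yield the uniform estimate $\|\e^{-sL} u_0\|_{\L^\infty_x} \lesssim \|u_0\|_{\L^\infty_x}$ for all $s > 0$, whence
\[
\|\Ext_L(u_0)\|_{\Z^{\infty,\infty}_\beta(\tau)} \leq \sup_{0 < s \leq \tau} s^{-\beta} \|\e^{-sL} u_0\|_{\L^\infty_x} \lesssim \tau^{-\beta} \|u_0\|_{\L^\infty_x},
\]
which tends to $0$ as $\tau \to 0$ because $-\beta > 0$.

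The substance of the lemma is the global finiteness, requiring enough decay of $\|\e^{-sL} u_0\|_{\L^\infty_x}$ at $s \to \infty$ to beat the weight $s^{|\beta|}$. The elementary $\L^1 \to \L^\infty$ Gaussian bound $\|\e^{-sL} u_0\|_{\L^\infty_x} \lesssim s^{-n/2} \|u_0\|_{\L^1_x}$ handles $\beta \geq -n/2$. To reach arbitrarily negative $\beta$, I would exploit that $u_0 \in \Soo$ has all polynomial moments vanishing, so $\hat u_0$ vanishes to infinite order at the origin. This permits writing $u_0 = \Delta^N V_N$ with $V_N \in \Soo$ for any $N \in \N$, by setting $\hat V_N(\xi) = (-1)^N \hat u_0(\xi)/|\xi|^{2N}$ (which is smooth and Schwartz, owing to the infinite-order vanishing of $\hat u_0$ at $0$). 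Iterated $\L^1 \to \L^\infty$ smoothing estimates for $\e^{-sL} \Delta^N$ of order $s^{-N - n/2}$ then yield $\|\e^{-sL} u_0\|_{\L^\infty_x} \lesssim s^{-N - n/2} \|V_N\|_{\L^1_x}$, which absorbs $s^{|\beta|}$ for $N$ chosen so that $N + n/2 > -\beta$.

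The principal obstacle is justifying the iterated $\L^1 \to \L^\infty$ estimate for $\e^{-sL}\Delta^N$ in the rough-coefficient regime, where $L \neq -\Delta$ and commutation is not available. I would decompose $\e^{-sL} = \prod_{k=1}^{N+1} \e^{-s_k L}$ with $s_k = s/(N+1)$ and, writing $\Delta^N = \Div \circ \nabla \circ \Delta^{N-1}$ iteratively, let each semigroup factor absorb one order of derivative via the $\L^p$ off-diagonal bounds for $\sqrt{s_k}\,\e^{-s_k L} \Div$ available in the admissible range $(q(A^*)', \infty]$, composing through intermediate $\L^p$ spaces and closing off with a Gaussian $\L^p \to \L^\infty$ smoothing at the final step to recover the $s^{-N-n/2}$ gain. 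As an alternative for the subrange of $\beta$ where Proposition~\ref{prop:caloric_extension_Besov} is directly applicable, one can instead invoke that proposition in combination with Corollary~\ref{cor:embedding_into_Zv}, using that $\Soo \subseteq \vv\dot\B^\alpha_{\infty,\infty}$ for every $\alpha < 0$.
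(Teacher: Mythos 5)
Your proof is more ambitious than the paper's and catches a real subtlety: you correctly observe that membership in $\Zv^{\infty,q}_\beta$ requires both the smallness condition~\eqref{eq:Zv_defining_property} and the global bound $\Ext_L(u_0)\in\Z^{\infty,q}_\beta$, and you try to establish both. The paper's own proof only establishes the former: it records $\|\Ext_L(u_0)\|_{\Z^{\infty,q}_\beta(T)}\lesssim T^{-\beta}\|u_0\|_{\L^\infty_x}$, which tends to zero as $T\to 0$ but blows up as $T\to\infty$, so it does not by itself produce the global membership. (In the one place the lemma is invoked, namely the proof of Proposition~\ref{prop:caloric_extension_Besov}, the global membership is supplied independently by the boundedness of $\Ext_L$ proved just before, so only the vanishing half of the lemma is actually used.)

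However, your route to global finiteness for $\beta<-\nicefrac{n}{2}$ does not survive the rough-coefficient regime, and this is a genuine gap. Writing $u_0=\Delta^N V_N$ is fine, but $\Delta$ does not commute with $L$, and splitting $\e^{-sL}=\prod_k\e^{-s_kL}$ cannot absorb $N$ Laplacians one per factor: after applying $\e^{-s_1L}\Div$ to $\nabla\Delta^{N-1}V_N$, the output is a scalar, not a divergence of a controlled vector field, so the next semigroup factor has nothing further to absorb. Nor are the endpoint bounds available to close the chain: for merely measurable $A$, the family $\sqrt{t}\,\e^{-tL}\Div$ is controlled only above the exponent $q(A^*)'>1$, and $\sqrt{t}\,\nabla\e^{-tL}$ only below $q(A)<\infty$, so one cannot compose through $\L^1$ or $\L^\infty$. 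More fundamentally, the heat kernel of a real, merely measurable $A$ has only finite H\"older regularity (Nash--Moser), so the moment cancellation of $u_0\in\Soo$ buys at most an extra factor $s^{-\nicefrac{\mu}{2}}$ past the Gaussian $s^{-\nicefrac{n}{2}}$, for a small $\mu>0$; there is no rough-coefficient analogue of the super-polynomial heat decay enjoyed by $\Soo$ data under $\e^{t\Delta}$. Your alternative via Proposition~\ref{prop:caloric_extension_Besov} with finite $r$ together with Corollary~\ref{cor:embedding_into_Zv} is sound and non-circular, but it only covers $\beta\in(-\nicefrac{1}{2},0)$, not the full range claimed by the lemma.
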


	\begin{proof}
		We focus on the case $q < \infty$ for simplicity. The case $q = \infty$ is similar and even simpler.
		Let $u_0 \in \Soo \subseteq \L^2_x$.
		Fix $x\in \R^n$ and $T \in (0,\infty]$. Let $t \in (0,T]$. Using $\L^\infty_x$-boundedness of the heat semigroup, calculate
		\begin{align}
			\label{eq:heat_extension_bounded1}
			\begin{split}
				\fint_{t/2}^t \fint_{\B(x,\sqrt{t})} |s^{-\beta} \e^{-sL} u_0|^q \d y \d s \leq \fint_{t/2}^t s^{-q\beta} \| \e^{-sL} u_0 \|_\infty^q \d s &\lesssim \fint_{t/2}^t s^{-q\beta} \d s \| u_0 \|_\infty^q \\
				&\approx t^{-q\beta} \| u_0 \|_\infty^q.
			\end{split}
		\end{align}
		Recall that $\beta$ is negative.
		Hence, $t^{-q\beta} \leq T^{-q\beta}$, so that the right-hand side of~\eqref{eq:heat_extension_bounded1} can be made arbitrarily small by taking $T$ small, depending on $\| u_0 \|_\infty$ and the operator norm of the heat semigroup. Thus, taking the supremum over $x\in \R^n$ gives the claim.
	\end{proof}

	Up to some technicalities, the proof of Proposition~\ref{prop:caloric_extension_Besov} now follows readily from the self-improving property (Proposition~\ref{prop:free_evolution_improve_Loo}), the smallness property for test-functions (Corollary~\ref{lem:heat_extension_bounded}), and the preliminary facts from Section~\ref{subsec:prelim_function_spaces}.

	\begin{proof}[Proof of Proposition~\ref{prop:caloric_extension_Besov}]
		By definition, $\Ext_L(u_0)$ is the unique weak solution to~\eqref{eq:caloric}. The estimates for $\nabla \Ext_L(u_0)$ follow from Proposition~\ref{prop:initial_value_problem_hedong} and the weighted $\Z$-space embedding. %
		Hence, we concentrate on the stated estimates for $\Ext_L(u_0)$.

		Write $X \coloneqq \dot\B^\alpha_{p,p}$ if $p < \infty$ and $X \coloneqq \vv\dot\B^\alpha_{\infty,\infty}$ if $p=\infty$. In both cases, $\Soo$ is dense in $X$, see Remark~\ref{rem:def_vvB}. Let $u_0 \in \Soo$ first. Using the weighted $\Z$-space embedding with $r \in [p,\infty]$, the self-improving property in Proposition~\ref{prop:free_evolution_improve_Loo} and Proposition~\ref{prop:initial_value_problem_hedong}, estimate
		\begin{align}
			\| \Ext_L(u_0) \|_{\Z^{r,q}_{\frac{n}{2r} - \frac{\sigma}{2}}} \lesssim \| \Ext_L(u_0) \|_{\Z^{p,q}_{\frac{n}{2p} - \frac{\sigma}{2}}} \lesssim \| \Ext_L(u_0) \|_{\Z^{p,2}_{\frac{n}{2p} - \frac{\sigma}{2}}} \lesssim \| u_0 \|_{\X},
		\end{align}
		where we used the identity $\nicefrac{n}{2p} - \nicefrac{\sigma}{2} = \nicefrac{\alpha}{2}$ in the last step.
		By density, this bound extends to all of $\X$, which completes the proof of~\eqref{eq:caloric_extension_Besov}.

		If $r = \infty$, it remains to show $\Ext_L(u_0) \in \Zv^{\infty,q}_{%
		- \nicefrac{\sigma}{2}}$ and $\nabla \Ext_L(u_0) \in \Zv^{\infty,2}_{%
		- \nicefrac{\sigma}{2}-\nicefrac{1}{2}}$.
		On the one hand, Lemma~\ref{lem:heat_extension_bounded} provides the claim for $\Ext_L(u_0)$ when $u_0 \in \Soo$. On the other hand, we have just shown that $\Ext_L$ is $\X \to \Z^{\infty,q}_{%
		- \nicefrac{\sigma}{2}}$ bounded. Thus, since $\Zv^{\infty,q}_{%
		- \nicefrac{\sigma}{2}}$ is a closed subspace of $\Z^{\infty,q}_{%
		- \nicefrac{\sigma}{2}}$, the claim follows by density of $\Soo$ in $X$.
		Finally, from $\Ext_L(u_0) \in \Zv^{\infty,2}_{%
		- \nicefrac{\sigma}{2}}$ and Caccioppoli's inequality~\cite[Cor.~4.6]{ABES19} there follows immediately $\nabla \Ext_L(u_0) \in \Zv^{\infty,2}_{%
		- \nicefrac{\sigma}{2}-\nicefrac{1}{2}}$, which completes the proof.
	\end{proof}

	\section{Well-posedness of the linear equation}
	\label{sec:linear_wp}

	For completeness, we assemble the results of Sections~\ref{subsec:duhamel_cauchy} and~\ref{sec:caloric} to obtain well-posedness of weak solutions to the linear Cauchy problem with rough coefficients
	\begin{align}
		\label{eq:CP}
		\tag{CP}
		\partial_t u - \Div(A\nabla u) = f + \Div(F), \quad u(0) = u_0,
	\end{align}
	where $f$ and $F$ are taken from appropriate weighted $\Z$-spaces and $u_0$ is taken from a homogeneous Besov space. Solution and data spaces are not tied to the locally square-integrable regime, which is a novelty of our approach. As before, put $L = -\Div(A\nabla)$.

	\begin{theorem}[Well-posedness of~\eqref{eq:CP} in weighted $\Z$-spaces]
		\label{thm:linear_wp}
		Fix $T \in (0,\infty]$.
		Let $p \in (1, \infty]$ and $\alpha \in (-1,0)$ satisfying $p \geq p(\alpha,A)$.
		Set $\sigma \coloneqq \nicefrac{n}{p} - \alpha$.
		The data is supposed to be as follows:
		\begin{enumerate}[(i)]
			\item $u_0 \in \dot\B^\alpha_{p,p}$ if $p < \infty$ and $u_0 \in \vv\dot\B^\alpha_{\infty,\infty}$ if $p = \infty$,
			\item $f \in \Z^{\tilde r, \tilde q}_{\nicefrac{n}{2\tilde r} - \nicefrac{\sigma}{2} - 1}(T)$ with $2_{*}\le \tilde q$ and $\tilde r < \nicefrac{n}{\sigma}$,
			\item $F \in \Z^{\hat r, \hat q}_{\nicefrac{n}{2\hat r} - \nicefrac{\sigma}{2} - \nicefrac{1}{2}}(T)$ with $2\leq \hat q$ and $\hat r < \nicefrac{n}{(\sigma-1)}$.
		\end{enumerate}

		Then, the following hold:
		\begin{itemize}
			\item (Existence) The function $u = \Ext_L(u_0) + \Soll^L_1(f) + \Sol^L_{\nicefrac{1}{2}}(F)$ is a weak solution to~\eqref{eq:CP}.
			\item (Regularity) One has $u \in \Z^{r,q}_{\nicefrac{n}{2r} - \nicefrac{\sigma}{2}}(T)$ and $\nabla u \in \Z^{p,2}_{\nicefrac{n}{2p} - \nicefrac{\sigma}{2} - \nicefrac{1}{2}}(T)$ for all $r,q\in (1,\infty]$ satisfying $\max(p, \tilde r, \hat r) \leq r$ and $q \in \SobSet^{**}(\tilde q) \cap \SobSet^*(\hat q)$. All bounds are independent of $T$.
			\item (Uniqueness) The weak solution $u$ to~\eqref{eq:CP} is unique among all weak solutions satisfying $\nabla u \in \Z^{p,2}_{\nicefrac{n}{2p} - \nicefrac{\sigma}{2} - \nicefrac{1}{2}}(T)$ if $\max(\tilde r, \hat r) \leq p$.
		\end{itemize}
		As usual, if one of $r$, $\tilde r$, $\hat r$ is infinite, the corresponding $\Z^\infty$-space can be replaced by a $\Zv^\infty$-space.
	\end{theorem}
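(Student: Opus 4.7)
The plan is to write $u = u_H + u_P$ where $u_H \coloneqq \Ext_L(u_0)$ is the free evolution and $u_P \coloneqq \Soll^L_1(f) + \Sol^L_{\nicefrac{1}{2}}(F)$ is the Duhamel part, and then to derive the three conclusions by stitching together previously established results.

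For existence, I will invoke Proposition~\ref{prop:caloric_extension_Besov} to present $u_H$ as a weak solution of the homogeneous problem attaining $u_0$ at $t=0$. The hypotheses $\tilde q \geq 2_*$ and $\hat q \geq 2$ in (ii) and (iii) exactly match the conditions of Proposition~\ref{prop:Duhamel_Z_weak}, which then delivers $u_P$ as a weak solution of~\eqref{eq:LP}. The vanishing trace $u_P(0) = 0$ follows from Proposition~\ref{prop:Duhamel_Z_trace}; the weight shifts $-1$ on $f$ and $-\nicefrac{1}{2}$ on $F$ imposed in (ii)--(iii) translate into the thresholds $\beta_f > -\nicefrac{1}{2}$ and $\beta_F > -1$ required there, and these are guaranteed by the upper bounds $\tilde r < \nicefrac{n}{\sigma}$ and $\hat r < \nicefrac{n}{(\sigma - 1)}$. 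Adding yields a weak solution of~\eqref{eq:CP} with initial trace $u_0$.

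For regularity, I will apply Proposition~\ref{prop:Duhamel_mapping_Z} with the $\kappa$-values from Table~\ref{tab:Duhamel_Sobolev}, obtaining $\Soll^L_1(f) \in \Z^{\tilde r, q}_{\nicefrac{n}{2\tilde r} - \nicefrac{\sigma}{2}}$ for $q \in \SobSet^{**}(\tilde q)$ and $\Sol^L_{\nicefrac{1}{2}}(F) \in \Z^{\hat r, q}_{\nicefrac{n}{2\hat r} - \nicefrac{\sigma}{2}}$ for $q \in \SobSet^{*}(\hat q)$. The key observation is that along the \emph{critical scale} $\bigl\{ \Z^{r,q}_{\nicefrac{n}{2r} - \nicefrac{\sigma}{2}} \bigr\}_r$, the Hardy--Sobolev invariant $2\beta - \nicefrac{n}{r}$ equals $-\sigma$ independently of $r$; hence Proposition~\ref{prop:Z_space_embedding} permits free upward migration in $r$ along this scale. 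Combining this with Proposition~\ref{prop:caloric_extension_Besov} applied to $u_H$ then places $u$ in $\Z^{r,q}_{\nicefrac{n}{2r} - \nicefrac{\sigma}{2}}$ for every admissible $(r, q)$. The gradient bound proceeds identically, using the $\kappa = \nicefrac{1}{2}$ row for $\nabla \Soll^L_1$ and the $\kappa = 0$ row for $\nabla \Sol^L_{\nicefrac{1}{2}}$, and exploiting the nesting property $\Z^{p,q_0} \subseteq \Z^{p,q_1}$ (for $q_0 \geq q_1$) to lower $\hat q$ to $2$ before the $\kappa = 0$ step, which is legitimate because $\hat q \geq 2$; similarly one lowers $\tilde q$ to a value at most $2$ if necessary before the $\kappa = \nicefrac{1}{2}$ step, using $\tilde q \geq 2_*$. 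Since $A$ is rough, the resulting bounds are independent of $T$; the $\Zv$-variants for $p = \infty$ follow from the $\Zv$-valued versions of Propositions~\ref{prop:Duhamel_mapping_Z} and~\ref{prop:caloric_extension_Besov}.

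For uniqueness, I take two weak solutions $u_1, u_2$ sharing the stated gradient regularity and set $v \coloneqq u_1 - u_2$. Then $v$ is a weak solution of the homogeneous equation with $v(0) = 0$ and $\nabla v \in \Z^{p,2}_{\nicefrac{n}{2p} - \nicefrac{\sigma}{2} - \nicefrac{1}{2}}(T)$. Setting $\beta \coloneqq \nicefrac{n}{2p} - \nicefrac{\sigma}{2} - \nicefrac{1}{2}$, a direct computation using $\sigma = \nicefrac{n}{p} - \alpha$ gives $2\beta + 1 = \alpha$, so the compatibility condition $p \geq p(2\beta + 1, A) = p(\alpha, A)$ required to apply Proposition~\ref{prop:linear_uniqueness} coincides with the standing assumption; hence $v = 0$. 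The main technical burden throughout is the bookkeeping of critical-scale invariance that makes the Hardy--Sobolev embeddings align across the three source terms $u_0$, $f$, $F$; once that alignment is recognised, every step reduces to a direct invocation of a previously established proposition.
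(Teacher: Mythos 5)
Your proposal is correct and follows essentially the same route as the paper: existence via Propositions~\ref{prop:caloric_extension_Besov} and~\ref{prop:Duhamel_Z_weak} (and~\ref{prop:Duhamel_Z_trace}), regularity via Proposition~\ref{prop:Duhamel_mapping_Z} with the parameter tuples from Table~\ref{tab:Duhamel_Sobolev} combined with the Hardy--Sobolev embedding along the critical scale and the nesting property, and uniqueness by reducing to Proposition~\ref{prop:linear_uniqueness} through the computation $2\beta+1=\alpha$. The only cosmetic difference is that you invoke the trace result already in the existence step, whereas the paper defers it to the uniqueness step.
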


	\begin{proof}
		To show the theorem, we just have to assemble the results of Sections~\ref{subsec:duhamel_cauchy} and~\ref{sec:caloric}.

		\textbf{Step 1}: Existence. For $\Ext_L(u_0)$, the claim was established in Proposition~\ref{prop:caloric_extension_Besov}. For $\Soll^L_1(f)$ and $\Sol^L_{\nicefrac{1}{2}}(F)$, we aim to apply Proposition~\ref{prop:Duhamel_Z_weak}. All requirements are clear except the weight restrictions. But $\nicefrac{n}{2\tilde r} - \nicefrac{\sigma}{2} - 1 > -1$ is equivalent to $\tilde r < \nicefrac{n}{\sigma}$ and $\nicefrac{n}{2\hat r} - \nicefrac{\sigma}{2} - \nicefrac{1}{2} > -1$ is equivalent to $\hat r < \nicefrac{n}{(\sigma-1)}$, which justifies the application of Proposition~\ref{prop:Duhamel_Z_weak}.

		\textbf{Step 2}: Regularity. The claim for $\Ext_L(u_0)$ follows again from Proposition~\ref{prop:caloric_extension_Besov} using $r \geq p$. For the Duhamel terms, we rely on Proposition~\ref{prop:Duhamel_mapping_Z} in conjunction with the weighted $\Z$-space embedding. The restrictions
		on $(p,\tilde q,q,1) $ for $\Soll^L_1$,
		on $(p,\min(\tilde q,2),2, \nicefrac1 2) $ for $\nabla \Soll^L_1$,
		on $(p, \hat q, q, \nicefrac1 2)$ for $\Sol^L_{\nicefrac{1}{2}}$ and on $(p, 2, 2, 0)$ for $\nabla \Sol^L_{\nicefrac{1}{2}}$
		in Table~\ref{tab:Duhamel_Sobolev} are all satisfied. For the gradient terms, we potentially use the nesting property of weighted $\Z$-spaces. Note that the factors $-1$ and $-\nicefrac{1}{2}$ appearing in the weights for $f$ and $F$ are compensated by the different singularity parameters $\kappa$ in Table~\ref{tab:Duhamel_Sobolev}.

		\textbf{Step 3}: Uniqueness. By the nesting property of weigthed $\Z$-spaces, we deduce $f \in \Z^{\tilde r, 2_*}_{\nicefrac{n}{2\tilde r} - \nicefrac{\sigma}{2} - 1}(T)$ and $F \in \Z^{\hat r, 2}_{\nicefrac{n}{2\hat r} - \nicefrac{\sigma}{2} - \nicefrac{1}{2}}(T)$. Hence, we can apply the regularity part with $q = 2$ to obtain $\nabla u \in \Z^{p,2}_{\nicefrac{n}{2p} - \nicefrac{\sigma}{2} - \nicefrac{1}{2}}(T)$. %
		Moreover, $u(0) = u_0$ by virtue of Proposition~\ref{prop:Duhamel_Z_trace} and Proposition~\ref{prop:caloric_extension_Besov}.
		We conclude by invoking Proposition~\ref{prop:linear_uniqueness}. Indeed, put $\beta \coloneqq \nicefrac{n}{2p} - \nicefrac{\sigma}{2} - \nicefrac{1}{2}$, then $\beta > -1$ is equivalent to $\alpha > -1$ and $p \geq p(\alpha,A) = p(2\beta + 1, A)$, so that the application of Proposition~\ref{prop:linear_uniqueness} is justified.
	\end{proof}

	\section{Mild solutions for the reaction--diffusion problem}
	\label{sec:mild}

	For the rest of this article, we turn our attention to the well-posedness of~\eqref{eq:rd}.
	To simplify the formulation of the well-posedness result, introduce the parameter functions
	\begin{align}
		\RD_{-}(n,\rho) = (n+2) \nicefrac{\rho}{2}, \qquad
		\RD_+(n,\rho) = n(1+\rho) \nicefrac{\rho}{2}.
	\end{align}
	We are going to show the following main result. A precise definition of weak solutions to the equation will be given in Definition~\ref{def:weak_solution_rd}.

	\begin{figure}[t]
		\centering

		\begin{subfigure}{0.48\textwidth}
			\centering
			\begin{tikzpicture}
				\begin{axis}[
					width=\linewidth, height=6cm,
					axis x line=middle, axis y line=middle,
					xlabel={$1/p$}, ylabel={$\alpha$},
					every axis x label/.style={
						at={(ticklabel cs:1)}, %
						anchor=south west,
					},
					every axis y label/.style={
						at={(axis description cs:0,1)}, %
						anchor=south, %
					},
					xmin=0, xmax=1.15,
					ymin=-1.2, ymax=0.15, %
					xtick={0},
					ytick={-1,0},
					axis equal image, %
					enlargelimits=false
					]

					\def\xleft{0.2525}
					\def\xright{1}
					\fill[blue!20]
					(axis cs:\xleft,0) --
					(axis cs:\xright,0) --
					(axis cs:0.7,-1) --
					(axis cs:\xleft,-1) -- cycle;

					\draw[blue,thick]
					(axis cs:\xleft,0) --
					(axis cs:\xright,0) --
					(axis cs:0.7,-1) --
					(axis cs:\xleft,-1) -- cycle;

					\draw[line width=2.5pt,red] (axis cs:0.2525,-0.8687) -- (axis cs:0.5556,0);

					\draw[dashed] (axis cs:0,-1.6667) -- (axis cs:0.2525,-0.8687);

					\draw[dashed] (axis cs:0.5556,0) -- (axis cs:1,1.3333);

				\end{axis}
			\end{tikzpicture}
			\caption*{Case $\rho = \nicefrac{6}{5} \geq 1$}
		\end{subfigure}
		\hfill
		\begin{subfigure}{0.48\textwidth}
			\centering
			\begin{tikzpicture}
				\begin{axis}[
					width=\linewidth, height=6cm,
					axis x line=middle, axis y line=middle,
					xlabel={$1/p$}, ylabel={$\alpha$},
					every axis x label/.style={
						at={(ticklabel cs:1)}, %
						anchor=south west, %
					},
					every axis y label/.style={
						at={(axis description cs:0,1)}, %
						anchor=south, %
					},
					xmin=0, xmax=1.05,
					ymin=-1.2, ymax=0.15,
					xtick={0},
					ytick={-1,0},
					axis equal image, %
					enlargelimits=false
					]

					\def\xleft{0.508} %
					\def\xright{1}
					\fill[blue!20]
					(axis cs:\xleft,0) --
					(axis cs:\xright,0) --
					(axis cs:0.7,-1) --
					(axis cs:\xleft,-1) -- cycle;

					\draw[blue,thick]
					(axis cs:\xleft,0) --
					(axis cs:\xright,0) --
					(axis cs:0.7,-1) --
					(axis cs:\xleft,-1) -- cycle;

					\def\n{3}
					\def\rho{0.75}
					\def\m{\n} %
					\def\b{-2/\rho} %

					\def\xbottom{(-1-\b)/\m} %
					\def\xtop{(0-\b)/\m} %

					\def\xsegleft{\xbottom> \xleft ? \xbottom : \xleft}
					\def\xsegright{\xtop<\xright ? \xtop : \xright}

					\draw[line width=2.5pt,red]
					(axis cs:0.5556,-1) -- (axis cs:0.8889,0);

					\draw[dashed]
					(axis cs:\xleft,\m*\xleft+\b) -- (axis cs:0.5556,-1);

					\draw[dashed]
					(axis cs:0.8889,0) -- (axis cs:\xright,\m*\xright+\b);

				\end{axis}
			\end{tikzpicture}
			\caption*{Case $\rho = \nicefrac{3}{4} < 1$}
		\end{subfigure}%

		\caption{Set $n = 3$. The blue areas depict all pairs $(p,\alpha)$ with $p \in (1,\RD_{+}(n,\rho))$, $\alpha \in (-1,0)$ and $p \geq p(\alpha,A)$. The red line segment is the intersection of the critical line (that is, the points satisfying $\nicefrac{2}{\rho} = \nicefrac{n}{p} - \alpha$) with the blue area. The red line leaves the blue area on its left edge if and only if $\rho \geq 1$. If $\rho < 1$, it may happen that the red line leaves the blue area on its right-hand slope.}
		\label{fig:line-rect-cases}
	\end{figure}
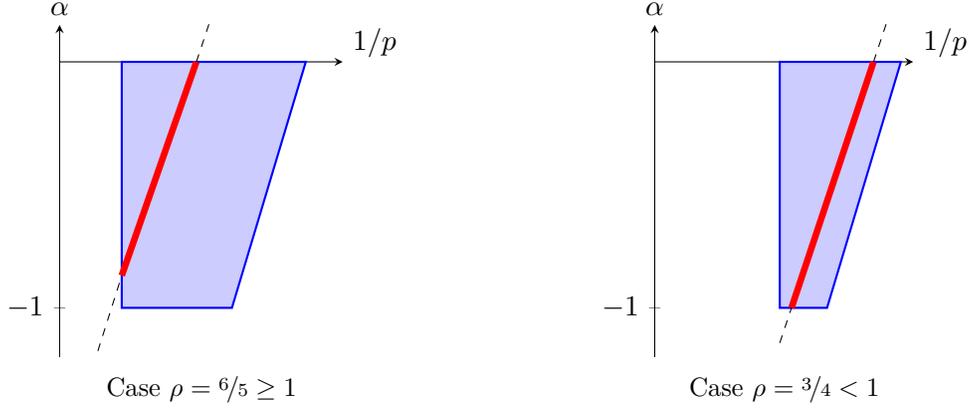

	\begin{theorem}[Well-posedness of~\eqref{eq:rd}]
		\label{thm:rd_new}
		Fix $\rho > \nicefrac{2}{n}$. Let $p \in (1, \RD_{+}(n,\rho))$ and $\alpha \in (-1,0)$ satisfying $\nicefrac{2}{\rho} = \nicefrac{n}{p} - \alpha$ and $p \geq p(\alpha,A)$. Fix $u_0 \in \dot\B^\alpha_{p,p}$.
		Then, the following hold:
		\begin{itemize}
			\item (Existence) There exists a weak solution $(u,\tau)$ of~\eqref{eq:rd} with $\tau > 0$.
			\item (Regularity) For every $r,q \in (1,\infty]$ satisfying $\RD_{-}(n,\rho) < r < \RD_{+}(n,\rho)$, $r \geq p$, $q > \RD_{-}(n,\rho)$ and $T \in (0,\tau)$, we have $u \in \Z^{r, q}_{\nicefrac{n}{2r} - \nicefrac{1}{\rho}}(T)$ and $\nabla u \in \Z^{p, 2}_{\nicefrac{n}{2p} - \nicefrac{1}{\rho} - \nicefrac{1}{2}}(T)$.
			\item (Uniqueness) For every $r,q \in (1,\infty]$ satisfying $\RD_{-}(n,\rho) < r < \RD_{+}(n,\rho)$, $r \geq p$, $q > \RD_{-}(n,\rho)$ and $T \in (0,\tau)$, the solution $(u,T)$ is unique among all weak solutions $(v,T)$ of~\eqref{eq:rd} satisfying $v \in \Z^{r, q}_{\nicefrac{n}{2r} - \nicefrac{1}{\rho}}(T)$ and $\nabla v \in \Z^{p,2}_{\nicefrac{n}{2p} - \nicefrac{1}{\rho} - \nicefrac{1}{2}}(T)$.
			\item (Maximality) The solution $(u,\tau)$ is maximal in any of the stated uniqueness classes.
		\end{itemize}
	\end{theorem}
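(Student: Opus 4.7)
The plan is to realize the mild--weak transference strategy sketched in Section~\ref{subsec:intro_weak}: first construct a mild solution by a contraction argument, bootstrap its regularity until it becomes a weak solution, and then recover uniqueness among weak solutions by promoting any weak solution to a mild one via the super-linear reverse Hölder inequality. Throughout, set $\sigma \coloneqq \nicefrac{2}{\rho} = \nicefrac{n}{p}-\alpha$, so the weights read $\nicefrac{n}{2r}-\nicefrac{1}{\rho}$ and the general framework of Sections~\ref{subsec:duhamel_cauchy}--\ref{sec:linear_wp} is directly applicable.

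\textbf{Existence of a mild solution.} I would pick $r_{0}\in(\RD_{-}(n,\rho),\RD_{+}(n,\rho))$ with $r_{0}\ge p$ and $q_{0}>\RD_{-}(n,\rho)$, set $\beta_{0}\coloneqq\nicefrac{n}{2r_{0}}-\nicefrac{1}{\rho}$, and consider the Picard map $\Psi(u)\coloneqq\Ext_{L}(u_{0})+\Soll^{L}_{1}(\phi(u))$ on a small ball of $X_{T}\coloneqq\Z^{r_{0},q_{0}}_{\beta_{0}}(T)$. Proposition~\ref{prop:caloric_extension_Besov} places $\Ext_{L}(u_{0})$ in $X_{T}$ with norm tending to zero as $T\to 0$ (directly when $p<\infty$, using the $\vv\dot\B^{\alpha}_{\infty,\infty}$ structure when $p=\infty$). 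The pointwise bound $|\phi(u)-\phi(v)|\lesssim(|u|^{\rho}+|v|^{\rho})|u-v|$, combined with Hölder at the Whitney-average level, places $\phi(u)-\phi(v)$ in $\Z^{r_{0}/(1+\rho),q_{0}/(1+\rho)}_{(1+\rho)\beta_{0}}(T)$. A short arithmetic check reveals that the window $(\RD_{-}(n,\rho),\RD_{+}(n,\rho))$ is precisely the one for which the hypercontractive mapping of $\Soll^{L}_{1}$ (Proposition~\ref{prop:Duhamel_mapping_Z}) followed by the Hardy--Sobolev embedding (Proposition~\ref{prop:Z_space_embedding}) returns this space into $X_{T}$, with an estimate quadratic in the $X_{T}$-norms. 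Picard iteration then yields a mild solution on $(0,\tau)$ for some $\tau>0$.

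\textbf{Regularity and mild $\Rightarrow$ weak.} Next I would invoke the bootstrapping scheme of Proposition~\ref{prop:RD_bootstrapping}, iterating the hypercontractive Duhamel mapping of Corollary~\ref{cor:Duhamel_mapping_Z} to push $(r,q)$ through the full admissible range stated in the theorem. Driving $q$ down to $2_{*}$ at the level of $u$ puts $\phi(u)$ in $\Z^{p,2_{*}}_{\nicefrac{n}{2p}-\nicefrac{1}{\rho}-\nicefrac{1}{2}}(T)$, so that the mapping property of $\nabla\Soll^{L}_{1}$ from Proposition~\ref{prop:Duhamel_mapping_Z} delivers $\nabla u\in\Z^{p,2}_{\nicefrac{n}{2p}-\nicefrac{1}{\rho}-\nicefrac{1}{2}}(T)$. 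Proposition~\ref{prop:Duhamel_Z_weak} then upgrades the mild identity to the weak formulation of~\eqref{eq:rd} on $(0,\tau)$, while Propositions~\ref{prop:Duhamel_Z_trace} and~\ref{prop:caloric_extension_Besov} ensure $u(0)=u_{0}$ in the sense of Definition~\ref{def:initial_condition}.

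\textbf{Uniqueness and maximality.} For a second weak solution $v$ in the uniqueness class, the super-linear reverse Hölder estimate of Lemma~\ref{lem:RH_rd} is at our disposal. Applying its self-improvement (Theorem~\ref{thm:rh_improvement}) lowers the local exponent strictly below $2_{*}(1+\rho)$, which combined with the a priori local integrability of $v$ produces $\phi(v)\in\Z^{p,2_{*}}_{\nicefrac{n}{2p}-\nicefrac{1}{\rho}-\nicefrac{1}{2}}(T)$. Setting $w\coloneqq v-\Ext_{L}(u_{0})-\Soll^{L}_{1}(\phi(v))$ then gives a weak solution of the linear Cauchy problem with zero data whose gradient lies in $\Z^{p,2}_{\nicefrac{n}{2p}-\nicefrac{1}{\rho}-\nicefrac{1}{2}}(T)$, so $w=0$ by Proposition~\ref{prop:linear_uniqueness}; hence $v$ is a mild solution. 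The contraction from the existence step forces $v=u$ on a short initial interval, and a standard connectedness argument propagates equality to all of $(0,T)$. Maximality follows from Zorn's lemma on the poset of weak solutions in the uniqueness class, using that two such solutions agreeing on an overlap can be glued. The hardest step, and the main obstacle of the whole theorem, is the reduction $\phi(v)\in\Z^{p,2_{*}}_{\cdot}$: the baseline integrability of a weak solution is by itself insufficient for the SIO theory, and the initial super-linear reverse Hölder inequality carries the dangerous exponent $2_{*}(1+\rho)$. The self-improvement below $\RD_{-}(n,\rho)$ is precisely what closes the argument.
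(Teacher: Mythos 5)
Your overall architecture is exactly the paper's: Picard iteration for a local mild solution, hypercontractive bootstrapping, mild-to-weak and weak-to-mild transference via the super-linear reverse Hölder self-improvement, then mild uniqueness and a maximality construction. Two gaps are substantive. \emph{First}, the sentence ``driving $q$ down to $2_*$ at the level of $u$ puts $\phi(u)$ in $\Z^{p,2_*}_{\cdot}(T)$'' is backwards: if $u$ is only locally $\L^{2_*}$, then $\phi(u)\lesssim|u|^{1+\rho}$ lands in $\L^{2_*/(1+\rho)}$, which is worse than $\L^{2_*}$. You need to drive $q$ \emph{up} to at least $2_*(1+\rho)$ (the paper pushes all the way to $q=\infty$ in Proposition~\ref{prop:RD_bootstrapping}, then uses nesting to come back down to $2(1+\rho)$ or $2_*(1+\rho)$ as needed). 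The same confusion appears in the weight: $\phi(u)$ should carry the weight $\nicefrac{n}{2\tilde r}-\nicefrac{1}{\rho}-1$ appropriate to an $f$-source (see Theorem~\ref{thm:linear_wp}), not the gradient weight $\nicefrac{n}{2p}-\nicefrac{1}{\rho}-\nicefrac{1}{2}$. \emph{Second}, your Zorn-style maximality argument would yield a maximal lifespan $\tau=\tau(r,q)$ for each fixed pair $(r,q)$, but the theorem asserts a single $\tau$ that is simultaneously maximal in every admissible uniqueness class. The paper's Proposition~\ref{prop:weak_maximal} handles this explicitly in its Step~2, by showing that a solution maximal for one parameter pair automatically satisfies~\eqref{eq:reg_maximal} for every other pair (via the mild bootstrapping and the weak-to-mild bridge), so the lifespans coincide. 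Without that cross-parameter argument, the last bullet of the theorem is not established.

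Two further points deserve attention. The RH self-improvement (Theorem~\ref{thm:rh_improvement}) lowers the exponent to $q\in(\RD_-(n,\rho),2_*(1+\rho))$, i.e.\ \emph{just above} $\RD_-(n,\rho)$, not below it as you wrote; lowering below that threshold is in fact impossible and is precisely the nonlinear obstruction highlighted in the introduction. And in the weak-to-mild direction, for $\rho<\nicefrac{4}{n}$ the parameter range for $\phi(v)$ is covered neither by Proposition~\ref{prop:linear_existence_hedong} (which needs $\tilde q=2$) nor by Theorem~\ref{thm:linear_wp} (which needs $r\ge 2^*$): the paper closes this with the interpolation argument of Lemma~\ref{lem:tedious_interpolation}, which your sketch does not account for. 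Finally, ``a standard connectedness argument'' undersells the propagation step in mild uniqueness: the paper's Proposition~\ref{prop:mild_uniqneness_rd} uses a genuine instantaneous-regularization bound in unweighted $\L^r$ to obtain a uniform step size, which is not automatic. These are the key technical points to fill in; the high-level roadmap is correct.
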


	In this section, we first investigate mild solutions to~\eqref{eq:rd}. Afterwards, in Section~\ref{sec:weak}, we turn our attention to weak solutions. Eventually, we combine all these finding in Section~\ref{sec:lifespan} to complete the proof of Theorem~\ref{thm:rd_new}.

	\subsection{Existence of local mild solutions}
	\label{sec:existence_mild}

	We start with the existence of local mild solution to~\eqref{eq:rd}. In our context, a \emph{mild solution} is a fixed-point of the map $v \mapsto \Ext_L(u_0) + \Soll^L_1(\phi(v))$, where $v$ varies over a suitable solution class and $u_0$ is a fixed initial datum. The local existence of mild solutions will be provided by Banach's fixed-point theorem.

	Fix $\rho > \nicefrac{2}{n}$.
	Let $p \in (1, \RD_{+}(n,\rho))$ and $\alpha \in (-1, 0)$ such that $\nicefrac{2}{\rho} = \nicefrac{n}{p} - \alpha$ and $p \geq p(\alpha,A)$.
	Note that $p$ is finite, and fix $u_0 \in \dot\B^\alpha_{p,p}$.
	Now, fix some $r\in (\RD_{-}(n,\rho), \RD_{+}(n,\rho))$ satisfying $r\ge p$.
	{Such $r$ exists since the interval $(\RD_{-}(n,\rho), \RD_{+}(n,\rho))$ is non-empty owing to $\rho > \nicefrac{2}{n}$.}
	Note also that
	\begin{align}
		\label{eq:fujita_exp}
		r > \RD_{-}(n,\rho) > 1+\rho
	\end{align}
	by definition of $\RD_{-}(n,\rho)$ and the lower bound on $\rho$. In particular, $r > 1$ follows.

	For the given $u_0 \in \dot\B^\alpha_{p,p}$, we study the existence of local mild solutions to the non-linear reaction--diffusion equation
	\begin{align}
		\partial_t u -\Div(A\nabla u) = \phi(u), \quad u(0) = u_0,
	\end{align}
	where the non-linearity $\phi$ is a map $\R \to \R$ that satisfies for some constants $C_g, C_L >0$ the growth and Lipschitz estimates
	\begin{align}
		\label{eq:RD_growth_Lipschitz}
		|\phi(u)| \leq C_g |u|^{1+\rho}, \qquad
		|\phi(u) - \phi(v)| \leq C_L(|u|^\rho + |v|^\rho)|u-v|.
	\end{align}
	The coefficient function $A \colon \R^n \to \R^{n\times n}$ is measurable and satisfies~\eqref{eq:A_elliptic}.
	Now, for $\lambda, T \in (0,\infty)$ to be specified, define the complete metric set
	\begin{align}
		\X(\lambda, T) \coloneqq \Bigl\{ v \in \L^r_{\frac{n}{2r} - \frac{1}{\rho}}(T) \colon \| v \|_{\L^r_{\frac{n}{2r} - \frac{1}{\rho}}(T)} \leq \lambda \Bigr\},
	\end{align}
	where the distance on $\X(\lambda,T)$ is given by $\d(v,w) \coloneqq \| v - w \|_{\L^r_{\nicefrac{n}{2r} - \nicefrac{1}{\rho}}(T)}$.
	Put $L = -\Div(A\nabla)$.
	Define the fixed-point map
	\begin{align}
		\Theta(v) \coloneqq \Ext_L(u_0) + \Soll^L_1(\phi(v)), \qquad v\in \X(\lambda,T).
	\end{align}
	We are going to show below that, for suitable values of $\lambda$ and $T$, $\Theta$ is well-defined, preserves $\X(\lambda,T)$ and is a strict contraction. Thus, it admits a fixed-point, which is by definition a mild solution to~\eqref{eq:rd}.

	We start with some parameter relations which will simplify our life.

	\begin{lemma}
		\label{lem:p_A_rd}
		Fix $\rho > \nicefrac{2}{n}$. Let $p \in (1,\RD_{+}(n,\rho))$ and $\alpha \in (-1,0)$ satisfying $\nicefrac{2}{\rho} = \nicefrac{n}{p} - \alpha$ and $p \geq p(\alpha, A)$. Moreover, let $r \in (1,\RD_{+}(n,\rho))$ satisfying $r \geq p$. Put $\beta \coloneqq \tfrac{n(1+\rho)}{2r} - \tfrac{1}{\rho} - \tfrac{1}{2}$. Then, the following hold:
		\begin{align}
			\mathrm{(a)} \quad \beta > -\tfrac{1}{2}, \qquad \mathrm{(b)} \quad p \geq p(2\beta + 1, A), \qquad \mathrm{(c)} \quad p > \tfrac{r}{1+\rho}.
		\end{align}
	\end{lemma}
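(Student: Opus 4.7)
The plan is to verify the three assertions by elementary algebra, exploiting the scaling relation $\nicefrac{2}{\rho} = \nicefrac{n}{p} - \alpha$ and the monotonicity of $\alpha \mapsto p(\alpha, A)$ recorded just below~\eqref{eq:def_pA}. Nothing here is conceptually delicate; the only structural observation is that (b) will reduce cleanly to (c), so one should verify (a), then (c), then (b).

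For (a), I rearrange $\beta > -\nicefrac{1}{2}$ into $\frac{n(1+\rho)}{2r} > \frac{1}{\rho}$, which is equivalent to $r < \frac{n(1+\rho)\rho}{2} = \RD_{+}(n,\rho)$, precisely the standing hypothesis on $r$. For (c), I combine $\alpha < 0$ with the scaling identity to deduce $\nicefrac{n}{p} < \nicefrac{2}{\rho}$, i.e.\ $p > \nicefrac{n\rho}{2}$. Using $r < \RD_{+}(n,\rho)$ gives $\frac{r}{1+\rho} < \frac{n\rho}{2}$, and chaining the two inequalities yields (c).

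For (b), I compute $2\beta + 1 = \frac{n(1+\rho)}{r} - \frac{2}{\rho}$ and use the scaling identity to eliminate $\nicefrac{2}{\rho}$ from $\alpha = \nicefrac{n}{p} - \nicefrac{2}{\rho}$: this shows that $2\beta + 1 \geq \alpha$ is equivalent to $p(1+\rho) \geq r$, which is exactly (c). Part (a) gives in particular $2\beta + 1 > 0 > -1$, so $p(2\beta+1, A)$ is well defined, and the monotonicity of $p(\cdot,A)$ then gives $p(2\beta+1, A) \leq p(\alpha, A) \leq p$, as desired. The only obstacle I anticipate is keeping track of signs and carrying out the rearrangements in the right order; there is no substantial difficulty.
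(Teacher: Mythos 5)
Your proof is correct. The main difference from the paper is in part (c): you prove it by direct algebraic chaining ($p > \nicefrac{n\rho}{2}$ from the scaling identity with $\alpha<0$, and $\nicefrac{r}{(1+\rho)} < \nicefrac{n\rho}{2}$ from $r < \RD_{+}(n,\rho)$), whereas the paper uses a geometric argument: both $(\nicefrac{1}{p},\alpha)$ and $(\nicefrac{(1+\rho)}{r}, 2\beta+1)$ lie on the critical line of positive slope in the $(\nicefrac{1}{p},\alpha)$-plane, and $2\beta+1 > \alpha$ forces the abscissa inequality. Your version is more elementary and self-contained; the paper's version makes visible the scaling picture (Figure~\ref{fig:line-rect-cases}) that organizes the whole parameter discussion. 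Correspondingly, the logical order differs for (b): you derive $2\beta+1 > \alpha$ from (c) via the equivalence $2\beta+1 \geq \alpha \iff p(1+\rho)\geq r$, whereas the paper gets it for free from (a), since $2\beta+1 > 0 > \alpha$. Your detour is a little longer but valid, and it does correctly note that $\beta > -\nicefrac{1}{2} > -1$ ensures $p(2\beta+1,A)$ is defined.
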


	\begin{proof}
		The first item is equivalent to $r < \RD_{+}(n,\rho)$, which we assume. In particular, $2\beta + 1 > 0 > \alpha$. Now, using that $p(\cdot,A)$ is decreasing, we find $p \geq p(\alpha, A) \geq p(2\beta + 1, A)$, which is the second item.

		For the third item, consider the critical line $L$ in the $(\nicefrac{1}{p},\alpha)$-plane passing through the points $(0, -\nicefrac{2}{\rho})$ and $(\nicefrac{2}{n\rho}, 0)$. By definition, $(\nicefrac{1}{p}, \alpha)$ belongs to $L$, and the same is true for $(\tfrac{1+\rho}{r}, 2\beta + 1)$. Since $2\beta + 1 > \alpha$, we must have $\tfrac{1+\rho}{r} > \tfrac{1}{p}$ since $L$ has positive slope, which concludes the proof.
	\end{proof}

	By the lemma, all restrictions in the linear theory involving the number $p(\cdot, A)$ will automatically be fulfilled in applications to~\eqref{eq:rd}. We are going to use this freely.

	Let us rephrase the mapping properties for $\Soll^L_1$ from Corollary~\ref{cor:Duhamel_Lq} in a convenient way.
	\begin{corollary}
		\label{cor:Duhamel_RD}
		The Duhamel operator $\Soll^L_1$ maps $\L^{\nicefrac{r}{1+\rho}}_{(\nicefrac{n}{2r} - \nicefrac{1}{\rho})(1 + \rho)}(T)$ into $\L^r_{\nicefrac{n}{2r} - \nicefrac{1}{\rho}}(T)$ with operator norm independent of $T$.
	\end{corollary}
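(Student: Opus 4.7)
The plan is to derive the corollary as a direct application of Corollary~\ref{cor:Duhamel_Lq} to $S = \Soll^L_1$ (so $\kappa = 1$, rough-case row), with the input exponent $q \coloneqq \tfrac{r}{1+\rho}$ and input weight $\beta \coloneqq \bigl(\tfrac{n}{2r} - \tfrac{1}{\rho}\bigr)(1+\rho)$. What needs to be verified is threefold: first, the output weight produced by Corollary~\ref{cor:Duhamel_Lq}, namely $\beta + \kappa - \tfrac{n}{2}\bigl(\tfrac{1}{q} - \tfrac{1}{r}\bigr)$, matches the target weight $\tfrac{n}{2r} - \tfrac{1}{\rho}$; second, the hypothesis $\beta > -1$ holds; third, the parameter pair $(q,r)$ lies in the admissibility set dictated by Table~\ref{tab:Duhamel_Sobolev}, that is, $r \in \SobSet^{**}(q)$ (the other row conditions being empty in the rough case for $\Soll^L_1$).

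For the output weight, I would just compute: $\tfrac{n}{2}\bigl(\tfrac{1}{q} - \tfrac{1}{r}\bigr) = \tfrac{n\rho}{2r}$, hence
\begin{align}
\beta + 1 - \tfrac{n\rho}{2r} = \tfrac{n(1+\rho)}{2r} - \tfrac{1+\rho}{\rho} + 1 - \tfrac{n\rho}{2r} = \tfrac{n}{2r} - \tfrac{1}{\rho},
\end{align}
as required. For the weight condition, $\beta > -1$ rearranges to $\tfrac{n(1+\rho)}{2r} > \tfrac{1}{\rho}$, i.e.\ $r < \tfrac{n\rho(1+\rho)}{2} = \RD_{+}(n,\rho)$, which is exactly the standing assumption on $r$ (see Lemma~\ref{lem:p_A_rd}(a) applied with $\beta$ shifted, or equivalently the definition of $\RD_{+}$).

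The Sobolev admissibility is the only mildly non-trivial step. Since $q \leq r$ automatically, it remains to check $r \leq q^{**}$ when $q < 1 + \nicefrac{n}{2}$ (the other two cases of $\SobSet^{**}(q)$ being handled analogously or being trivially open). Using $\tfrac{1}{q} = \tfrac{1+\rho}{r}$, the bound $\tfrac{1}{r} \geq \tfrac{1}{q} - \tfrac{2}{n+2}$ rearranges to $r \geq \tfrac{\rho(n+2)}{2} = \RD_{-}(n,\rho)$, which is guaranteed by the choice $r > \RD_{-}(n,\rho)$ made just before the corollary (see~\eqref{eq:fujita_exp}). The $T$-independence of the operator norm is inherited directly from Corollary~\ref{cor:Duhamel_Lq} in the rough case. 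There is no real obstacle here; the proof is essentially a bookkeeping exercise, and the only point worth flagging is that the lower bound $r > \RD_{-}(n,\rho)$ and the upper bound $r < \RD_{+}(n,\rho)$ correspond precisely to the Sobolev admissibility and the weight condition, respectively, so the interval of admissible $r$ is tight on both sides.
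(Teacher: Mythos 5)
Your proposal is correct and follows essentially the same route as the paper: reduce to Corollary~\ref{cor:Duhamel_Lq} with $q^\sharp = r/(1+\rho)$, $\beta^\sharp = (\tfrac{n}{2r}-\tfrac{1}{\rho})(1+\rho)$, $\kappa = 1$, verify the output-weight identity, observe $\beta^\sharp > -1 \iff r < \RD_+(n,\rho)$, and check $r \in \SobSet^{**}(q^\sharp) \iff r \geq \RD_-(n,\rho)$. The one hypothesis the paper explicitly lists that you left implicit is $q^\sharp > 1$, but this too follows from the display~\eqref{eq:fujita_exp} you already cite, so there is no gap.
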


	\begin{proof}
		Corollary~\ref{cor:Duhamel_Lq} applies to $\Soll^L_1$ when $\kappa=1$ provided
		the parameters $\beta^\sharp \coloneqq (\nicefrac{n}{2r} - \nicefrac{1}{\rho})(1 + \rho)$, $q^\sharp \coloneqq \nicefrac{r}{(1+\rho)}$ and $r^\sharp \coloneqq r$ satisfy the requirements there. %
		First, $\beta^\sharp > -1$ is equivalent to $r <\RD_{+}(n,\rho)$.
		Second, $\nicefrac{n}{2r} - \nicefrac{1}{\rho} =\beta^\sharp+\kappa {-\nicefrac{n}{2}(\nicefrac{1}{q^\sharp} - \nicefrac{1}{r^\sharp})}$.
		Third, $q^\sharp > 1$ by~\eqref{eq:fujita_exp}.
		Fourth, $r^\sharp \in \SobSet^{**}(q^\sharp)$ is implied by $r \geq \RD_{-}(n,\rho)$.
	\end{proof}

	For brevity, put $\Y(T)\coloneqq \L^{\nicefrac{r}{1+\rho}}_{(\nicefrac{n}{2r} - \nicefrac{1}{\rho})(1 + \rho)}(T)$. We proceed with the self-mapping property for $\Theta$.

	\begin{lemma}[Self-mapping property]
		There is $\lambda^* > 0$ such that, whenever $\lambda \leq \lambda^*$ and $T \leq T^*$ for some $T^*$ depending on $\lambda$, then $\Theta$ maps $\X(\lambda,T)$ into itself.
	\end{lemma}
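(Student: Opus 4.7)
The plan is to estimate the two summands of $\Theta(v) = \Ext_L(u_0) + \Soll^L_1(\phi(v))$ separately in the norm of $\X(\lambda,T)$, and then to arrange for $\lambda$ and $T$ so that the sum is at most $\lambda$. Write $\beta_0 \coloneqq \nicefrac{n}{2r} - \nicefrac{1}{\rho}$ for the weight parameter. First I would observe that by Proposition~\ref{prop:caloric_extension_Besov} applied with $q=r$, using $\sigma = \nicefrac{n}{p}-\alpha = \nicefrac{2}{\rho}$ (so $\nicefrac{n}{2r}-\nicefrac{\sigma}{2}=\beta_0$) and $r \geq p$, one obtains the global bound
\begin{align}
\|\Ext_L(u_0)\|_{\L^r_{\beta_0}} = \|\Ext_L(u_0)\|_{\Z^{r,r}_{\beta_0}} \lesssim \|u_0\|_{\dot\B^\alpha_{p,p}}.
\end{align}
Since $r < \RD_+(n,\rho) < \infty$, the dominated convergence theorem yields
\begin{align}
\|\Ext_L(u_0)\|_{\L^r_{\beta_0}(T)} \xrightarrow{T \to 0} 0.
\end{align}

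Next I would control the non-linear term. The key algebraic observation is that the weight parameter $(\nicefrac{n}{2r}-\nicefrac{1}{\rho})(1+\rho) = \beta_0(1+\rho)$ of the target space in Corollary~\ref{cor:Duhamel_RD} is exactly what makes the non-linearity scale correctly. Using the growth bound $|\phi(v)| \leq C_g |v|^{1+\rho}$ from~\eqref{eq:RD_growth_Lipschitz} and a direct computation of the weighted $\L^{r/(1+\rho)}$ norm of $|v|^{1+\rho}$,
\begin{align}
\|\phi(v)\|_{\Y(T)} \leq C_g \bigl\||v|^{1+\rho}\bigr\|_{\L^{r/(1+\rho)}_{\beta_0(1+\rho)}(T)} = C_g \|v\|_{\L^r_{\beta_0}(T)}^{1+\rho} \leq C_g \lambda^{1+\rho}.
\end{align}
Applying Corollary~\ref{cor:Duhamel_RD} then yields, with a constant $C_1 > 0$ independent of $T$,
\begin{align}
\|\Soll^L_1(\phi(v))\|_{\L^r_{\beta_0}(T)} \leq C_1 \|\phi(v)\|_{\Y(T)} \leq C_1 C_g \lambda^{1+\rho}.
\end{align}

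Combining the two bounds via the triangle inequality gives
\begin{align}
\|\Theta(v)\|_{\L^r_{\beta_0}(T)} \leq \|\Ext_L(u_0)\|_{\L^r_{\beta_0}(T)} + C_1 C_g \lambda^{1+\rho}.
\end{align}
I would now fix $\lambda^\ast > 0$ so small that $C_1 C_g (\lambda^\ast)^\rho \leq \nicefrac{1}{2}$; for every $\lambda \in (0,\lambda^\ast]$ the second term is then bounded by $\nicefrac{\lambda}{2}$. Given such a $\lambda$, the limit from the first paragraph provides $T^\ast = T^\ast(\lambda) > 0$ such that $\|\Ext_L(u_0)\|_{\L^r_{\beta_0}(T)} \leq \nicefrac{\lambda}{2}$ for every $T \leq T^\ast$. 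This delivers $\|\Theta(v)\|_{\L^r_{\beta_0}(T)} \leq \lambda$, which is the self-mapping property.

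The only mildly delicate step is ensuring that $\Soll^L_1(\phi(v))$ is a well-defined element of $\L^r_{\beta_0}(T)$ (rather than merely of the extended class produced by our SIO theory). This is built in: the hypotheses of Corollary~\ref{cor:Duhamel_RD} apply to the scalar $\phi(v)$ since the weight parameter $\beta_0(1+\rho) > -1$ (equivalent to $r < \RD_+(n,\rho)$) and the integrability parameter $\nicefrac{r}{(1+\rho)} > 1$ (equivalent to $r > \RD_-(n,\rho)$, which holds by~\eqref{eq:fujita_exp}), and Lemma~\ref{lem:p_A_rd} guarantees the compatibility with the number $p(\cdot,A)$.
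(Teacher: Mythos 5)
Your proof is correct and follows essentially the same two-step strategy as the paper: bound the Duhamel term via the growth estimate and Corollary~\ref{cor:Duhamel_RD} with a $T$-independent constant, then make the free evolution small by Proposition~\ref{prop:caloric_extension_Besov} and shrinking $T$. The only addition is your explicit dominated-convergence justification for $\|\Ext_L(u_0)\|_{\L^r_{\beta_0}(T)}\to 0$, which the paper leaves implicit; everything else matches.
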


	\begin{proof}
		We consider the free evolution and the Duhamel operator in the definition of $\Theta$ separately.

		\textbf{Step 1}: smallness property for the Duhamel operator. By Corollary~\ref{cor:Duhamel_RD} and the growth estimate in~\eqref{eq:RD_growth_Lipschitz}, calculate
		\begin{align}
			\| \Soll^L_1(\phi(v)) \|_{\X(T)} \leq C \| \phi(v) \|_{\Y(T)} \leq C C_g \| v \|_{\X(T)}^{1+\rho} \leq C C_g \lambda^{1+\rho},
		\end{align}
		where $C$ is a bound for the operator norm of $\Soll^L_1$ and is independent of $T$. Choose $\lambda \leq \lambda^*$ with $\lambda^*$ small enough so that $CC_g \lambda^\rho \leq \nicefrac{1}{2}$. Then, we obtain
		\begin{align}
			\| \Soll^L_1(\phi(v)) \|_{\X(T)} \leq \frac{1}{2} \lambda
		\end{align}
		independently of $T$, which completes Step~1. For the second step, consider $\lambda$ to be fixed.

		\textbf{Step 2}: smallness property for the free evolution. By Proposition~\ref{prop:caloric_extension_Besov}, since $r \geq p$, there is $T^* > 0$ depending on $\lambda$ and $u_0$ such that, for $T \leq T^*$, we have the estimate
		\begin{align}
			\| \Ext_L(u_0) \|_{\X(T)} \leq \frac{1}{2} \lambda.
		\end{align}
		Combining both steps gives the claim.
	\end{proof}

	\begin{lemma}[Contraction property]
		\label{lem:rd_contraction}
		There is $\lambda^* > 0$ such that, whenever $\lambda \leq \lambda^*$, then for all $T\in { (0,\infty)}$ the map $\Theta$ is a strict contraction on $\X(\lambda, T)$.
	\end{lemma}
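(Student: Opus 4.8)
The plan is to use that the free evolution $\Ext_L(u_0)$ in the definition of $\Theta$ does not depend on the argument, so it cancels in a difference. For $v, w \in \X(\lambda,T)$, linearity of $\Soll^L_1$ gives $\Theta(v) - \Theta(w) = \Soll^L_1(\phi(v) - \phi(w))$, hence $\d(\Theta(v),\Theta(w)) = \| \Soll^L_1(\phi(v) - \phi(w)) \|_{\X(T)}$. By Corollary~\ref{cor:Duhamel_RD} this is $\lesssim \| \phi(v) - \phi(w) \|_{\Y(T)}$ with a constant $C$ independent of $T$, so the whole matter reduces to estimating the non-linear difference in $\Y(T) = \L^{\nicefrac{r}{1+\rho}}_{(\nicefrac{n}{2r} - \nicefrac{1}{\rho})(1+\rho)}(T)$.

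For that, I would plug in the local Lipschitz bound from~\eqref{eq:RD_growth_Lipschitz}, $|\phi(v) - \phi(w)| \leq C_L(|v|^\rho + |w|^\rho)|v - w|$, and run a weighted Hölder argument. Write $\beta_0 \coloneqq \nicefrac{n}{2r} - \nicefrac{1}{\rho}$, so that $\X(T) = \L^r_{\beta_0}(T)$ and $\Y(T) = \L^{\nicefrac{r}{1+\rho}}_{(1+\rho)\beta_0}(T)$. Pointwise, $t^{-(1+\rho)\beta_0}\,(|v|^\rho + |w|^\rho)|v - w|$ splits as $(t^{-\beta_0}|v|)^\rho\,(t^{-\beta_0}|v-w|) + (t^{-\beta_0}|w|)^\rho\,(t^{-\beta_0}|v-w|)$, and since $\tfrac{\rho}{r} + \tfrac{1}{r} = \tfrac{1+\rho}{r}$ with $\nicefrac{r}{(1+\rho)} > 1$ by~\eqref{eq:fujita_exp}, Hölder's inequality on $\L^{\nicefrac{r}{1+\rho}}(\R^{n+1}_+, \d x\ddt)$ yields $\| \phi(v) - \phi(w) \|_{\Y(T)} \leq C_L\bigl(\| v \|_{\X(T)}^\rho + \| w \|_{\X(T)}^\rho\bigr)\| v - w \|_{\X(T)}$. (The same computation with $w = 0$ shows $\phi(v) \in \Y(T)$ for $v \in \X(T)$, so the composition in $\Theta$ is legitimate.) Using $\| v \|_{\X(T)}, \| w \|_{\X(T)} \leq \lambda$ one gets $\d(\Theta(v), \Theta(w)) \leq 2CC_L\lambda^\rho\, \d(v,w)$.

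To conclude, I would choose $\lambda^* > 0$ with $2CC_L(\lambda^*)^\rho \leq \tfrac12$ (and no larger than the $\lambda^*$ from the self-mapping lemma, so that both properties hold for the same parameters); then for every $\lambda \leq \lambda^*$ and every $T \in (0,\infty)$ the map $\Theta$ is Lipschitz on $\X(\lambda,T)$ with constant $\leq \tfrac12 < 1$, and this constant is independent of $T$ precisely because $C$ is. Honestly I do not expect a real obstacle here: the only points requiring care are the weighted Hölder bookkeeping and the verification that $\nicefrac{r}{(1+\rho)} > 1$, so that $\L^{\nicefrac{r}{1+\rho}}$ is a genuine weighted Lebesgue space and Hölder applies — but the latter is already recorded in the setup via~\eqref{eq:fujita_exp}, which rests on the standing hypothesis $\rho > \nicefrac{2}{n}$.
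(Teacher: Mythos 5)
Your proof is correct and follows essentially the same route as the paper: subtract to isolate $\Soll^L_1(\phi(v)-\phi(w))$, apply Corollary~\ref{cor:Duhamel_RD} for the $T$-uniform bound, and use the local Lipschitz estimate~\eqref{eq:RD_growth_Lipschitz} with Hölder's inequality to get the factor $2\lambda^\rho$. The only difference is that you spell out the weighted Hölder bookkeeping and the check $\nicefrac{r}{1+\rho}>1$ in more detail than the paper does.
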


	\begin{proof}
		Let $u,v \in \X(\lambda,T)$. Observe that $\Theta(u) - \Theta(v) = \Soll^L_1(\phi(u) - \phi(v))$. By virtue of Corollary~\ref{cor:Duhamel_RD}, the operator $\Soll^L_1$ is $\Y(T) \to \X(T)$ bounded. Hence, it suffices to control the $\Y(T)$ norm of $\phi(u) - \phi(v)$ accordingly. To this end, use the local Lipschitz estimate in~\eqref{eq:RD_growth_Lipschitz} and H\"older's inequality
		to give
		\begin{align}
			\| \phi(u) - \phi(v) \|_{\Y(T)} \leq C_L \Bigl( \| u \|_{\X(T)}^\rho + \| v \|_{\X(T)}^\rho \Bigr) \| u-v \|_{\X(T)}
			\leq C_L 2\lambda^\rho \| u-v \|_{\X(T)}.
		\end{align}
		Taking $\lambda$ sufficiently small depending on $C_L$ and the operator norm of $\Soll^L_1$ concludes the proof.
	\end{proof}

	Combining the last two lemmas gives the following result.

	\begin{proposition}[Local mild existence for~\eqref{eq:rd}]
		\label{prop:RD_existence_mild}
		Fix $\rho > \nicefrac{2}{n}$. Let $p \in (1, \RD_{+}(n,\rho))$ and $\alpha \in (-1, 0)$ such that $\nicefrac{2}{\rho} = \nicefrac{n}{p} - \alpha$ and $p \geq p(\alpha,A)$.
		Fix $u_0 \in \dot\B^\alpha_{p,p}$.
		Then, there exist {$r \in (\RD_{-}(n,\rho), \RD_{+}(n,\rho))$ satisfying $r \geq p$} and $T \in (0,\infty)$ such that~\eqref{eq:rd} admits a mild solution $u\in\L^r_{\nicefrac{n}{2r}-\nicefrac{1}{\rho}}(T)$.
	\end{proposition}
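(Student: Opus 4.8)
The plan is to realize a mild solution as a fixed point of $\Theta$ via Banach's fixed-point theorem, feeding in the self-mapping lemma and the contraction property (Lemma~\ref{lem:rd_contraction}). First I would fix the exponent $r$: since $\rho > \nicefrac{2}{n}$ one has $\RD_{-}(n,\rho) < \RD_{+}(n,\rho)$, and since $p < \RD_{+}(n,\rho)$ by hypothesis, the interval $(\max(p,\RD_{-}(n,\rho)), \RD_{+}(n,\rho))$ is non-empty; I pick any $r$ in it, so that $r \in (\RD_{-}(n,\rho), \RD_{+}(n,\rho))$ and $r \geq p$. With this $r$ fixed, I consider the complete metric space $\X(\lambda,T)$ and the map $\Theta(v) = \Ext_L(u_0) + \Soll^L_1(\phi(v))$ introduced above.

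Next I would record why $\Theta$ is well-defined on $\X(\lambda,T)$, which is where the linear theory enters. For $v \in \X(\lambda,T)$, the growth bound in~\eqref{eq:RD_growth_Lipschitz} gives $\phi(v) \in \Y(T)$, and Corollary~\ref{cor:Duhamel_RD} maps $\phi(v)$ back into $\X(T) = \L^r_{\nicefrac{n}{2r}-\nicefrac{1}{\rho}}(T)$; the free evolution $\Ext_L(u_0)$ lies in $\X(T)$ by Proposition~\ref{prop:caloric_extension_Besov} (recall $\sigma = \nicefrac{2}{\rho}$ here, so the weight $\nicefrac{n}{2r}-\nicefrac{\sigma}{2}$ is exactly $\nicefrac{n}{2r}-\nicefrac{1}{\rho}$, and $\nicefrac{n}{2p}-\nicefrac{\sigma}{2}=\nicefrac{\alpha}{2}$), whose hypothesis $p \geq p(\alpha,A)$ is assumed, while Lemma~\ref{lem:p_A_rd} takes care of the remaining parameter relations from the linear theory. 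Then I would invoke the self-mapping lemma to pick $\lambda \leq \lambda^*$ and a corresponding $T \leq T^*$ (depending on $\lambda$ and $u_0$) so that $\Theta(\X(\lambda,T)) \subseteq \X(\lambda,T)$, shrinking $\lambda^*$ if necessary so that Lemma~\ref{lem:rd_contraction} also applies and $\Theta$ is a strict contraction on $\X(\lambda,T)$.

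Since $\X(\lambda,T)$ is a closed ball in the Banach space $\L^r_{\nicefrac{n}{2r}-\nicefrac{1}{\rho}}(T)$, it is a complete metric space for the distance $\d$, so Banach's fixed-point theorem furnishes a (unique) fixed point $u \in \X(\lambda,T)$ of $\Theta$. By construction $u = \Ext_L(u_0) + \Soll^L_1(\phi(u))$, that is, $u$ is a mild solution to~\eqref{eq:rd} belonging to $\L^r_{\nicefrac{n}{2r}-\nicefrac{1}{\rho}}(T)$, as claimed. I do not expect any genuine obstacle: all the analytic content is already packaged into Corollary~\ref{cor:Duhamel_RD}, the self-mapping lemma, and Lemma~\ref{lem:rd_contraction}. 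The only thing demanding attention is the parameter bookkeeping — choosing a single $r$ that simultaneously meets all the constraints, and reconciling the two thresholds $\lambda^*$ coming from the two lemmas by taking their minimum, after which $T^*$ may depend on that $\lambda$ and on $u_0$.
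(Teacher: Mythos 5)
Your proposal reproduces the paper's own argument: the paper obtains Proposition~\ref{prop:RD_existence_mild} by simply combining the self-mapping lemma with the contraction property of Lemma~\ref{lem:rd_contraction} via Banach's fixed-point theorem on $\X(\lambda,T)$, exactly as you do. Your additional remarks on choosing $r$ from $(\max(p,\RD_-(n,\rho)),\RD_+(n,\rho))$ and on taking the minimum of the two thresholds $\lambda^*$ are correct bookkeeping that the paper leaves implicit.
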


	\subsection{Bootstrapping regularity -- mild solutions}
	\label{sec:regularity_mild}

	Given a mild solution to~\eqref{eq:rd}, we establish further regularity properties of it. To do so, we employ an iteration argument using the equation. In particular, the artificial choice of the fixed-point set in the last section is justified.

	Fix $\rho > \nicefrac{2}{n}$ and $T \in (0,\infty]$. Let $p \in (1, \RD_{+}(n,\rho))$ and $\alpha \in (-1, 0)$ such that $\nicefrac{2}{\rho} = \nicefrac{n}{p} - \alpha$ and $p \geq p(\alpha,A)$. Fix $u_0 \in \dot\B^\alpha_{p,p}$.
	Assume that $u \in \Z^{\tilde r,\tilde q}_{\nicefrac{n}{2\tilde r} - \nicefrac{1}{\rho}}(T)$ is a mild solution to~\eqref{eq:rd}, that is to say, the identity
	\begin{align}
		\label{eq:bootstrapping_mild_solution_rd}
		u = \Ext_L(u_0) + \Soll^L_1(\phi(u))
	\end{align}
	holds in $\Z^{\tilde r, \tilde q}_{\nicefrac{n}{2\tilde r} - \nicefrac{1}{\rho}}(T)$. For the parameters we assume $$\tilde r, \tilde q \in (1, \infty], \quad \tilde r \geq p, \quad \RD_{-}(n,\rho) < \tilde r < \RD_{+}(n,\rho), \quad \tilde q > \RD_{-}(n,\rho).$$
	We are going to show the following result.

	\begin{proposition}[Bootstrapping regularity for~\eqref{eq:rd}]
		\label{prop:RD_bootstrapping}
		In the situation just described, one has $u \in \Z^{r,\infty}_{\nicefrac{n}{2r} - \nicefrac{1}{\rho}}(T)$ for all $r \in (1,\infty]$ satisfying $r \geq p$ and $\RD_{-}(n,\rho) < r < \RD_{+}(n,\rho)$. In particular, $u \in \Z^{r,q}_{\nicefrac{n}{2r} - \nicefrac{1}{\rho}}(T)$ for any $q \in (1,\infty]$.
	\end{proposition}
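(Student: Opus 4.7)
The plan is to iterate the mild formulation $u = \Ext_L(u_0) + \Soll^L_1(\phi(u))$, exploiting the hypercontractivity of the Duhamel operator on weighted $\Z$-spaces developed in Section~\ref{subsec:duhamel_cauchy} and the Hardy--Sobolev embedding along the critical scaling line. Write $\beta_r \coloneqq \nicefrac{n}{2r} - \nicefrac{1}{\rho}$ for brevity.

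The core of the argument will be a single iteration step, which I will show takes $u \in \Z^{r_k,q_k}_{\beta_{r_k}}(T)$ to $u \in \Z^{r_{k+1},q_{k+1}}_{\beta_{r_{k+1}}}(T)$ with $q_{k+1} > q_k$ and $r_{k+1}$ controlled. The growth bound $|\phi(u)| \lesssim |u|^{1+\rho}$ immediately yields $\phi(u) \in \Z^{r_k/(1+\rho),\,q_k/(1+\rho)}_{(1+\rho)\beta_{r_k}}(T)$; the weight $(1+\rho)\beta_{r_k} > -1$ is equivalent to $r_k < \RD_+(n,\rho)$. I then invoke the bootstrapped hypercontractivity of $\Soll^L_1$ from Corollary~\ref{cor:Duhamel_mapping_Z} (with auxiliary exponent $\tilde q_b$ chosen close to $1$, legal because Table~\ref{tab:Duhamel_Bootstrapping} imposes no restriction on $\Soll^L_1$), obtaining $\Soll^L_1(\phi(u)) \in \Z^{r_k/(1+\rho),\,q_{k+1}}_{(1+\rho)\beta_{r_k}+1}(T)$ for any $q_{k+1} \in \SobSet^{**}(q_k/(1+\rho))$. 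A short computation verifies the algebraic identity $(1+\rho)\beta_{r_k}+1 = \beta_{r_k/(1+\rho)}$, so the output sits on the critical scaling line. Applying Hardy--Sobolev (Proposition~\ref{prop:Z_space_embedding}) along that line lets me relocate the term into $\Z^{r_{k+1},q_{k+1}}_{\beta_{r_{k+1}}}(T)$ for any $r_{k+1} \geq r_k/(1+\rho)$. Combined with $\Ext_L(u_0) \in \Z^{r_{k+1},q_{k+1}}_{\beta_{r_{k+1}}}(T)$ from Proposition~\ref{prop:caloric_extension_Besov} (valid for $r_{k+1} \geq p$ and any $q_{k+1} \in (1,\infty]$), the mild identity places $u$ in that space, under the sole compatibility condition $r_{k+1} \geq \max(r_k/(1+\rho),p)$.

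To run the iteration I will initialise with $r_0 = \tilde r$, $q_0 = \tilde q$, and select $r_{k+1} = \max(r_k/(1+\rho), r^\star)$ (for a chosen target $r^\star$ in the admissible range) and $q_{k+1}$ as large as possible in $\SobSet^{**}(q_k/(1+\rho))$. The standing hypothesis $\tilde q > \RD_-(n,\rho) = (n+2)\nicefrac{\rho}{2}$ is precisely equivalent to $(\tilde q/(1+\rho))^{**} > \tilde q$, and since this relation self-reproduces, the sequence $(q_k)$ is strictly increasing. It cannot have a finite accumulation point because the only fixed point of $q \mapsto (q/(1+\rho))^{**}$ is $\RD_-(n,\rho)$, which lies strictly below $q_0$. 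Hence $q_k \to \infty$, and in finitely many steps $q_k/(1+\rho) \geq 1 + \nicefrac{n}{2}$, at which stage $q_{k+1} = \infty$ is admissible. The sequence $(r_k)$ stabilises at $r^\star$ in finitely many steps. The ancillary conditions $r_k < \RD_+(n,\rho)$ and $r_k/(1+\rho) > 1$ are preserved throughout: the latter because $r^\star \geq \RD_-(n,\rho) > 1+\rho$ when $\rho > \nicefrac{2}{n}$.

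Running the iteration to completion with $r^\star$ chosen as (or arbitrarily close to) the infimum of the admissible range $\{r : r \geq p,\ \RD_-(n,\rho) < r < \RD_+(n,\rho)\}$ then yields $u \in \Z^{r^\star,\infty}_{\beta_{r^\star}}(T)$. A final Hardy--Sobolev embedding along the critical line propagates this to every $r \geq r^\star$ in the admissible range, and the ``in particular'' conclusion is immediate from the nesting $\Z^{r,\infty}_{\beta_r} \subseteq \Z^{r,q}_{\beta_r}$ for every $q \in (1,\infty]$. The principal technical difficulty will be the bookkeeping: at each step one must simultaneously verify that the three parameters $r_k$, $q_k$, and the weight $(1+\rho)\beta_{r_k}$ lie in the hypotheses of the Duhamel estimate and the Hardy--Sobolev embedding, while exploiting the specific algebra of the critical scaling line.
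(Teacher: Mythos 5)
Your proposal is correct and reaches the conclusion, but it reorganises the paper's argument in a meaningful way: you run a \emph{single} simultaneous induction, lowering $r_k$ and raising $q_k$ in lockstep, whereas the paper first introduces $\hat q \coloneqq \min(\tilde q, r)$ and performs two \emph{separate} inductions --- Step~1 lowers the global exponent from $\tilde r$ down to $r$ with the local exponent frozen at $\hat q$ (using Proposition~\ref{prop:Duhamel_mapping_Z}, whose hypothesis is $p \geq q$), and Step~2 then raises the local exponent from $\hat q$ to $\infty$ with $r$ frozen (using the bootstrapped Corollary~\ref{cor:Duhamel_mapping_Z}, with auxiliary exponent $\hat q/(1+\rho)$). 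Your unified scheme is enabled precisely by your remark that for $\Soll^L_1$ Table~\ref{tab:Duhamel_Bootstrapping} imposes no restriction, so the auxiliary exponent $\tilde q_b$ can be taken arbitrarily close to $1$; this eliminates the need for $\hat q$ entirely, because the constraint on the global exponent relaxes from $p^\sharp \geq q^\sharp$ to $p^\sharp \geq \tilde q_b$, which merely requires $r_k > 1+\rho$. This is exactly the condition the paper itself checks in its Step~2 ($p^\sharp \geq \tilde q^\sharp$ there), so you are on solid ground even though the statement of Corollary~\ref{cor:Duhamel_mapping_Z} displays the $p$-range as $[q,\infty]$ --- the underlying Corollary~\ref{cor:SIO_local} gives the genuine range $[\tilde q, \infty]$, and it is worth flagging this explicitly when you write the proof out, lest a reader stop at the literal statement. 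Your termination analysis of the $q$-iteration (fixed point of $q \mapsto (q/(1+\rho))^{**}$ at $\RD_-(n,\rho)$, hence escape in finitely many steps) is equivalent to the paper's ``uniform step size $\delta$'' observation, and the algebraic check $(1+\rho)\beta_{r_k}+1 = \beta_{r_k/(1+\rho)}$, the Hardy--Sobolev relocation and the verification of $r_k > 1+\rho$ via $r^\star > \RD_-(n,\rho) > 1+\rho$ are all sound. What the combined induction buys is a cleaner bookkeeping at the price of a heavier-looking single step; what the paper's two-stage version buys is that Step~1 only needs the unbootstrapped Duhamel proposition. Either route is fine.
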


	The proof is quite technical. However, its idea is fairly simple: using the fact that~\eqref{eq:bootstrapping_mild_solution_rd} is an \emph{implicit} representation of $u$, self-improvement can be achieved by an iteration scheme using hypercontractivity of the Duhamel operator $\Soll^L_1$ combined with the weighted $\Z$-space embeddings.

	\begin{proof}
		Recall that we assume $u \in \Z^{\tilde r, \tilde q}_{\nicefrac{n}{2\tilde r} - \nicefrac{1}{\rho}}(T)$. We proceed in two steps.

		\textbf{Step 1}: We show that $u \in \Z^{r, \tilde q}_{\nicefrac{n}{2r} - \nicefrac{1}{\rho}}(T)$.
		If $r\ge \tilde r$, the claim of this step follows from to the weighted $\Z$-space embedding.
		Otherwise, assume $r<\tilde r$. We want to lower $\tilde r$ to $r$ by an induction procedure. Define $(r_j)_{j=0}^m$ as the minimal sequence of numbers satisfying
		\begin{align}
			r_0 \leq r, \qquad r_m = \tilde r, \qquad
			r_{j-1} = \frac{r_j}{1+\rho} \quad \text{for} \quad j = 1, \dots, m.
		\end{align}
		We know that $u\in \Z^{r_m, \tilde q}_{\nicefrac{n}{2r_m} - \nicefrac{1}{\rho}}(T)$. We claim that, for {$j = 1,\dots,m$}, one has
		\begin{align}
			\label{eq:bootstrapping_RD_induction}
			v\in \Z^{r_j, \tilde q}_{\frac{n}{2r_j} - \frac{1}{\rho}}(T) \qquad \Longrightarrow \qquad \Soll^L_1(\phi(v)) \in \Z^{r_{j-1}, \tilde q}_{\frac{n}{2r_{j-1}} - \frac{1}{\rho}}(T).
		\end{align}
		Indeed, $$v\in \Z^{r_j, \tilde q}_{\frac{n}{2r_j} - \frac{1}{\rho}}(T) \qquad \Longrightarrow \qquad \phi(v) \in \Z^{\frac{r_{j}}{1+\rho}, \frac{\tilde q}{1+\rho}}_{(\frac{n}{2r_j} - \frac{1}{\rho})(1+\rho)}(T) = \Z^{r_{j-1}, \frac{\tilde q}{1+\rho}}_{\frac{n}{2r_{j-1}} - \frac{1}{\rho} - 1}(T),$$
		and~\eqref{eq:bootstrapping_RD_induction} follows on applying Proposition~\ref{prop:Duhamel_mapping_Z} with $\Soll^L_1$ in the case $\kappa \coloneqq 1$, provided we check that $\beta\coloneqq(\nicefrac{n}{2r_j} - \nicefrac{1}{\rho})(1+\rho) > -1$
		and $\tilde q \in \SobSet^{**}(\nicefrac{\tilde q}{1+\rho})$. The verification is similar to Corollary~\ref{cor:Duhamel_RD}. First,
		 $(\nicefrac{n}{2r_j} - \nicefrac{1}{\rho})(1+\rho) > -1$ is equivalent to $r_j < \RD_{+}(n,\rho)$, which follows from $r_j \leq \tilde r < \RD_{+}(n,\rho)$. %
		 Second, $\tilde q \in \SobSet^{**}(\nicefrac{\tilde q}{1+\rho})$
		follows from $\tilde q \geq \RD_{-}(n, \rho)$.

		Concerning the free evolution, we have $\Ext_L(u_0) \in \Z^{r, \tilde q}_{\nicefrac{n}{2r} - \nicefrac{1}{\rho}}(T)$ by Proposition~\ref{prop:caloric_extension_Besov}, taking $r \geq p$ into account.
		Combined with the weighted $\Z$-space embedding of Proposition~\ref{prop:Z_space_embedding}, this yields in particular that $\Ext_L(u_0) \in \Z^{r_j, \tilde q}_{\nicefrac{n}{2r_j} - \nicefrac{1}{\rho}}(T)$ for $j = 1, \dots, m$, keeping $r_j \geq r$ in mind.

		Now, {by virtue of~\eqref{eq:bootstrapping_RD_induction} applied with $v= u$,} we can start a descending induction using the representation~\eqref{eq:bootstrapping_mild_solution_rd}, to arrive at
		$$\Soll^L_1(\phi(u)) \in \Z^{r_0, \tilde q}_{\frac{n}{2r_0} - \frac{1}{\rho}}(T) \subseteq \Z^{r, \tilde q}_{\frac{n}{2r} - \frac{1}{\rho}}(T),$$
		where we used the weighted $\Z$-spaces embedding in the last inclusion, taking $r \geq r_0$ into account.
		By the representation~\eqref{eq:bootstrapping_mild_solution_rd}, this concludes Step~1.

		\textbf{Step 2}: We
		show that $u \in \Z^{r,\infty}_{\nicefrac{n}{2r} - \nicefrac{1}{\rho}}(T)$.
		{We know $u \in \Z^{r, \tilde q}_{\nicefrac{n}{2r} - \nicefrac{1}{\rho}}(T)$ from Step~1.}
		Using a similar iteration scheme, we want to improve $\tilde q$ to $\infty$. We claim that it is possible to pick a finite, increasing sequence $(q_j)_{j=0}^m$ satisfying
		\begin{align}
			q_0 = \tilde q, \qquad q_m = \infty, \qquad
			q_{j+1} \in \SobSet^{**}\Bigl(\frac{q_j}{1+\rho}\Bigr)
			\quad \text{for} \quad j = 0, \dots, m-1.
		\end{align}
		This is possible using $\tilde q >\RD_{-}(n, \rho)$, since $ q^\flat \in [\tilde q,\infty)$ implies $\tfrac{1+\rho}{q^\flat} - \tfrac{2}{n+2} \leq \tfrac{1}{q^\flat} - \delta$ for a fixed \enquote{step size} $\delta > 0$.
		As we want $q_m = \infty$, the sequence has to be constructed in such a way that $\tfrac{q_{m-1}} {1+\rho} >\tfrac{n}{2} + 1$.
		We prove by induction
		that, for $j = 0,\dots, m$, one has
		$u\in \Z^{r, q_j}_{\nicefrac{n}{2r} - \nicefrac{1}{\rho}}(T)$.
		Step~1 furnishes the initial input $u \in \Z^{r, \tilde q}_{\nicefrac{n}{2r} - \nicefrac{1}{\rho}}(T) = \Z^{r, q_0}_{\nicefrac{n}{2r} - \nicefrac{1}{\rho}}(T)$.
		Next, assume the induction hypothesis holds up to $j\in \{0, \ldots, m-1\}$ and show it for $j+1$. %
		This and the initial input gives us
		\begin{align}
			\label{eq:phi_induction}
			\phi(u) \in \Z^{\frac{r}{1+\rho}, \frac{q_j}{1+\rho}}_{(\frac{n}{2r} - \frac{1}{\rho})(1+\rho)}(T).
		\end{align}
		Then, Proposition~\ref{prop:Duhamel_mapping_Z} applied to $\Soll^L_1$ yields
		\begin{align}
			\label{eq:bootstrapping_RD_step2}
			\Soll^L_1(\phi(u)) \in \Z^{\frac{r}{1+\rho}, q_{j+1}}_{\frac{n(1+\rho)}{2r} - \frac{1}{\rho}}(T),
		\end{align}
		provided we
	check that $\beta \coloneqq (\nicefrac{n}{2r} - \nicefrac{1}{\rho})(1+\rho) > -1$ and $q_{j+1} \in \SobSet^{**}(\nicefrac{q_j}{1+\rho})$.
	Indeed, $(\nicefrac{n}{2r} - \nicefrac{1}{\rho})(1+\rho) > -1$ was checked in Step~1 and
	$q_{j+1} \in \SobSet^{**}(\nicefrac{q_j}{1+\rho})$
	holds by choice of the sequence $(q_j)_j$.
	Eventually, using the weighted $\Z$-space embedding, we find $\Soll^L_1(\phi(u)) \in \Z^{r, q_{j+1}}_{\nicefrac{n}{2r} - \nicefrac{1}{\rho}}(T)$.

	Next, using that $r \geq p$, we find that $\Ext_L(u_0)$ belongs to the same space by Proposition~\ref{prop:caloric_extension_Besov}.
	This completes the proof of the induction. %
	As $q_{m}=\infty$, we deduce $u \in \Z^{r,\infty}_{\nicefrac{n}{2r} - \nicefrac{1}{\rho}}(T)$ as desired.
\end{proof}

	\subsection{Uniqueness of mild solutions}
	\label{sec:uniqueness_mild}

	In this section, we show uniqueness of mild solutions. In contrast to Section~\ref{sec:existence_mild}, mild solutions are not supposed to belong to a small ball in the contraction space, so that uniqueness is \emph{not} a consequence of Banach's contraction principle. Instead, we employ a contraction argument to show short-time coincidence of two given mild solutions, whereas their long-time coincidence relies on \emph{instantaneous regularization} and smoothing properties of Duhamel operators.

	In the following proposition, we show that there exists \emph{at most} one mild solution to the reaction--diffusion equation.

	\begin{proposition}[Uniqueness of mild solutions]
		\label{prop:mild_uniqneness_rd}
		Fix $\rho > \nicefrac{2}{n}$ and $T \in (0,\infty)$. Let $p \in (1, \RD_{+}(n,\rho))$ and let $\alpha \in (-1,0)$ satisfying $\nicefrac{2}{\rho} = \nicefrac{n}{p} - \alpha$ and $p \geq p(\alpha,A)$. Fix $u_0 \in \dot\B^\alpha_{p,p}$. Then, there exists at most one mild solution $u \in \L^r_{\nicefrac{n}{2r} - \nicefrac{1}{\rho}}(T)$ to~\eqref{eq:rd} with $r\in (1,\infty]$ satisfying $r \geq p$ and $\RD_{-}(n,\rho) < r < \RD_{+}(n,\rho)$.
	\end{proposition}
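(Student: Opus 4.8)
The plan is to establish uniqueness by combining a short-time contraction argument with a propagation-of-uniqueness step relying on the regularity theory for mild solutions developed in Proposition~\ref{prop:RD_bootstrapping}. Let $u, v \in \L^r_{\nicefrac{n}{2r} - \nicefrac{1}{\rho}}(T)$ be two mild solutions with the same initial datum $u_0$; we must show $u = v$. The first move is a local argument: set $w \coloneqq u - v$ and observe that $\Theta(u) - \Theta(v) = \Soll^L_1(\phi(u) - \phi(v))$, so $w = \Soll^L_1(\phi(u) - \phi(v))$. Using the local Lipschitz estimate in~\eqref{eq:RD_growth_Lipschitz}, Hölder's inequality, and the mapping property of $\Soll^L_1$ from Corollary~\ref{cor:Duhamel_RD} (exactly as in Lemma~\ref{lem:rd_contraction}, but \emph{without} the smallness of $\lambda$), one gets
\begin{align}
	\| w \|_{\L^r_{\nicefrac{n}{2r}-\nicefrac{1}{\rho}}(t)} \lesssim \bigl( \| u \|_{\L^r_{\nicefrac{n}{2r}-\nicefrac{1}{\rho}}(t)}^\rho + \| v \|_{\L^r_{\nicefrac{n}{2r}-\nicefrac{1}{\rho}}(t)}^\rho \bigr) \| w \|_{\L^r_{\nicefrac{n}{2r}-\nicefrac{1}{\rho}}(t)}
\end{align}
for every $t \in (0,T]$, with implicit constant independent of $t$. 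Since the $\L^r_{\nicefrac{n}{2r}-\nicefrac{1}{\rho}}(t)$-norms of $u$ and $v$ tend to $0$ as $t \to 0$ (by the dominated convergence / absolute continuity of the integral defining the weighted $\L^r$-norm on $(0,t)\times\R^n$), there is $t_0 \in (0,T]$ small enough that the prefactor is $\le \nicefrac{1}{2}$, whence $\| w \|_{\L^r_{\nicefrac{n}{2r}-\nicefrac{1}{\rho}}(t_0)} = 0$, i.e. $u = v$ on $(0,t_0) \times \R^n$.

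The second move is to propagate this coincidence up to time $T$. Let $t_* \coloneqq \sup\{ t \in (0,T] \colon u = v \text{ on } (0,t)\times\R^n \}$; by the first step $t_* > 0$, and we must rule out $t_* < T$. The key point is that on the interval $(0,t_*)$ the restricted solutions $u, v$ agree and, by Proposition~\ref{prop:RD_bootstrapping}, enjoy the improved regularity $u = v \in \Z^{r,\infty}_{\nicefrac{n}{2r}-\nicefrac{1}{\rho}}(t)$ for every $t < t_*$; in particular $u(s)$ is bounded on compact time-subintervals away from $0$, say on $[t_*/2, t_*]$ after passing to the limit. This lets us set up a fresh contraction argument with base time $t_*$ rather than $0$: rewrite the fixed-point identity as $u(t) = \e^{-(t-t_*/2)L} u(t_*/2) + \int_{t_*/2}^t \e^{-(t-s)L}\phi(u(s))\,\d s$ and similarly for $v$, noting $u(t_*/2) = v(t_*/2)$ because $t_*/2 < t_*$. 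Subtracting and using the mapping properties of the time-shifted Duhamel operator together with the now-available boundedness of $u, v$ near $t_*$, one obtains a Lipschitz estimate for $u - v$ on $(t_*/2, t_* + \delta)$ whose prefactor can be made $\le \nicefrac{1}{2}$ by shrinking $\delta$, since the relevant norms of $u, v$ over a short window past $t_*$ are small (here the higher local integrability from $\Z^{r,\infty}$ and the structure of the weighted space are what make the window-norms vanish as $\delta \to 0$). This forces $u = v$ on $(0, t_* + \delta)$, contradicting the definition of $t_*$; hence $t_* = T$ and $u \equiv v$.

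The main obstacle I anticipate is making the second step rigorous in the low-integrability regime: the mild solution only a priori lives in $\L^r_\beta = \Z^{r,r}_\beta$, so before one can restart the fixed-point scheme at a positive time one genuinely needs the bootstrapped regularity $\Z^{r,\infty}_\beta$, and one must be careful that the instantaneous smoothing of Proposition~\ref{prop:RD_bootstrapping} applies on \emph{every} sub-time-interval $(0,t)$ with $t < t_*$ with constants that do not blow up as $t \uparrow t_*$ (this is exactly why the propositions in Section~\ref{sec:regularity_mild} are stated with $T$-independent bounds). A secondary technical point is identifying the value $u(t_*/2)$ as a genuine function in a space on which $\e^{-sL}$ and $\Soll^L_1$ act boundedly, i.e. checking that the shifted Duhamel representation is valid for the restricted solution — this is the content of the mild–weak transference mentioned in the introduction, and for the present purpose follows from the $\Z^{r,\infty}$-regularity together with Lemma~\ref{lem:Z_local_Lq}, which gives local $\L^\infty$-control up to the boundary and hence a well-defined trace-like slice $u(t_*/2)$.
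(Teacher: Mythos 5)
Your Step~1 is the same as the paper's: short-time coincidence by a contraction estimate combined with the vanishing of $\|u\|_{\L^r_{\nicefrac{n}{2r}-\nicefrac{1}{\rho}}(t)}$ as $t\to 0$. Your Step~2 is in the same spirit as the paper's (propagate the coincidence forward past the short time $\tau$), but the technical route you sketch is different and contains misattributions that should be corrected.

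The paper does \emph{not} invoke Proposition~\ref{prop:RD_bootstrapping}, does not slice at $t_*/2$, and does not appeal to $\Z^{r,\infty}$-regularity. The only ``instantaneous regularization'' used is the elementary fact that $\|u\|_{\L^r((\tau,T)\times\R^n)} \leq M < \infty$: this follows immediately from $u\in \L^r_{\nicefrac{n}{2r}-\nicefrac{1}{\rho}}(T)$, because the weight $t^{-(\nicefrac{n}{2r}-\nicefrac{1}{\rho})}\cdot t^{-\nicefrac{1}{r}}$ is bounded above and below on $[\tau,T]$; no bootstrapping is required. The paper's actual mechanism for making the contraction factor small past $\tau$ is also different from yours: rather than shrinking the window so that the weighted norms of $u,v$ themselves become small (absolute continuity of the integral), the paper introduces a strictly lower exponent $q$ via $\nicefrac{1}{q} = \nicefrac{1}{r} + \nicefrac{2}{n+2}$ (which satisfies $q < \nicefrac{r}{1+\rho}$ precisely because $r > \RD_{-}(n,\rho)$), then uses the hypercontractive bound $\Soll^L_1\colon \L^q_{-\nicefrac{2}{n+2}}\to \L^r$ from Corollary~\ref{cor:Duhamel_Lq} together with Hölder's inequality in time, which produces an explicit factor $\eps^\nu$. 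Combined with the uniform bound by $M$, this gives a uniform step size, so the interval $(\tau,T]$ is covered in finitely many iterations.

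Your supremum-plus-absolute-continuity variant is a legitimate alternative: since $u,v\in\L^r_{\nicefrac{n}{2r}-\nicefrac{1}{\rho}}(T)$, the window norms $\|u\|_{\L^r_\beta(t_*,t_*+\delta)}$ do vanish as $\delta\to 0$, and subtracting the Duhamel representations (no need to isolate $u(t_*/2)$ as a ``trace'' and restart the semigroup from it) gives a contraction with vanishing factor, contradicting the maximality of $t_*$. But be aware that (a) your appeal to Proposition~\ref{prop:RD_bootstrapping} and to $\Z^{r,\infty}$-regularity is unnecessary and misattributed — the window smallness is plain absolute continuity of the $\L^r_\beta$-norm, not a consequence of higher local integrability; and (b) the detour through a mild-weak transference to define a trace $u(t_*/2)$ is a red herring, since the standard subtraction $u-v = \Soll^L_1(\phi(u)-\phi(v))$ localized to the window (using that $\phi(u)-\phi(v)$ vanishes on $(0,t_*)$) suffices and works with $T$-independent operator norm.
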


	Using the bootstrapping result for~\eqref{eq:rd} in Proposition~\ref{prop:RD_bootstrapping}, we immediately obtain the following corollary.

	\begin{corollary}
		\label{cor:mild_uniqueness_rd}
		The conclusion of Proposition~\ref{prop:mild_uniqneness_rd} stays true for $u \in \Z^{r,q}_{\nicefrac{n}{2r} - \nicefrac{1}{\rho}}(T)$ when $r,q\in (1,\infty]$ satisfy $r \geq p$, $\RD_{-}(n,\rho) < r < \RD_{+}(n,\rho)$ and $q > \RD_{-}(n,\rho)$.
	\end{corollary}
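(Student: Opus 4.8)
The plan is to deduce Corollary~\ref{cor:mild_uniqueness_rd} directly from Proposition~\ref{prop:mild_uniqneness_rd} by upgrading the local integrability of an arbitrary mild solution in the enlarged class $\Z^{r,q}_{\nicefrac{n}{2r} - \nicefrac{1}{\rho}}(T)$ to the one treated there, namely $\L^r_{\nicefrac{n}{2r} - \nicefrac{1}{\rho}}(T)$. The tool for this upgrade is the bootstrapping result Proposition~\ref{prop:RD_bootstrapping}, whose hypotheses on the starting integrability exponents coincide verbatim with the assumptions of the corollary.

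Concretely, I would argue as follows. Let $u, v \in \Z^{r,q}_{\nicefrac{n}{2r} - \nicefrac{1}{\rho}}(T)$ be two mild solutions to~\eqref{eq:rd}, with $r, q \in (1,\infty]$ satisfying $r \geq p$, $\RD_{-}(n,\rho) < r < \RD_{+}(n,\rho)$ and $q > \RD_{-}(n,\rho)$. These are precisely the constraints imposed on the pair $(\tilde r, \tilde q)$ in Proposition~\ref{prop:RD_bootstrapping}, applied with $\tilde r = r$ and $\tilde q = q$; hence that proposition yields $u, v \in \Z^{r,\infty}_{\nicefrac{n}{2r} - \nicefrac{1}{\rho}}(T)$. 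By the nesting property $\Z^{r,\infty}_\beta \subseteq \Z^{r,r}_\beta$ together with the identity $\Z^{r,r}_\beta = \L^r_\beta$ coming from Fubini's theorem, we obtain $u, v \in \L^r_{\nicefrac{n}{2r} - \nicefrac{1}{\rho}}(T)$. The mild-solution identity $u = \Ext_L(u_0) + \Soll^L_1(\phi(u))$ is an equality of functions on $(0,T) \times \R^n$, so it persists in $\L^r_{\nicefrac{n}{2r} - \nicefrac{1}{\rho}}(T)$, and likewise for $v$; thus $u$ and $v$ are mild solutions in the sense of Proposition~\ref{prop:mild_uniqneness_rd} for the same exponent $r$, which lies in the admissible range of that proposition. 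Invoking it gives $u = v$.

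There is no genuine obstacle in this argument; the only point worth emphasizing is that Proposition~\ref{prop:RD_bootstrapping} imposes no smallness or ansatz-type restriction on the mild solution, so the regularity upgrade is available for \emph{every} element of the class $\Z^{r,q}_{\nicefrac{n}{2r} - \nicefrac{1}{\rho}}(T)$ and not merely for the fixed point produced in Proposition~\ref{prop:RD_existence_mild}. In this sense the corollary simply records that instantaneous self-improvement collapses the $\Z^{r,q}$-uniqueness class onto the $\L^r$-uniqueness class already settled.
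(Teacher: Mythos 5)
Your proof is correct and follows essentially the same route as the paper: apply Proposition~\ref{prop:RD_bootstrapping} (with $\tilde r = r$, $\tilde q = q$) to upgrade the regularity of any mild solution in $\Z^{r,q}_{\nicefrac{n}{2r} - \nicefrac{1}{\rho}}(T)$ to $\L^r_{\nicefrac{n}{2r} - \nicefrac{1}{\rho}}(T)$, and then invoke Proposition~\ref{prop:mild_uniqneness_rd}. Your version is only slightly more explicit, spelling out the intermediate step through $\Z^{r,\infty}$ and the nesting/Fubini identification $\Z^{r,r}_\beta = \L^r_\beta$, both of which the paper implicitly invokes through the ``in particular'' clause of Proposition~\ref{prop:RD_bootstrapping}.
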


	\begin{proof}
		Let $u \in \Z^{r,q}_{\nicefrac{n}{2r} - \nicefrac{1}{\rho}}(T)$ be a mild solution to~\eqref{eq:rd}. By Proposition~\ref{prop:RD_bootstrapping}, we find $u\in \L^r_{\nicefrac{n}{2r} - \nicefrac{1}{\rho}}(T)$. Hence, Proposition~\ref{prop:mild_uniqneness_rd} entails that there is at most one such $u$.
	\end{proof}

	The strategy of proof is classical and was employed in other settings and for various types of equations, see for instance~\cite{Miura}.

	\begin{proof}[Proof of Proposition~\ref{prop:mild_uniqneness_rd}]
		Suppose that $u,v \in \L^r_{\nicefrac{n}{2\rho} - \nicefrac{1}{\rho}}(T)$ are mild solutions to~\eqref{eq:rd}. To ease notation, put $\X(t) \coloneqq \L^r_{\nicefrac{n}{2r} - \nicefrac{1}{\rho}}(t)$ for any $t>0$. We are going to show that $u = v$.

		\textbf{Step 1}: Coincidence for a short time. Subtract the two solutions to find $u - v = \Soll^L_1(\phi(u) - \phi(v))$. Arguing as in Lemma~\ref{lem:rd_contraction}, we find for $0 < \tau \leq T$ that
		\begin{align}
			\| u-v \|_{\X(\tau)} \leq C \Bigl( \| u \|_{\X(\tau)}^\rho + \| v \|_{\X(\tau)}^\rho \Bigr) \| u-v \|_{\X(\tau)},
		\end{align}
		where $C$ is a constant independent of $\tau$, $u$ and $v$. Take $\tau$ small enough (depending on $u$ and $v$) so that $C \bigl( \| u \|_{\X(\tau)}^\rho + \| v \|_{\X(\tau)}^\rho \bigr) \leq \frac{1}{2}$. It follows $u = v$ on $[0,\tau]$.

		\textbf{Step 2}: Coincidence for large times. It remains to show $u = v$ on $[\tau,T]$. To do so, we use the following instantaneous regularization property: there exists $M > 0$ such that
		\begin{align}
			\label{eq:RD_uniqueness_M}
			\| u \|_{\L^r(\tau,T)} + \| v \|_{\L^r(\tau,T)} \leq M.
		\end{align}
		Let $\eps > 0$ such that $\tau + \eps \leq T$.
		We are going to show that $u = v$ on $[\tau, \tau + \eps]$ when $\eps$ is small enough.
		To this end, define $q$ via $\nicefrac{1}{q} \coloneqq \nicefrac{1}{r} + \nicefrac{2}{n+2}$. From $r > \RD_{-}(n,\rho)$ and $\rho > \nicefrac{2}{n}$ it follows $q > 1$, and $r > \RD_{-}(n,\rho)$ yields $q < \nicefrac{r}{1+\rho}$. Hence, for a suitable $p^\sharp\in (1,\infty)$, we can write
		\begin{align}
			\label{eq:hoelder_rel}
			\frac{1}{q} = \frac{1 + \rho}{r} + \frac{1}{p^\sharp}.
		\end{align}
		From Corollary~\ref{cor:Duhamel_Lq} it follows that $\Soll^L_1 \colon \L^q_{-\nicefrac{2}{n+2}} \to \L^r$ with operator norm bounded by a constant $C$ independent of $T$. Hence, using $\phi(u) - \phi(v) = 0$ on $[0,\tau]$, Hölder's inequality with~\eqref{eq:hoelder_rel} and the local Lipschitz condition~\eqref{eq:RD_growth_Lipschitz}, we find for some $\nu > 0$ independent of $\eps$ that
		\begin{align}
			\| u-v \|_{\L^r(\tau, \tau + \eps)} &\leq C \| \phi(u) - \phi(v) \|_{\L^q_{-\frac{2}{n+2}}(\tau, \tau + \eps)} \\
			&\leq \eps^\nu C(T) \| \phi(u) - \phi(v) \|_{\L^\frac{r}{1+\rho}(\tau, \tau + \eps)} \\
			&\leq \eps^\nu C_L C(T) \Bigl( \| u \|_{\L^r(\tau, T)}^\rho + \| v \|_{\L^r(\tau, T)}^\rho \Bigr) \| u-v \|_{\L^r(\tau, \tau + \eps)},
		\end{align}
		where $C(T)$ is a constant depending on $T$.
		Hence, taking~\eqref{eq:RD_uniqueness_M} into account, deduce
		\begin{align}
			\| u-v \|_{\L^r(\tau, \tau + \eps)} \leq \eps^\nu C_L C(T) 2 M^\rho \| u-v \|_{\L^r(\tau, \tau + \eps)}.
		\end{align}
		Thus, we can take $\eps$ small to obtain
		$\| u-v \|_{\L^r(\tau, \tau + \eps)} \leq \frac{1}{2} \| u-v \|_{\L^r(\tau, \tau + \eps)}$ and hence $u = v$ on $[\tau, \tau + \eps]$. Since~\eqref{eq:RD_uniqueness_M} is a bound on $[\tau, T]$, we can iterate the argument with a uniform step size $\eps$ to eventually obtain $u = v$ on $[\tau, T]$ as desired.
	\end{proof}

	\section{Weak solutions for the reaction--diffusion problem}
	\label{sec:weak}

	So far, we have only treated mild solutions to~\eqref{eq:rd}. This is going to change now. In the current section, we will establish existence, uniqueness and regularity of weak solutions for it.

	\subsection{Existence of local weak solutions}
	\label{sec:weak_existence}

	The starting point in the construction of local weak solutions is, of course, the existence of local mild solutions, which we already have at hand. A central challenge is that we have to investigate their gradients now in order to identify them as weak solutions. To overcome this challenge, we employ the bootstrapping result for mild solutions to bring us back into a locally square-integrable setting.

	\begin{definition}[Weak solution]
		\label{def:weak_solution_rd} Let $\tau\in (0,\infty]$.
		Call $(u,\tau)$ a weak solution to~\eqref{eq:rd} if $u, \nabla u \in \L^2_\loc((0,\tau) \times \R^n)$, {$\phi(u) \in \L^{2_*}_\loc((0,\tau) \times \R^n)$} and $u$ is a weak solution to the equation $\partial_t u - \Div(A\nabla u) = \phi(u)$ on $(0,\tau) \times \R^n$ in the sense of Definition~\ref{def:weak_solution} which attains $u(0) = u_0$ in the sense of Definition~\ref{def:initial_condition}.

		If $\tau$ is clear from the context, we simply say that $u$ is a weak solution to~\eqref{eq:rd}.
	\end{definition}

	The lower Sobolev exponent $2_*$ of $2$ for source terms is a natural condition from scaling considerations.

	With a precise definition of a weak solution at hand, we establish existence of local weak solutions in the following proposition.

	\begin{proposition}[Existence of local weak solutions]
		\label{prop:existence_weak_rd}
		Fix $\rho > \nicefrac{2}{n}$, $p \in (1, \RD_{+}(n,\rho))$ and let $\alpha \in (-1,0)$ satisfying $\nicefrac{2}{\rho} = \nicefrac{n}{p} - \alpha$ and $p \geq p(\alpha,A)$. Fix $u_0 \in \dot\B^\alpha_{p,p}$. Let $r \in (1,\infty]$ satisfying $r \geq p$ and $\RD_{-}(n,\rho) < r < \RD_{+}(n,\rho)$.
		Then, there exists a weak solution $(u,T)$ of~\eqref{eq:rd} satisfying $u\in \Z^{r,\infty}_{\nicefrac{n}{2r} - \nicefrac{1}{\rho}}(T)$ and $\nabla u \in \Z^{p,2}_{\nicefrac{n}{2p} - \nicefrac{1}{\rho} - \nicefrac{1}{2}}(T)$.
		In particular, $u\in \Z^{r,q}_{\nicefrac{n}{2r} - \nicefrac{1}{\rho}}(T)$ for any $q\in (1,\infty]$.
	\end{proposition}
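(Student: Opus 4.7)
I would promote the mild solution machinery of the previous subsections to a weak solution framework. Proposition~\ref{prop:RD_existence_mild} produces a mild solution $u \in \L^{r_0}_{\nicefrac{n}{2r_0} - \nicefrac{1}{\rho}}(T)$ for some admissible $r_0$, satisfying the mild identity $u = \Ext_L(u_0) + \Soll^L_1(\phi(u))$. The bootstrapping Proposition~\ref{prop:RD_bootstrapping} then promotes $u$ to $\Z^{r,\infty}_{\nicefrac{n}{2r} - \nicefrac{1}{\rho}}(T)$ for every $r$ in the admissible range, and hence to $\Z^{r,q}_{\nicefrac{n}{2r} - \nicefrac{1}{\rho}}(T)$ for every $q \in (1,\infty]$ by the nesting property. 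This settles the asserted regularity of $u$, and in particular yields $\phi(u) \in \L^{2_*}_\loc$ via the bootstrapped inclusion $u \in \Z^{r,2_*(1+\rho)}_{\nicefrac{n}{2r} - \nicefrac{1}{\rho}}(T)$.

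The crux is establishing $\nabla u \in \Z^{p,2}_{\nicefrac{n}{2p} - \nicefrac{1}{\rho} - \nicefrac{1}{2}}(T)$. Splitting $\nabla u = \nabla\Ext_L(u_0) + \nabla\Soll^L_1(\phi(u))$, the first summand is handled directly by Proposition~\ref{prop:caloric_extension_Besov} with $\sigma = \nicefrac{2}{\rho}$. For the second, fix $r_1$ in the admissible range with $r_1 \ge p$; the bootstrapping yields $\phi(u) \in \Z^{r_1/(1+\rho), 2_*}_{\gamma}(T)$, where $\gamma \coloneqq (\nicefrac{n}{2r_1} - \nicefrac{1}{\rho})(1+\rho) > -1$ owing to $r_1 < \RD_{+}(n,\rho)$. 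The bootstrapped Corollary~\ref{cor:Duhamel_mapping_Z}, applied to $\nabla\Soll^L_1$ at level $p_1 = r_1/(1+\rho)$, then produces $\nabla\Soll^L_1(\phi(u)) \in \Z^{r_1/(1+\rho), 2}_{\gamma + \nicefrac{1}{2}}(T)$. The Hardy--Sobolev-type embedding Proposition~\ref{prop:Z_space_embedding} finally transfers this into the target space $\Z^{p,2}_{\nicefrac{n}{2p} - \nicefrac{1}{\rho} - \nicefrac{1}{2}}(T)$: a direct computation places both weights on the common critical line $2\beta - \nicefrac{n}{p} = -\nicefrac{2}{\rho} - 1$, and Lemma~\ref{lem:p_A_rd}(c) supplies the strict inequality $p > p_1$ required for the embedding.

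The weak formulation and the initial condition then follow by additivity. Proposition~\ref{prop:initial_value_problem_hedong} identifies $\Ext_L(u_0)$ as a weak solution of the homogeneous problem attaining $u_0$ in the sense of Definition~\ref{def:initial_condition}. Taking $p^\sharp = q^\sharp = p_1$ and noting $\phi(u) \in \L^{p_1}_\gamma(T)$ with $\gamma > -1$, Proposition~\ref{prop:Duhamel_Z_weak} identifies $\Soll^L_1(\phi(u))$ as a very weak solution of the linear inhomogeneous problem with zero trace; the gradient regularity established above upgrades this to a weak solution in the sense of Definition~\ref{def:weak_solution_rd}. The vanishing initial trace of $\Soll^L_1(\phi(u))$ is supplied by Proposition~\ref{prop:Duhamel_Z_trace} after embedding $\phi(u)$ into a $\Z^{p^\sharp, q^\sharp}$-space with $q^\sharp \ge 2_*$ and $q^\sharp \le p^\sharp$, giving $u(0) = u_0$ in $\cD'$.

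The \emph{main obstacle} is the gradient-regularity step. Applying $\nabla\Soll^L_1$ at the natural level $p_1 = r_1/(1+\rho)$ would be blocked by the Sobolev threshold $p_1 \ge 2_*$ in the vanilla Proposition~\ref{prop:Duhamel_mapping_Z}, whereas an alternative route that first embeds $\phi(u)$ into a larger-$p$ level hits the weight restriction $\beta > -1$, which would force $p < \nicefrac{n\rho}{2}$ and exclude part of the parameter range. Circumventing both obstructions simultaneously requires combining the bootstrapped Corollary~\ref{cor:Duhamel_mapping_Z} (which relaxes the $p \ge 2_*$ constraint down to $p > 1$, an inequality that is automatic from $r_1 > 1+\rho$, cf.~\eqref{eq:fujita_exp}) with the critical-line Hardy--Sobolev embedding of Proposition~\ref{prop:Z_space_embedding} to transfer to the desired level $p$.
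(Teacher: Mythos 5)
Your high-level strategy (mild solution from Proposition~\ref{prop:RD_existence_mild}, bootstrapping via Proposition~\ref{prop:RD_bootstrapping}, splitting $u = \Ext_L(u_0) + \Soll^L_1(\phi(u))$, transferring gradient bounds to the level $p$ by the Hardy--Sobolev embedding and Lemma~\ref{lem:p_A_rd}(c)) matches the paper's, and your critical-line computation is correct. The divergence is in how you identify $\Soll^L_1(\phi(u))$ as a weak solution with vanishing trace: the paper invokes Proposition~\ref{prop:linear_existence_hedong} (imported from \cite{AH3}), which at the level $p^\sharp = r/(1+\rho)$ simultaneously delivers the weak-solution property, the gradient estimate, and the vanishing trace for source $\phi(u)\in \Z^{p^\sharp,2}_{\beta^\sharp-\nicefrac{1}{2}}$, with no restriction of the form $p^\sharp \geq q$. (Note $\phi(u)\in \Z^{r/(1+\rho),2}$ follows from $u\in\Z^{r,\infty}\subseteq \Z^{r,2(1+\rho)}$; you only use the weaker $\Z^{r,2_*(1+\rho)}$.) You instead attempt to re-derive these facts from the paper's internal SIO machinery (Corollary~\ref{cor:Duhamel_mapping_Z}, Propositions~\ref{prop:Duhamel_Z_weak}, \ref{prop:Duhamel_Z_trace}).

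That route has a genuine gap in the trace step. Proposition~\ref{prop:Duhamel_Z_trace} requires $2_* \leq q^\sharp \leq p^\sharp$ together with $\beta^\sharp > -\nicefrac{1}{2}$, i.e.\ the weight of $\phi(u)$ strictly above $-1$. The natural level for $\phi(u)$ is $p_1 = r_1/(1+\rho)$, and $p_1 \geq 2_*$ for some admissible $r_1 < \RD_+(n,\rho)$ only when $\rho > \nicefrac{4(n+2)}{n(n+4)}$. For $\rho \in (\nicefrac{2}{n}, \nicefrac{4(n+2)}{n(n+4)}]$ (a non-empty sub-range of the hypothesis $\rho > \nicefrac{2}{n}$), no admissible $r_1$ gives $p_1\geq 2_*$, and the Hardy--Sobolev embedding from $p_1$ up to $p^\sharp = 2_*$ replaces the weight $\gamma$ of $\phi(u)$ by $\gamma' = \nicefrac{n}{2\cdot 2_*} - 1 - \nicefrac{1}{\rho}$, which satisfies $\gamma' \leq -1$ precisely in this sub-range. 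So the embedding you invoke ``into a $\Z^{p^\sharp,q^\sharp}$-space with $q^\sharp \geq 2_*$ and $q^\sharp \leq p^\sharp$'' is not available there, and Proposition~\ref{prop:Duhamel_Z_trace} cannot be applied. (A similar obstruction blocks upgrading the ``very weak'' output of Proposition~\ref{prop:Duhamel_Z_weak} to a ``weak'' one via that proposition's $q\geq 2_*$ clause, although in that case you correctly bypass it by upgrading through $\L^2_{\loc}$-regularity directly.) The paper's proof sidesteps the obstruction entirely by using Proposition~\ref{prop:linear_existence_hedong}, whose hypotheses (source in $\Z^{p^\sharp,2}_{\beta^\sharp-\nicefrac{1}{2}}$ with $\beta^\sharp > -\nicefrac{1}{2}$, equivalently $r < \RD_+(n,\rho)$) are met at $p^\sharp = r/(1+\rho)$ throughout the admissible range; you should use it as well.
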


	\begin{proof}
		By Proposition~\ref{prop:RD_existence_mild}, there exist $T\in (0,\infty)$, $\tilde r \in (1,\infty]$ satisfying $\tilde r \geq p$ and $\RD_{-}(n,\rho) < \tilde r < \RD_{+}(n,\rho)$, and $u \in \L^{\tilde r}_{\nicefrac{n}{2\tilde{r}} - \nicefrac{1}{\rho}}(T)$ so that $u$ is a mild solution to~\eqref{eq:rd}. Using bootstrapping (Proposition~\ref{prop:RD_bootstrapping}), the regularity $u\in \Z^{r,\infty}_{\nicefrac{n}{2r} - \nicefrac{1}{\rho}}(T)$ follows.
		It remains to show that $u$ is a weak solution to~\eqref{eq:rd} satisfying $\nabla u \in \Z^{p,2}_{\nicefrac{n}{2p} - \nicefrac{1}{\rho} - \nicefrac{1}{2}}(T)$.

		Recall that by definition, $u = \Ext_L(u_0) + \Soll^L_1(\phi(u)) \eqqcolon v + w$. Owing to Proposition~\ref{prop:caloric_extension_Besov}, $v$ is a weak solution to
		\begin{align}
			\partial_t v - \Div(A\nabla v) = 0, \qquad v(0) = u_0,
		\end{align}
		satisfying $\nabla v\in \Z^{p,2}_{\nicefrac{n}{2p} - \nicefrac{1}{\rho} - \nicefrac{1}{2}}(T)$. Therefore, we are left to show that $w = \Soll^L_1(\phi(u))$ is a weak solution to
		\begin{align}
			\label{eq:existence_weak_rd1}
			\partial_t w - \Div(A\nabla w) = \phi(u), \qquad w(0) = 0,
		\end{align}
		satisfying $\nabla w \in \Z^{p,2}_{\nicefrac{n}{2p} - \nicefrac{1}{\rho} - \nicefrac{1}{2}}(T)$.
		To this end, we aim to invoke Theorem~\ref{thm:linear_wp}.
		By the nesting property of weighted $\Z$-spaces, there holds $u \in \Z^{r, 2_*(1+\rho)}_{\nicefrac{n}{2r} - \nicefrac{1}{\rho}}(T)$. Consequently
		\begin{align}
		\label{eq:bootstrapphiu}
\phi(u) \in \Z^{\frac{r}{1+\rho},2_*}_{(\frac{n}{2r} - \frac{1}{\rho})(1+\rho)}(T).
\end{align}
		Recall that $\nicefrac{r}{1+\rho} > 1$ by~\eqref{eq:fujita_exp}.
		Thus, with $f \coloneqq \phi(u)$, $\sigma \coloneqq \nicefrac{2}{\rho}$ and $\tilde r \coloneqq \nicefrac{r}{(1+\rho)}$, we can indeed apply Theorem~\ref{thm:linear_wp} as $\tilde r < \nicefrac{n}{\sigma}$ is equivalent to $r < \RD_{+}(n,\rho)$.
		Hence, $w$ is a weak solution to~\eqref{eq:existence_weak_rd1} satisfying $$\nabla w \in \Z^{\frac{r}{1+\rho},2}_{\frac{n(1+\rho)}{2r} - \frac{1}{\rho} - \frac{1}{2}}(T) \subseteq {\Z^{p,2}_{\frac{n}{2p} - \frac{1}{\rho} - \frac{1}{2}}(T),}$$
		where we used the weighted $\Z$-space embedding in conjunction with Lemma~\ref{lem:p_A_rd} in the last inclusion.
		Therefore, $u = v + w$ is by construction a weak solution to~\eqref{eq:rd} satisfying $\nabla u \in {\Z^{p,2}_{\nicefrac{n}{2p} - \nicefrac{1}{\rho} - \nicefrac{1}{2}}(T)}$. This completes the proof.
	\end{proof}

	\subsection{Bootstrapping regularity -- weak solutions}
	\label{sec:regularity_weak}

	On the side of mild solutions, we have developed a thorough regularity theory using bootstrapping methods in Section~\ref{sec:regularity_mild}. They were crucial in the passage from mild to weak solutions in Section~\ref{sec:weak_existence}. For instance, we have used bootstrapping for a mild solution $u$ to argue that $\phi(u) \in\Z^{\nicefrac{r}{1+\rho},2}_{(\nicefrac{n}{2r} - \nicefrac{1}{\rho})(1+\rho)}$ in \eqref{eq:bootstrapphiu}, which was an essential step to identify $u$ as a weak solution. In the next section, we are going in the opposite direction: we will show that a weak solution $u$ is also a mild solution, so that we can infer its uniqueness from Section~\ref{sec:uniqueness_mild}. To do so, the question whether $\phi(u) \in \Z^{\nicefrac{r}{1+\rho},2_*}_{(\nicefrac{n}{2r} - \nicefrac{1}{\rho})(1+\rho)}$ will pop up. A priori, this information is not always available. Using a bootstrapping procedure for weak solutions, we are going to close this gap in the present subsection.

	The tools to achieve improved regularity for weak solutions are super-linear reverse Hölder inequalities (we simply write RH-inequalities) and their self-improvement. To the best of our knowledge, this is a new principle that has not yet been employed in the literature.

	\subsubsection{Self-improvement of super-linear RH-inequalities}
	\label{subsec:rh_self_improvement}

	We formulate this section in the scope of a doubling metric measure space $(M, \dist, \mu)$. Here, \emph{doubling} means that there exists a constant $C>0$ such that $\mu(\B(x,2r)) \leq C \mu(\B(x,r))$ for every $x\in M$ and $r \in (0, \diam(M))$, where $\B(x,r)$ is a ball in $M$ of radius $r$ around $x$ with respect to the distance~$\dist$.
	\begin{example}
		\label{ex:parabolic_distance}
		The Euclidean half-space $\R^{1+n}_+$ equipped with the (dilated) parabolic distance $\dist((t,x),(s,y)) \coloneqq \max(2\sqrt{|t-s|}, |x-y|)$ and the usual Lebesgue measure is an example of a doubling metric measure space. Note that Whitney boxes are balls with respect to that distance.
	\end{example}
	In the Euclidean setting, the linear case of the following result goes back to~\cite{RH}. Our proof builds on the ideas presented in~\cite{BCF}, in which the linear case was generalized to doubling metric measure spaces.

	\begin{theorem}[Self-improvement of super-linear RH-inequalities]
		\label{thm:rh_improvement}
		Let $s^\sharp, p^\sharp \in (0,\infty]$ satisfying $s^\sharp < p^\sharp$, $q^\sharp, r^\sharp \in (s^\sharp, p^\sharp)$ and $\alpha^\sharp \geq 1$. Assume that there are $\beta_1, \beta_2 \in (0,1)$ such that
		\begin{align}
			\label{eq:rh_improvement_relations}
			\frac{1}{r^\sharp} = \frac{\beta_1}{p^\sharp} + \frac{1-\beta_1}{s^\sharp}, \qquad \frac{1}{q^\sharp} = \frac{\beta_2}{p^\sharp} + \frac{1-\beta_2}{s^\sharp}, \qquad \beta_2 \alpha^\sharp = \beta_1.
		\end{align}
		Let $B$ be a ball. Assume that there are $\lambda > 1$ and $v\in \L^{p^\sharp}(\lambda B)$ non-negative such that
		\begin{align}
			\label{eq:RH_assumption}
			\tag{RH}
			\left( \tiltfiint_{\wt B} v^{p^\sharp} \right)^\frac{1}{p^\sharp} \lesssim {\left( \tiltfiint_{2\wt B} v^{r^\sharp} \right)^\frac{1}{r^\sharp} + \left( \tiltfiint_{2\wt B} v^{q^\sharp} \right)^\frac{\alpha^\sharp}{q^\sharp}}
		\end{align}
		holds for all balls $\wt B$ satisfying {$2 \wt B \subseteq \lambda B$}.
		Then, there also holds the improved super-linear RH-inequality
		\begin{align}
			\left( \tiltfiint_{B} v^{p^\sharp} \right)^\frac{1}{p^\sharp} \lesssim \left( \tiltfiint_{\lambda B} v^{s^\sharp} \right)^\frac{1}{s^\sharp} + \left( \tiltfiint_{\lambda B} v^{s^\sharp} \right)^\frac{\theta}{s^\sharp},
		\end{align}
		where $$\theta \coloneqq \frac{\alpha^\sharp (1-\beta_2)}{1-\beta_1} \in [1,\infty).$$
	\end{theorem}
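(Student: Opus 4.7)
The plan is to interpolate between $L^{p^\sharp}$ and $L^{s^\sharp}$ on the right-hand side of~\eqref{eq:RH_assumption}, split the resulting product by Young's inequality, and then iterate on concentric dilations of $B$ via a Giaquinta--Modica-type self-improvement lemma. The crucial algebraic ingredient is the identity $\beta_2\alpha^\sharp = \beta_1$ in~\eqref{eq:rh_improvement_relations}: together with the first two relations there, it identifies $r^\sharp$ and $q^\sharp$ as the interpolation exponents between $p^\sharp$ and $s^\sharp$ with weights $(\beta_1,1-\beta_1)$ and $(\beta_2,1-\beta_2)$ respectively, and forces the $L^{p^\sharp}$-factors arising from the two terms on the right-hand side of~\eqref{eq:RH_assumption} to carry the same exponent $\beta_1/p^\sharp$. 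Applying log-convexity of $L^p$-norms to both source terms and using $\beta_2\alpha^\sharp = \beta_1$ yields
\begin{align}
    \left(\tiltfiint_{\wt B} v^{p^\sharp}\right)^{\frac{1}{p^\sharp}} \lesssim \left(\tiltfiint_{2\wt B} v^{p^\sharp}\right)^{\frac{\beta_1}{p^\sharp}} \Bigl[\left(\tiltfiint_{2\wt B} v^{s^\sharp}\right)^{\frac{1-\beta_1}{s^\sharp}} + \left(\tiltfiint_{2\wt B} v^{s^\sharp}\right)^{\frac{(1-\beta_2)\alpha^\sharp}{s^\sharp}}\Bigr].
\end{align}

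Young's inequality with exponents $(1/\beta_1, 1/(1-\beta_1))$ and a small parameter $\eps > 0$, together with the elementary bound $(a+b)^{1/(1-\beta_1)} \lesssim a^{1/(1-\beta_1)} + b^{1/(1-\beta_1)}$, then converts this estimate into
\begin{align}
    \left(\tiltfiint_{\wt B} v^{p^\sharp}\right)^{\frac{1}{p^\sharp}} \leq \eps \left(\tiltfiint_{2\wt B} v^{p^\sharp}\right)^{\frac{1}{p^\sharp}} + C_\eps \Bigl[\left(\tiltfiint_{2\wt B} v^{s^\sharp}\right)^{\frac{1}{s^\sharp}} + \left(\tiltfiint_{2\wt B} v^{s^\sharp}\right)^{\frac{\theta}{s^\sharp}}\Bigr],
\end{align}
valid for every ball $\wt B$ with $2\wt B \subseteq \lambda B$, where the exponent $\theta = (1-\beta_2)\alpha^\sharp/(1-\beta_1)$ emerges precisely as $1/(1-\beta_1)$ applied to $(1-\beta_2)\alpha^\sharp/s^\sharp$.

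The final step is to pass from small balls $\wt B$ to $B$ itself and absorb the $\eps$-term. For $1 \leq \rho < \sigma \leq \lambda$, let $B_\rho$ denote the ball concentric with $B$ of radius $\rho R$, where $R$ is the radius of $B$, and cover $B_\rho$ by a Vitali family of balls of radius proportional to $(\sigma-\rho)R$, chosen so that the relevant dilates remain inside $B_\sigma$. Raising the $\eps$-inequality to the power $p^\sharp$ on each cover element, summing with bounded overlap from doubling, and comparing $\tiltfiint_{2\wt B_i} v^{s^\sharp}$ with $\tiltfiint_{\lambda B} v^{s^\sharp}$ up to a factor of order $(\sigma-\rho)^{-d}$ (where $d$ is a doubling dimension) produces a recursion of the form
\begin{align}
    h(\rho) \leq \eta\, h(\sigma) + \frac{C}{(\sigma-\rho)^{d}}\, A,
\end{align}
with $h(\rho) \coloneqq \tiltfiint_{B_\rho} v^{p^\sharp}$, $\eta = \eta(\eps) < 1$ for $\eps$ sufficiently small, and $A \coloneqq (\tiltfiint_{\lambda B} v^{s^\sharp})^{p^\sharp/s^\sharp} + (\tiltfiint_{\lambda B} v^{s^\sharp})^{\theta p^\sharp/s^\sharp}$. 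Since $h$ is bounded thanks to the hypothesis $v \in \L^{p^\sharp}(\lambda B)$, a standard Giaquinta--Modica iteration lemma applies and yields $h(1) \lesssim A$. Taking the $p^\sharp$-th root at $\rho = 1$ is the claimed inequality. The main obstacle is the bookkeeping in this last step: the smallness constant $\eta$ must depend only on $\eps$ and the doubling constant, so that it stays strictly below $1$ uniformly in $\sigma-\rho$ and the iteration converges; the $(\sigma-\rho)$-dependence must be carefully confined to the additive term, where it is harmlessly absorbed by the iteration lemma.
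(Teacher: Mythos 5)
Your approach is correct but genuinely different from the paper's. You first interpolate both right-hand terms of~\eqref{eq:RH_assumption} against the $p^\sharp$- and $s^\sharp$-averages, using $\beta_2\alpha^\sharp=\beta_1$ to align the $p^\sharp$-exponents, then apply Young's inequality to obtain an $\eps$-Caccioppoli-type estimate, and finally close via a Vitali covering of concentric dilations $B_\rho$ and a Giaquinta--Modica absorption lemma. This is the classical Iwaniec--Nolder hole-filling scheme that the paper attributes to~\cite{RH} for the linear case, adapted to the super-linear setting. The paper instead follows the Bernicot--Coulhon--Frey route~\cite{BCF}: it introduces the functional $K(\eps,s^\sharp)$, the supremum over sub-balls $B'$ of the ratio of $A(B')$ to $\wt A(\lambda B') + \eps A(\lambda B')$, covers a generic $B'$ by \emph{finitely many comparable-radius} balls $B_i$, applies the same Hölder-interpolation trick and~\eqref{eq:RH_assumption} to derive the self-referential bound $K(\eps,s^\sharp)\lesssim K(\eps,s^\sharp)^{\beta_1}$, and concludes from $\beta_1<1$ that $K$ is uniformly bounded in $\eps$ before sending $\eps\to 0$. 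The covering geometry is thus quite different — boundedly many comparable-size balls in the paper versus arbitrarily many small balls of radius $\sim(\sigma-\rho)R$ in your argument — and the paper's packaging dispenses with the Giaquinta--Modica lemma entirely, replacing it by the single algebraic observation that a quantity bounded by a strict power of itself is uniformly bounded. Your version has the advantage of being closer to the literature most readers know; the paper's has the advantage that the finite covering family and the scaling-free $K$-functional sidestep the $(\sigma-\rho)^{-d}$ bookkeeping that you correctly flag as the delicate point, and adapt more directly to the metric-space setting inherited from~\cite{BCF}. Two small caveats to tidy up in a full write-up: the case $p^\sharp=\infty$ should be treated directly (no raising to the $p^\sharp$-th power; take suprema over the cover instead), and the exponent in the $(\sigma-\rho)$-power in your recursion is really $d\,\theta\,p^\sharp/s^\sharp$ rather than $d$, though this does not affect the applicability of the iteration lemma.
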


	\begin{proof}
		Let the ball $B$ and $\lambda > 1$ be as in the statement.
		For $\eps > 0$ define the functional
		\begin{align}
			K(\eps, s^\sharp) \coloneqq \sup_{B'} \left[ \frac{A(B')}{\wt A(\lambda B') + \eps A(\lambda B')}\right],
		\end{align}
		where the supremum is taken over all balls $B'$ satisfying $\lambda B' \subseteq \lambda B$
		and the quantities $A$ and $\tilde A$ are given by
		\begin{align}
			A(\hat B) &\coloneqq \left( \tiltfiint_{\hat B} v^{r^\sharp} \right)^\frac{1}{r^\sharp} + \left( \tiltfiint_{\hat B} v^{q^\sharp} \right)^\frac{\alpha^\sharp}{q^\sharp}, \\
			\wt A(\hat B) &\coloneqq \left( \tiltfiint_{\hat B} v^{s^\sharp} \right)^\frac{1}{s^\sharp} + \left( \tiltfiint_{\hat B} v^{s^\sharp} \right)^\frac{\theta}{s^\sharp},
		\end{align}
		{where $\hat B$ is any ball satisfying $\hat B \subseteq \lambda B$. Note that $B' \coloneqq B$ is an admissible choice in the definition of $K$, and that the quantities $A$ and $\tilde A$ are finite owing to the assumption $v\in \L^{p^\sharp}(\lambda B)$.}
		By definition, $K(\eps, s^\sharp)$ is bounded by a constant depending on $\eps$. The central task in the proof is to show that $K(\eps, s^\sharp)$ is bounded independently of $\eps$.

		{Fix $B'$ satisfying $\lambda B' \subseteq \lambda B$.}
		Cover {$B'$} by a family of balls $(B_i)_i$ satisfying%
		\begin{align}
			\label{eq:RH_covering}
			2\lambda B_i \subseteq \lambda B', \quad {\r(B_i) \approx \r(B')}, \quad \#_i \lesssim 1,
		\end{align}
		where implicit constants depend on $\lambda$ and the dimension.
		Note that, in particular, $2\lambda B_i \subseteq \lambda B' \subseteq \lambda B$.
		Using~\eqref{eq:rh_improvement_relations} in conjunction with Hölder's inequality and the definition of $\theta$, we obtain for every $i$ the estimate
		\begin{align}
			A(B_i) &\leq \left[ \Bigl( \tiltfiint_{B_i} v^{s^\sharp} \Bigr)^\frac{1-\beta_1}{s^\sharp} + \Bigl( \tiltfiint_{B_i} v^{s^\sharp} \Bigr)^\frac{\alpha^\sharp (1-\beta_2)}{s^\sharp} \right] \Bigl( \tiltfiint_{B_i} v^{p^\sharp} \Bigr)^\frac{\beta_1}{p^\sharp}
			\approx \wt A(B_i)^{1-\beta_1} \Bigl( \tiltfiint_{B_i} v^{p^\sharp} \Bigr)^\frac{\beta_1}{p^\sharp}.
		\end{align}
		Summing this bound in $i$ and using~\eqref{eq:RH_covering}, we find
		\begin{align}
			\label{eq:rh_improvement_1}
			A({B'}) \lesssim \sum_i A(B_i) \lesssim \sum_i \wt A(B_i)^{{1-\beta_1}} \Bigl( \tiltfiint_{B_i} v^{p^\sharp} \Bigr)^\frac{\beta_1}{p^\sharp}.
		\end{align}
		For fixed $i$, use~\eqref{eq:RH_assumption} and the definition of $K$ to estimate the last $p^\sharp$ average further by
		\begin{align}
			\Bigl( \tiltfiint_{B_i} v^{p^\sharp} \Bigr)^\frac{1}{p^\sharp} \lesssim A(2B_i) &\leq K(\eps, s^\sharp) \bigl[ \wt A(2\lambda B_i) + \eps A(2\lambda B_i) \bigr] \\
			&\lesssim K(\eps, s^\sharp) \bigl[ \wt A(\lambda {B'}) + \eps A(\lambda {B'}) \bigr].
		\end{align}
		Plugging this bound back into~\eqref{eq:rh_improvement_1}, deduce
		\begin{align}
			A({B'}) &\lesssim \sum_i \wt A(B_i)^{1-\beta_1} K(\eps, s^\sharp)^{\beta_1} \bigl[ \wt A(\lambda {B'}) + \eps A(\lambda {B'}) \bigr]^{\beta_1} \\
			&\lesssim \wt A(\lambda {B'})^{1-\beta_1} K(\eps, s^\sharp)^{\beta_1} \bigl[ \wt A(\lambda {B'}) + \eps A(\lambda {B'}) \bigr]^{\beta_1}.
		\end{align}
		Since $\wt A(\lambda {B'})^{1-\beta_1} [ \wt A(\lambda {B'}) + \eps A(\lambda {B'}) ]^{\beta_1} \leq \wt A(\lambda {B'}) + \beta_1 \eps A(\lambda {B'})$ by Young's inequality, we find by definition of $K$ {(using that $B'$ was arbitrary)} that
		\begin{align}
			K(\eps, s^\sharp) \leq K(\beta_1 \eps, s^\sharp) \lesssim K(\eps, s^\sharp)^{\beta_1},
		\end{align}
		where we used $\beta_1 \leq 1$ in the first step. Since even $\beta_1 < 1$, such a bound can only hold if $K(\eps, s^\sharp)$ is uniformly bounded in $\eps$. In particular, {specializing to $B' = B$,} this entails
		\begin{align}
			A(B) \leq C \left[ \wt A(\lambda B) + \eps A(\lambda B) \right],
		\end{align}
		where the constant $C$ is independent of $\eps$. Hence, taking $\eps \to 0$, we derive
		\begin{align}
			A(B) \leq C \wt A(\lambda B).
		\end{align}
		By definition of the quantities $A$ and $\wt A$, this gives the claim.
	\end{proof}

	{
	\begin{remark}
		The choice $\alpha^\sharp = 1$, and hence $r^\sharp = q^\sharp$, recovers the linear case from~\cite{BCF}.
	\end{remark}
	}

	\subsubsection{Consequences for reaction--diffusion equations}
	\label{subsec:regularity_weak_rd}

	Suppose that $(u,T)$ is a weak solution to~\eqref{eq:rd}. Fix $T' \in (0,T)$.
	Then, pick $\lambda = \sqrt{1 + \alpha} \in (1,3)$, where $0 < \alpha < \tfrac{4(T-T')}{T'}$.
	Recall from Example~\ref{ex:parabolic_distance} that Whitney boxes are balls with respect to the dilated parabolic distance.
	By construction, for $x\in \R^n$ and $0 < t \leq T'$, we have $\lambda \WB(t,x) \subseteq (0,T) \times \R^n$ compactly, which we are going to use frequently. In particular, for $(s,y) \in \lambda \W(t,x)$ we have $s \approx t$ with implicit constants depending on $\lambda$.

	In this setup, we obtain the following RH-inequality.

	\begin{lemma}[RH-inequality for~\eqref{eq:rd}]
		\label{lem:RH_rd}
		Fix $x\in \R^n$ and $t \in (0, T']$. Let $B'$ be an open ball with respect to the parabolic distance such that $2B' \subseteq \lambda \WB(t,x)$.
		Then
		\begin{align}
			\left( \tiltfiint_{B'} |s^{\nicefrac{1}{\rho}} u|^{2^*} \right)^\frac{1}{2^*}
			\lesssim \tiltfiint_{2B'} |s^{\nicefrac{1}{\rho}} u| + \left( \tiltfiint_{2B'} |s^{\nicefrac{1}{\rho}} u|^{2_*(1+\rho)} \right)^\frac{1}{2_*}.
		\end{align}
	\end{lemma}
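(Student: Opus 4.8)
The plan is to derive the inequality from the parabolic energy (Caccioppoli) estimate for weak solutions together with the parabolic Sobolev embedding, treating the reaction term $\phi(u)$ as a source via $\L^{2_*}$--$\L^{2^*}$ duality, and then to lower the exponent on the leading right-hand term by a hole-filling iteration.

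\emph{Step 1: removing the weight.} Since $2B'\subseteq\lambda\WB(x,t)$ and $s\approx t$ on $\lambda\WB(x,t)$ with constants depending only on $\lambda$, one has $s^{\nicefrac{1}{\rho}}\approx t^{\nicefrac{1}{\rho}}$ on $2B'$; pulling this essentially constant factor out of every average, it suffices to prove the unweighted estimate
\[
\Bigl(\tiltfiint_{B'}|u|^{2^*}\Bigr)^{\frac{1}{2^*}} \lesssim \tiltfiint_{2B'}|u| + t\Bigl(\tiltfiint_{2B'}|u|^{2_*(1+\rho)}\Bigr)^{\frac{1}{2_*}},
\]
because re-inserting the weight (multiplying by $t^{\nicefrac{1}{\rho}}$) recovers the claim, the extra factor $t$ on the last term precisely accounting for the weight mismatch between $t^{(1+\rho)/\rho}$ and $t^{\nicefrac{1}{\rho}}$ created by the super-linear exponent. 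I will also use that $\lambda\WB(x,t)$ is a parabolic ball (Example~\ref{ex:parabolic_distance}) of radius $\approx\sqrt t$, so that any sub-ball $B'$ has parabolic radius $\rho_0$ with $\rho_0^2\lesssim t$.

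\emph{Step 2: energy estimate and Sobolev.} Fix concentric parabolic balls $B_{r_1}\subset B_{r_2}$ with $B'\subseteq B_{r_1}$ and $B_{r_2}\subseteq 2B'$, and a parabolic cutoff $\eta$ with $\eta\equiv 1$ on $B_{r_1}$, $\supp\eta\subseteq B_{r_2}$, $|\nabla\eta|\lesssim(r_2-r_1)^{-1}$, $0\le\partial_t\eta\lesssim(r_2-r_1)^{-2}$. Testing $\partial_t u-\Div(A\nabla u)=\phi(u)$ against $\eta^2u$ — legitimate after a Steklov regularization in time, since $u,\nabla u\in\L^2_\loc$ and $\phi(u)\in\L^{2_*}_\loc$ by Definition~\ref{def:weak_solution_rd} — and using ellipticity of $A$, one obtains the standard Caccioppoli bound controlling $\|\eta u\|_{V(B_{r_2})}^2$, where $V(Q)\coloneqq\L^\infty_\tau(\L^2_y)\cap\L^2_\tau(\W^{1,2}_y)$, by $(r_2-r_1)^{-2}\iint_{B_{r_2}}|u|^2$ plus $\bigl|\iint\phi(u)\eta^2u\bigr|$. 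Since $\eta\le 1$ and $|\phi(u)|\le C_g|u|^{1+\rho}$, the source term is at most $C_g\bigl\||u|^{1+\rho}\bigr\|_{\L^{2_*}(B_{r_2})}\|\eta u\|_{\L^{2^*}}$, and the parabolic Sobolev embedding $V\hookrightarrow\L^{2^*}$ gives $\|\eta u\|_{\L^{2^*}}\lesssim\|\eta u\|_{V}$. Absorbing and solving the resulting quadratic inequality for $\|\eta u\|_V$ yields
\[
\Bigl(\iint_{B_{r_1}}|u|^{2^*}\Bigr)^{\frac{1}{2^*}} \lesssim \Bigl(\tfrac{1}{(r_2-r_1)^2}\iint_{B_{r_2}}|u|^2\Bigr)^{\frac12} + C_g\bigl\||u|^{1+\rho}\bigr\|_{\L^{2_*}(B_{r_2})}.
\]

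\emph{Step 3: hole-filling, and the main obstacle.} Passing to averages (using $|B_r|\approx r^{n+2}$ and $\rho_0^2\lesssim t$, which turns the source term into $\lesssim t\,(\tiltfiint_{B_{r_2}}|u|^{2_*(1+\rho)})^{1/2_*}$), interpolating the $\L^2$ average between $\L^1$ and $\L^{2^*}$, and applying Young's inequality with a small parameter bounds the $\L^2$ term by a small multiple of $(\tiltfiint_{B_{r_2}}|u|^{2^*})^{1/2^*}$ plus $C$ times an $\L^1$ average, at the price of a power of $r_2/(r_2-r_1)$; a standard iteration (hole-filling) lemma applied to the one-parameter family of concentric balls between $B'$ and $2B'$ then absorbs the $\L^{2^*}$ term and produces the inequality of Step 1. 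Re-inserting the weight finishes the proof. The delicate points are the rigorous justification of the energy estimate for a source only in $\L^{2_*}_\loc$ (forcing the use of duality rather than Hölder, together with time-regularization), and carrying the $(r_2-r_1)^{-1}$ dependence correctly through the interpolation so that the iteration lemma applies; the super-linear structure of $\phi$ is harmless at this stage, since the factor $|u|^{1+\rho}$ is merely carried along and its unfavorable local exponent $2_*(1+\rho)$ is reduced only afterwards, via Theorem~\ref{thm:rh_improvement}.
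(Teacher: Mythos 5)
Your proof is correct, but you take a visibly different route from the paper: where the paper invokes the reverse H\"older inequality for weak solutions with an $\L^{2_*}$-source, namely~\cite[Prop.~4.4]{ABES19} applied with $f=\phi(u)$, and then simply feeds in the growth bound~\eqref{eq:RD_growth_Lipschitz} and the weight $s\approx t$, you instead re-derive that cited proposition from scratch. Your Steps~2 and~3 (Caccioppoli with $\L^{2_*}$--$\L^{2^*}$ duality for the source, parabolic Sobolev embedding, then a hole-filling iteration to lower $\L^2$ to $\L^1$) are precisely the ingredients that go into the proof of the referenced result, so the two arguments are conceptually identical; the paper treats this machinery as a black box, while you open the box. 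Your Step~1 and the final re-insertion of the weight coincide with the paper's last sentence (multiplying~\eqref{eq:RH1_rd} by $t^{\nicefrac{1}{\rho}}$ and noting $1+\nicefrac{1}{\rho}=\nicefrac{(1+\rho)}{\rho}$). What your route buys is self-containedness and transparency of where $2_*$ integrability of the source is actually used (in the duality pairing for the test function $\eta^2 u$); what the paper's route buys is brevity. Two details in your write-up are lightly skipped and worth acknowledging if you expand it: the Steklov regularization is genuinely needed to make $\eta^2 u$ an admissible test function (you say so), and the hole-filling lemma must be stated so that the small coefficient in front of the $\L^{2^*}$-term is independent of $r_2-r_1$ — which works after the $\L^1$--$\L^{2^*}$ interpolation and Young's inequality move the full weight of $r_2/(r_2-r_1)$ onto the $\L^1$ average — but neither is a gap, just bookkeeping.
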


	\begin{proof}
		Since $2B' \subseteq \lambda \WB(t,x) \subseteq (0,T) \times \R^n$ by choice of $\lambda$, $u$ is a weak solution to the linear problem $\partial_t u -\Div(A\nabla u) = \phi(u)$ on $2B'$. Hence, we can use the reverse Hölder inequality from~\cite[Prop.~4.4]{ABES19} with $f = \phi(u)$ and $\gamma = 2$, followed by the growth bound~\eqref{eq:RD_growth_Lipschitz}, to obtain
		\begin{align}
			\label{eq:RH1_rd}
			\left( \tiltfiint_{B'} |u|^{2^*} \right)^\frac{1}{2^*}
			&\lesssim \tiltfiint_{2B'} |u| + \r(B')^2 \left( \tiltfiint_{2B'} |\phi(u)|^{2_*} \right)^\frac{1}{2_*} \\
			&\lesssim{} \tiltfiint_{2B'} |u| + t \left( \tiltfiint_{2B'} |u|^{2_*(1+\rho)} \right)^\frac{1}{2_*},
		\end{align}
		where we also used $\r(B') \leq \r(\lambda \WB(t,x)) = \sqrt{\lambda t}$ in the last step. As $2B' \subseteq \lambda \WB(t,x)$, we find $s \approx t$ on $2B'$. Thus, multiplying~\eqref{eq:RH1_rd} by $t^{\nicefrac{1}{\rho}}$ and keeping $1 + \tfrac{1}{\rho} = \tfrac{1+\rho}{\rho}$ in mind, deduce
		\begin{align}
			\label{eq:RH2_rd}
			\left( \tiltfiint_{B'} |s^{\nicefrac{1}{\rho}} u|^{2^*} \right)^\frac{1}{2^*}
			\lesssim \tiltfiint_{2B'} |s^{\nicefrac{1}{\rho}} u| + \left( \tiltfiint_{2B'} |s^{\nicefrac{1}{\rho}} u|^{2(1+\rho)} \right)^\frac{1}{2}.
		\end{align}
		This completes the proof.
	\end{proof}

	\begin{remark}
		\label{rem:RH_rd}
		Since $u$ is a weak solution to~\eqref{eq:rd}, $u$ is locally square-integrable and $\phi(u)$ is locally $2_*$ integrable by definition. In particular, $u$ is locally integrable. Hence,~\eqref{eq:RH1_rd} entails local $\L^{2^*}$-integrability on $B'$.
		Eventually, {by varying $\lambda$ slightly in conjunction with a covering argument, we can show $\L^{2^*}$-integrability on $\lambda\WB(t,x)$.}
	\end{remark}

	Now, the following enhanced RH-inequality for~\eqref{eq:rd} can readily be deduced from our abstract result on self-improvement of super-linear RH-inequalities (Proposition~\ref{thm:rh_improvement}).

	\begin{proposition}[Improved RH-inequality for~\eqref{eq:rd}]
		\label{prop:rh_rd_improved}
		Let $\rho \in [0, \nicefrac{4}{n})$. Suppose that $(u,T)$ is a weak solution to~\eqref{eq:rd}. Fix $T' \in (0,T)$ and let $q \in (\RD_{-}(n,\rho), 2_*(1+\rho))$.
		Define $$\theta \coloneqq \frac{q(\tfrac{4}{n} - \rho)}{\tfrac{4q}{n} - 2^* \rho} > 1.$$
		Then, there exist $\lambda \in (1,3)$ and a constant $C = C(n,\rho,q,\lambda) > 0$ such that, for all $x\in \R^n$ and $t \in (0, T']$, the following improved RH-inequality holds:
		\begin{align}
			\left( \tiltfiint_{\W(t,x)} |s^{\nicefrac{1}{\rho}} u|^{2^*} \right)^\frac{1}{2^*}
			\leq{} C \Biggl[ \Bigl( \tiltfiint_{\lambda \WB(t,x)} |s^{\nicefrac{1}{\rho}} u|^q \Bigr)^\frac{\theta}{q} + \Bigl( \tiltfiint_{\lambda \WB(t,x)} |s^{\nicefrac{1}{\rho}} u|^q \Bigr)^\frac{1}{q} \Biggr].
		\end{align}
	\end{proposition}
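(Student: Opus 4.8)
The plan is to read this statement off from the abstract self-improvement result, Theorem~\ref{thm:rh_improvement}, applied in the doubling metric measure space $\R^{n+1}_+$ of Example~\ref{ex:parabolic_distance} (with the dilated parabolic distance and Lebesgue measure), fed with the basic super-linear reverse Hölder inequality of Lemma~\ref{lem:RH_rd}. Throughout I work with $v \coloneqq |s^{\nicefrac{1}{\rho}}u|$ and with the ball $B \coloneqq \WB(x,t)$ at a scale $t \in (0,T']$; recall from the discussion preceding Lemma~\ref{lem:RH_rd} that $\lambda\WB(x,t) \subseteq (0,T)\times\R^n$ compactly once $\lambda = \sqrt{1+\alpha}$ with $0 < \alpha < \min\bigl(8, \tfrac{4(T-T')}{T'}\bigr)$, which also yields $\lambda \in (1,3)$, and from Remark~\ref{rem:RH_rd} that $v \in \L^{2^*}(\lambda\WB(x,t))$, supplying the integrability hypothesis $v \in \L^{p^\sharp}(\lambda B)$ of Theorem~\ref{thm:rh_improvement}.

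First I would fix the parameters $p^\sharp \coloneqq 2^*$, $s^\sharp \coloneqq q$, $q^\sharp \coloneqq 2_*(1+\rho)$ and $\alpha^\sharp \coloneqq 1+\rho$, so that the exponent $\alpha^\sharp/q^\sharp = \nicefrac{1}{2_*}$ matches the last term in Lemma~\ref{lem:RH_rd} and $\alpha^\sharp \geq 1$. With $s^\sharp$, $p^\sharp$, $q^\sharp$, $\alpha^\sharp$ fixed, the three relations~\eqref{eq:rh_improvement_relations} determine $\beta_2$ (hence $\beta_1 = (1+\rho)\beta_2$, hence $r^\sharp$) uniquely, and I would check that the hypotheses of Theorem~\ref{thm:rh_improvement} hold: $\beta_2 \in (0,1)$ unwinds to $q < 2_*(1+\rho)$ together with $\rho < \nicefrac{4}{n}$; $\beta_1 < 1$ unwinds precisely to $\nicefrac{\rho}{q} < \nicefrac{2}{(n+2)}$, i.e. $q > \RD_{-}(n,\rho)$; and then $q < r^\sharp < 2^*$ and $q < q^\sharp < 2^*$ follow (the latter again from $\rho < \nicefrac{4}{n}$), so $q^\sharp, r^\sharp \in (s^\sharp, p^\sharp)$ as required. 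Thus the ranges of $q$ and $\rho$ stated in the proposition are exactly what makes the abstract theorem available.

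It remains to match hypothesis and conclusion. Since $r^\sharp > q \geq 1$, Jensen's inequality gives $\tiltfiint_{2B'} v \leq \bigl(\tiltfiint_{2B'} v^{r^\sharp}\bigr)^{\nicefrac{1}{r^\sharp}}$, so Lemma~\ref{lem:RH_rd}, valid for every ball $B'$ with $2B' \subseteq \lambda\WB(x,t)$, upgrades to exactly~\eqref{eq:RH_assumption} for $v$ with the above parameters and $B = \WB(x,t)$. Theorem~\ref{thm:rh_improvement} then yields
\begin{align}
	\Bigl( \tiltfiint_{\WB(x,t)} v^{2^*} \Bigr)^{\frac{1}{2^*}} \lesssim \Bigl( \tiltfiint_{\lambda\WB(x,t)} v^{q} \Bigr)^{\frac{1}{q}} + \Bigl( \tiltfiint_{\lambda\WB(x,t)} v^{q} \Bigr)^{\frac{\theta}{q}}
\end{align}
with $\theta = \nicefrac{\alpha^\sharp(1-\beta_2)}{(1-\beta_1)}$; substituting the explicit $\beta_1,\beta_2$ and simplifying by means of $\nicefrac{1}{2_*} - \nicefrac{1}{2^*} = \nicefrac{2}{(n+2)}$ and $2^* = \nicefrac{2(n+2)}{n}$ turns this into the closed form $\theta = \nicefrac{q(\nicefrac{4}{n} - \rho)}{(\nicefrac{4q}{n} - 2^*\rho)}$, and $\theta > 1$ is seen to be equivalent to $q < 2_*(1+\rho)$. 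Translation and dilation invariance of the construction makes the constant $C$ depend only on $n$, $\rho$, $q$ and $\lambda$ (and the ellipticity of $A$), which is the assertion. The only genuinely delicate point is the parameter bookkeeping of the second paragraph — verifying that~\eqref{eq:rh_improvement_relations} is solvable in $(0,1)^2$ precisely in the stated regime and identifying the resulting $\theta$ with its closed form; everything else is a direct consequence of Lemma~\ref{lem:RH_rd}, Remark~\ref{rem:RH_rd} and Theorem~\ref{thm:rh_improvement}.
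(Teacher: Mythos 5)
Your proposal is correct and follows essentially the same route as the paper: work in the parabolic doubling space of Example~\ref{ex:parabolic_distance} with $v = |s^{\nicefrac{1}{\rho}}u|$, choose $p^\sharp = 2^*$, $s^\sharp = q$, $q^\sharp = 2_*(1+\rho)$, $\alpha^\sharp = 1+\rho$, verify the hypotheses of Theorem~\ref{thm:rh_improvement} reduce to the stated ranges for $q$ and $\rho$, upgrade Lemma~\ref{lem:RH_rd} via Jensen to the form~\eqref{eq:RH_assumption}, and apply the abstract theorem. One very minor imprecision: $\theta > 1$ is equivalent (given the sign checks) to $q < 2^*$, not to $q < 2_*(1+\rho)$ as you state; since $q < 2_*(1+\rho) \leq 2^*$ when $\rho < \nicefrac{4}{n}$, the needed implication $\theta > 1$ still holds and the argument is unaffected.
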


	\begin{proof}
		We subdivide the proof into two steps.

		\textbf{Step 1}: setup parameters. In this tedious preparatory step, we set up the parameters for Proposition~\ref{thm:rh_improvement}. Define
		\begin{align}
			p^\sharp \coloneqq 2^*, \qquad q^\sharp \coloneqq 2_*(1+\rho), \qquad r^\sharp \coloneqq \frac{2_* q}{q - 2_* \rho}, \qquad s^\sharp \coloneqq q.
		\end{align}
		By $\rho < \nicefrac{4}{n}$, we obtain $p^\sharp > q^\sharp$, and we have $q^\sharp > s^\sharp$ by assumption on $q$.
		{Next, $r^\sharp > s^\sharp$ is equivalent to $q < 2_* (1+\rho)$.
		Lastly, $p^\sharp > r^\sharp$ is equivalent to $q > \RD_{-}(n,\rho)$.}
		In summary,
		\begin{align}
			{r^\sharp, q^\sharp \in (s^\sharp, p^\sharp).}
		\end{align}
		Thus, if we define
		\begin{align}
			\beta_1 \coloneqq \frac{p^\sharp(r^\sharp - s^\sharp)}{r^\sharp(p^\sharp - s^\sharp)} \in (0,1), \qquad \beta_2 \coloneqq \frac{p^\sharp(q^\sharp - s^\sharp)}{q^\sharp(p^\sharp - s^\sharp)} \in (0,1),
		\end{align}
		we obtain the Hölder relations
		\begin{align}
			\frac{1}{r^\sharp} = \frac{\beta_1}{p^\sharp} + \frac{1-\beta_1}{s^\sharp}, \qquad \frac{1}{q^\sharp} = \frac{\beta_2}{p^\sharp} + \frac{1-\beta_2}{s^\sharp}.
		\end{align}
		Put $\alpha^\sharp \coloneqq 1 + \rho$. The definition of $s^\sharp$ is designed in such a way that the crucial identity
		\begin{align}
			\label{eq:beta_relation_rd}
			\beta_2 \alpha^\sharp = \beta_1
		\end{align}
		holds. Lastly, we comment on the exponent $\theta$. Its numerator is positive since $\rho < \nicefrac{4}{n}$, and its denominator is positive since $\nicefrac{4q}{n} > \nicefrac{4}{n} \RD_{-}(n,\rho) = 2^* \rho$. Hence, $\theta > 1$ is equivalent to $q < 2^*$, which we have established above.

		\textbf{Step 2}: conclusion by Proposition~\ref{thm:rh_improvement}. Let $\lambda \in (1,3)$ be as in Lemma~\ref{lem:RH_rd}. Fix $x \in \R^n$ and $t \in (0, T']$, and set $B \coloneqq \WB(t,x)$. Define the function $v(s,y) \coloneqq s^{\nicefrac{1}{\rho}} |u(s,y)|$. By Remark~\ref{rem:RH_rd}, we have $v \in \L^{p^\sharp}(\lambda\WB(t,x))$. Moreover, using Jensen's inequality, Lemma~\ref{lem:RH_rd} provides~\eqref{eq:RH_assumption} for the parameters introduced in Step~1. All remaining assumptions of Proposition~\ref{thm:rh_improvement} were likewise checked in Step~1. Hence, we can apply the proposition to conclude.
	\end{proof}

	From the previous proposition, we easily obtain self-improvement on the level of weighted $\Z$-spaces for weak solutions to~\eqref{eq:rd}.

	\begin{corollary}[Bootstrapping of weak solutions to~\eqref{eq:rd}]
		\label{cor:bootstrapping_weak_rd}
		Let $\rho \in [0, \nicefrac{4}{n})$. Suppose that $(u,
		T)$ is a weak solution to~\eqref{eq:rd} such that $u\in \Z^{r,q}_{\nicefrac{n}{2r} - \nicefrac{1}{\rho}}(T)$ for some $r \in (1, \infty]$ and $q\in (\RD_{-}(n,\rho), 2_*(1+\rho))$. Then, $u \in \Z^{r,2^*}_{\nicefrac{n}{2r} - \nicefrac{1}{\rho}}(T')$ for all $T' \in (0,T)$.
		In particular, $u \in \Z^{r,2_*(1+\rho)}_{\nicefrac{n}{2r} - \nicefrac{1}{\rho}}(T')$ for all $T' \in (0,T)$.
	\end{corollary}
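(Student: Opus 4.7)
The strategy is to combine the improved reverse Hölder inequality from Proposition~\ref{prop:rh_rd_improved} with the Hardy--Sobolev-type embedding from Proposition~\ref{prop:Z_space_embedding} for weighted $\Z$-spaces. Set $\beta \coloneqq \nicefrac{n}{2r} - \nicefrac{1}{\rho}$, and introduce the abbreviations
\begin{align}
A(x,t) \coloneqq \Bigl(\tiltfiint_{\lambda\W(x,t)} |s^{-\beta} u|^q \Bigr)^{\nicefrac{1}{q}}, \qquad
B(x,t) \coloneqq \Bigl(\tiltfiint_{\W(x,t)} |s^{-\beta} u|^{2^*} \Bigr)^{\nicefrac{1}{2^*}}.
\end{align}
Since $s \approx t$ on $\lambda\W(x,t)$, the factor $s^{\nicefrac{1}{\rho}}$ appearing in Proposition~\ref{prop:rh_rd_improved} is comparable to $t^{\nicefrac{n}{2r}} s^{-\beta}$, and substituting this into the improved RH-inequality followed by dividing through by $t^{\nicefrac{n}{2r}}$ yields the pointwise bound
\begin{align}
B(x,t) \lesssim A(x,t) + t^{(\theta-1)\nicefrac{n}{2r}} A(x,t)^\theta, \qquad x \in \R^n, \; t \in (0, T'].
\end{align}

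Raising to the $r$-th power and integrating against $\mathrm{d}x\,\nicefrac{\mathrm{d}t}{t}$ reduces the target $\Z^{r,2^*}_{\beta}(T')$-membership to showing the finiteness of the two integrals $\int A^r$ and $\int t^{(\theta-1)\nicefrac{n}{2}} A^{r\theta}$. The first is $\lesssim \| u \|_{\Z^{r,q}_\beta(T')}^r$ by the change of angle formula (Lemma~\ref{lem:change_of_angle}), hence finite by hypothesis.

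The main obstacle is the super-linear integral, whose outer exponent $r\theta > r$ forbids a direct appeal to the hypothesis. The key trick is to redistribute the $t$-weight: write the integrand as $(t^{(\theta-1)\nicefrac{n}{2r\theta}} A(x,t))^{r\theta}$ and note that, on $\lambda\W(x,t)$, the inner quantity equals up to constants
\begin{align}
A_{\beta_1}(x,t) \coloneqq \Bigl(\tiltfiint_{\lambda\W(x,t)} |s^{-\beta_1} u|^q \Bigr)^{\nicefrac{1}{q}}, \qquad \beta_1 \coloneqq \tfrac{n}{2r\theta} - \tfrac{1}{\rho}.
\end{align}
With this $\beta_1$, the scaling identity $2\beta - \nicefrac{n}{r} = 2\beta_1 - \nicefrac{n}{r\theta} = -\nicefrac{2}{\rho}$ is satisfied, so Proposition~\ref{prop:Z_space_embedding} yields the localized embedding $\Z^{r, q}_\beta(T) \subseteq \Z^{r\theta, q}_{\beta_1}(T)$; a second application of Lemma~\ref{lem:change_of_angle} then bounds the super-linear integral by $\| u \|_{\Z^{r\theta, q}_{\beta_1}(T')}^{r\theta}$, which is finite. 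The boundary case $r = \infty$ is easier and handled directly: $A(x,t)$ is dominated uniformly by $\| u \|_{\Z^{\infty, q}_\beta(T')}$, so the super-linear term is pointwise controlled by a finite constant and the argument collapses to a direct supremum bound. Finally, the additional assertion $u \in \Z^{r, 2_*(1+\rho)}_\beta(T')$ is free from the nesting property, since $\rho < \nicefrac{4}{n}$ precisely ensures $2_*(1+\rho) < 2^*$.
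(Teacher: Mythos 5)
Your proof is correct and follows essentially the same route as the paper: multiply the improved reverse Hölder inequality of Proposition~\ref{prop:rh_rd_improved} by $t^{-\nicefrac{n}{2r}}$, take the $\L^r$-norm, redistribute the residual $t$-weight to recognize the super-linear term as a $\Z^{r\theta,q}_{\beta_1}$-norm, and then use the Hardy--Sobolev embedding (Proposition~\ref{prop:Z_space_embedding}) together with the change of angle formula and nesting. The only difference is cosmetic: you treat $r=\infty$ as a separate (trivial) case, while the paper leaves it implicit since the $t$-weight in front of the super-linear term is then $1$ and the embedding step degenerates to the identity.
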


	\begin{proof}
		The definition of weighted $\Z$-spaces is invariant under dilation of the Whitney box, see~\cite[Rem.~3.6]{Luca}. Hence, multiplying the improved RH-inequality from Proposition~\ref{prop:rh_rd_improved} by $t^{-\nicefrac{n}{2r}}$ and applying the $\L^r$-norm to it, yields
		\begin{align}
			\| u \|_{\Z^{r,2^*}_{\frac{n}{2r} - \frac{1}{\rho}}(T')} \lesssim \| u \|_{\Z^{\theta r, q}_{\frac{n}{2\theta r} - \frac{1}{\rho}}(T)}^\theta + \| u \|_{\Z^{r, q}_{\frac{n}{2r} - \frac{1}{\rho}}(T)}.
		\end{align}
		By the weighted $\Z$-space embedding (using that $\theta > 1$) and the assumption, we have
		$$\| u \|_{\Z^{\theta r, q}_{\frac{n}{2\theta r} - \frac{1}{\rho}}(T)} \lesssim \| u \|_{\Z^{r, q}_{\frac{n}{2r} - \frac{1}{\rho}}(T)} \; \to \; 0 \qquad \text{as } T \to 0.$$
		This gives the main claim of the corollary.
		The \enquote{in particular} part follows from the nesting property of weighted $\Z$-spaces as $2_*(1+\rho) \leq 2^*$ is equivalent to $\rho \leq \nicefrac{4}{n}$.
	\end{proof}

	\begin{remark}
		In contrast to Proposition~\ref{prop:RD_bootstrapping}, note that bootstrapping for weak solutions allows to increase the local integrability, but \emph{not} to decrease the global integrability.
	\end{remark}

	\subsection{Uniqueness of weak solutions}
	\label{sec:weak_uniqueness}

	In Propositions~\ref{prop:existence_weak_rd}, we proved existence of local weak solutions for the reaction--diffusion equation for some time $T \in (0,\infty)$ depending on the initial value $u_0$ and the non-linearity. They belong to some regularity classes. Now, we study uniqueness of weak solutions in such classes. More precisely, given any time $T$, we wish to show that any such class contain \emph{at most} one weak solution.

	We reduce the uniqueness of weak solutions to the uniqueness of mild solutions. The latter was established in Corollary~\ref{cor:mild_uniqueness_rd}. We proceed in two steps.

	\begin{proposition}[Weak solutions to~\eqref{eq:rd} are mild solutions]
		\label{prop:weak_mild_rd}
		Fix $\rho > \nicefrac{2}{n}$ and $T\in (0,\infty)$. Let $p \in (1, \RD_{+}(n,\rho))$ and $\alpha \in (-1,0)$ satisfying $\nicefrac{2}{\rho} = \nicefrac{n}{p} - \alpha$ and $p \geq p(\alpha,A)$. Let $u_0 \in \dot\B^\alpha_{p,p}$.
		Suppose that $(u,T)$ is a weak solution to~\eqref{eq:rd} satisfying $u\in \Z^{r,q}_{\nicefrac{n}{2r} - \nicefrac{1}{\rho}}(T)$ and $\nabla u \in \Z^{p, 2}_{\nicefrac{n}{2p} - \nicefrac{1}{\rho} - \nicefrac{1}{2}}(T)$, where $r,q \in(1,\infty]$ satisfy $r \geq p$, $\RD_{-}(n,\rho) < r < \RD_{+}(n,\rho)$ and $q \geq \max(2_*(1+\rho),2^*)$.
		Then, $u$ is a mild solution to~\eqref{eq:rd}.
	\end{proposition}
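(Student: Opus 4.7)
The plan is to realize $u$ as the natural mild-solution candidate $\tilde u := \Ext_L(u_0) + \Soll^L_1(\phi(u))$, and to identify $u$ with $\tilde u$ via the linear uniqueness result Proposition~\ref{prop:linear_uniqueness}.

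The first task is to give sense to $w := \Soll^L_1(\phi(u))$ as a weak solution of $\partial_t w - \Div(A\nabla w) = \phi(u)$ on $(0,T)\times\R^n$ with $w(0)=0$. The growth bound $|\phi(u)|\leq C_g|u|^{1+\rho}$ from~\eqref{eq:RD_growth_Lipschitz}, combined with H\"older's inequality on the Whitney boxes defining $\Z^{r,q}_\beta$, yields
\[
\phi(u)\in\Z^{\nicefrac{r}{1+\rho},\,\nicefrac{q}{1+\rho}}_{\nicefrac{n(1+\rho)}{2r}-\nicefrac{1}{\rho}-1}(T),
\]
whose weight exponent is strictly greater than $-1$ precisely because $r<\RD_{+}(n,\rho)$. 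The dual lower bound $q\geq\max(2_*(1+\rho),2^*)$ ensures $\nicefrac{q}{1+\rho}\geq 2_*$; after nesting the local exponent of $\phi(u)$ down to $2_*$ and, if needed, first raising the global exponent of $u$ via Proposition~\ref{prop:Z_space_embedding} to meet the constraint local-$\leq$-global, Proposition~\ref{prop:Duhamel_Z_weak} identifies $w$ as a weak solution of the stated linear PDE, with $w(0)=0$ by Proposition~\ref{prop:Duhamel_Z_trace}. Proposition~\ref{prop:Duhamel_mapping_Z} further provides $\nabla w\in\Z^{\nicefrac{r}{1+\rho},2}_{\nicefrac{n(1+\rho)}{2r}-\nicefrac{1}{\rho}-\nicefrac{1}{2}}(T)$, and one more application of Proposition~\ref{prop:Z_space_embedding}, legitimized by Lemma~\ref{lem:p_A_rd}(c) (which states $p>\nicefrac{r}{1+\rho}$), transports this gradient bound to
\[
\nabla w\in\Z^{p,2}_{\nicefrac{n}{2p}-\nicefrac{1}{\rho}-\nicefrac{1}{2}}(T).
\]

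Setting $v:=\Ext_L(u_0)$, Proposition~\ref{prop:caloric_extension_Besov} shows that $v$ is a weak solution to the homogeneous Cauchy problem with $v(0)=u_0$ and $\nabla v$ in the same weighted $\Z$-space as $\nabla w$. Hence $\tilde u:=v+w$ is a weak solution of $\partial_t\tilde u-\Div(A\nabla\tilde u)=\phi(u)$ with $\tilde u(0)=u_0$, and the difference $u-\tilde u$ solves the homogeneous equation with zero initial datum, with $\nabla(u-\tilde u)\in\Z^{p,2}_{\nicefrac{n}{2p}-\nicefrac{1}{\rho}-\nicefrac{1}{2}}(T)$ by the hypothesis on $\nabla u$ and the bound just established. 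The weight exponent $\beta:=\nicefrac{n}{2p}-\nicefrac{1}{\rho}-\nicefrac{1}{2}$ satisfies $\beta>-1$ (equivalent to $p<\RD_{+}(n,\rho)$), and Lemma~\ref{lem:p_A_rd}(b) supplies $p\geq p(2\beta+1,A)$. Proposition~\ref{prop:linear_uniqueness} then forces $u=\tilde u$, i.e., $u=\Ext_L(u_0)+\Soll^L_1(\phi(u))$, which is the mild identity.

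The chief technical hurdle is the parameter bookkeeping in the first step: the local-$\leq$-global constraint of Proposition~\ref{prop:Duhamel_Z_weak} is not automatic from $u\in\Z^{r,q}_{\beta}$, and the two lower bounds $q\geq 2_*(1+\rho)$ and $q\geq 2^*$ appearing in the hypothesis are tailored precisely to make this reduction go through on either side of the threshold $\rho=\nicefrac{4}{n}$, by combining nesting in the local integrability of $\phi(u)$ with Hardy--Sobolev-type embeddings raising the global integrability of $u$.
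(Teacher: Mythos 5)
Your global strategy is exactly the paper's: form the mild-solution candidate $\tilde u = \Ext_L(u_0) + \Soll^L_1(\phi(u))$, show it is a weak solution with the right gradient estimate, and invoke Proposition~\ref{prop:linear_uniqueness} to force $u=\tilde u$. The steps you carry out for the free evolution, the gradient transport via Proposition~\ref{prop:Z_space_embedding} and Lemma~\ref{lem:p_A_rd}(c), and the final exponent check via Lemma~\ref{lem:p_A_rd}(b) are all correct and match the paper.

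However, there is a genuine gap in the step where you identify $w = \Soll^L_1(\phi(u))$ as a weak solution. You want to place $\phi(u)$ into a space $\Z^{p^\sharp,q^\sharp}_{\beta^\sharp}$ with $q^\sharp \ge 2_*$ (needed for weak, rather than merely very weak, solutions in Proposition~\ref{prop:Duhamel_Z_weak}) and $q^\sharp \le p^\sharp$, while also keeping $\beta^\sharp > -1$. Starting from $u \in \Z^{r,q}_{\nicefrac{n}{2r}-\nicefrac{1}{\rho}}$ and the growth bound, this forces $2_* \le p^\sharp = \nicefrac{r'}{1+\rho}$, i.e.\ $r' \ge 2_*(1+\rho)$, while the weight condition forces $r' < \RD_{+}(n,\rho)$. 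Your remedy --- first raise the global exponent of $u$ to such an $r'$ via the Hardy--Sobolev embedding --- therefore requires $2_*(1+\rho) < \RD_{+}(n,\rho)$, which is equivalent to $\rho > \nicefrac{4(n+2)}{n(n+4)}$. This threshold sits strictly between $\nicefrac{2}{n}$ and $\nicefrac{4}{n}$, so for $\rho \in \bigl(\nicefrac{2}{n},\, \nicefrac{4(n+2)}{n(n+4)}\,\bigr]$ (a nonempty range; for $n=3$ it is $(\nicefrac{2}{3}, \nicefrac{20}{21}]$) there is no admissible $r'$, and your reduction to Proposition~\ref{prop:Duhamel_Z_weak} or Theorem~\ref{thm:linear_wp} fails. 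This is precisely the obstruction the paper addresses: for $\rho < \nicefrac{4}{n}$ it does not use Proposition~\ref{prop:Duhamel_Z_weak} at all but instead proves the auxiliary Lemma~\ref{lem:tedious_interpolation}, whose proof is a non-trivial interpolation of mapping bounds for $\Soll^L_1$ and $\nabla\Soll^L_1$ between the two endpoint cases $\rho = \nicefrac{2}{n}$ (where $\nicefrac{2^*}{1+\rho}=2$, so Proposition~\ref{prop:linear_existence_hedong} applies) and $\rho=\nicefrac{4}{n}$ (where $\RD_-(n,\rho)=2^*$, so Theorem~\ref{thm:linear_wp} applies). You correctly flagged the local-$\le$-global bookkeeping as the chief hurdle, and for $\rho\ge\nicefrac{4}{n}$ your argument closes (this is the paper's easy case, since then $r > \RD_-(n,\rho) \ge 2_*(1+\rho)$ holds automatically), but the embedding trick alone cannot close the small-$\rho$ regime.
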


	\begin{proof}
		Assume that $u$ is a weak solution to~\eqref{eq:rd} satisfying $u\in \Z^{r,q}_{\nicefrac{n}{2r} - \nicefrac{1}{\rho}}(T)$ and $\nabla u \in \Z^{p, 2}_{\nicefrac{n}{2p} - \nicefrac{1}{\rho} - \nicefrac{1}{2}}(T)$.
		Define $v \coloneqq \Ext_L(u_0) + \Soll^L_1(\phi(u))$. Arguing as in Proposition~\ref{prop:existence_weak_rd}, we have that $v$ is a weak solution to
		$$\partial_t v - \Div(A\nabla v) = \phi(u), \quad v(0) = u_0,$$
		satisfying $\nabla v \in \Z^{p,2}_{\nicefrac{n}{2p} - \nicefrac{1}{\rho} - \nicefrac{1}{2}}(T)$.
		Define $w \coloneqq u - v$. By construction, $w$ is a weak solution to $\partial_t w - \Div(A\nabla w) = 0$ with $w(0) = 0$. Moreover, $\nabla w \in \Z^{p,2}_{\nicefrac{n}{2p} - \nicefrac{1}{\rho} - \nicefrac{1}{2}}(T)$, using also the assumption on $\nabla u$. Therefore, by uniqueness of the linear Cauchy problem stated in Proposition~\ref{prop:linear_uniqueness}, $w = 0$.
		Thus, $u = v$. Plugging this back into the definition of $v$ shows that $u$ is a mild solution to~\eqref{eq:rd} as desired.
	\end{proof}

	From Proposition~\ref{prop:weak_mild_rd}, uniqueness of weak solutions follows rapidly.

	\begin{corollary}[Uniqueness of weak solutions to~\eqref{eq:rd}]
		\label{cor:uniqueness_weak_rd}
		Fix $\rho > \nicefrac{2}{n}$. Let $p \in (1, \RD_{+}(n,\rho))$ and $\alpha \in (-1,0)$ satisfying $\nicefrac{2}{\rho} = \nicefrac{n}{p} - \alpha$ and $p \geq p(\alpha,A)$. Fix $u_0 \in \dot\B^\alpha_{p,p}$.
		Then, for $T\in (0,\infty) $,
		there exists at most one weak solution $(u,T)$ to~\eqref{eq:rd} satisfying $u\in \Z^{r,q}_{\nicefrac{n}{2r} - \nicefrac{1}{\rho}}(T)$ and $\nabla u\in \Z^{p,2}_{\nicefrac{n}{2p} - \nicefrac{1}{\rho} - \nicefrac{1}{2}}(T)$, where $r,q \in(1,\infty]$ satisfy $r \geq p$, $\RD_{-}(n,\rho) < r < \RD_{+}(n,\rho)$ and $q > \RD_{-}(n,\rho)$.
	\end{corollary}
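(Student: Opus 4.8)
The plan is to reduce the uniqueness of weak solutions to the uniqueness of mild solutions, the latter being Corollary~\ref{cor:mild_uniqueness_rd}, by invoking the transference principle of Proposition~\ref{prop:weak_mild_rd}. The only obstacle is a mismatch in the local integrability exponent: Proposition~\ref{prop:weak_mild_rd} requires $q \geq \max(2_*(1+\rho), 2^*)$, whereas here we merely assume $q > \RD_{-}(n,\rho)$. Bridging this gap is the heart of the argument, and it splits according to the size of $\rho$.

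So let $(u,T)$ and $(v,T)$ be two weak solutions to~\eqref{eq:rd} with $u,v \in \Z^{r,q}_{\nicefrac{n}{2r}-\nicefrac{1}{\rho}}(T)$ and $\nabla u, \nabla v \in \Z^{p,2}_{\nicefrac{n}{2p}-\nicefrac{1}{\rho}-\nicefrac{1}{2}}(T)$, the parameters being as in the statement. First I would treat the range $\rho \geq \nicefrac{4}{n}$. A direct computation shows $\RD_{-}(n,\rho) = \nicefrac{(n+2)\rho}{2} \geq \max(2_*(1+\rho), 2^*)$ in this range, so that $q > \RD_{-}(n,\rho)$ already forces $q \geq \max(2_*(1+\rho), 2^*)$. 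Hence Proposition~\ref{prop:weak_mild_rd} applies verbatim and exhibits both $u$ and $v$ as mild solutions to~\eqref{eq:rd} lying in $\Z^{r,q}_{\nicefrac{n}{2r}-\nicefrac{1}{\rho}}(T)$. Since $q > \RD_{-}(n,\rho)$, Corollary~\ref{cor:mild_uniqueness_rd} forces $u = v$.

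For $\rho < \nicefrac{4}{n}$ (hence $\rho \in (\nicefrac{2}{n}, \nicefrac{4}{n})$) the exponent $q$ may be too small, and I would first upgrade it by the reverse-Hölder self-improvement. Fix $T' \in (0,T)$. In this range $\RD_{-}(n,\rho) < 2_*(1+\rho)$, so one may choose $\bar q \in \bigl(\RD_{-}(n,\rho), \min(q, 2_*(1+\rho))\bigr)$; by the nesting property $u,v \in \Z^{r,\bar q}_{\nicefrac{n}{2r}-\nicefrac{1}{\rho}}(T)$, and Corollary~\ref{cor:bootstrapping_weak_rd} upgrades this to $u,v \in \Z^{r,2^*}_{\nicefrac{n}{2r}-\nicefrac{1}{\rho}}(T')$. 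Because $2^* \geq 2_*(1+\rho)$ here, $2^* = \max(2_*(1+\rho), 2^*)$, so the restrictions $(u,T')$ and $(v,T')$ — which are again weak solutions to~\eqref{eq:rd} — now meet the hypotheses of Proposition~\ref{prop:weak_mild_rd} and are therefore mild solutions on $(0,T')$. As $2^* > \RD_{-}(n,\rho)$ when $\rho < \nicefrac{4}{n}$, Corollary~\ref{cor:mild_uniqueness_rd} yields $u = v$ on $(0,T')$; letting $T' \uparrow T$ gives $u = v$ on $(0,T)$.

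What remains is routine bookkeeping: the restriction of a weak solution to a shorter time interval is again a weak solution in the sense of Definition~\ref{def:weak_solution_rd} (one just tests against functions supported in the smaller strip and keeps the same distributional trace), the weighted $\Z$-norms do not increase under such restriction (the inner Whitney averages decrease pointwise after multiplication by a time cutoff), and the side conditions $p \geq p(\alpha,A)$, $r \geq p$ and $\RD_{-}(n,\rho) < r < \RD_{+}(n,\rho)$ are preserved throughout. The genuinely delicate point is the case distinction itself: the self-improvement engine (Theorem~\ref{thm:rh_improvement}, applied through Corollary~\ref{cor:bootstrapping_weak_rd}) is only available for $\rho < \nicefrac{4}{n}$, but precisely for $\rho \geq \nicefrac{4}{n}$ the Fujita-type threshold $\RD_{-}(n,\rho)$ is already large enough that the transference principle applies with no regularization needed.
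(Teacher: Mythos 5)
Your proof is correct and follows essentially the same route as the paper: reduce uniqueness of weak solutions to uniqueness of mild solutions via the transference principle (Proposition~\ref{prop:weak_mild_rd}), bridging the gap in the local exponent $q$ by the reverse-Hölder self-improvement (Corollary~\ref{cor:bootstrapping_weak_rd}), after restriction to $T' < T$. The paper condenses both cases into the single quantity $\max(q,2^*)$ and observes the inequality $\max(q,2^*)\ge\max(2_*(1+\rho),2^*)$, whereas you make the case split on $\rho\gtrless\nicefrac{4}{n}$ explicit and insert the intermediate exponent $\bar q$ to justify the application of the bootstrapping corollary when $q$ may exceed $2_*(1+\rho)$; this is the same argument spelled out in slightly more detail.
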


	\begin{proof}
		Assume that $(u,T)$ is a weak solution to~\eqref{eq:rd} satisfying $u\in \Z^{r,q}_{\nicefrac{n}{2r} - \nicefrac{1}{\rho}}(T)$ and $\nabla u\in \Z^{p,2}_{\nicefrac{n}{2p} - \nicefrac{1}{\rho} - \nicefrac{1}{2}}(T)$.
		Let $T' \in (0,T)$ arbitrary.
		Using Corollary~\ref{cor:bootstrapping_weak_rd}, we know moreover that $u\in \Z^{r,\max(q,2^*)}_{\nicefrac{n}{2r} - \nicefrac{1}{\rho}}(T')$.
		We claim that $\max(q,2^*) \geq \max(2_*(1+\rho),2^*)$. Indeed, the maximum on the right-hand side is attained by $2_*(1+\rho)$ if and only if $\rho \geq \nicefrac{4}{n}$. In this case, $q > \RD_{-}(n,\rho) \geq 2_* (1+\rho)$.
		Hence, by Proposition~\ref{prop:weak_mild_rd}, $u$ is a mild solution to~\eqref{eq:rd} on $[0,T']$. As $u \in \Z^{r,q}_{\nicefrac{n}{2r} - \nicefrac{1}{\rho}}(T')$ with $q>\RD_{-}(n,\rho)$, Corollary~\ref{cor:mild_uniqueness_rd} states that there is \emph{at most} one such mild solution. Hence, $u$ is unique on $[0,T']$. Since $T'$ was arbitrary, this proves the claim.
	\end{proof}

	\section{Conclusion of Theorem~\ref{thm:rd_new}}
	\label{sec:lifespan}

	We construct a weak solution $(u,\tau)$ to~\eqref{eq:rd} which is maximal in all uniqueness classes from Section~\ref{sec:weak_uniqueness} simultaneously. In particular, the lifespan is independent of the chosen uniqueness class. In conjunction with the results from Section~\ref{sec:weak_uniqueness}, this enables us to conclude Theorem~\ref{thm:rd_new}.

	\begin{proposition}[Maximal weak solutions]
		\label{prop:weak_maximal}
		Fix $\rho > \nicefrac{2}{n}$. Let $p \in (1, \RD_{+}(n,\rho))$ and $\alpha \in (-1,0)$ satisfying $\nicefrac{2}{\rho} = \nicefrac{n}{p} - \alpha$ and $p \geq p(\alpha,A)$. Fix $u_0 \in \dot\B^\alpha_{p,p}$.
		Then, there exists a weak solution $(u,\tau)$ to~\eqref{eq:rd} such that, for any $r,q \in(1,\infty]$ satisfying $r \geq p$, $\RD_{-}(n,\rho) < r < \RD_{+}(n,\rho)$ and $q > \RD_{-}(n,\rho)$, the following property holds:
		\begin{align}
			\label{eq:reg_maximal}
			\tag{R}
			\forall T \in (0, \tau) \colon \quad u\in \Z^{r,q}_{\frac{n}{2r} - \frac{1}{\rho}}(T) \; \; \& \; \; \nabla u\in \Z^{p,2}_{\frac{n}{2p} - \frac{1}{\rho} - \frac{1}{2}}(T).
		\end{align}
		Moreover, $(u,\tau)$ is maximal among all solutions verifying~\eqref{eq:reg_maximal}.
	\end{proposition}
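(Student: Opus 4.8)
The plan is to obtain $u$ by gluing together the local weak solutions furnished by Proposition~\ref{prop:existence_weak_rd}, to take $\tau$ as the supremum of their existence times, and to read off compatibility of the pieces and maximality from the uniqueness statement in Corollary~\ref{cor:uniqueness_weak_rd}. Concretely, I would let $\mathcal{T}$ be the set of $T\in(0,\infty)$ for which \eqref{eq:rd} admits a weak solution $(v,T)$ such that, for every \emph{admissible} pair $(r,q)$ (that is, $r\ge p$, $\RD_{-}(n,\rho)<r<\RD_{+}(n,\rho)$, $q>\RD_{-}(n,\rho)$) and every $T'\in(0,T)$, one has $v\in\Z^{r,q}_{\nicefrac{n}{2r}-\nicefrac{1}{\rho}}(T')$ and $\nabla v\in\Z^{p,2}_{\nicefrac{n}{2p}-\nicefrac{1}{\rho}-\nicefrac{1}{2}}(T')$. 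Proposition~\ref{prop:existence_weak_rd} produces a weak solution with $\nabla v$ in the required space and $v\in\Z^{r_0,\infty}_{\nicefrac{n}{2r_0}-\nicefrac{1}{\rho}}(T_0)$ for one admissible $r_0$, and Proposition~\ref{prop:RD_bootstrapping} upgrades this, on the \emph{same} interval $(0,T_0)$, to all admissible $(r,q)$; hence $\mathcal{T}\neq\emptyset$ and $\tau\coloneqq\sup\mathcal{T}\in(0,\infty]$ is well defined. For $T\in\mathcal{T}$ let $u_T$ be the associated solution, unique by Corollary~\ref{cor:uniqueness_weak_rd}. Since $\Z$- and $\L^2_\loc$-membership are preserved, with controlled norms, under restriction from $(0,T')\times\R^n$ to $(0,T)\times\R^n$ for $T<T'$ (this uses the norm equivalence for $\Z^{p,q}_\beta(T)$ recalled in Section~\ref{subsec:prelim_function_spaces}), the restriction of any $u_T$ to a subinterval stays in the same solution class; thus $\mathcal{T}$ is an interval, and for $T_1<T_2$ in $\mathcal{T}$ uniqueness gives $u_{T_1}=u_{T_2}|_{(0,T_1)\times\R^n}$. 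This yields a well-defined function $u$ on $(0,\tau)\times\R^n$ with $u=u_T$ on $(0,T)\times\R^n$ for each $T\in\mathcal{T}$.

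The next step is to check that $(u,\tau)$ is a weak solution of \eqref{eq:rd} with property \eqref{eq:reg_maximal}. Being a weak solution of the equation is a local matter: every test function in Definitions~\ref{def:weak_solution} and~\ref{def:weak_solution_rd} is supported in some $(0,T)\times\R^n$ with $T\in\mathcal{T}$, where $u=u_T$, so the integral identity holds; likewise $u,\nabla u\in\L^2_\loc$, $\phi(u)\in\L^{2_*}_\loc$ and the continuity into $\cD'(\R^n)$ are inherited from the $u_T$. The initial condition $u(0)=u_0$ holds because it holds for each $u_T$ and $u$ agrees with $u_T$ near $t=0$. Finally, for admissible $(r,q)$ and $T\in(0,\tau)$ one picks $T'\in\mathcal{T}$ with $T<T'$; restricting $u_{T'}\in\Z^{r,q}_{\nicefrac{n}{2r}-\nicefrac{1}{\rho}}(T')$ and $\nabla u_{T'}\in\Z^{p,2}_{\nicefrac{n}{2p}-\nicefrac{1}{\rho}-\nicefrac{1}{2}}(T')$ to $(0,T)$ gives \eqref{eq:reg_maximal}.

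For maximality, let $(v,\sigma)$ be any weak solution of \eqref{eq:rd} satisfying \eqref{eq:reg_maximal}. If $\sigma>\tau$, pick $T$ with $\tau<T<\sigma$; applying \eqref{eq:reg_maximal} with $T'=T<\sigma$ shows $v\in\Z^{r,q}_{\nicefrac{n}{2r}-\nicefrac{1}{\rho}}(T)$ and $\nabla v\in\Z^{p,2}_{\nicefrac{n}{2p}-\nicefrac{1}{\rho}-\nicefrac{1}{2}}(T)$ for all admissible $(r,q)$, so $(v,T)$ witnesses $T\in\mathcal{T}$, contradicting $T>\sup\mathcal{T}$; hence $\sigma\le\tau$. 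For every $T\in(0,\sigma)$, both $v|_{(0,T)\times\R^n}$ and $u|_{(0,T)\times\R^n}$ are weak solutions of \eqref{eq:rd} in the uniqueness class of Corollary~\ref{cor:uniqueness_weak_rd}, so they coincide; letting $T\uparrow\sigma$ gives $v=u$ on $(0,\sigma)\times\R^n$, i.e.\ $(v,\sigma)$ is a restriction of $(u,\tau)$. This is the asserted maximality, and in particular the lifespan $\tau$ does not depend on the chosen uniqueness class. I expect the only non-formal ingredient to be the regularity theory of Sections~\ref{sec:regularity_mild} and~\ref{sec:regularity_weak} (bootstrapping for mild and for weak solutions), which is exactly what makes ``lies in one admissible $\Z$-class'' equivalent to ``satisfies \eqref{eq:reg_maximal}'' and hence renders $\mathcal{T}$, and therefore $\tau$, unambiguous; everything else is the routine gluing argument, whose one delicate point is the monotonicity of the $\Z$-space norms under restriction to subintervals.
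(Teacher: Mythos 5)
Your proposal is correct and takes essentially the same route as the paper: define a set of admissible lifetimes, set $\tau$ to be its supremum, glue the pieces via the uniqueness in Corollary~\ref{cor:uniqueness_weak_rd}, and deduce maximality by contradiction. The only organizational difference is that you build \eqref{eq:reg_maximal} for \emph{all} admissible pairs $(r,q)$ into the definition of $\mathcal{T}$, so that the glued solution satisfies \eqref{eq:reg_maximal} automatically; the paper instead defines its set $\mathcal{M}$ with a single fixed pair, and then in a separate Step~2 transfers \eqref{eq:reg_maximal} and maximality to all other pairs by re-running the argument of Corollary~\ref{cor:uniqueness_weak_rd} to see that $u$ is a mild solution on $(0,T)$ and then invoking Proposition~\ref{prop:RD_bootstrapping}. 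Both versions rest on the same ingredients (Propositions~\ref{prop:existence_weak_rd} and~\ref{prop:RD_bootstrapping}, Corollary~\ref{cor:uniqueness_weak_rd}) and your rearrangement buys a slightly shorter Step~2 at the cost of a stronger defining condition for $\mathcal{T}$.

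Two small points to tighten. First, when identifying $u_{T_1}$ with $u_{T_2}|_{(0,T_1)\times\R^n}$, your definition of $\mathcal{T}$ only grants $u_{T_1}\in\Z^{r,q}_{\nicefrac{n}{2r}-\nicefrac{1}{\rho}}(T')$ for \emph{strict} $T'<T_1$, so Corollary~\ref{cor:uniqueness_weak_rd} should be applied at such levels $T'$ and then $T'\uparrow T_1$ — you do this for the final maximality step but not in the gluing. Second, invoking Proposition~\ref{prop:RD_bootstrapping} to upgrade the Proposition~\ref{prop:existence_weak_rd} solution ``to all admissible $(r,q)$'' presupposes that this solution is a \emph{mild} solution: Proposition~\ref{prop:RD_bootstrapping} bootstraps mild solutions. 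This is true because the proof of Proposition~\ref{prop:existence_weak_rd} constructs $u$ as a mild solution (or, alternatively, one passes through Proposition~\ref{prop:weak_mild_rd}), but it is not part of the statement of Proposition~\ref{prop:existence_weak_rd}, so the step should be made explicit.
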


	\begin{proof}
		Fix $\rho$, $p$, $\alpha$ and $u_0$ as in the statement. Let $(r,q)$ be some fixed pair of parameters as in the statement.

		\textbf{Step~1}: construction of a maximal solution $(u,\tau)$ for the parameter pair $(r,q)$. Consider the set
		\begin{align}
			\cM \coloneqq \bigl\{ (v,T) \text{ weak solution to } \eqref{eq:rd} \colon v \in \Z^{r,q}_{\frac{n}{2r} - \frac{1}{\rho}}(T), \; \nabla v \in \Z^{p,2}_{\frac{n}{2p} - \frac{1}{\rho} - \frac{1}{2}}(T) \bigr\}.
		\end{align}
		By Proposition~\ref{prop:existence_weak_rd}, we know $\cM \neq \emptyset$. Define the maximal existence time
		\begin{align}
			\tau \coloneqq \sup \, \bigl\{ T \colon (v,T) \in \cM \bigr\} > 0.
		\end{align}
		If $t \in (0,\tau)$, define $u(t) \coloneqq v(t)$, where $(v,T) \in \cM$ with $T \geq t$. By the uniqueness result in Corollary~\ref{cor:uniqueness_weak_rd}, this definition is unambiguous. By construction, $u$ is a weak solution on $(0,\tau)$. Moreover, $(u, \tau)$ is maximal among all weak solutions verifying~\eqref{eq:reg_maximal}. Indeed, otherwise there exists $(\tilde v, \tilde T) \in \cM$ with $\tilde T>\tau$, contradicting the definition of $\tau$.

		\textbf{Step~2}: solution $(u,\tau)$ is maximal for any other parameter pair $(\tilde r, \tilde q)$. Let $(\tilde r, \tilde q)$ be another pair of parameters as in the statement of the proposition.
		First, we show that $(u,\tau)$ verifies~\eqref{eq:reg_maximal} for the pair $(\tilde r, \tilde q)$. Let $T \in (0,\tau)$. Repeating the proof of Corollary~\ref{cor:uniqueness_weak_rd}, we find that $u$ is a mild solution to~\eqref{eq:rd} on $[0,T]$. Then, the bootstrapping result in Proposition~\ref{prop:RD_bootstrapping} entails $u \in \Z^{\tilde r,\tilde q}_{\nicefrac{n}{2\tilde r} - \nicefrac{1}{\rho}}(T)$. Eventually, arguing as in Proposition~\ref{prop:existence_weak_rd}, we also find $\nabla u \in \Z^{p,2}_{\nicefrac{n}{2p} - \nicefrac{1}{\rho} - \nicefrac{1}{2}}(T)$, which completes the proof of~\eqref{eq:reg_maximal}.

		Second, we show that $(u,\tau)$ is maximal for the pair $(\tilde r, \tilde q)$. For the sake of contradiction, suppose that $(v,T)$ is a weak solution to~\eqref{eq:rd} satisfying~\eqref{eq:reg_maximal} for the pair $(\tilde r, \tilde q)$ and such that $T > \tau$. Fix $T' \in (\tau, T)$. Replacing $u$ by $v$ in the argument at the beginning of Step~2, we deduce that $(v, T')$ is a weak solution to~\eqref{eq:rd} satisfying $v\in \Z^{r,q}_{\nicefrac{n}{2r} - \nicefrac{1}{\rho}}(T')$ and $\nabla v \in \Z^{p,2}_{\nicefrac{n}{2p} - \nicefrac{1}{\rho} - \nicefrac{1}{2}}(T')$. But this contradicts the maximality of $u$ for the pair $(r,q)$ obtained in Step~1. This concludes the proof.
	\end{proof}

	Eventually, we can give the proof to our non-linear main result.

	\begin{proof}[Proof of Theorem~\ref{thm:rd_new}]
		The existence, regularity and maximality assertions of Theorem~\ref{thm:rd_new} are provided by Proposition~\ref{prop:weak_maximal}.
		Moreover, uniqueness was investigated in Corollary~\ref{cor:uniqueness_weak_rd}. This completes the proof.
	\end{proof}

\end{document}